\documentclass[oneside,english]{amsart}
\usepackage{lmodern}

\usepackage[T1]{fontenc}
\usepackage[latin9]{inputenc}
\usepackage{enumitem}
\usepackage{amsthm}
\usepackage{amssymb}
\PassOptionsToPackage{normalem}{ulem}
\usepackage{ulem}

\makeatletter
\numberwithin{equation}{section}
\numberwithin{figure}{section}
\theoremstyle{plain}
\newtheorem{thm}{\protect\theoremname}[section]
  \theoremstyle{plain}
  \newtheorem{fact}[thm]{\protect\factname}
 \theoremstyle{plain}
 \newtheorem*{main_thm*}{Main Theorem}
  \theoremstyle{definition}
  \newtheorem{defn}[thm]{\protect\definitionname}
  \theoremstyle{remark}
  \newtheorem{rem}[thm]{\protect\remarkname}
  \theoremstyle{plain}
  \newtheorem{lem}[thm]{\protect\lemmaname}
  \theoremstyle{remark}
  \newtheorem{note}[thm]{\protect\notename}
  \theoremstyle{plain}
  \newtheorem{prop}[thm]{\protect\propositionname}
  \theoremstyle{plain}
  \newtheorem{cor}[thm]{\protect\corollaryname}
  \theoremstyle{remark}
  \newtheorem{claim}[thm]{\protect\claimname}
 \newlist{casenv}{enumerate}{4}
 \setlist[casenv]{leftmargin=*,align=left,widest={iiii}}
 \setlist[casenv,1]{label={{\itshape\ \casename} \arabic*.},ref=\arabic*}
 \setlist[casenv,2]{label={{\itshape\ \casename} \roman*.},ref=\roman*}
 \setlist[casenv,3]{label={{\itshape\ \casename\ \alph*.}},ref=\alph*}
 \setlist[casenv,4]{label={{\itshape\ \casename} \arabic*.},ref=\arabic*}
  \theoremstyle{remark}
  \newtheorem{notation}[thm]{\protect\notationname}
  \theoremstyle{definition}
  \newtheorem{problem}[thm]{\protect\problemname}


\usepackage{multicol}
\pagestyle{headings}
\usepackage{a4wide}
\usepackage{amssymb}
\usepackage{url}
\linespread{1.5}

\makeatother

\usepackage{babel}
  \providecommand{\claimname}{Claim}
  \providecommand{\corollaryname}{Corollary}
  \providecommand{\definitionname}{Definition}
  \providecommand{\factname}{Fact}
  \providecommand{\lemmaname}{Lemma}
  \providecommand{\notationname}{Notation}
  \providecommand{\notename}{Note}
  \providecommand{\problemname}{Problem}
  \providecommand{\propositionname}{Proposition}
  \providecommand{\remarkname}{Remark}
 \providecommand{\casename}{Case}
\providecommand{\theoremname}{Theorem}

\begin{document}
\global\long\def\arity{\operatorname{Ar}}
\global\long\def\C{\mathfrak{C}}
\global\long\def\acl{\operatorname{acl}}
\global\long\def\tp{\operatorname{tp}}
\global\long\def\qf{\operatorname{qf}}
\global\long\def\id{\operatorname{id}}
\global\long\def\Add{\operatorname{Add}}
\global\long\def\ded{\operatorname{ded}}
\global\long\def\cof{\operatorname{cof}}
\global\long\def\SS{\mathcal{P}}
\global\long\def\EM{\operatorname{EM}}
\global\long\def\tr{\operatorname{tr}}
\global\long\def\F{\mathcal{F}}
\global\long\def\tr{\operatorname{tr}}
\global\long\def\lev{\operatorname{lev}}

\def\Ind#1#2{#1\setbox0=\hbox{$#1x$}\kern\wd0\hbox to 0pt{\hss$#1\mid$\hss} \lower.9\ht0\hbox to 0pt{\hss$#1\smile$\hss}\kern\wd0} 
\def\Notind#1#2{#1\setbox0=\hbox{$#1x$}\kern\wd0\hbox to 0pt{\mathchardef \nn="3236\hss$#1\nn$\kern1.4\wd0\hss}\hbox to 0pt{\hss$#1\mid$\hss}\lower.9\ht0 \hbox to 0pt{\hss$#1\smile$\hss}\kern\wd0} 
\def\nind{\mathop{\mathpalette\Notind{}}}

\global\long\def\ind{\mathop{\mathpalette\Ind{}}}
 \global\long\def\nind{\mathop{\mathpalette\Notind{}}}
\global\long\def\indi{\mathop{\mathpalette\Ind{}}}

\global\long\def\nf{\mbox{nf}}
\global\long\def\Uu{\mathcal{U}}
\global\long\def\dom{\operatorname{dom}}
\global\long\def\concat{\frown}

\global\long\def\NTPT{\operatorname{NTP}_{\operatorname{2}}}
\global\long\def\ist{\operatorname{ist}}
\global\long\def\C{\mathbb{M}}
\global\long\def\alt{\operatorname{alt}}
\global\long\def\assum{\smiley}
\global\long\def\leftexp#1#2{{\vphantom{#2}}^{#1}{#2}}

\title{On non-forking spectra }

\author{Artem Chernikov, Itay Kaplan and Saharon Shelah}

\address{A. Chernikov: \'{E}quipe de Logique Math\'{e}matique, Institut de Math\'{e}matiques de Jussieu
- Paris Rive Gauche, B\^{a}timent Sophie Germain, Universit\'{e} Paris Diderot
Paris 7, UFR de Math\'{e}matiques - case 7012, 75205 Paris Cedex 13 France}
 \email{art.chernikov@gmail.com}

\address{I. Kaplan: Institute of Mathematics, The Hebrew University of Jerusalem, Givat
Ram, Jerusalem 91904, Israel}
 \email{kaplan@math.huji.ac.il}

\address{S. Shelah: Institute of Mathematics, The Hebrew University of Jerusalem, Givat
Ram, Jerusalem 91904, Israel and
 Department of Mathematics Hill Center-Busch Campus Rutgers, The State University of New Jersey 110 Frelinghuysen Road Piscataway, NJ 08854-8019 USA}
  \email{shelah@math.huji.ac.il}


\begin{abstract}
Non-forking is one of the most important notions in modern model theory
capturing the idea of a generic extension of a type (which is a far-reaching
generalization of the concept of a generic point of a variety).

To a countable first-order theory we associate its \emph{non-forking
spectrum} --- a function of two cardinals $\kappa$ and $\lambda$
giving the supremum of the possible number of types over a model of
size $\lambda$ that do not fork over a sub-model of size $\kappa$.
This is a natural generalization of the stability function of a theory.

We make progress towards classifying the non-forking spectra. On the
one hand, we show that the possible values a non-forking spectrum
may take are quite limited. On the other hand, we develop a general
technique for constructing theories with a prescribed non-forking
spectrum, thus giving a number of examples. In particular, we answer
negatively a question of Adler whether NIP is equivalent to bounded
non-forking.

In addition, we answer a question of Keisler regarding the number
of cuts a linear order may have. Namely, we show that it is possible
that $\ded\kappa<\left(\ded\kappa\right)^{\omega}$.
\end{abstract}

\maketitle

\section{Introduction}

The notion of a non-forking extension of a type (see Definition \ref{def:forking})
was introduced by Shelah for the purposes of his classification program
to capture the idea of a ``generic'' extension of a type to a larger
set of parameters which essentially doesn't add new constraints to
the set of its solutions. In the context of stable theories non-forking
gives rise to an independence relation enjoying a lot of natural properties
(which in the special case of vector spaces amounts to linear independence
and in the case of algebraically closed fields to algebraic independence)
and is used extensively in the analysis of models. In a subsequent
work of Shelah \cite{SheSimple}, Kim and Pillay \cite{KimForking,KimPillaySimple}
the basic properties of forking were generalized to a larger class
of simple theories. Recent work of the first and second authors shows
that many properties of forking still hold in a larger class of theories
without the tree property of the second kind \cite{cheka}.

Here we consider the following basic question: how many non-forking
extensions can there be? More precisely, given a complete first-order
theory $T$, we associate to it its non-forking spectrum, a function
$f_{T}(\kappa,\lambda)$ from cardinals $\kappa\leq\lambda$ to cardinals
defined as:
\[
f_{T}\left(\kappa,\lambda\right)=\mbox{sup}\left\{ S^{\nf}(N,M)\left|\,M\preceq N\models T,\,|M|\leq\kappa,\,|N|\leq\lambda\right.\right\} ,
\]
where $S^{\mbox{\ensuremath{\nf}}}\left(A,B\right)=\left\{ p\in S_{1}(A)\left|\,p\mbox{ does not fork over }B\right.\right\} $
(counting $1$-types rather than $n$-types is essential, as the value
may depend on the arity, see Section \ref{sub:f_T1 different than f_T2}).

This is a generalization of the classical question ``how many types
can a theory have over a model?''. Recall that the stability function of a theory
is defined as $$f_{T}\left(\kappa\right)= \sup\left\{ S\left(M\right)\left|\,M\models T,\,\left|M\right|=\kappa\right.\right\}. $$
It is easy to see that $f_{T}\left(\kappa,\kappa\right)=f_{T}\left(\kappa\right)$.
This function has been studied extensively by Keisler \cite{KeislerSixClasses}
and the third author \cite{shelahStability}, where the following
fundamental result was proved:
\begin{fact}
\label{fac:Keisler}For any complete countable first-order theory
$T$, $f_{T}$ is one of the following: $\kappa$, $\kappa+2^{\aleph_{0}}$,
$\kappa^{\aleph_{0}}$, $\ded\left(\kappa\right)$, $\ded\left(\kappa\right)^{\aleph_{0}}$,
$2^{\kappa}$.
\end{fact}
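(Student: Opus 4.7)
The plan is to classify $f_T$ by matching each of the six candidate growth rates to a combinatorial dividing line in Shelah's stability hierarchy, and then proving sharp upper and lower bounds on each side of the line. The three relevant dividing lines are the order property (separating stable from unstable), the independence property (separating NIP from IP), and the local rank/tree conditions separating totally transcendental from superstable from merely stable theories. For each of the six regions I would pin down one upper-bound argument (an explicit bound on $|S(M)|$) and one lower-bound construction (an explicit family of $1$-types over some model of the prescribed cardinality).

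For the upper bounds I would argue as follows. If $T$ has the independence property then $|S_1(M)| \leq 2^{|M|}$ is trivial. If $T$ is NIP then each $\phi$-type over $M$ is determined by cuts in a quasi-order definable from $\phi$, so $|S_\phi(M)| \leq \ded(\kappa)$; since the language is countable, multiplying across formulas gives $|S(M)| \leq \ded(\kappa)^{\aleph_0}$. Within stable $T$ every type is definable and is coded by countably many defining schemes over $M$, which yields $\kappa^{\aleph_0}$. Superstability forbids infinite forking chains and so localizes every type to a countable subset of $M$ plus finitely many formula-parameters, giving $\kappa + 2^{\aleph_0}$. For totally transcendental $T$, Morley rank together with a Cantor--Bendixson analysis bounds $|S(M)|$ by $\kappa$.

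For the matching lower bounds, the combinatorial properties whose failure was used above must be turned into explicit constructions of types. The order property yields a definable linear quasi-order on an indiscernible sequence, and embedding a dense linear order of size $\kappa$ produces $\ded(\kappa)$ distinct $1$-types via its cuts. The independence property lets one code an arbitrary subset of $M$ as a type, giving $2^\kappa$. Failure of superstability supplies an infinite tree of formulas with $\aleph_0$-splitting whose branches give $\kappa^{\aleph_0}$ types; failure of total transcendence supplies a binary tree and $2^{\aleph_0}$ types. In each case the constructed types can be pushed up to a model of size $\kappa$ by standard Ehrenfeucht--Mostowski and type-extension arguments.

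The principal difficulty is separating $\ded(\kappa)$ from $\ded(\kappa)^{\aleph_0}$ inside the NIP-unstable region. The $\ded(\kappa)^{\aleph_0}$ upper bound is straightforward from the per-formula analysis, but to single out the $\ded(\kappa)$ case one must argue that in some NIP theories the ``order content'' is concentrated in a single formula (up to boolean combinations), while others admit $\aleph_0$ many cuts that can be chosen independently; this is the subtle fine structure of NIP. Producing theories that attain the intermediate value $\ded(\kappa)^{\aleph_0}$ as genuinely distinct from $\ded(\kappa)$ is delicate precisely because these two cardinals may coincide in many models of set theory --- a tension the present paper later addresses by constructing a universe where $\ded(\kappa) < \ded(\kappa)^{\aleph_0}$.
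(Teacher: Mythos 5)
First, note that the paper does not prove this Fact at all: it is quoted as a known theorem of Keisler and Shelah (\cite{KeislerSixClasses}, \cite{shelahStability}) and used as a black box, so there is no internal proof to compare against. Your sketch is therefore judged as a reconstruction of the Keisler--Shelah argument. The overall architecture you describe is the right one: the six values do line up with the dividing lines totally transcendental / superstable / stable / NIP-unstable / IP, and your upper-bound sketches (Cantor--Bendixson for t.t.; localization to finite sets plus bounded multiplicity for superstable; definability of types for stable; $|S_{\varphi}(M)|\leq\ded(\kappa)$ per NIP formula, multiplied over a countable language) and lower-bound sketches (binary tree of formulas, forking tree, cuts in an order-property sequence, coding subsets via IP) are all essentially correct and, combined, do establish that $f_T$ lies between consecutive entries of the list according to where $T$ sits in the hierarchy.

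There is, however, a genuine gap exactly where you flag a ``principal difficulty,'' and your discussion of it points in the wrong direction. The statement to be proved is that $f_T$ \emph{equals} one of the six functions, so for an unstable NIP theory one must show the dichotomy: either $f_T(\kappa)=\ded(\kappa)$ for all $\kappa$, or $f_T(\kappa)=\left(\ded\kappa\right)^{\aleph_{0}}$ for all $\kappa$, with no intermediate behaviour. Your per-formula analysis only yields the sandwich $\ded(\kappa)\leq f_T(\kappa)\leq\left(\ded\kappa\right)^{\aleph_{0}}$; you give no argument that a theory whose type-count exceeds $\ded(\kappa)$ for some $\kappa$ must already achieve $\left(\ded\kappa\right)^{\aleph_{0}}$ for all $\kappa$. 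This is precisely the step Keisler could not complete and Shelah supplied, and it requires a new combinatorial configuration (roughly, infinitely many formulas whose cuts can be chosen independently, extracted from the failure of the $\ded(\kappa)$ bound) rather than the ``fine structure of NIP'' in the vague sense you invoke. Moreover, the last part of your final paragraph conflates this dichotomy with the separate question of \emph{realizing} the value $\left(\ded\kappa\right)^{\aleph_{0}}$ as distinct from $\ded(\kappa)$ in some model of set theory; that realizability question (the subject of Section \ref{sec:on ded kappa < ded kappa ^aleph0} of the paper) is irrelevant to the Fact as stated, which only asserts membership in the list. As written, your proposal proves a weaker statement: that $f_T$ lies in one of six \emph{intervals}, not that it equals one of six functions.
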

Where $\ded\left(\kappa\right)$ is the supremum of the number of
cuts that a linear order of size $\kappa$ may have (see Definition
\ref{def:ded}). While this result is unconditional, in some models
of $ZFC$, some of these functions may coincide. Namely, if $GCH$
holds, $\ded\left(\kappa\right)=\ded\left(\kappa\right)^{\aleph_{0}}=2^{\kappa}$.
By a result of Mitchell \cite{Mitchell}, it was known that for any
cardinal $\kappa$ with $\cof\kappa>\aleph_{0}$ consistently $\ded\left(\kappa\right)<2^{\kappa}$.
In 1976, Keisler \cite[Problem 2]{KeislerSixClasses} asked whether
$\ded\left(\kappa\right)<\ded\left(\kappa\right)^{\aleph_{0}}$ is
consistent with $ZFC$. We give a positive answer in Section \ref{sec:on ded kappa < ded kappa ^aleph0}.

The aim of this paper is to classify the possibilities of $f_{T}\left(\kappa,\lambda\right)$.
The philosophy of ``dividing lines'' of the third author suggests
that the possible non-forking spectra are quite far from being arbitrary,
and that there should be finitely many possible functions, distinguished
by the lack (or presence) of certain combinatorial configurations.
We work towards justifying this philosophy and arrive at the following
picture.
\begin{main_thm*}
\label{MainTheorem}Let $T$ be a countable complete first-order theory.
Then for $\lambda\gg\kappa$, $f_{T}(\kappa,\lambda)$ can be one
of the following, in increasing order (meaning that we have an example
for each item in the list except for (\ref{enu:lambda^beth}), and
``???'' means that we don't know if there is anything between the
previous and the next item, while the lack of ``???'' means that
there is nothing in between):

\begin{multicols}{4}
\begin{enumerate}
\item \label{enu:kappa}$\kappa$
\item \label{enu:kappa+ 2^omega}$\kappa+2^{\aleph_{0}}$
\item \label{enu:kappa^omega}$\kappa^{\aleph_{0}}$
\item \label{enu:ded kappa}$\ded\kappa$
\item \label{enu:??? between k^omega and ded k ^omega}???
\item \label{enu:ded kappa ^ omega}$\left(\ded\kappa\right)^{\aleph_{0}}$
\item \label{enu:double exponent kappa}$2^{2^{\kappa}}$
\item \label{enu:lambda}$\lambda$
\item \label{enu:lambda ^omega}$\lambda^{\aleph_{0}}$
\item \label{enu:??? between lambda^omega and lambda^<beth}???
\item \label{enu:lambda^beth}$\lambda^{<\beth_{\aleph_{1}}\left(\kappa\right)}$
\item \label{enu:ded lambda}$\ded\lambda$
\item \label{enu:??? between ded lambda and ded lambda^omega}???
\item \label{enu:ded lambda ^ omega}$\left(\ded\lambda\right)^{\aleph_{0}}$
\item \label{enu:between ded lambda ^omega and 2^lambda}???
\item \label{enu:2 ^ lambda}$2^{\lambda}$
\end{enumerate}

\end{multicols}

\end{main_thm*}
In particular, note that the existence of an example of $f_{T}\left(\kappa,\lambda\right)=2^{2^{\kappa}}$
answers negatively a question of Adler \cite[Section 6]{Ad} whether
NIP is equivalent to bounded non-forking.

The restriction $\lambda\gg\kappa$ is in order to make the statement
clearer. It can be taken to be $\lambda\geq\beth_{\aleph_{1}}\left(\kappa\right)$.
In fact we can say more about smaller $\lambda$ in some cases. In
the class of $\NTPT$ theories (see Section \ref{sec:Inside NTP2}),
we have a much nicer picture, meaning that there is a gap between
(\ref{enu:ded kappa ^ omega}) and (\ref{enu:2 ^ lambda}). \\

In the first part of the paper, we prove that the non-forking spectra
cannot take values which are not listed in the Main Theorem. The proofs
here combine techniques from generalized stability theory (including
results on stable and NIP theories, splitting and tree combinatorics)
with a two cardinal theorem for $L_{\omega_{1},\omega}$.

The second part of the paper is devoted to examples.

We introduce a general construction which we call \emph{circularization}.
Roughly speaking, the idea is the following: modulo some technical
assumptions, we start with an arbitrary theory $T_{0}$ in a finite
relational language and an (essentially) arbitrary prescribed set
of formulas $F$. We expand $T$ by putting a circular order on the
set of solutions of each formula in $F$, iterate the construction
and take the limit. The point is that in the limit all the formulas
in $F$ are forced to fork, and we have gained some control on the
set of non-forking types. This construction turns out to be quite
flexible: by choosing the appropriate initial data, we can find a
wide range of examples of non-forking spectra previously unknown.

\section{\label{sec:Preliminaries}Preliminaries}

Our notation is standard: $\kappa,\lambda,\mu$ are cardinals; $\alpha,\beta,\ldots$
are ordinals; $M,N,\ldots$ are models; $\C$ is always a monster
model of the theory in question; $B^{\left[\kappa\right]}$ is the
set of subsets of $B$ of size $\leq\kappa$; $T$ is a complete countable
first-order theory; for a sequence $\bar{a}=\left\langle a_{i}\left|\,i<\alpha\right.\right\rangle $,
$\EM\left(\bar{a}/A\right)$ denotes its Ehrenfeucht-Mostowski type
over $A$.

\subsection{Basic properties of forking and dividing.\protect \\
}

We recall the definition of forking and dividing (e.g. see \cite[Section 2]{cheka}
for more details).
\begin{defn}
(Dividing) Let $A$ be be a set, and $a$ a tuple. We say that the
formula $\varphi\left(x,a\right)$ \emph{divide}s over $A$ if and
only if there is a number $k<\omega$ and tuples $\left\{ a_{i}\left|i<\omega\right.\right\} $
such that
\begin{enumerate}
\item $\tp\left(a_{i}/A\right)=\tp\left(a/A\right)$.
\item The set $\left\{ \varphi\left(x,a_{i}\right)\left|\,i<\omega\right.\right\} $
is $k$-inconsistent (i.e. every subset of size $k$ is not consistent).
\end{enumerate}
In this case, we say that a formula $k$-divides.\end{defn}
\begin{rem}
From Ramsey and compactness it follows that $\varphi\left(x,a\right)$
divides over $A$ if and only if there is an indiscernible sequence
over $A$, $\left\langle a_{i}\left|i<\omega\right.\right\rangle $
such that $a_{0}=a$ and $\left\{ \varphi\left(x,a_{i}\right)\left|\,i<\omega\right.\right\} $
is inconsistent.\end{rem}
\begin{defn}
\label{def:forking}(Forking) Let $A$ be be a set, and $a$ a tuple.
\begin{enumerate}
\item Say that the formula $\varphi\left(x,a\right)$ \emph{forks} over
$A$ if there are formulas $\psi_{i}\left(x,a_{i}\right)$ for $i<n$
such that $\varphi\left(x,a\right)\vdash\bigvee_{i<n}\psi_{i}\left(x,a_{i}\right)$
and $\psi_{i}\left(x,a_{i}\right)$ divides over $A$ for every $i<n$.
\item Say that a type $p$ forks over $A$ if there is a finite conjunction
of formulas from $p$ which forks over $A$.
\end{enumerate}
\end{defn}
It follows immediately from the definition that if a partial type
$p\left(x\right)$ does not fork over $A$ then there is a global
type $p'\left(x\right)\in S\left(\C\right)$ extending $p\left(x\right)$
that does not fork over $A$.
\begin{lem}
\label{lem: Cofinal set} Let $(A,\leq)$ be a $\kappa^{+}$-directed
order and let $f:\,A\to\kappa$. Then there is a cofinal subset $A_{0}\subseteq A$
such that $f$ is constant on $A_{0}$.\end{lem}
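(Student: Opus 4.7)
The plan is to argue by contradiction, exploiting $\kappa^+$-directedness to combine $\kappa$ many obstructions into a single element that cannot be assigned any value.

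Concretely, I would first observe that it suffices to show some fiber $f^{-1}(i)$ is itself cofinal in $A$; for then $A_0 := f^{-1}(i)$ is the required cofinal subset on which $f$ is constantly equal to $i$. So suppose toward a contradiction that for every $i < \kappa$ the fiber $f^{-1}(i)$ fails to be cofinal. Unwinding the definition, for each $i < \kappa$ we may pick a witness $a_i \in A$ such that no $b \in A$ with $b \geq a_i$ satisfies $f(b) = i$.

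Now the set $\{a_i : i < \kappa\}$ has cardinality at most $\kappa$, so by $\kappa^+$-directedness of $(A,\leq)$ there exists $b \in A$ with $b \geq a_i$ for all $i < \kappa$. Setting $j := f(b) \in \kappa$, we have $b \geq a_j$, and by the choice of $a_j$ this forces $f(b) \neq j$, contradicting the definition of $j$. Hence some fiber must be cofinal, completing the proof.

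There is no real obstacle here: the lemma is a straightforward pigeonhole once one unpacks what ``not cofinal'' means for each fiber, and $\kappa^+$-directedness is precisely the hypothesis needed to amalgamate the $\kappa$ chosen witnesses $a_i$ into one element $b$ above all of them. The only point worth being careful about is the exact quantifier form of ``not cofinal,'' namely that it produces an element $a_i$ above which the fiber $f^{-1}(i)$ is \emph{entirely} avoided, rather than merely an element that is not dominated by the fiber.
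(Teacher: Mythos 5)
Your proof is correct and is essentially the paper's own argument: negate the conclusion, extract for each $i<\kappa$ a witness $a_i$ above which the fiber $f^{-1}(i)$ is avoided, use $\kappa^{+}$-directedness to find a single $b$ above all the $a_i$, and derive a contradiction from the value $f(b)$. The only difference is presentational — you phrase the conclusion in terms of a fiber being cofinal and spell out the quantifier unwinding, which the paper leaves implicit.
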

\begin{proof}
Assume not, then for every $\alpha<\kappa$ there is some $a_{\alpha}\in A$
such that $f(a)\neq\alpha$ for any $a\geq a_{\alpha}$. By $\kappa^{+}$-directedness
there is some $a\geq a_{\alpha}$ for all $\alpha<\kappa$. But then
whatever $f(a)$ is, we get a contradiction. \end{proof}
\begin{lem}
\label{lem: Countable base} Assume that $p(x)\in S(A)$ does not
fork over $B$. Then there is some $B_{0}\subseteq B$ such that $|B_{0}|\leq|A|+|T|$
and $p(x)$ does not fork over $B_{0}$.\end{lem}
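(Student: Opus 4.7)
The plan is to reduce the statement to a single formula and then use a cofinality/compactness argument to manufacture a global forking witness out of the assumed local ones.

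\smallskip
\noindent\emph{Reduction.} Since $|p|\leq|A|+|T|$, the set of finite conjunctions $\varphi_q:=\bigwedge q$ with $q\subseteq p$ finite has cardinality $\leq|A|+|T|$. If for each such $\varphi_q$ I can find $B_q\subseteq B$ with $|B_q|\leq|T|$ over which $\varphi_q$ does not fork, then $B_0:=\bigcup_q B_q\subseteq B$ has size $\leq|A|+|T|$; since each $B_q\subseteq B_0$ and dividing over a smaller set is weaker than dividing over a larger one, $\varphi_q$ does not fork over $B_0$ either, whence $p$ does not fork over $B_0$. Thus it suffices to treat a single formula $\varphi(x,a)$ which does not fork over $B$ and to exhibit $B_0\in B^{[|T|]}$ over which $\varphi(x,a)$ still does not fork.

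\smallskip
\noindent\emph{Local certificates and stabilization.} Assume toward a contradiction that $\varphi(x,a)$ forks over every $B_0\in B^{[|T|]}$. For each such $B_0$ fix a forking certificate: $L$-formulas $\psi_i^{B_0}(x,y_i)$ for $i<n_{B_0}$, integers $k_i^{B_0}<\omega$, parameters $b_i^{B_0}$, and $B_0$-indiscernible sequences $\langle b_i^{B_0,j}:j<\omega\rangle$ starting at $b_i^{B_0}$, such that $\varphi(x,a)\vdash\bigvee_{i<n_{B_0}}\psi_i^{B_0}(x,b_i^{B_0})$ and each $\{\psi_i^{B_0}(x,b_i^{B_0,j}):j<\omega\}$ is $k_i^{B_0}$-inconsistent. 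The ``signature'' $(n_{B_0},(\psi_i^{B_0})_{i},(k_i^{B_0})_{i})$ ranges over a set of cardinality $\leq|T|$, and $B^{[|T|]}$ is $|T|^{+}$-directed by union, so Lemma~\ref{lem: Cofinal set} yields a cofinal $\mathcal{A}_0\subseteq B^{[|T|]}$ on which this signature is constant, say equal to $(n,(\psi_i)_{i<n},(k_i)_{i<n})$.

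\smallskip
\noindent\emph{Compactness.} Let $\Gamma(\bar{y}_1,\ldots,\bar{y}_n)$ be the partial type over $B\cup\{a\}$, with $\bar{y}_i=\langle y_i^{j}:j<\omega\rangle$, expressing: (i) $\forall x\,(\varphi(x,a)\to\bigvee_{i<n}\psi_i(x,y_i^{0}))$; (ii) each $\bar{y}_i$ is indiscernible over $B$; (iii) each $\{\psi_i(x,y_i^{j}):j<\omega\}$ is $k_i$-inconsistent. Any finite fragment of $\Gamma$ mentions only finitely many parameters $B_{00}\subseteq B$; by cofinality of $\mathcal{A}_0$ pick $B_0\in\mathcal{A}_0$ with $B_{00}\subseteq B_0$, and the local certificate at $B_0$ realizes the fragment ($B_0$-indiscernibility implies $B_{00}$-indiscernibility). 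By compactness $\Gamma$ is realized in $\C$, producing a forking certificate for $\varphi(x,a)$ over $B$ -- contradicting the hypothesis.

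\smallskip
\noindent\emph{Main obstacle.} The crux is the stabilization step: local certificates over distinct $B_0$ a priori involve different formulas and inconsistency indices, so cannot be glued directly. Lemma~\ref{lem: Cofinal set} is exactly what is needed, because the certificates are parametrized by a discrete signature of size $\leq|T|$ while $B^{[|T|]}$ is $|T|^{+}$-directed; once a uniform signature is fixed, the global witness arises by routine compactness.
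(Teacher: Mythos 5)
Your proof is correct and follows essentially the same route as the paper's: assume forking over every small subset of $B$, stabilize the (finite) forking certificate data via Lemma~\ref{lem: Cofinal set} using $|T|^{+}$-directedness of $B^{\left[|T|\right]}$, and then recover forking over $B$ by compactness. The only difference is cosmetic --- you first reduce to a single formula and use base monotonicity of non-forking, whereas the paper stabilizes the finite subtype $p_{C}$ together with the formulas and the inconsistency index in one application of the cofinality lemma.
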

\begin{proof}
Let $\kappa=|A|+|T|$, and assume the converse. Then $p\left(x\right)$
forks over every $C\subseteq B$ with $\left|C\right|\leq\kappa$.
That is, for every $C\in B^{\left[\kappa\right]}$ there are $p_{C}\subseteq p$
with $|p_{C}|<\omega$, $\psi_{0}^{C}\left(x,y_{0}\right),\ldots,\psi_{m_{C}-1}^{C}\left(x,y_{m_{C}}\right)\in L$
and $k_{C}<\omega$ such that for some $d_{0}^{C},...,d_{m_{C}-1}^{C}$,
$p_{C}\left(x\right)\vdash\bigvee_{i<m_{C}}\psi_{i}^{C}\left(x,d_{i}^{C}\right)$
and each of $\psi_{i}^{C}\left(x,d_{i}^{C}\right)$ is $k_{C}$-dividing
over $C$. As $B^{\left[\kappa\right]}$ is $\kappa^{+}$-directed
under inclusion and $\left|p\left(x\right)\right|\leq\kappa$, it
follows by Lemma \ref{lem: Cofinal set} that for some finite $p_{0}\subseteq p$,
$\left\{ \psi_{i}\left|\,i<m\right.\right\} $ and $k$ this holds
for every $C\in B^{\left[\kappa\right]}$. But then by compactness
$p_{0}(x)$ forks over $B$ --- a contradiction.
\end{proof}

\subsection{The non-forking spectra}
\begin{defn}

\begin{enumerate}
\item For a countable first-order $T$ and infinite cardinals $\kappa\leq\lambda$,
let
\[
f_{T}\left(\kappa,\lambda\right)=\mbox{sup}\left\{ S^{\nf}(N,M)\left|\,M\preceq N\models T,\,|M|\leq\kappa,\,|N|\leq\lambda\right.\right\} ,
\]
where $S^{\mbox{\ensuremath{\nf}}}\left(A,B\right)=\left\{ p\in S_{1}(A)\left|\,p\mbox{ does not fork over }B\right.\right\} $.
We call this function the \emph{non-forking spectrum} of $T$.
\item For $n>1$, we may also define $f_{T}^{n}\left(\kappa,\lambda\right)$
and $S_{n}^{\nf}$ similarly where we replace $1$-types with $n$-types.
\end{enumerate}
\end{defn}
\begin{note}
All the proofs in Section \ref{sec:Gaps} remain valid for $f_{T}$
replaced by $f_{T}^{n}$. \end{note}
\begin{rem}
A special case $f_{T}(\kappa,\kappa)$ is the well-known stability
function $f_{T}(\kappa)$ because $S^{\nf}\left(N,N\right)=S\left(N\right)$
 (Because every type over a model $M$ does not fork over $M$).
\end{rem}
Some easy observations:
\begin{lem}
\label{lem:basic properties of f_T}For all $\kappa\leq\lambda$,
\begin{enumerate}
\item $f_{T}\left(\kappa\right)\leq f_{T}\left(\kappa,\lambda\right)$
\item $\kappa\leq f_{T}\left(\kappa,\lambda\right)\leq2^{\lambda}$
\item If $f_{T}\left(\kappa,\lambda\right)\geq\mu$ and $\kappa\leq\kappa'$
then $f_{T}\left(\kappa',\lambda\right)\geq\mu$.
\item $f_{T}^{n}\left(\kappa,\lambda\right)\leq f_{T}^{n+1}\left(\kappa,\lambda\right)$
\end{enumerate}
\end{lem}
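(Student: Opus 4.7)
The plan is to peel off the four items in turn using only standard properties of non-forking and basic cardinal arithmetic; no single part is deep, and the only substantive input is monotonicity of non-forking in the base, which enters in (3).

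For (1), given $M$ with $|M|=\kappa$ and $|S(M)|$ close to $f_T(\kappa)$, extend to some $N\succeq M$ with $|N|\leq\lambda$. Every $p\in S(M)$ does not fork over $M$: no formula with parameters from $M$ can divide over $M$, since any $M$-indiscernible sequence starting at a point $a\in M$ is constant (as $\tp(a/M)$ is realized uniquely by $a$). Hence, by the observation following Definition \ref{def:forking}, each $p\in S(M)$ extends to some $\tilde p\in S^{\nf}(N,M)$. The map $p\mapsto\tilde p$ is injective (since $\tilde p\restriction M=p$), so $f_T(\kappa,\lambda)\geq|S^{\nf}(N,M)|\geq|S(M)|$, and taking the supremum on the right gives (1). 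For (2), the lower bound follows from (1) together with the trivial $|S(M)|\geq|M|=\kappa$ (distinct $a\in M$ give distinct types via the formula $x=a$), while the upper bound comes from $S^{\nf}(N,M)\subseteq S_{1}(N)$ and $|S_{1}(N)|\leq 2^{|N|+|T|}\leq 2^{\lambda}$.

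For (3), the key input is monotonicity of non-forking in the base: if $A\subseteq B$ and $p$ does not fork over $A$, then $p$ does not fork over $B$. This is because any $B$-indiscernible witness of dividing over $B$ is automatically $A$-indiscernible (by the remark following the definition of dividing), so forking over $B$ transfers back to forking over $A$. Then for any $M\preceq N$ with $|M|\leq\kappa$ and $|N|\leq\lambda$, downward L\"owenheim--Skolem provides $M\preceq M'\preceq N$ with $|M'|\leq\kappa'$ (we may assume $\kappa'\leq\lambda$, as otherwise the statement is vacuous), and monotonicity gives $S^{\nf}(N,M)\subseteq S^{\nf}(N,M')$, whence $f_T(\kappa,\lambda)\leq f_T(\kappa',\lambda)$. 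For (4), fix any $c\in M$ and define $\iota:S_n^{\nf}(N,M)\to S_{n+1}^{\nf}(N,M)$ by $\iota(p)=p(x_1,\ldots,x_n)\cup\{x_{n+1}=c\}$; the resulting $(n+1)$-type is complete and does not fork over $M$, because any $\varphi(x_1,\ldots,x_{n+1},\bar b)\in\iota(p)$ reduces modulo $x_{n+1}=c$ to a formula from $p$ (with the constant $c\in M$ contributing no new parameters outside $M$). Injectivity is clear by restriction to the first $n$ variables, so $f_T^n(\kappa,\lambda)\leq f_T^{n+1}(\kappa,\lambda)$.

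The only step requiring genuine care is pinning down the correct direction of monotonicity in (3) --- namely that enlarging the base can only preserve non-forking, never destroy it --- which is immediate once one unwinds that $B$-indiscernibility is stronger than $A$-indiscernibility when $A\subseteq B$.
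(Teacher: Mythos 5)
The paper states this lemma without proof (it is offered as a list of ``easy observations''), so there is nothing to compare against except correctness. Your argument is essentially right, and items (2), (3), (4) are fine as written. Two remarks. First, in (3) you work harder than necessary: since the supremum defining $f_{T}\left(\kappa',\lambda\right)$ ranges over all pairs $M\preceq N$ with $|M|\leq\kappa'$, and $|M|\leq\kappa$ implies $|M|\leq\kappa'$, every witness for $f_{T}\left(\kappa,\lambda\right)$ is already a witness for $f_{T}\left(\kappa',\lambda\right)$; no base monotonicity of forking and no intermediate model $M'$ are needed. Your detour through base monotonicity is nevertheless correct.

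Second, and this is the one genuine soft spot: in (1) you justify ``every $p\in S(M)$ does not fork over $M$'' by observing that no formula with parameters in $M$ can \emph{divide} over $M$. That observation is true, but it does not yield non-forking: by Definition \ref{def:forking}, $\varphi\left(x,a\right)$ with $a\in M$ forks over $M$ if it implies a disjunction $\bigvee_{i<n}\psi_{i}\left(x,a_{i}\right)$ where the parameters $a_{i}$ may lie \emph{outside} $M$, and it is exactly those external dividing formulas that your argument does not rule out. The standard repair is via coheirs: extend $p$ to a global type $p'$ finitely satisfiable in $M$; a finitely satisfiable formula cannot divide over $M$ (any witnessing $M$-indiscernible sequence would give a $k$-inconsistent family each member of which is realized in $M$ by the same element, which is absurd), and since $p'$ is complete it would have to contain one of the $\psi_{i}\left(x,a_{i}\right)$ if $p$ forked. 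Alternatively you may simply cite the fact, which the paper itself invokes in the remark identifying $S^{\nf}\left(N,N\right)$ with $S\left(N\right)$. With that repair the proof of (1), and hence of the whole lemma, is complete.
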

For set theoretic preliminaries, see Section \ref{sec:on ded kappa < ded kappa ^aleph0}.

\section{\label{sec:Gaps}Gaps}

In the following series of subsections, we exclude all the possibilities
for $f_{T}$ which are not in our list (except when ``???'' is indicated).

\subsection{On (\ref{enu:kappa}) -- (\ref{enu:ded kappa}).}
\begin{defn}
\label{def:stable theories}Recall that a theory $T$ is called \emph{stable}
if $f_{T}\left(\kappa\right)\leq\kappa^{\aleph_{0}}$ for all $\kappa$
(see \cite[Theorem II.2.13]{Sh:c} for equivalent definitions). \end{defn}
\begin{rem}
\label{rem:stable theories}If $T$ is stable then every type over
a model $M$ has a unique non-forking extension to any model containing
$M$, so $f_{T}\left(\kappa\right)=f_{T}\left(\kappa,\lambda\right)$
for all $\lambda\geq\kappa\geq\aleph_{0}$.

If $T$ is unstable, then $f_{T}\left(\kappa\right)\geq\ded\left(\kappa\right)$
for all $\kappa$ (see \cite[Theorem II.2.49]{Sh:c}), so $f_{T}\left(\kappa,\lambda\right)\geq\ded\left(\kappa\right)$
for all $\lambda\geq\kappa$. \end{rem}
\begin{prop}
The following holds:
\begin{enumerate}
\item If $f_{T}\left(\kappa,\lambda\right)>\kappa$ for some $\lambda\geq\kappa$
then $f_{T}\left(\kappa,\lambda\right)\geq\kappa+2^{\aleph_{0}}$
for all $\lambda\geq\kappa$.
\item If $f_{T}\left(\kappa,\lambda\right)>\kappa+2^{\aleph_{0}}$ for some
$\lambda\geq\kappa$ then $f_{T}\left(\kappa,\lambda\right)\geq\kappa^{\aleph_{0}}$
for all $\lambda\geq\kappa$.
\item If $f_{T}\left(\kappa,\lambda\right)>\kappa^{\aleph_{0}}$ for some
$\lambda\geq\kappa$ then $f_{T}\left(\kappa,\lambda\right)\geq\ded\left(\kappa\right)$
for all $\lambda\geq\kappa$.
\end{enumerate}
\end{prop}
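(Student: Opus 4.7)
The plan is to reduce all three statements to the classical Keisler--Shelah classification of the stability function (Fact \ref{fac:Keisler}) via the stable/unstable dichotomy recorded in Remark \ref{rem:stable theories}. I split into two cases, handling all three items simultaneously.

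\emph{Stable case.} If $T$ is stable, Remark \ref{rem:stable theories} gives $f_T(\kappa,\lambda') = f_T(\kappa)$ for every $\lambda' \geq \kappa$, because every type over a model has a unique non-forking global extension. Consequently each hypothesis $f_T(\kappa,\lambda) > \kappa$ (respectively $> \kappa + 2^{\aleph_0}$, $> \kappa^{\aleph_0}$) becomes the corresponding statement about $f_T(\kappa)$, and each desired conclusion $f_T(\kappa,\lambda') \geq \kappa + 2^{\aleph_0}$ (respectively $\geq \kappa^{\aleph_0}$, $\geq \ded\kappa$) similarly reduces to a statement about $f_T(\kappa)$. The required jumps are then immediate from Fact \ref{fac:Keisler}: no value in the Keisler list lies strictly between $\kappa$ and $\kappa + 2^{\aleph_0}$, between $\kappa + 2^{\aleph_0}$ and $\kappa^{\aleph_0}$, or between $\kappa^{\aleph_0}$ and $\ded\kappa$.

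\emph{Unstable case.} If $T$ is unstable, Remark \ref{rem:stable theories} gives $f_T(\kappa,\lambda') \geq \ded(\kappa)$ for every $\lambda' \geq \kappa$, with no further assumption on $\lambda'$. Since Keisler's list is arranged in increasing order, $\ded\kappa \geq \kappa^{\aleph_0} \geq \kappa + 2^{\aleph_0}$, and this single inequality already delivers the conclusion of each of (1), (2), and (3). Note that in this case we do not even need to use the hypothesis; the instability alone pushes $f_T(\kappa,\lambda')$ above the threshold for each item.

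I do not anticipate any substantive obstacle, as the entire content is packaged inside Fact \ref{fac:Keisler}, Remark \ref{rem:stable theories}, and the monotonicity $f_T(\kappa) \leq f_T(\kappa,\lambda)$ from Lemma \ref{lem:basic properties of f_T}. The one minor point to record is the comparison $\kappa^{\aleph_0} \leq \ded\kappa$, which is used in the unstable case and is implicit in the ordering of the Keisler list. The possible ``???'' entries of the Main Theorem begin only at item (5), i.e.\ between $\ded\kappa$ and $(\ded\kappa)^{\aleph_0}$; they lie beyond the range of the present proposition and do not interfere with the argument.
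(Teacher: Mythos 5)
Your proposal is correct and follows essentially the same route as the paper: reduce to the stability function via Remark \ref{rem:stable theories} (the paper's ``without loss of generality $T$ is stable'' is exactly your unstable-case dispatch, using $\ded\kappa\geq\kappa^{\aleph_{0}}\geq\kappa+2^{\aleph_{0}}$), and then invoke the gaps in Keisler's list from Fact \ref{fac:Keisler}. Nothing further is needed.
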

\begin{proof}
(3): Suppose $f_{T}\left(\kappa,\lambda\right)>\kappa^{\aleph_{0}}$
for some $\lambda\geq\kappa$. Then $T$ is unstable, so by Remark
\ref{rem:stable theories} $f_{T}\left(\kappa,\lambda\right)\geq\ded\left(\kappa\right)$
for all $\lambda\geq\kappa$.

(1): Suppose $f_{T}\left(\kappa,\lambda\right)>\kappa$ for some $\lambda\geq\kappa$.
Without loss of generality $T$ is stable. So $f_{T}\left(\kappa\right)=f_{T}\left(\kappa,\lambda\right)>\kappa$.
By Fact \ref{fac:Keisler}, $f_{T}\left(\kappa\right)\geq\kappa+2^{\aleph_{0}}$
for all $\kappa$, and we are done.

(2): Similar to (1).
\end{proof}

\subsection{The gap between (\ref{enu:ded kappa ^ omega}) and (\ref{enu:double exponent kappa}). }
\begin{defn}

\begin{enumerate}
\item A formula $\varphi\left(x,y\right)$ has the\emph{ independence property}
(IP) if there are \\
$\left\{ a_{i}\left|\,i<\omega\right.\right\} $ and $\left\{ b_{s}\left|\,s\subseteq\omega\right.\right\} $
in $\C$ such that $\varphi\left(a_{i},b_{s}\right)$ holds if and
only if $i\in s$ for all $i<\omega$ and $s\subseteq\omega$.
\item A theory $T$ is \emph{NIP (dependent)} if no formula $\varphi\left(x,y\right)$
has IP.
\end{enumerate}
\end{defn}
See \cite{Ad} for more about NIP.
\begin{fact}
\label{fac:NIP}If $T$ is NIP and $M\models T$ then the $\left|S\left(M\right)\right|\leq\left(\ded\left|M\right|\right)^{\aleph_{0}}$
\cite{shelahStability} and if $M\prec N$ and $p\in S\left(M\right)$
then $p$ has at most $\left(\ded\left|M\right|\right)^{\aleph_{0}}$
non-forking extensions (e.g. follows from the proof of \cite[Theorem 42]{Ad},
noticing that $\left|S_{\omega}\left(M\right)\right|\leq\left(\ded\left|M\right|\right)^{\aleph_{0}}$).
It follows that $\left|S^{\nf}\left(N,M\right)\right|\leq\left(\ded\left|M\right|\right)^{\aleph_{0}}$.
\end{fact}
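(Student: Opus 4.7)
The plan is to establish the two bounds---on $|S(M)|$ and on the number of non-forking extensions of a fixed $p\in S(M)$---separately, then combine them. For the first, the essential combinatorial step is Shelah's estimate $|S_{\varphi}(M)| \leq \ded|M|$ for every NIP formula $\varphi(x,y)$. The standard route is to extract an indiscernible sequence $(c_i)_{i\in I}$ indexed by a dense linear order from an enumeration of $M$, exploit the fact that NIP formulas have bounded alternation rank on such sequences, and show that distinct $\varphi$-types over $M$ induce distinct Dedekind cuts of $I$ (modulo bookkeeping of finitely many ``exceptional'' indices). Since a complete $1$-type over $M$ is determined by its $\varphi$-type for each of the countably many $\varphi \in L$, we get $|S(M)| \leq \prod_{\varphi} |S_{\varphi}(M)| \leq (\ded|M|)^{\aleph_0}$.

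For the bound on non-forking extensions of $p$ to $N$, I would appeal to two structural facts about NIP. First, every global type which does not fork over a model $M$ is automatically $M$-invariant. Second, in NIP an $M$-invariant global type is determined by the EM-type over $M$ of any of its Morley sequences of length $\omega$---that is, by an element of $S_{\omega}(M)$. Since every non-forking extension of $p$ to $N$ lifts to a global non-forking, hence $M$-invariant, extension, it suffices to bound $|S_{\omega}(M)|$. Running the $\varphi$-type argument of the previous paragraph with finite tuples of variables in place of a single one yields $|S_{\omega}(M)| \leq (\ded|M|)^{\aleph_0}$, which gives the desired bound $(\ded|M|)^{\aleph_0}$ on the number of non-forking extensions of $p$.

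Combining the two estimates, $|S^{\nf}(N,M)| \leq |S(M)| \cdot (\ded|M|)^{\aleph_0} = (\ded|M|)^{\aleph_0}$. The main obstacle is the single-formula bound $|S_{\varphi}(M)| \leq \ded|M|$: this is where NIP genuinely enters through its VC-theoretic content and where Dedekind cuts appear naturally. The remaining ingredients---the invariance of global non-forking extensions over models in NIP, and the reconstruction of invariant types from Morley sequences---are by now essentially formal consequences of the standard NIP machinery, once the $\varphi$-type bound is in hand.
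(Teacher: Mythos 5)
Your proposal is correct and follows essentially the same route the paper indicates: the Shelah-style count $|S_{\varphi}(M)|\leq\ded|M|$ for NIP formulas giving $|S(M)|,|S_{\omega}(M)|\leq\left(\ded|M|\right)^{\aleph_{0}}$, plus the fact (the content of the cited proof of Adler's Theorem 42) that a global non-forking extension over a model is $M$-invariant and determined by the type over $M$ of its Morley sequence, hence by an element of $S_{\omega}(M)$. Combining the two bounds as you do yields $\left|S^{\nf}(N,M)\right|\leq\left(\ded|M|\right)^{\aleph_{0}}$ exactly as the paper intends.
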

A generalization of a result due to Poizat \cite{poiThUnstable}.
\begin{prop}
\label{prop: many ultrafilters from IP} Assume that $f_{T}\left(\kappa,\lambda\right)>\left(\ded\kappa\right)^{\aleph_{0}}$
for some $\lambda\geq\kappa$. Then $f_{T}(\kappa,\lambda)\geq2^{\min\{\lambda,2^{\kappa}\}}$
for all $\lambda\geq\kappa$.\end{prop}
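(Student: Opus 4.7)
The plan is to deduce that $T$ has IP from the assumption, and then exploit an $M$-indiscernible IP-witnessing sequence to produce $2^{\min\{\lambda, 2^\kappa\}}$ distinct non-forking $1$-types, in the spirit of Poizat's classical construction.

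First, the hypothesis forces $T$ to have IP. Indeed, if $T$ were NIP, then Fact \ref{fac:NIP} would give $|S^{\nf}(N,M)| \leq (\ded|M|)^{\aleph_{0}} \leq (\ded\kappa)^{\aleph_{0}}$ for every $M \preceq N$ with $|M| \leq \kappa$, whence $f_{T}(\kappa, \lambda) \leq (\ded\kappa)^{\aleph_{0}}$, contradicting the assumption. Fix a formula $\varphi(x,y)$ with IP.

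Now fix $\lambda \geq \kappa$ and set $\mu = \min\{\lambda, 2^{\kappa}\}$. Take $M \models T$ of size $\kappa$. Using IP of $\varphi$ with compactness and Ramsey, I would construct an $M$-indiscernible sequence $(a_{i})_{i<\mu}$ inside a model $N \succeq M$ of size $\lambda$ such that for every $\eta \in 2^{\mu}$ the partial type $q_{\eta}(x) = \{\varphi(x, a_{i})^{\eta(i)} : i<\mu\}$ is consistent. Since distinct $\eta$'s yield partial types that already disagree on $\varphi$-formulas over $N$, any family of completions $p_{\eta} \in S(N)$ will contain $2^{\mu}$ pairwise distinct types; it therefore suffices to check that each $q_{\eta}$ can be completed to an element of $S^{\nf}(N,M)$, i.e., that $q_{\eta}$ does not fork over $M$. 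Granted this, one obtains $|S^{\nf}(N,M)| \geq 2^{\mu} = 2^{\min\{\lambda, 2^{\kappa}\}}$, as required.

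The core technical step is thus to show that each $q_{\eta}$ does not fork over $M$. As a first approximation, one checks that every finite conjunction $\bigwedge_{i \in F} \varphi(x, a_{i})^{\eta(i)}$ does not divide over $M$: the sequence of disjoint successive $|F|$-blocks of the original $(a_{i})$ is itself $M$-indiscernible, and the consistency of every pattern guaranteed by IP ensures the associated set of shifted formulas stays consistent, producing a witness of non-dividing. To upgrade this non-dividing of finite pieces to non-forking of the whole partial type $q_{\eta}$, the natural strategy is to take $(a_{i})$ to be a Morley sequence over $M$ in a carefully chosen $M$-invariant or coheir type on a product sort, so that $q_{\eta}$ extends to a global type which is invariant over the base in a suitably expanded language and whose restriction to $N$ consequently does not fork over $M$.

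The main obstacle is exactly this upgrade from non-dividing to non-forking. Since all the $a_{i}$'s share a single type over $M$, $q_{\eta}$ cannot be extended to a genuinely $M$-invariant global type when $\eta$ is non-constant, so one is forced to work with a refined invariance after enlarging the base by part of the sequence data. The cap $\min\{\lambda, 2^{\kappa}\}$ in the exponent reflects two natural ceilings on the construction: the inclusion $(a_{i})_{i<\mu} \subseteq N$ forces $\mu \leq \lambda$, while the compactness and Erd\H{o}s--Rado extraction used to produce an IP-witnessing $M$-indiscernible sequence with sufficient invariance over $M$ caps the attainable length at $2^{\kappa}$.
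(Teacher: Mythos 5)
Your reduction to IP via Fact \ref{fac:NIP} matches the paper, but after that your argument has a genuine gap exactly where you flag it: you never establish that the partial types $q_{\eta}(x)=\{\varphi(x,a_{i})^{\eta(i)}: i<\mu\}$ do not fork over $M$, and in a general theory with IP this is simply false. An alternating instance $\varphi(x,a_{0})\land\neg\varphi(x,a_{1})$ along an $M$-indiscernible sequence can fork over $M$; indeed the paper's own circularization examples are built precisely so that such formulas fork, which is why theories with IP can have $f_{T}(\kappa,\lambda)$ as small as $2^{2^{\kappa}}$ rather than $2^{\lambda}$. Your proposed repair via Morley sequences in invariant types is the argument the paper uses for Theorem \ref{thm:IP in NTP2}, but there it crucially relies on $\NTPT$ (strict invariance and Fact \ref{fac:Cheka}) to convert consistency into non-dividing and non-dividing into non-forking; none of that is available here, and ``refined invariance after enlarging the base'' does not produce $2^{\mu}$ distinct non-forking completions, since all but boundedly many of the $q_{\eta}$ cannot extend to $M$-invariant global types.

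The paper's proof avoids the issue entirely by reversing the roles of the two sides of the IP formula so that non-forking comes for free from finite satisfiability. It places the IP-witnessing set $A=\{a_{i}: i<\kappa\}$ inside the \emph{small} model $M$, puts the elements $b_{s}$ for $s$ ranging over a Hausdorff independent family $S\subseteq\SS(\kappa)$ of size $\mu=\min\{\lambda,2^{\kappa}\}$ into $N$, and for each $D\subseteq S$ takes an ultrafilter on $\kappa$ deciding $D$ appropriately; the resulting ultrafilter limit $p_{D}\in S(N)$ is finitely satisfiable in $A\subseteq M$, hence does not fork over $M$, and distinct $D$'s are separated by some $\varphi(x,b_{s})$. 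This yields $2^{\mu}$ types in $S^{\nf}(N,M)$ with no forking computation at all. The cap $\min\{\lambda,2^{\kappa}\}$ then has a different source than you suggest: $2^{\kappa}$ is the maximal size of an independent family of subsets of $\kappa$ (Hausdorff), and $\lambda$ bounds how many parameters $b_{s}$ fit in $N$. To make your write-up into a proof you would need to replace the invariance strategy by this coheir-counting argument (or otherwise prove non-forking of the $q_{\eta}$, which cannot be done in general).
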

\begin{proof}
By Fact \ref{fac:NIP}, some formula $\varphi\left(x,y\right)$ in
$T$ has IP.

Recall that a set $S\subseteq\SS\left(\kappa\right)$ is called independent
if every finite intersection of elements of $S$ or their complements
is non-empty. By a theorem of Hausdorff there is such a family of
size $2^{\kappa}$. Fix some $\kappa$ and $\mu\leq2^{\kappa}$, and
let $S$ be a family of independent subset of $\kappa$, such that
$|S|=\mu$.

Let $A=\left\{ a_{i}\left|\,i<\kappa\right.\right\} $ be such that
$b_{s}\models\left\{ \varphi\left(x,a_{i}\right)^{\mbox{if }i\in s}\left|\,i<\kappa\right.\right\} $
for every $s\subseteq\kappa$. Let $M$ be a model of size $\kappa$
containing $A$ and $N$ of size $\mu$ containing $M\cup\left\{ b_{s}\left|\,s\in S\right.\right\} $.
Now for every $D\subseteq S$, there is an ultrafilter on $\kappa$
containing $D$, and let $p_{D}\in S\left(N\right)$ be
\[
\left\{ \psi\left(x,c\right)\left|\,c\in N,\,\psi\in L,\,\left\{ a\in M\left|\,\psi\left(a,c\right)\right.\right\} \in D\right.\right\} ,
\]
so it is finitely satisfiable in $A$. Notice that if $D_{1}\neq D_{2}$
then $p_{D_{1}}\neq p_{D_{2}}$, as $\varphi\left(x,b_{s}\right)\in p_{D_{1}}\land\neg\varphi\left(x,b_{s}\right)\in p_{D_{2}}$
for any $s\in D_{1}\setminus D_{2}$. Thus $S^{\nf}\left(N,M\right)\geq2^{\mu}$.

If $\lambda\leq2^{\kappa}$, then let $\mu=\lambda$ and we have that
$f_{T}\left(\lambda,\kappa\right)\geq2^{\lambda}$.

If $\lambda>2^{\kappa}$, then let $\mu=2^{\kappa}$, so $f_{T}\left(\kappa,\lambda\right)\geq2^{2^{\kappa}}$
and we are done.
\end{proof}
Note that in the Main Theorem we assumed that $\lambda\geq2^{2^{\kappa}}$,
so in this case we have $f_{T}\left(\kappa,\lambda\right)\geq2^{2^{\kappa}}$.

\subsection{The gap between (\ref{enu:double exponent kappa}) and (\ref{enu:lambda}).\protect \\
}

We recall the basic properties of splitting.
\begin{defn}
Suppose $A\subseteq B$ are sets. A type $p\left(x\right)\in S\left(B\right)$
\emph{splits} over $A$ if there is some formula $\varphi\left(x,y\right)$
 and $b,\,c\in B$ such that $\tp\left(b/A\right)=\tp\left(c/A\right)$
and $\varphi\left(x,b\right)\land\neg\varphi\left(x,c\right)\in p$.
\end{defn}

\begin{fact}
\label{fac:non-splitting}(See e.g. \cite[Sections 5, 6]{Ad}) Let
$M\prec N$ be models
\begin{enumerate}
\item The number of types in $S\left(N\right)$ that do not split over $M$
is bounded by $2^{2^{\left|M\right|}}$.
\item If $N$ is $\left|M\right|^{+}$-saturated and $p\in S\left(N\right)$
splits over $M$, then there is an indiscernible sequence $\left\langle a_{i}\left|\,i<\omega\right.\right\rangle $
in $N$ over $M$ such that $\varphi\left(x,a_{0}\right)\land\neg\varphi\left(x,a_{1}\right)\in p$
for some $\varphi$.
\item If $T$ is NIP, and $p\in S^{\nf}\left(N,M\right)$, then $p$ does
not split over $M$.
\end{enumerate}
\end{fact}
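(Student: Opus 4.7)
The three parts would be handled separately by classical techniques.

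For part (1), I would observe that any non-splitting $p \in S(N)$ is completely determined by the following data: for each $q(y) \in S(M)$ and each formula $\varphi(x,y) \in L$, the truth value of ``$\varphi(x, b) \in p$'' for some (equivalently, any) realization $b \in N$ of $q$. This yields an injection from the set of non-splitting types into the functions $S(M) \times L \to \{0,1\}$, and since $|S(M)| \le 2^{|M|}$ and $|L| = \aleph_0$, the bound $2^{2^{|M|}}$ follows.

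For part (2), given a splitting witness $\varphi(x,b_0) \wedge \neg\varphi(x, b_1) \in p$ with $b_0 \equiv_M b_1$, the plan is first to produce, in $\C$, an $M$-indiscernible sequence $(c_i)_{i<\omega}$ of realizations of $\tp(b_0/M)$ whose $\varphi$-pattern in some global extension $p^* \in S(\C)$ of $p$ is non-constant, say $\varphi(x, c_0) \wedge \neg\varphi(x, c_1) \in p^*$. This requires a Ramsey/Erd\H{o}s--Rado extraction from a long sequence of realizations of $\tp(b_0/M)$ in $\C$, using the pair-coloring induced by $\varphi$-membership in $p^*$, together with the non-$M$-invariance of $p^*$ (which follows from splitting) to ensure that a non-constant color survives in the extracted sub-sequence. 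Once $(c_i)$ is in hand, I would realize its $\EM$-type over $M$ inside $N$ by $|M|^+$-saturation; the ambient parameter set has size $|M|$, so this is permitted. The main subtlety is precisely the extraction step, ensuring that the extracted indiscernible sub-sequence still witnesses splitting rather than collapsing to a constant $\varphi$-pattern; for this I would follow the argument in the cited sections of \cite{Ad}.

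For part (3), I would argue by contrapositive. If $p \in S(N)$ splits over $M$, then, after passing to a sufficiently saturated elementary extension if necessary, part (2) yields an $M$-indiscernible sequence $(a_i)_{i<\omega}$ with $\varphi(x, a_0) \wedge \neg\varphi(x, a_1) \in p$. I would then show that $\varphi(x, a_0) \wedge \neg\varphi(x, a_1)$ divides over $M$ using the $M$-indiscernible pair-sequence $\bigl((a_{2k}, a_{2k+1})\bigr)_{k<\omega}$ as witness. The key claim is that $\{\varphi(x, a_{2k}) \wedge \neg\varphi(x, a_{2k+1}) : k < \omega\}$ is inconsistent: a common realization $b$ would make $\varphi(b, a_i)$ alternate infinitely along the indiscernible sequence $(a_i)$, contradicting the NIP alternation lemma. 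Compactness together with indiscernibility upgrades mere inconsistency to uniform $n$-inconsistency for some $n$, giving the dividing instance and hence forking of $p$, contradicting $p \in S^{\nf}(N, M)$. The main background ingredient I rely on is the NIP alternation lemma, which I would invoke as standard: infinite alternation along an $M$-indiscernible sequence for an NIP formula $\varphi$ would, by applying indiscernibility to arbitrary order patterns, produce an IP pattern for $\varphi$.
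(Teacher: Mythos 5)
The paper states this as a Fact with a citation to \cite{Ad} and gives no proof, so your sketch has to stand on its own. Parts (1) and (3) are fine: (1) is the standard counting argument (with the minor point that for $q\in S(M)$ not realized in $N$ the function value is undefined, which does not affect the bound), and (3) correctly reduces to (2) plus the finite-alternation characterization of NIP, with the pair-sequence $\bigl((a_{2k},a_{2k+1})\bigr)_{k<\omega}$ witnessing dividing of $\varphi(x,a_0)\wedge\neg\varphi(x,a_1)$ over $M$.

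Part (2), however, has a genuine gap --- in fact two. First, the extraction step you flag as ``the main subtlety'' does not just need care; as you have set it up, it fails. If you homogenize a long sequence of realizations of $\tp(b_0/M)$ with respect to a colouring that records membership of $\varphi(x,e)$ in $p^{*}$, the extracted $M$-indiscernible subsequence necessarily has a \emph{constant} $\varphi$-pattern: the only pair-patterns that are consistent along a sequence of length at least $3$ are $(\varphi,\varphi)$ and $(\neg\varphi,\neg\varphi)$, so non-constancy cannot survive homogenization. The standard proofs avoid extracting the splitting pair altogether: take a global coheir $\mathfrak{q}$ of $\tp(b_{0}/M)$ and build a coheir sequence $(c_{i})_{i<\omega}$ of $\mathfrak{q}$ over $Mb_{0}b_{1}$ \emph{inside} $N$ (each step realizes a type over a subset of $N$ of size $|M|$, so $|M|^{+}$-saturation applies); then both $(b_{0},c_{0},c_{1},\ldots)$ and $(b_{1},c_{0},c_{1},\ldots)$ are $M$-indiscernible, and whichever of $\varphi(x,c_{0})$, $\neg\varphi(x,c_{0})$ lies in $p$, one of these two sequences (after possibly replacing $\varphi$ by $\neg\varphi$) is the required witness. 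Second, and independently fatal: your final step ``realize the $\EM$-type of $(c_{i})$ over $M$ inside $N$'' does not transfer the condition $\varphi(x,c_{0})\wedge\neg\varphi(x,c_{1})\in p$ to the new copy $(a_{i})$. Membership in $p$ is not determined by the type over $M$ --- that is precisely what it means for $p$ to split over $M$ --- so $(a_{0},a_{1})\equiv_{M}(c_{0},c_{1})$ tells you nothing about whether $\varphi(x,a_{0})\in p$. The sequence must be built inside $N$ from the outset with reference to $p$, as in the coheir argument above; a transfer via the $\EM$-type over $M$ alone cannot work.
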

\begin{defn}
\label{def:non-forking-pattern}A \emph{non-forking pattern} of depth
$\theta$ over $A$ consists of an array $\left\{ \bar{a}_{\alpha}\left|\,\alpha<\theta\right.\right\} $
where $\bar{a}_{\alpha}=\left\langle a_{\alpha,i}\left|\,i<\omega\right.\right\rangle $
and formulas $\left\{ \varphi_{\alpha}\left(x,y\right)\left|\,\alpha<\theta\right.\right\} $
such that
\begin{itemize}
\item $\bar{a}_{\alpha_{0}}$ is indiscernible over $\left\{ \bar{a}_{\alpha}\left|\,\alpha<\alpha_{0}\right.\right\} \cup A$.
\item $\left\{ \varphi_{\alpha}\left(x,a_{\alpha,0}\right)\land\neg\varphi_{\alpha}\left(x,a_{\alpha,1}\right)\left|\,\alpha<\theta\right.\right\} $
does not fork over $A$.
\end{itemize}
\end{defn}

\begin{defn}
A \emph{pair non-forking pattern} of depth $\theta$ over a set $A$
is defined similarly, but here we only demand that $\bar{a}_{\alpha_{0}}$
is indiscernible over $\left\{ a_{\alpha,0},a_{\alpha,1}\left|\,\alpha<\alpha_{0}\right.\right\} \cup A$.\end{defn}
\begin{lem}
\label{lem:pair pattern =00003D full pattern}If there is a pair non-forking
pattern of depth $\theta$ over $A$, then there is a non-forking
pattern of depth $\theta$ over $A$.\end{lem}
\begin{proof}
Suppose we have a pair non-forking pattern of depth $\theta$, $\left\{ \bar{a}_{\alpha}\left|\,\alpha<\theta\right.\right\} $.
It is enough to find an array $\left\{ \bar{b}_{\alpha}\left|\,\alpha<\theta\right.\right\} $
as in the first point of Definition \ref{def:non-forking-pattern}
such that $b_{\alpha,0}b_{\alpha,1}=a_{\alpha,0}a_{\alpha,1}$. By
compactness we may assume that $\theta$ is finite. The proof is by
induction on $\theta$. For $\theta=0,1$ there is nothing to do.
Suppose $\theta=n+1$. By induction, we may assume that the first
$n$ sequences satisfy the first point. By Ramsey and compactness
(see e.g. \cite[Lemma 5.1.3]{TentZiegler}), there is an indiscernible
sequence $\bar{b}_{n}'$ which is indiscernible over $A\cup\left\{ \bar{a}_{\alpha}\left|\,\alpha<n\right.\right\} $
and such that the type of any finite sub-tuple in $\bar{b}_{n}'$
is the same as a sub-tuple of the same length in $\bar{a}_{n}$ over
$A\cup\left\{ a_{\alpha,0},a_{\alpha,1}\left|\,\alpha<n\right.\right\} $.
So there is an automorphism taking $\bar{b}_{n}'$ to $\bar{a}_{n}$
which fixes $A\cup\left\{ a_{\alpha,0},a_{\alpha,1}\left|\,\alpha<n\right.\right\} $.
Now let $\bar{b}_{\alpha}$ for $\alpha<n$ be the image of this automorphism,
and $\bar{b}_{n}=\bar{a}_{n}$. \end{proof}
\begin{defn}
For an infinite cardinal $\kappa$, let $g_{T}\left(\kappa\right)$
be the smallest cardinal $\theta$ such that there is no (pair) non-forking
pattern of depth $\theta$ over some model of size $\kappa$. \end{defn}
\begin{rem}
\label{rem:reducing base of g}It is clear that $g_{T}\left(\kappa'\right)\geq g_{T}\left(\kappa\right)$
whenever $\kappa'\geq\kappa$. In addition, from Lemma \ref{lem: Countable base}
it follows that if $g_{T}\left(\kappa\right)>\theta$ then $g_{T}\left(\theta+\aleph_{0}\right)>\theta$. \end{rem}
\begin{lem}
\label{lem:better pattern}If $g_{T}\left(\kappa\right)>\theta$ then
there is $M$ of size $\kappa$ such that for any $\lambda$ we can
find a non-forking pattern $\left\{ \bar{a}_{\alpha},\,\varphi_{\alpha}\left|\,\alpha<\theta\right.\right\} $
such that in addition:
\begin{itemize}
\item $\bar{a}_{\alpha}=\left\langle a_{\alpha,i}\left|\,i<\lambda\right.\right\rangle $
\item $\left\{ \varphi_{\alpha}\left(x,a_{\alpha,0}\right)\left|\,\alpha<\theta\right.\right\} \cup\left\{ \neg\varphi_{\alpha}\left(x,a_{\alpha,i}\right)\left|\,\alpha<\theta,\,0<i<\lambda\right.\right\} $
does not fork over $M$.
\end{itemize}
\end{lem}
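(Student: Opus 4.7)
The plan is to pick a convenient global extension of the original non-forking type, stretch each sequence of the pattern by compactness, and then read off the strengthened non-forking conclusion from $M$-Lascar invariance.

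By $g_T(\kappa)>\theta$, fix a non-forking pattern $(\bar{b}_\alpha,\varphi_\alpha)_{\alpha<\theta}$ over a model $M$ of size $\kappa$, and let $\Phi_0 := \{\varphi_\alpha(x,b_{\alpha,0})\wedge\neg\varphi_\alpha(x,b_{\alpha,1}) \mid \alpha<\theta\}$. Since $\Phi_0$ does not fork over the model $M$, a standard fact about non-forking partial types over a model gives a global $p\in S(\C)$ extending $\Phi_0$ that is Lascar invariant over $M$ --- i.e., fixed by every $\sigma\in\operatorname{Autf}(\C/M)$. This $M$ is the one in the conclusion.

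Next, by transfinite induction on $\alpha<\theta$ I build sequences $\bar{a}_\alpha = \langle a_{\alpha,i} \mid i<\lambda\rangle$ in $\C$ with $a_{\alpha,i} = b_{\alpha,i}$ for $i<\omega$ and with $\bar{a}_\alpha$ indiscernible over $M\cup\bigcup_{\beta<\alpha}\bar{a}_\beta$. This is a routine compactness argument at each stage: every finite fragment of the required indiscernibility on the new tail of $\bar{a}_\alpha$ can be realized inside the original pattern by re-indexing, using the assumed nested indiscernibility of $(\bar{b}_\alpha)_{\alpha<\theta}$.

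Finally, I show the target type $\Phi := \{\varphi_\alpha(x,a_{\alpha,0}) \mid \alpha<\theta\} \cup \{\neg\varphi_\alpha(x,a_{\alpha,i}) \mid \alpha<\theta,\ 0<i<\lambda\}$ is contained in $p$, which forces $\Phi$ not to fork over $M$. The positive formulas $\varphi_\alpha(x,a_{\alpha,0}) = \varphi_\alpha(x,b_{\alpha,0})$ are already in $\Phi_0\subseteq p$. For the negative ones, both $b_{\alpha,1} = a_{\alpha,1}$ and $a_{\alpha,i}$ lie in the $M$-indiscernible sequence $\bar{a}_\alpha$, hence share the same Lascar strong type over $M$; choose $\sigma\in\operatorname{Autf}(\C/M)$ with $\sigma(b_{\alpha,1}) = a_{\alpha,i}$, and Lascar invariance gives $\neg\varphi_\alpha(x,a_{\alpha,i}) = \sigma(\neg\varphi_\alpha(x,b_{\alpha,1}))\in \sigma(p) = p$.

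The main obstacle is the coherent stretching in the middle step, where one must simultaneously arrange $a_{\alpha,i} = b_{\alpha,i}$ for $i<\omega$ and preserve indiscernibility of $\bar{a}_\alpha$ over the previously-stretched $\bar{a}_\beta$. The other two steps are single applications of known facts: Lascar-invariant extensions of non-forking partial types over a model, and Lascar equivalence of any two elements of an $M$-indiscernible sequence.
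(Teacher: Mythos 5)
Your first step is false, and the error is fatal: a global type that does not fork over a model $M$ need not be Lascar invariant over $M$ (the true implication runs in the opposite direction), and in the present situation \emph{no} global extension of $\Phi_{0}$ can be Lascar invariant over $M$ at all. Indeed, $b_{\alpha,0}$ and $b_{\alpha,1}$ are the first two terms of a sequence indiscernible over $M$, hence have the same Lascar strong type over $M$; so any global type invariant under $\operatorname{Autf}\left(\C/M\right)$ must contain $\varphi_{\alpha}\left(x,b_{\alpha,0}\right)$ if and only if it contains $\varphi_{\alpha}\left(x,b_{\alpha,1}\right)$, whereas every extension of $\Phi_{0}$ contains $\varphi_{\alpha}\left(x,b_{\alpha,0}\right)\land\neg\varphi_{\alpha}\left(x,b_{\alpha,1}\right)$. (For a concrete instance of the failed ``standard fact'': in the random graph $R\left(x,b\right)\land\neg R\left(x,b'\right)$ does not fork over a model $M$ with $b,b'\notin M$, yet $b$ and $b'$ are Lascar equivalent over $M$, so this formula has no Lascar-invariant global extension.) Since your step 3 rests entirely on this invariance, the argument collapses. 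The obstruction is structural, not a missing citation: any invariance of $p$ strong enough to make it treat $a_{\alpha,1},a_{\alpha,2},\ldots$ uniformly also forces it to treat $a_{\alpha,0}$ and $a_{\alpha,1}$ uniformly, which is exactly what a non-forking pattern forbids.

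The paper's proof sidesteps this. After stretching each $\bar{a}_{\alpha}$ to length $\lambda$ by compactness (your step 2, which is fine), it takes an \emph{arbitrary} global non-forking extension $p$ of $\Phi_{0}$ and uses only its completeness: $p$ decides $\varphi_{\alpha}\left(x,a_{\alpha,i}\right)$ for every $i<\lambda$, so by pigeonhole one truth value occurs for $\lambda$ many indices. If $\lambda$ many instances are negative, keep $\varphi_{\alpha}$ and $a_{\alpha,0}$ and discard the other positive indices; if $\lambda$ many are positive, replace $\varphi_{\alpha}$ by $\neg\varphi_{\alpha}$, promote $a_{\alpha,1}$ to the first position, and discard the negative indices. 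Subsequences of indiscernible sequences remain indiscernible, so after re-indexing one still has a non-forking pattern and the displayed set lies inside $p$. Replacing your invariance argument with this pruning-and-flipping step repairs the proof.
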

\begin{proof}
By assumption we have some non-forking pattern $\left\{ \bar{a}_{\alpha},\varphi_{\alpha}\left|\,\alpha<\theta\right.\right\} $
over some $M$ of size $\kappa$. By compactness, we may assume that
$\bar{a}_{\alpha}$ is of length $\lambda$ for all $\alpha<\theta$.
Let $p\left(x\right)\in S\left(\C\right)$ be a non-forking extension
of $\left\{ \varphi_{\alpha}\left(x,a_{\alpha,0}\right)\land\neg\varphi_{\alpha}\left(x,a_{\alpha,1}\right)\left|\,\alpha<\theta\right.\right\} $.
By omitting some elements from each sequence $\bar{a}_{\alpha}$ and
maybe changing $\varphi_{\alpha}$ to $\neg\varphi_{\alpha}$ we may
assume
\[
\left\{ \varphi_{\alpha}\left(x,a_{\alpha,0}\right)\left|\,\alpha<\theta\right.\right\} \cup\left\{ \neg\varphi_{\alpha}\left(x,a_{\alpha,i}\right)\left|\,\alpha<\theta,\,0<i<\lambda\right.\right\} \subseteq p.
\]
\end{proof}
\begin{prop}
The following are equivalent:
\begin{enumerate}
\item For some $\kappa$, $g_{T}\left(\kappa\right)>1$.
\item For every $\lambda\geq\kappa\geq\aleph_{0}$, $f_{T}(\kappa,\lambda)=2^{\lambda}$
if $\lambda\leq2^{\kappa}$ and $f_{T}(\kappa,\lambda)\geq\lambda$
otherwise.
\item For some $\lambda\geq\kappa$, $f_{T}(\kappa,\lambda)>2^{2^{\kappa}}$.
\end{enumerate}
\end{prop}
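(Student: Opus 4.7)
The plan is to prove the cycle $(2) \Rightarrow (3) \Rightarrow (1) \Rightarrow (2)$, with the first two implications being short. For $(2) \Rightarrow (3)$, take any $\kappa \geq \aleph_0$ and any $\lambda > 2^{2^\kappa}$: then $\lambda > 2^\kappa$, and (2) gives $f_T(\kappa,\lambda) \geq \lambda > 2^{2^\kappa}$.

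For $(3) \Rightarrow (1)$ I argue the contrapositive: assuming $g_T(\kappa) = 1$ for every $\kappa \geq \aleph_0$, I will show $f_T(\kappa,\lambda) \leq 2^{2^\kappa}$. The key claim is that whenever $M \preceq N$ with $|M| \leq \kappa$, every $p \in S^{\nf}(N,M)$ is non-splitting over $M$. Indeed, if such $p$ split, I would extend it to a global non-forking $p^* \in S(\C)$ and apply Fact \ref{fac:non-splitting}(2) in the highly saturated monster to obtain an $M$-indiscernible $\langle a_i : i < \omega\rangle$ with $\varphi(x,a_0) \wedge \neg\varphi(x,a_1) \in p^*$; since $p^*$ is non-forking over $M$, the same is true of this formula, yielding a depth-$1$ non-forking pattern and contradicting the hypothesis. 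Fact \ref{fac:non-splitting}(1) then bounds $|S^{\nf}(N,M)|$ by $2^{2^{|M|}} \leq 2^{2^\kappa}$, contradicting (3).

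The substantial direction is $(1) \Rightarrow (2)$. From $g_T(\kappa_0) > 1$, Remark \ref{rem:reducing base of g} gives $g_T(\kappa) > 1$ for every $\kappa \geq \aleph_0$, so for each such $\kappa$ I fix a non-forking pattern $(\bar{a},\varphi)$ over a model $M$ of size $\kappa$. The first move is to upgrade this to IP: in a model $N \supseteq M \cup \{a_0,a_1\}$, any non-forking extension $p \in S(N)$ of $\varphi(x,a_0) \wedge \neg\varphi(x,a_1)$ lies in $S^{\nf}(N,M)$ and splits over $M$ (via $a_0, a_1$, which share the same type over $M$ by indiscernibility), so the contrapositive of Fact \ref{fac:non-splitting}(3) forces $T$ to have IP. With IP in hand, the construction in the proof of Proposition \ref{prop: many ultrafilters from IP} applies verbatim for every $\kappa \geq \aleph_0$ and $\lambda \geq \kappa$, delivering $f_T(\kappa,\lambda) \geq 2^{\min(\lambda, 2^\kappa)}$; for $\lambda \leq 2^\kappa$ this equals $2^\lambda$, matched against the trivial upper bound $f_T(\kappa,\lambda) \leq 2^\lambda$ to give equality.

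To finish $(1) \Rightarrow (2)$, I need $f_T(\kappa,\lambda) \geq \lambda$ in the regime $\lambda > 2^\kappa$, where Proposition \ref{prop: many ultrafilters from IP} only delivers $\geq 2^{2^\kappa}$. I use Lemma \ref{lem:better pattern} to extend the pattern to an $M$-indiscernible sequence $\langle a_i : i < \lambda\rangle$ with $\{\varphi(x,a_0)\} \cup \{\neg\varphi(x,a_i) : 0 < i < \lambda\}$ non-forking over $M$. For each $i_0 < \lambda$ the order-preserving injection $j \mapsto i_0 + j$ lifts via indiscernibility to an $M$-automorphism of $\C$ sending $a_j$ to $a_{i_0 + j}$; transporting a global non-forking extension of the above partial type yields a global non-forking type $p_{i_0}$ containing $\varphi(x,a_{i_0})$ and $\neg\varphi(x,a_j)$ for every $j > i_0$. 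For $i_0 < i_1 < \lambda$, $p_{i_0}$ and $p_{i_1}$ disagree on $\varphi(x, a_{i_1})$, so the restrictions $p_{i_0}|N$ supply $\lambda$ pairwise distinct members of $S^{\nf}(N,M)$ for any $N \supseteq M\bar{a}$ of size $\lambda$. I expect the principal conceptual step to be precisely the upgrade from $g_T > 1$ to full IP via Fact \ref{fac:non-splitting}(3); once that is in place, Proposition \ref{prop: many ultrafilters from IP} and Lemma \ref{lem:better pattern} mechanically dispatch both regimes.
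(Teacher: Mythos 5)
Your proof is correct and follows essentially the same route as the paper: the same splitting/non-splitting dichotomy (Fact \ref{fac:non-splitting}) for the equivalence of (1) and (3), the same shift-by-$i_0$ construction from Lemma \ref{lem:better pattern} to get $\lambda$ many non-forking types, and the same appeal to the IP construction of Proposition \ref{prop: many ultrafilters from IP} for the $\lambda\leq2^{\kappa}$ case. The only cosmetic differences are that you argue $(3)\Rightarrow(1)$ contrapositively and derive IP from Fact \ref{fac:non-splitting}(3) rather than from Fact \ref{fac:NIP}; in fact your write-up fills in a detail the paper leaves implicit, namely why the shifted types $p_{i_0}$ are still non-forking over $M$.
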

\begin{proof}
(1) implies (2): By remark \ref{rem:reducing base of g}, we may assume
that $\kappa=\aleph_{0}$. By Lemma \ref{lem:better pattern} there
is some countable $M$ such that for any $\lambda$ there is some
$\bar{b}=\left\langle b_{i}\left|\,i<\lambda\right.\right\rangle $
such that $\left\{ \varphi\left(x,b_{0}\right)\right\} \cup\left\{ \neg\varphi\left(x,b_{i}\right)\left|\,i<\lambda\right.\right\} $
does not fork over $M$. So, for every $i<\lambda$, $p_{i}\left(x\right)=\left\{ \varphi\left(x,b_{j}\right)^{\mbox{if }j=i}\left|\,i\leq j<\lambda\right.\right\} $
does not fork over $M$.

Taking some model $N\supseteq\bar{b}$ of size $\lambda$ we can expand
each $p_{i}$ to some $q_{i}\in S^{\nf}\left(N,M\right)$. Notice
that for any $i<j<\lambda$, $q_{i}\neq q_{j}$ as $\neg\varphi\left(x,a_{j}\right)\in p_{i}$,
but $\varphi\left(x,a_{j}\right)\in p_{j}$. So we conclude that $S^{\nf}\left(N,M\right)\geq\lambda$.
By Lemma \ref{lem:basic properties of f_T}, we get that $f_{T}\left(\kappa,\lambda\right)\geq\lambda$
for every $\lambda\geq\kappa$.

Note that by Fact \ref{fac:NIP}, we know that $T$ is not NIP, so
if $\lambda\leq2^{\kappa}$, then by Proposition \ref{prop: many ultrafilters from IP}
$f_{T}\left(\kappa,\lambda\right)=2^{\lambda}$.

(2) implies (3) is clear.

(3) implies (1): Let $M\prec N$ witness that $f_{T}(\kappa,\lambda)>2^{2^{\kappa}}$.
By Fact \ref{fac:non-splitting}(1), there is some $p\in S^{\nf}\left(N,M\right)$
that splits over $M$.

Let $N'\succ N$ be $|M|^{+}$-saturated and $p'\in S^{\nf}\left(N',M\right)$,
a non-forking extension of $p$. By Fact \ref{fac:non-splitting}(2)
we find an indiscernible sequence $\bar{a}=\left\langle a_{i}\left|\,i<\omega\right.\right\rangle $
in $N'$ and a formula $\varphi\left(x,a_{0}\right)\land\neg\varphi\left(x,a_{1}\right)\in p$
--- and we get (1).
\end{proof}

\subsection{The gap between (\ref{enu:lambda}) and (\ref{enu:lambda ^omega}).}
\begin{lem}
\label{lem:cardinal arithmatics} For any cardinals $\lambda$ and
$\theta$, if $\theta$ is regular or $\lambda\geq2^{<\theta}$ then
$\left(\lambda^{<\theta}\right)^{<\theta}=\lambda^{<\theta}$.\end{lem}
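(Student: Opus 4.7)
Set $\kappa := \lambda^{<\theta}$. Since $\kappa \leq \kappa^{<\theta}$ is immediate, the task is to prove $\kappa^\mu \leq \kappa$ for every $\mu < \theta$. The plan is to reduce this to a single claim about how $\kappa$ absorbs small exponents, and then verify that claim under each of the two hypotheses of the lemma.

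The main computational input is this: for any $\alpha < \kappa$ there is some $\nu < \theta$ with $\alpha \leq \lambda^\nu$, and so $|\alpha|^\mu \leq (\lambda^\nu)^\mu = \lambda^{\nu\mu}$; the cardinal product $\nu\mu$ of two cardinals below the infinite cardinal $\theta$ is itself below $\theta$, so $|\alpha|^\mu \leq \lambda^{<\theta} = \kappa$. When the supremum defining $\kappa$ is attained, say $\kappa = \lambda^{\nu_0}$ for some $\nu_0 < \theta$, the same identity gives $\kappa^\mu = \lambda^{\nu_0\mu} \leq \kappa$ directly. When the supremum is not attained, the bounded-image argument still works provided $\cof(\kappa) \geq \theta$: any $f: \mu \to \kappa$ then factors through some $\alpha < \kappa$, and $\kappa^\mu = \sum_{\alpha<\kappa}|\alpha|^\mu \leq \kappa\cdot\kappa = \kappa$.

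It thus suffices to show, under each hypothesis, that if the supremum defining $\kappa$ is not attained then $\cof(\kappa) \geq \theta$. For $\theta$ regular this is a short diagonal argument: assuming $\cof(\kappa) = \mu < \theta$ is witnessed by a cofinal sequence $(\gamma_i)_{i<\mu}$ in $\kappa$ with $\gamma_i < \lambda^{\nu_i}$ and $\nu_i < \theta$, the regularity of $\theta$ yields $\nu^* := \sup_i \nu_i < \theta$, so $\kappa \leq \lambda^{\nu^*}$, contradicting non-attainment.

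For the case $\lambda \geq 2^{<\theta}$ with possibly singular $\theta$, the plan is to rule out the non-attained case altogether. First the hypothesis forces $\lambda \geq \theta$, since otherwise $\lambda < \theta$ gives $2^\lambda \leq 2^{<\theta} \leq \lambda$ in contradiction with Cantor; hence $\nu < \theta \leq \lambda$ throughout. Shapiro's formula $\lambda^\nu = (\sup_{\alpha<\lambda}|\alpha|^\nu)^{\cof\lambda}$ is then applicable for $\cof\lambda \leq \nu < \theta$, and the bound $2^\nu \leq \lambda$ (from $\lambda \geq 2^{<\theta}$) keeps each $|\alpha|^\nu$ controlled by $2^{|\alpha|}$ uniformly in $\nu$; the inner supremum $\sup_{\alpha<\lambda}|\alpha|^\nu$ thus equals a fixed cardinal independent of $\nu$, forcing $\lambda^\nu$ to stabilize on a final segment of $\theta$ and the outer supremum to be attained. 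The main obstacle is precisely this stabilization step, as it is a genuine singular-cardinal arithmetic statement that fails without the assumption $\lambda \geq 2^{<\theta}$; once it is in place, the reductions from the earlier paragraphs complete the proof.
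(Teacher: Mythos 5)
Your reduction is sound, and your treatment of the regular case is complete and correct: you split on whether the supremum $\kappa=\lambda^{<\theta}=\sup_{\nu<\theta}\lambda^{\nu}$ is attained, handle the attained case by $(\lambda^{\nu_{0}})^{\mu}=\lambda^{\nu_{0}\mu}$ with $\nu_{0}\mu<\theta$, and handle the non-attained case by showing $\cof(\kappa)\geq\theta$ (the diagonal argument using regularity of $\theta$) and then running the bounded-image computation $\kappa^{\mu}\leq\sum_{\alpha<\kappa}|\alpha|^{\mu}\leq\kappa$. For regular $\theta$ this is actually more self-contained than the paper's proof, which even in that case invokes \cite[Observation 2.11 (4)]{Sh233} (applied to $\lambda'=\lambda^{<\theta}$, which satisfies $\lambda'\geq2^{<\theta}$) rather than an elementary cofinality argument.

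The gap is in the case $\lambda\geq2^{<\theta}$ with $\theta$ singular. What you need there is exactly the content of \cite[Observation 2.11 (4)]{Sh233}, namely that $\lambda\geq2^{<\theta}$ forces the supremum to be attained: $\lambda^{<\theta}=\lambda^{\nu}$ for some $\nu<\theta$. Your sketch of this does not constitute a proof. First, the identity $\lambda^{\nu}=\left(\sup_{\alpha<\lambda}|\alpha|^{\nu}\right)^{\cof\lambda}$ (not standardly called Shapiro's formula) is only valid when $\cof\lambda\leq\nu$, so your argument says nothing when $\cof\lambda\geq\theta$ (e.g.\ $\lambda$ regular) and nothing about the exponents $\nu<\cof\lambda$ when $\cof\lambda<\theta$. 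Second, and more seriously, the step ``the inner supremum $\sup_{\alpha<\lambda}|\alpha|^{\nu}$ equals a fixed cardinal independent of $\nu$'' does not follow from the majorant $|\alpha|^{\nu}\leq2^{|\alpha|\cdot\nu}$: the individual terms genuinely depend on $\nu$ (e.g.\ $\aleph_{\omega_{1}}^{\aleph_{0}}<\aleph_{\omega_{1}}^{\aleph_{1}}$ when GCH holds below $\aleph_{\omega_{1}}$), and a $\nu$-independent upper bound on the terms of a supremum does not make the supremum $\nu$-independent. You flag this stabilization as ``the main obstacle,'' but that obstacle is the entire content of the non-regular case, so as written that case is unproved. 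Either cite the Shelah observation, as the paper does, or give an actual proof of attainment; the reductions in your first two paragraphs would then indeed finish the argument.
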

\begin{proof}
By \cite[Observation 2.11 (4)]{Sh233}, if $\lambda\geq2^{<\theta}$,
then $\lambda^{<\theta}=\lambda^{\nu}$ for some $\nu<\theta$. So
$\left(\lambda^{<\theta}\right)^{<\theta}=\left(\lambda^{\nu}\right)^{<\theta}=\lambda^{<\theta}$.
If $\theta$ is regular, then, letting $\lambda'=\lambda^{<\theta}$,
since $\lambda'\geq2^{<\theta}$, $\left(\lambda'\right)^{<\theta}=\left(\lambda'\right)^{\nu}$
for some $\nu<\theta$ so
\[
\left(\lambda'\right)^{<\theta}=\left(\lambda'\right)^{\nu}=\left(\lambda^{<\theta}\right)^{\nu}=\left(\sum_{\mu<\theta}\lambda^{\mu}\right)^{\nu}=\sum_{\mu<\theta}\left(\lambda^{\mu\cdot\nu}\right)=\lambda^{<\theta}=\lambda'.
\]
\end{proof}
\begin{lem}
\label{lem:polynomial f_T implies g_T bigger than degree}Suppose
$f_{T}\left(\kappa,\lambda\right)>\lambda^{<\theta}$, and $\lambda\geq\sum_{\mu<\theta}2^{2^{\kappa+\mu}}$
then $g_{T}\left(\kappa\right)>\theta$.\end{lem}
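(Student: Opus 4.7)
The plan is to prove the contrapositive: assuming $g_{T}(\kappa)\le\theta$, I will show that $f_{T}(\kappa,\lambda)\le\lambda^{<\theta}$ whenever $\lambda\ge\sum_{\mu<\theta}2^{2^{\kappa+\mu}}$. Fix $M\prec N$ with $\left|M\right|\le\kappa$, $\left|N\right|\le\lambda$, and assume for contradiction that $\left|S^{\nf}(N,M)\right|>\lambda^{<\theta}$. The goal is to construct a non-forking pattern of depth $\theta$ over $M$, contradicting $g_{T}(\kappa)\le\theta$.

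By recursion on $\alpha\le\theta$, I build sequences $\bar{a}_{\beta}=\left\langle a_{\beta,i}\left|\,i<\omega\right.\right\rangle$ in $N$ for $\beta<\alpha$, each indiscernible over $A_{\beta}:=M\cup\bigcup_{\gamma<\beta}\bar{a}_{\gamma}$, formulas $\varphi_{\beta}$, and a subset $\mathcal{P}_{\alpha}\subseteq S^{\nf}(N,M)$ of cardinality $>\lambda^{<\theta}$, such that every $p\in\mathcal{P}_{\alpha}$ contains $\varphi_{\beta}(x,a_{\beta,0})\wedge\neg\varphi_{\beta}(x,a_{\beta,1})$ for each $\beta<\alpha$. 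Once $\alpha$ reaches $\theta$, any $p\in\mathcal{P}_{\theta}$ witnesses that the conjunction of the pattern's formulas does not fork over $M$ (since $p$ does not), and combined with the indiscernibility maintained throughout the recursion this yields a non-forking pattern of depth $\theta$ over $M$, giving the desired contradiction.

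For the successor step, set $A_{\alpha}=M\cup\bigcup_{\beta<\alpha}\bar{a}_{\beta}$, of cardinality $\le\kappa+\left|\alpha\right|$. First, the restriction map $\mathcal{P}_{\alpha}\to S(A_{\alpha})$ has codomain of size $\le2^{\kappa+\left|\alpha\right|}\le\lambda$, so by pigeonhole some fiber has size $>\lambda^{<\theta}$; shrink $\mathcal{P}_{\alpha}$ to that fiber, on which all types share a common restriction to $A_{\alpha}$. Next, for each $\varphi\in L$ consider the equivalence relation $p\,E_{\varphi}\,q$ iff $\left\{ b\in N\left|\,\varphi(x,b)\in p\right.\right\} =\left\{ b\in N\left|\,\varphi(x,b)\in q\right.\right\}$; if every such relation had $\le\lambda^{<\theta}$ classes then, since a type in $\mathcal{P}_{\alpha}$ is determined by these traces as $\varphi$ ranges over the countable language, one would get $\left|\mathcal{P}_{\alpha}\right|\le\left(\lambda^{<\theta}\right)^{\aleph_{0}}=\lambda^{<\theta}$ via Lemma \ref{lem:cardinal arithmatics}, a contradiction; so some $\varphi_{\alpha}$ yields $>\lambda^{<\theta}$ classes. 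Finally, using $\lambda\ge2^{2^{\kappa+\left|\alpha\right|}}$, apply an Erd\H{o}s--Rado / pigeonhole argument on a long sequence of class representatives in $N$ to extract an $A_{\alpha}$-indiscernible sequence $\bar{a}_{\alpha}$ of length $\omega$ in $N$ such that a still-large subfamily $\mathcal{P}_{\alpha+1}\subseteq\mathcal{P}_{\alpha}$ of size $>\lambda^{<\theta}$ satisfies $\varphi_{\alpha}(x,a_{\alpha,0})\wedge\neg\varphi_{\alpha}(x,a_{\alpha,1})\in p$ for every $p\in\mathcal{P}_{\alpha+1}$.

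For limit stages, take $\mathcal{P}_{\alpha}=\bigcap_{\beta<\alpha}\mathcal{P}_{\beta}$, and ensure via bookkeeping at the successor steps (e.g.\ bounding $\left|\mathcal{P}_{\beta}\setminus\mathcal{P}_{\beta+1}\right|$ or exploiting cofinality properties of $\lambda^{<\theta}$) that the intersection retains cardinality $>\lambda^{<\theta}$ throughout the entire recursion of length $\theta$. The main technical obstacle is the indiscernible-extraction step inside the successor: producing $\bar{a}_{\alpha}$ that lies in $N$, is indiscernible over $A_{\alpha}$, and whose first two coordinates witness splitting on $\varphi_{\alpha}$ for a subfamily of size $>\lambda^{<\theta}$. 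This is precisely where the cardinal hypothesis $\lambda\ge\sum_{\mu<\theta}2^{2^{\kappa+\mu}}$ is consumed, and where Lemma \ref{lem:cardinal arithmatics} is used to collapse $\left(\lambda^{<\theta}\right)^{\aleph_{0}}$ back to $\lambda^{<\theta}$. A secondary hurdle is the limit-stage bookkeeping, which dictates how aggressively one may shrink $\mathcal{P}_{\alpha}$ at each successor.
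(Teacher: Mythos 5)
Your overall shape (pigeonhole on restrictions, find a formula that ``splits'' a large family, extract an indiscernible pair, iterate $\theta$ times) is the right instinct, but the two steps you yourself flag as obstacles are genuine gaps, and the methods you propose for them do not work. First, the indiscernible extraction: you need $\bar{a}_{\alpha}$ to be an \emph{infinite} $A_{\alpha}$-indiscernible sequence lying \emph{inside} $N$. Erd\H{o}s--Rado plus compactness produces indiscernibles in the monster model, not in $N$; to find them inside a given model you would need $\left|N\right|$ of the order of $\beth_{\left(2^{\kappa+\left|A_{\alpha}\right|}\right)^{+}}$, far beyond the hypothesis $\lambda\geq\sum_{\mu<\theta}2^{2^{\kappa+\mu}}$ (which for $\theta=\aleph_{0}$ is just $2^{2^{\kappa}}$). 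The paper gets around this by first replacing $N$ with a model $N_{\theta}$ of the same cardinality that contains, for every $u$ of size $<\theta$, a $\left(\kappa+\left|u\right|\right)^{+}$-saturated submodel $M_{u}\supseteq M\cup u$ of size $2^{\kappa+\left|u\right|}$ --- this is precisely what the cardinal hypothesis pays for --- and then invokes Fact \ref{fac:non-splitting}(2): a type over a sufficiently saturated model that splits over $M_{0}$ already admits an $M_{0}$-indiscernible witness inside that saturated model. Saturation, not Ramsey-type partition calculus, is the mechanism; without this preliminary enlargement of $N$ your successor step cannot be carried out.

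Second, the limit stages. A nested decreasing $\omega$-chain of sets each of size $>\lambda^{<\theta}$ can have empty intersection (partition a set of size $\left(\lambda^{<\theta}\right)^{+}$ into countably many pieces of full size and let $\mathcal{P}_{n}$ be the union of the pieces with index $\geq n$), and your successor step discards a fiber complement of unbounded size, so no ``bookkeeping'' of the kind you gesture at can rescue $\bigcap_{\beta<\alpha}\mathcal{P}_{\beta}$. The paper sidesteps limits entirely by a different organization: for each \emph{individual} type $p$ it attempts to build the pattern, records the stage $\alpha_{p}$ and parameter set $A_{p}$ at which the attempt gets stuck, and only \emph{then} pigeonholes over the $\leq\lambda^{<\theta}=\lambda$ possible values of $\left(A_{p},\alpha_{p}\right)$ to find $>\lambda$ types stuck at the same place; these must all be non-splitting over a model $M_{0}$ of size $\kappa+\left|A\right|$, contradicting the $2^{2^{\kappa+\left|A\right|}}$ bound of Fact \ref{fac:non-splitting}(1). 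Note also that this non-splitting count replaces your trace-counting argument, which has its own arithmetic problem when $\theta=\aleph_{0}$: there $\left(\lambda^{<\theta}\right)^{\aleph_{0}}=\lambda^{\aleph_{0}}$ need not equal $\lambda^{<\theta}=\lambda$, so Lemma \ref{lem:cardinal arithmatics} does not collapse the product over the countable language as you claim.
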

\begin{proof}
Let $\lambda'=\lambda^{<\theta}$. By Lemma \ref{lem:cardinal arithmatics},
$\left(\lambda'\right)^{<\theta}=\lambda'$. So, we have $f_{T}\left(\kappa,\lambda'\right)\geq f_{T}\left(\kappa,\lambda\right)>\lambda^{<\theta}=\left(\lambda'\right)^{<\theta}$,
so we may replace $\lambda$ with $\lambda'$ and assume $\lambda^{<\theta}=\lambda$.

Let $\left(N,M\right)$ be a witness to $f_{T}\left(\kappa,\lambda\right)>\lambda$.
For every $A\subseteq N$ of size $<\theta$, let $M_{A}\subseteq\C$
be a $\left(\kappa+\left|A\right|\right)^{+}$-saturated model of
size $\leq2^{\left|A\right|+\kappa}$ containing $M\cup A$. Let $N_{0}=\bigcup_{A\in N^{\left[<\theta\right]}}M_{A}$.
So $N_{0}\supseteq N$, and $\left|N_{0}\right|\leq\lambda\cdot2^{<\theta+\kappa}=\lambda$.
Repeating the construction with respect to $\left(N_{0},M\right)$,
construct $N_{1}$, and more generally $N_{i}$ for $i\leq\theta$,
taking union in limit steps. So $\left|N_{\theta}\right|\leq\lambda\cdot\theta=\lambda$.

Fix $p\left(x\right)\in S^{\nf}\left(N_{\theta},M\right)$.

 We try
to choose by induction on $\alpha<\theta$ formulas $\varphi_{\alpha}^{p}\left(x,y\right)$
and sequences $\bar{a}_{\alpha}^{p}=\left\langle a_{\alpha,i}^{p}\left|\,i<\omega\right.\right\rangle $
in $N_{\alpha+1}$ such that $\bar{a}_{\alpha}^{p}$ is indiscernible
over $\left\{ a_{\beta,0}^{p},a_{\beta,1}^{p}\left|\,\beta<\alpha\right.\right\} \cup M$
and $\varphi_{\alpha}^{p}\left(x,a_{\alpha,0}^{p}\right)\land\neg\varphi_{\alpha}^{p}\left(x,a_{\alpha,1}^{p}\right)\in p$.
If we succeed, then we found a pair-non-forking pattern of depth $\theta$
over $M$ as desired (by Lemma \ref{lem:pair pattern =00003D full pattern}).
Otherwise, we are stuck in some $\alpha_{p}<\theta$. Let $A_{p}=\bigcup\left\{ a_{\beta,0}^{p},a_{\beta,1}^{p}\left|\,\beta<\alpha_{p}\right.\right\} $.

Let $F\subseteq S^{\nf}\left(N_{\theta},M\right)$ be a set of size
$>\lambda$ such that for $p\neq q\in F$, $p|_{N}\neq q|_{N}$. As
the size of the set $\left\{ A_{p}\left|\,p\in F\right.\right\} $
is bounded by $\lambda^{<\theta}=\lambda$ there is some $A$ of size
$<$ $\theta$ and $\alpha$ such that, letting $S=\left\{ p\in F\left|\,A_{p}=A\land\alpha_{p}=\alpha\right.\right\} $,
$\left|S\right|>\lambda$. Let $M_{0}\subseteq N_{\alpha}$ be some
model containing $A\cup M$ of size $\kappa+\left|A\right|$. Suppose
$p\in S$ and $p|_{N_{\alpha}}$ splits over $M_{0}$, so already
$p|_{M_{0}B}$ splits over $M_{0}$ for some finite $B$. Then there
is some $\left(\kappa+\left|A\right|\right)^{+}$-saturated model
$N'\subseteq N_{\alpha+1}$ containing $M\cup A\cup B$ and some $M_{0}'\subseteq N'$
such that $M_{0}'\equiv_{MAB}M_{0}$, so $p|_{N'}$ splits over $M_{0}'$.
By Fact \ref{fac:non-splitting}(2), we can find an $M_{0}'$-indiscernible
sequence $\left\langle a_{\alpha,i}^{p}\left|\,i<\omega\right.\right\rangle $
in $N'\subseteq N_{\alpha+1}$ such that $\varphi\left(x,a_{\alpha,0}^{p}\right)\land\neg\varphi\left(x,a_{\alpha,1}^{p}\right)\in p$
--- contradicting the choice of $\alpha$. So, for every $p\in S$,
$p|_{N_{\alpha}}$ does not split over $M_{0}$. But then by the choice
of $F$ and Fact \ref{fac:non-splitting}(1), $\left|S\right|\leq2^{2^{\kappa+\left|A\right|}}$
--- contradiction. \end{proof}
\begin{lem}
\label{lem:If g is bigger than theta then f is bigger than lambda^theta}If
$g_{T}\left(\kappa\right)>\theta$ then $f_{T}\left(\kappa,\lambda\right)\geq\lambda^{\left\langle \theta\right\rangle _{\tr}}$
for all $\lambda\geq\kappa$ (see Definition \ref{def:trees}). \end{lem}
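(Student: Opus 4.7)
The plan is to use the strengthened pattern from Lemma \ref{lem:better pattern} as the building block of a tree construction: each branch of an appropriate tree of height $\theta$ and $\lambda$-branching will give rise to a distinct non-forking type over a model of size $\lambda$. First I would apply Lemma \ref{lem:better pattern} to obtain a model $M$ of size $\kappa$ and a pattern $\{\bar{a}_{\alpha},\varphi_{\alpha}:\alpha<\theta\}$ with $\bar{a}_{\alpha}=\langle a_{\alpha,i}:i<\lambda\rangle$ indiscernible over $M\cup\{\bar{a}_{\beta}:\beta<\alpha\}$, such that the partial type
\[
\Pi(x):=\{\varphi_{\alpha}(x,a_{\alpha,0}):\alpha<\theta\}\cup\{\neg\varphi_{\alpha}(x,a_{\alpha,i}):\alpha<\theta,\,0<i<\lambda\}
\]
does not fork over $M$. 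Since $\lambda\geq\kappa+\sum_{\mu<\theta}\mu$, there is a model $N\succeq M$ of size $\lambda$ containing all the $\bar{a}_{\alpha}$.

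Next, for each branch $\eta$ of the tree counted by $\lambda^{\langle\theta\rangle_{\tr}}$ (via Definition \ref{def:trees}), viewed as a function from $\theta$ into $\lambda$, I would associate the partial type
\[
\Pi_{\eta}(x):=\{\varphi_{\alpha}(x,a_{\alpha,\eta(\alpha)}):\alpha<\theta\}\cup\{\neg\varphi_{\alpha}(x,a_{\alpha,i}):\alpha<\theta,\,i\neq\eta(\alpha)\}.
\]
The main step is to show that $\Pi_{\eta}$ does not fork over $M$. I would construct an $M$-automorphism $\sigma_{\eta}$ of $\C$ with $\sigma_{\eta}(\Pi)\supseteq\Pi_{\eta}$ by recursion on $\alpha<\theta$: at stage $\alpha$, using the indiscernibility of $\bar{a}_{\alpha}$ over $M\cup\{\bar{a}_{\beta}:\beta<\alpha\}$ (preserved under the partial automorphism built so far, which fixes these sequences), realize the transposition swapping indices $0$ and $\eta(\alpha)$ inside $\bar{a}_{\alpha}$ as an automorphism fixing $M\cup\{\bar{a}_{\beta}:\beta<\alpha\}$; at limit stages take the appropriate union. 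Composing gives $\sigma_{\eta}$ with $\sigma_{\eta}(\Pi)\supseteq\Pi_{\eta}$, whence $\Pi_{\eta}$ does not fork over $M$. Extend each $\Pi_{\eta}$ to a complete type $q_{\eta}\in S^{\nf}(N,M)$.

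Finally, distinct branches $\eta\neq\eta'$ yield distinct types: pick $\alpha$ with $\eta(\alpha)\neq\eta'(\alpha)$, so that $\varphi_{\alpha}(x,a_{\alpha,\eta(\alpha)})\in q_{\eta}$ while $\neg\varphi_{\alpha}(x,a_{\alpha,\eta(\alpha)})\in q_{\eta'}$. Counting branches then gives $|S^{\nf}(N,M)|\geq\lambda^{\langle\theta\rangle_{\tr}}$, so $f_{T}(\kappa,\lambda)\geq\lambda^{\langle\theta\rangle_{\tr}}$. The main technical obstacle is the coherent recursive construction of $\sigma_{\eta}$: one must check that at each stage the next transposition can be realized by an automorphism compatible with everything done previously, which is exactly what the stacked indiscernibility of the pattern provides, and that limit stages pose no difficulty because the partial automorphism fixes the relevant earlier parameters pointwise.
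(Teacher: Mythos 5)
There is a genuine gap at the central step. You claim that, by indiscernibility of $\bar{a}_{\alpha}$ over $M\cup\{\bar{a}_{\beta}:\beta<\alpha\}$, the \emph{transposition} swapping the indices $0$ and $\eta(\alpha)$ can be realized by an elementary map fixing that base. Indiscernibility of a sequence only yields elementarity of \emph{order-preserving} reindexings; a transposition reverses the order of the pair $(a_{\alpha,0},a_{\alpha,\eta(\alpha)})$, so it is elementary only if the sequence is totally indiscernible, which is not given. As a consequence your partial type $\Pi_{\eta}$, which demands $\neg\varphi_{\alpha}(x,a_{\alpha,i})$ for \emph{all} $i\neq\eta(\alpha)$ --- in particular for $i<\eta(\alpha)$ --- need not be non-forking, and need not even be consistent (think of $\varphi$ behaving monotonically along the sequence, so that the set $\{i:\varphi_{\alpha}(x,a_{\alpha,i})\}$ is forced to be an end segment; then ``$\varphi_{\alpha}$ holds exactly at position $\eta(\alpha)>0$'' is contradictory). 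There is also a secondary coherence problem: even granting the stage-$\alpha$ automorphism, it may move the later sequences $\bar{a}_{\gamma}$, $\gamma>\alpha$, so the composition need not send $\Pi$ onto a type whose parameters are the \emph{original} $a_{\gamma,i}$'s.

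The paper's proof repairs exactly this by using order-preserving \emph{shifts}: it builds partial elementary maps $F_{\eta}$ on $M\cup\{\bar{a}_{\beta}:\beta<\lg(\eta)\}$ with $F_{\eta i}(a_{\alpha,j})=F_{\eta 0}(a_{\alpha,i+j})$, so the resulting type $p_{\eta}=F_{\eta}(p)$ asserts $\varphi_{\alpha}$ at (the image of) position $\eta(\alpha)$ and $\neg\varphi_{\alpha}$ only at positions \emph{above} it. Two prices are paid that your sketch skips: (i) distinctness of $p_{\eta}$ and $p_{\nu}$ is no longer immediate and requires the computation $F_{\nu}(a_{\alpha,0})=F_{\rho 0}(a_{\alpha,j})=F_{\eta}(a_{\alpha,k})$ identifying a common parameter on which the two types disagree; and (ii) since the $F_{\eta}$-images are new tuples rather than the original sequences, one must restrict to a tree $T\subseteq\lambda^{<\theta}$ with at most $\lambda$ nodes so that all parameters fit inside a model $N$ of size $\lambda$, which is precisely why the conclusion is $\lambda^{\left\langle\theta\right\rangle_{\tr}}$ rather than $\lambda^{\theta}$. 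Your cardinality bookkeeping happens to be fine only because you (illegitimately) reuse the original sequences.
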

\begin{proof}
Fix $\lambda\geq\kappa+\theta$ (if $\lambda<\theta$ then $\lambda^{\left\langle \theta\right\rangle _{\tr}}$
is $0$). By Lemma \ref{lem:better pattern}, there is some non-forking
pattern $\left\{ \bar{a}_{\alpha},\,\varphi_{\alpha}\left|\,\alpha<\theta\right.\right\} $
over a model $M$ of size $\kappa$ such that $\bar{a}_{\alpha}=\left\langle a_{\alpha,i}\left|\,i<\lambda\right.\right\rangle $
and $p\left(x\right)=\left\{ \varphi_{\alpha}\left(x,a_{\alpha,0}\right)\left|\,\alpha<\theta\right.\right\} \cup\left\{ \neg\varphi_{\alpha}\left(x,a_{\alpha,i}\right)\left|\,\alpha<\theta,\,0<i<\lambda\right.\right\} $
does not fork over $M$. By induction on $\beta\leq\theta$ we define
elementary mappings $F_{\eta}$, $\eta\in\lambda^{\beta}$, with $\mbox{dom}(F_{\eta})=A_{\beta}=M\cup\left\{ \bar{a}_{\alpha}\left|\,\alpha<\beta\right.\right\} $:
\begin{itemize}
\item $F_{\emptyset}$ is the identity on $M$.
\item If $\beta$ is a limit ordinal, then let $F_{\eta}=\bigcup_{\alpha<\beta}F_{\eta\restriction\alpha}$.
\item If $\beta=\alpha+1$, let $F_{\eta0}$ be an arbitrary extension of
$F_{\eta}$ to $A_{\alpha+1}$. For $i<\lambda$, let $F_{\eta i}$
be an arbitrary elementary mapping extending $F_{\eta}$ such that
$F_{\eta i}\left(a_{\alpha,j}\right)=F_{\eta0}\left(a_{\alpha,i+j}\right)$.
This could be done by indiscerniblity.
\end{itemize}
Let $p_{\eta}=F_{\eta}\left(p\right)$. So,
\begin{itemize}
\item $p_{\eta}\left(x\right)$ does not fork over $M$ --- as $F_{\eta}$
is an elementary map fixing $M$.
\item If $\eta\neq\nu\in\lambda^{\theta}$, then $p_{\eta}\neq p_{\nu}$.
To see it, let $\alpha=\min\left\{ \beta<\theta\left|\,\eta\restriction\beta\neq\nu\restriction\beta\right.\right\} $
and suppose $\alpha=\beta+1$, $\rho=\eta\upharpoonright\beta=\nu\upharpoonright\beta$.
Assume $\eta\left(\beta\right)=i<j=\nu\left(\beta\right)$ and $0<k<\lambda$
is such that $i+k=j$. Then $\varphi\left(x,a_{\alpha,0}\right)\in p\Rightarrow\varphi\left(x,F_{\nu}\left(a_{\alpha,0}\right)\right)\in p_{\nu}$.
Similarly, $\neg\varphi\left(x,a_{\alpha,k}\right)\in p\Rightarrow\neg\varphi\left(x,F_{\eta}\left(a_{\alpha,k}\right)\right)\in p_{\eta}$.
But,
\[
F_{\nu}\left(a_{\alpha,0}\right)=F_{\rho j}\left(a_{\alpha,0}\right)=F_{\rho0}\left(a_{\alpha,j}\right)=F_{\rho0}\left(a_{i+k}\right)=F_{\rho i}\left(a_{\alpha,k}\right)=F_{\eta}\left(a_{\alpha,k}\right),
\]
so $p_{\eta}\neq p_{\nu}$.
\end{itemize}
Let $T\subseteq\lambda^{<\theta}$ be a tree of size $\leq\lambda$
such that if $x\in T$ and $y<x$ then $y\in T$. Let $B=\bigcup\left\{ F_{\eta}\left(\bar{a}_{\alpha}\right)\left|\,\alpha<\lg\left(\eta\right)\land\eta\in T\right.\right\} \cup M$,
so $\left|B\right|\leq\lambda+\kappa=\lambda$. Let $N$ be some model
containing $B$ of size $\lambda$. Thus, $\left|S^{\nf}\left(N,M\right)\right|$
is at least the number of branches in $T$ of length $\theta$. By
the definition of $\lambda^{\left\langle \theta\right\rangle _{\tr}}$
we are done. \end{proof}
\begin{prop}
If $f_{T}\left(\kappa,\lambda\right)>\lambda$ for some $\lambda\geq2^{2^{\kappa}}$,
then $f_{T}\left(\kappa,\lambda\right)\geq\lambda^{\aleph_{0}}$ for
all $\lambda\geq\kappa$.\end{prop}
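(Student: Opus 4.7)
The plan is to deduce this proposition directly from the two preceding lemmas by specializing to $\theta=\aleph_0$. The proposition exhibits the gap between items (\ref{enu:lambda}) and (\ref{enu:lambda ^omega}) of the Main Theorem, and the heavy lifting has already been done: Lemma \ref{lem:polynomial f_T implies g_T bigger than degree} converts a quantitative bound on $f_T$ into a structural statement $g_T(\kappa)>\theta$, while Lemma \ref{lem:If g is bigger than theta then f is bigger than lambda^theta} converts that structural statement back into a lower bound on $f_T$ at every $\lambda$.

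Concretely, I would first verify that the hypothesis matches the premise of Lemma \ref{lem:polynomial f_T implies g_T bigger than degree} for $\theta=\aleph_0$. For infinite $\lambda$ we have $\lambda^{<\aleph_0}=\lambda$, and $\sum_{\mu<\aleph_0}2^{2^{\kappa+\mu}}=2^{2^{\kappa}}$. So the assumption $f_T(\kappa,\lambda)>\lambda$ for some $\lambda\geq 2^{2^{\kappa}}$ is precisely ``$f_T(\kappa,\lambda)>\lambda^{<\aleph_0}$ and $\lambda\geq\sum_{\mu<\aleph_0}2^{2^{\kappa+\mu}}$.'' Invoking that lemma therefore yields $g_T(\kappa)>\aleph_0$; that is, there is a non-forking pattern of countable depth over some model of size $\kappa$.

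Next I would feed this into Lemma \ref{lem:If g is bigger than theta then f is bigger than lambda^theta}, again with $\theta=\aleph_0$. Its side condition is $\lambda\geq\kappa+\sum_{\mu<\aleph_0}\mu=\kappa$, which holds by the hypothesis on the new $\lambda$. The conclusion of the lemma is $f_T(\kappa,\lambda)\geq\lambda^{\langle\aleph_0\rangle_{\tr}}$ for every such $\lambda$. It then remains to observe that $\lambda^{\langle\aleph_0\rangle_{\tr}}=\lambda^{\aleph_0}$: for infinite $\lambda$ the full tree $\lambda^{<\omega}$ has cardinality $\lambda$ and exactly $\lambda^{\aleph_0}$ branches of length $\omega$, giving the lower bound, while the upper bound is immediate as every branch is coded by a function $\omega\to\lambda$.

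There is no real obstacle here; the work is bookkeeping. The only point that requires care is checking the cardinal arithmetic: that $\lambda^{<\aleph_0}=\lambda$ for infinite $\lambda$, that the sum $\sum_{\mu<\aleph_0}2^{2^{\kappa+\mu}}$ collapses to $2^{2^{\kappa}}$, and that $\lambda^{\langle\aleph_0\rangle_{\tr}}$ evaluates to $\lambda^{\aleph_0}$ so that the conclusion of Lemma \ref{lem:If g is bigger than theta then f is bigger than lambda^theta} matches the bound asserted in the proposition. Once these identifications are in place, the proof reduces to a one-line chain: hypothesis $\Rightarrow g_T(\kappa)>\aleph_0\Rightarrow f_T(\kappa,\lambda)\geq\lambda^{\aleph_0}$ for all $\lambda\geq\kappa$.
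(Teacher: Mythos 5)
Your proposal is correct and follows essentially the same route as the paper: Lemma \ref{lem:polynomial f_T implies g_T bigger than degree} with $\theta=\aleph_{0}$ gives $g_{T}(\kappa)>\aleph_{0}$, Lemma \ref{lem:If g is bigger than theta then f is bigger than lambda^theta} then gives $f_{T}(\kappa,\lambda)\geq\lambda^{\left\langle \aleph_{0}\right\rangle _{\tr}}=\lambda^{\aleph_{0}}$, and your cardinal-arithmetic checks are all accurate. The only (inessential) difference is that the paper first passes to $g_{T}(\aleph_{0})>\aleph_{0}$ via Remark \ref{rem:reducing base of g} and recovers the general $\kappa$ by monotonicity, whereas you apply the second lemma at $\kappa$ directly, which its side condition $\lambda\geq\kappa+\aleph_{0}=\kappa$ permits.
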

\begin{proof}
By Lemma \ref{lem:polynomial f_T implies g_T bigger than degree},
taking $\theta=\aleph_{0}$, $g_{T}\left(\kappa\right)>\aleph_{0}$
and then by Remark \ref{rem:reducing base of g}, $g_{T}\left(\aleph_{0}\right)>\aleph_{0}$.
By Lemma \ref{lem:If g is bigger than theta then f is bigger than lambda^theta},
$f_{T}\left(\aleph_{0},\lambda\right)>\lambda^{\left\langle \aleph_{0}\right\rangle }$
for all $\lambda$ but $\lambda^{\left\langle \aleph_{0}\right\rangle }=\lambda^{\aleph_{0}}$
(see Remark \ref{rem:treePower=00003Dpower}). By Remark \ref{lem:basic properties of f_T},
$f_{T}\left(\kappa,\lambda\right)\geq f_{T}\left(\aleph_{0},\lambda\right)\geq\lambda^{\aleph_{0}}$
so we are done.
\end{proof}

\subsection{On (\ref{enu:??? between lambda^omega and lambda^<beth}).}
\begin{prop}
If $f_{T}\left(\kappa,\lambda\right)>\lambda^{\mu}$ for some $\lambda\geq2^{2^{\kappa+\mu}}$,
then $f_{T}\left(\kappa,\lambda\right)\geq\lambda^{\left\langle \mu^{+}\right\rangle _{\tr}}$
for all $\lambda\geq\kappa\geq\mu^{+}$. \end{prop}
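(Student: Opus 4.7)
The plan is to apply the two preceding lemmas (\ref{lem:polynomial f_T implies g_T bigger than degree} and \ref{lem:If g is bigger than theta then f is bigger than lambda^theta}) in sequence with $\theta = \mu^{+}$, exactly as in the previous proposition (which handled the case $\mu = \aleph_{0}$). The key arithmetic identity I need is $\lambda^{\mu} = \lambda^{<\mu^{+}}$, which holds because $\lambda^{<\mu^{+}} = \sup_{\nu < \mu^{+}} \lambda^{\nu} = \sup_{\nu \leq \mu} \lambda^{\nu} = \lambda^{\mu}$ (for $\mu$ infinite the supremum is attained at $\nu = \mu$; the finite case is uninteresting since $\kappa \geq \mu^{+}$ forces all the relevant quantities to stabilize).

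First I would verify the hypothesis of Lemma \ref{lem:polynomial f_T implies g_T bigger than degree} with $\theta = \mu^{+}$. That lemma requires $\lambda \geq \sum_{\nu < \mu^{+}} 2^{2^{\kappa + \nu}}$, and since $\nu < \mu^{+}$ means $\nu \leq \mu$, this sum equals $\mu^{+} \cdot 2^{2^{\kappa + \mu}} = 2^{2^{\kappa + \mu}}$, which is exactly the bound assumed. Combined with the rewriting $\lambda^{\mu} = \lambda^{<\mu^{+}}$, the assumption $f_{T}(\kappa,\lambda) > \lambda^{\mu}$ becomes $f_{T}(\kappa,\lambda) > \lambda^{<\mu^{+}}$, so Lemma \ref{lem:polynomial f_T implies g_T bigger than degree} yields $g_{T}(\kappa) > \mu^{+}$.

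Next I would feed this into Lemma \ref{lem:If g is bigger than theta then f is bigger than lambda^theta}, again with $\theta = \mu^{+}$. Its side condition is that the target cardinal be at least $\kappa + \sum_{\nu < \mu^{+}} \nu = \kappa + \mu$, which holds for every $\lambda \geq \kappa \geq \mu^{+}$ in the statement. The conclusion is $f_{T}(\kappa,\lambda) \geq \lambda^{\langle \mu^{+} \rangle_{\tr}}$, which is exactly what is claimed.

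There is no real obstacle here: the proposition is essentially a bookkeeping corollary of the two preceding lemmas, parallel to the proof of the previous proposition, with the only subtlety being the cardinal-arithmetic identification $\lambda^{\mu} = \lambda^{<\mu^{+}}$ and the verification that $\sum_{\nu < \mu^{+}} 2^{2^{\kappa+\nu}}$ collapses to $2^{2^{\kappa+\mu}}$.
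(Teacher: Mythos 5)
Your route is the paper's route --- chain Lemma \ref{lem:polynomial f_T implies g_T bigger than degree} and Lemma \ref{lem:If g is bigger than theta then f is bigger than lambda^theta} with $\theta=\mu^{+}$ --- and your arithmetic verifications are correct: $\lambda^{<\mu^{+}}=\lambda^{\mu}$ for infinite $\mu$, and $\sum_{\nu<\mu^{+}}2^{2^{\kappa+\nu}}=2^{2^{\kappa+\mu}}$, so the first lemma indeed gives $g_{T}\left(\kappa\right)>\mu^{+}$ for the $\kappa$ appearing in the hypothesis.

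There is, however, one missing step. The conclusion ``for all $\lambda\geq\kappa\geq\mu^{+}$'' requantifies \emph{both} variables: it asserts the bound for every pair $\left(\kappa',\lambda'\right)$ with $\mu^{+}\leq\kappa'\leq\lambda'$, including cardinals $\kappa'$ strictly smaller than the $\kappa$ of the hypothesis. (This reading is forced by the pattern of the surrounding propositions and by the corollary that follows, where the original $\kappa$ need not even satisfy $\kappa\geq\aleph_{n+1}$.) Applying Lemma \ref{lem:If g is bigger than theta then f is bigger than lambda^theta} directly to $g_{T}\left(\kappa\right)>\mu^{+}$, as you do, only yields $f_{T}\left(\kappa,\lambda\right)\geq\lambda^{\left\langle \mu^{+}\right\rangle _{\tr}}$ for the \emph{original} $\kappa$. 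To get the full statement you need the intermediate base-reduction step: by Lemma \ref{lem: Countable base} (in the form of Remark \ref{rem:reducing base of g}), $g_{T}\left(\kappa\right)>\mu^{+}$ implies $g_{T}\left(\mu^{+}\right)>\mu^{+}$; then Lemma \ref{lem:If g is bigger than theta then f is bigger than lambda^theta} applied at the base $\mu^{+}$ gives $f_{T}\left(\mu^{+},\lambda\right)\geq\lambda^{\left\langle \mu^{+}\right\rangle _{\tr}}$, and monotonicity of $f_{T}$ in its first argument (Lemma \ref{lem:basic properties of f_T}(3)) lifts this to every $\kappa'$ with $\mu^{+}\leq\kappa'\leq\lambda$. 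This is a one-line fix, but without it the proof does not establish the statement as quantified.
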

\begin{proof}
By Lemma \ref{lem:polynomial f_T implies g_T bigger than degree},
$g_{T}\left(\kappa\right)>\mu^{+}$. By Lemma \ref{lem: Countable base},
$g_{T}\left(\mu^{+}\right)>\mu^{+}$. By Lemma \ref{lem:If g is bigger than theta then f is bigger than lambda^theta},
$f_{T}\left(\mu^{+},\lambda\right)\geq\lambda^{\left\langle \mu^{+}\right\rangle _{\tr}}$
for all $\lambda\geq\mu^{+}$, and so by Lemma \ref{lem:basic properties of f_T}
, $f_{T}\left(\kappa,\lambda\right)\geq\lambda^{\left\langle \mu^{+}\right\rangle _{\tr}}$
for any $\lambda\geq\kappa\geq\mu^{+}$. \end{proof}
\begin{cor}
If $f_{T}\left(\kappa,\lambda\right)>\lambda^{\aleph_{n}}$ for some
$\lambda\geq2^{2^{\kappa+\aleph_{n}}}$, then $f_{T}\left(\kappa,\lambda\right)\geq\lambda^{\left\langle \aleph_{n+1}\right\rangle _{\tr}}$
for all $\lambda\geq\kappa\geq\aleph_{n+1}$.
\end{cor}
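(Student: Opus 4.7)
The plan is extremely short: this corollary is just the preceding proposition specialized to the case $\mu=\aleph_{n}$. First I would observe that $\aleph_{n}$ is a cardinal with successor $\aleph_{n}^{+}=\aleph_{n+1}$, so the hypotheses of the corollary are exactly the hypotheses of the preceding proposition after substituting $\mu=\aleph_{n}$: the bound $\lambda\geq 2^{2^{\kappa+\aleph_{n}}}$ matches $\lambda\geq 2^{2^{\kappa+\mu}}$, the bound $\lambda\geq\kappa\geq\aleph_{n+1}$ matches $\lambda\geq\kappa\geq\mu^{+}$, and the hypothesis $f_{T}(\kappa,\lambda)>\lambda^{\aleph_{n}}$ matches $f_{T}(\kappa,\lambda)>\lambda^{\mu}$.

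Applying the proposition then yields $f_{T}(\kappa,\lambda)\geq\lambda^{\langle\mu^{+}\rangle_{\tr}}=\lambda^{\langle\aleph_{n+1}\rangle_{\tr}}$, which is the desired conclusion. There is really no obstacle here; the corollary is stated separately presumably to record the concrete instance at successor cardinals $\aleph_{n+1}$ that will be referenced when locating $f_{T}$ in the list of the Main Theorem (between items \eqref{enu:lambda ^omega} and \eqref{enu:lambda^beth}), and to make transparent the chain $\aleph_{n}\to\aleph_{n+1}$ that iterates up to $\beth_{\aleph_{1}}(\kappa)$. Thus the entire proof is a single line invoking the preceding proposition with $\mu=\aleph_{n}$.
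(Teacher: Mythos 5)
Your proposal is correct and matches the paper exactly: the corollary is the immediate specialization of the preceding proposition to $\mu=\aleph_{n}$, using $\aleph_{n}^{+}=\aleph_{n+1}$, and the paper offers no further argument. Nothing is missing.
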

This corollary says that morally there are gaps between $\lambda$
and $\lambda^{\aleph_{0}}$, $\lambda^{\aleph_{0}}$ and $\lambda^{\aleph_{1}}$
etc.

\subsection{On the gap between (\ref{enu:lambda^beth}) and (\ref{enu:ded lambda}).\protect \\
}

The following fact follows from the proof of Morley's two cardinal
theorem. For details, see \cite[Theorem 23]{KeislerInf}.
\begin{fact}
\label{fac:L_omega1 sentence} Suppose $\psi\in L_{\omega_{1},\omega}$,
$<$ is a binary relation, $P$ and $Q$ are predicates in $L$ and
$\psi$ implies that ``$<$ is a linear order on $Q$''. If for
every countable ordinal $\varepsilon$ there is a structure $B$ such
that
\begin{itemize}
\item $B\models\psi$
\item There is an embedding of the order $\beth_{\varepsilon}\left(\left|P^{B}\right|\right)$
into $\left(Q^{B},<^{B}\right)$.
\end{itemize}
Then for every cardinal $\lambda$ there is some structure $B$ such
that
\begin{itemize}
\item $B\models\psi$
\item $\left|P^{B}\right|=\aleph_{0}$
\item there is an embedding of $\left(\lambda,<\right)$ into $\left(Q^{B},<^{B}\right)$.
\end{itemize}
\end{fact}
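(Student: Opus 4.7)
The plan is to adapt the Ehrenfeucht--Mostowski / Morley two-cardinal argument to $L_{\omega_{1},\omega}$, following the treatment in Keisler's text. First I would fix a countable fragment $\Delta$ of $L_{\omega_{1},\omega}$ containing $\psi$ and closed under subformulas, and expand the language by countably many Skolem function symbols witnessing each existential $\Delta$-formula; write $\psi^{*}$ for the resulting expansion, so that every model of $\psi$ admits a Skolem expansion satisfying $\psi^{*}$ and $\Delta$-Skolem hulls are elementary substructures. The goal is then to produce an EM-blueprint $\Phi$ --- a set of complete $\Delta$-types of increasing finite tuples of ``generic'' elements --- such that for every linear order $I$ the resulting EM-model $M(I,\Phi)$ (a) satisfies $\psi^{*}$, (b) contains a $<$-increasing copy of $I$ inside $Q$ (realized by the generating indiscernibles), and (c) has $|P^{M(I,\Phi)}|=\aleph_{0}$. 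Applied to $I=(\lambda,<)$ this $\Phi$ yields the desired model.

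I would extract $\Phi$ by running Erd\H{o}s--Rado inside each $B_{\varepsilon}$ and taking a limit across countable $\varepsilon$. Expand $B_{\varepsilon}$ to a Skolem model $B_{\varepsilon}^{*}$ and view the hypothesized embedding as giving a $<$-increasing sequence $(c_{\alpha}:\alpha<\beth_{\varepsilon}(|P^{B_{\varepsilon}}|))$ inside $Q^{B_{\varepsilon}^{*}}$. For each $n$ and each finite $\Delta_{0}\subseteq\Delta$, the $\Delta_{0}$-type of an increasing $n$-tuple over $P^{B_{\varepsilon}}$ takes at most $|P^{B_{\varepsilon}}|^{\aleph_{0}}$ values; iterated application of the Erd\H{o}s--Rado partition relation $\beth_{\varepsilon}^{+}(\kappa)\to(\aleph_{1})_{\kappa}^{n}$, diagonalized over $(n,\Delta_{0})$, produces an $\aleph_{1}$-long subsequence that is $\Delta$-indiscernible over $P^{B_{\varepsilon}}$ with respect to increasing tuples. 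Its $\Delta$-types form a countable candidate blueprint $\Phi_{\varepsilon}$. A standard compactness argument over the countable $\varepsilon$ (using that each $\Phi_{\varepsilon}$ lives in a fixed countable fragment) then extracts a single $\Phi$ achieving (a) and (b) for all linear orders $I$.

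The heart of the argument, and the main obstacle, is clause (c): keeping $|P^{M(I,\Phi)}|=\aleph_{0}$ as $|I|$ grows unboundedly. This is the genuine two-cardinal content, and is precisely what forces us to use $\beth_{\varepsilon}$ for \emph{every} countable $\varepsilon$ rather than any single cardinal. The plan is to build into $\Phi$ the extra demand that every Skolem term $t(x_{1},\dots,x_{n})$ whose values on increasing indiscernible tuples lie in $P$ is in fact \emph{constant} on such tuples. This is secured by one more Erd\H{o}s--Rado pass: if $t(c_{i_{1}},\dots,c_{i_{n}})\in P^{B_{\varepsilon}^{*}}$ on the $\aleph_{1}$-indiscernible subsequence, the induced coloring into $P^{B_{\varepsilon}^{*}}$ uses at most $|P^{B_{\varepsilon}}|$ colors, so, replacing $\beth_{\varepsilon}$ by $\beth_{\varepsilon+\omega}$ and thinning further, we may arrange $t$ to take a single value. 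With this absorbed into $\Phi$, the set $P^{M(I,\Phi)}$ is contained in the Skolem closure of the countably many constant values of such terms, hence is countable regardless of $|I|$. Applying $\Phi$ to $(\lambda,<)$ now delivers the required $B$.
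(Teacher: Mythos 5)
The overall skeleton you propose --- Skolemize a countable fragment $\Delta$ containing $\psi$, build an EM blueprint whose $P$-valued Skolem terms are constant on increasing tuples, and stretch over $(\lambda,<)$ --- is indeed the shape of the Morley/Keisler argument that the paper cites for this Fact. But there is a genuine gap at the extraction step, and it sits exactly where the hypothesis ``for \emph{every} countable $\varepsilon$'' does its work. You claim that inside a single $B_{\varepsilon}$, iterated Erd\H{o}s--Rado ``diagonalized over $(n,\Delta_{0})$'' produces one $\aleph_{1}$-long subsequence that is $\Delta$-indiscernible for all arities simultaneously. This cannot work: each application of $\beth_{n}(\kappa)^{+}\to(\kappa^{+})^{n+1}_{\kappa}$ collapses the set to size $\kappa^{+}$, which is far too small to feed into the next arity, and no diagonalization recovers this. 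Indeed, a cardinal $\theta$ admitting an infinite subsequence homogeneous for all finite arities at once for a single coloring of $[\theta]^{<\omega}$ is precisely an Erd\H{o}s-type cardinal; its existence is not provable in ZFC and is certainly not witnessed by $\beth_{\varepsilon}(|P^{B}|)$ for $\varepsilon<\omega_{1}$. A telltale sign: if your extraction worked, a single sufficiently large $\varepsilon$ (say $\varepsilon=\omega+\omega$) would already suffice, whereas the theorem genuinely needs cofinally many $\varepsilon<\omega_{1}$.

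Morley's method never produces a full indiscernible sequence in any one model. For each fixed arity $n$ and each large enough $\varepsilon$, Erd\H{o}s--Rado yields subsets, still of size some $\beth_{\alpha}(\cdot)$, on which all increasing $n$-tuples realize the same complete $\Delta$-type over $P^{B_{\varepsilon}}$; one then forms the tree of coherent finite sequences of such ``large'' types and shows, via an ordinal rank on that tree, that well-foundedness would bound all the $\beth_{\varepsilon}(|P^{B_{\varepsilon}}|)$ by a single $\beth_{\gamma}$ with $\gamma<\omega_{1}$ --- this rank computation is where the full hypothesis is consumed. The resulting infinite branch is a blueprint that is only \emph{finitely approximated} across the various $B_{\varepsilon}$, so your final merging step also fails as stated: there is no ``standard compactness argument'' for $L_{\omega_{1},\omega}$, and one must instead invoke the Model Existence Theorem for the countable fragment (consistency properties) to realize the blueprint together with $\psi$ and an increasing copy of $(\lambda,<)$ inside $Q$. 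Your treatment of clause (c) is the right idea --- and in fact constancy of $P$-valued terms is automatic once the types in the blueprint are taken over $P$ --- but it must be encoded into the finite approximations rather than obtained by ``thinning further'' an indiscernible sequence that provably need not exist.
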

\begin{lem}
\label{lem:forking is L_omega1 definable}Let $M\prec N$ and $a\in N$.
Then the following are equivalent:
\begin{enumerate}
\item $\varphi\left(x,a\right)$ forks over $M$.
\item The following holds in $N$:
\begin{eqnarray*}
 & \bigvee_{\left\{ \psi_{0},\ldots,\psi_{m-1}\right\} \subseteq L}\bigvee_{k_{i}<\omega,i<m}\bigwedge_{\Delta\subseteq L\,\mbox{finite}}\\
 &
 \bigwedge_{n<\omega}\forall c_{0},\ldots,c_{n-1}\in M\exists\bar{y}_{0},\ldots,\exists\bar{y}_{m-1}\\
 & \left[\varphi\left(x,a\right)\vdash\bigvee_{i<n}\psi\left(x,y_{i,0}\right)\land \right. \\
 &
 \left. \bigwedge_{i<m,j<n}\left(y_{i,j}\equiv_{\bar{c}}^{\Delta}y_{i,0}\right)\land\bigwedge_{i<m,\,s\in n^{\left[k_{i}\right]}}\forall x\left(\neg\bigwedge_{j\in s}\varphi\left(x,y_{i,j}\right)\right)\right]
\end{eqnarray*}
 where $\bar{y}_{i}=\left\langle y_{i,j}\left|\,j<n\right.\right\rangle $
for $i<m$ and $\bar{c}=\left\langle c_{i}\left|\,i<n\right.\right\rangle $.
\end{enumerate}
\end{lem}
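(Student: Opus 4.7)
The plan is to prove both directions by unraveling the definitions of forking and dividing, trading one compactness (hidden in the definition of dividing) for another (combining the finite approximations in (2)). The formula in (2) should be read as: ``there is a choice of $\psi_0,\ldots,\psi_{m-1}$ and $k_0,\ldots,k_{m-1}$ such that every finite approximation of a dividing pattern for $\varphi(x,a)\vdash\bigvee_i\psi_i(x,b_i)$ can be realized inside $N$''.

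For (1) $\Rightarrow$ (2), I would unpack forking in $\C$ to obtain $\psi_0,\ldots,\psi_{m-1}$ and $b_0,\ldots,b_{m-1}$ with $\varphi(x,a)\vdash\bigvee_{i<m}\psi_i(x,b_i)$ and each $\psi_i(x,b_i)$ $k_i$-dividing over $M$, witnessed by an $M$-indiscernible sequence $\langle b_{i,j}:j<\omega\rangle$ with $b_{i,0}=b_i$. Fix any finite $\Delta\subseteq L$, $n<\omega$ and $\bar c\in M^n$. The initial segments $b_{i,0},\ldots,b_{i,n-1}$ then witness the inner existential $\exists\bar y_0\ldots\exists\bar y_{m-1}(\ldots)$ of (2) in $\C$. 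Since this inner existential is a first-order formula with parameters $\bar c,a\in N$, elementarity $N\prec\C$ (which we may assume by placing $N$ inside $\C$ over $M$) transfers the witness into $N$, yielding (2).

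For (2) $\Rightarrow$ (1), take the $\psi_i$'s and $k_i$'s supplied by (2) and form in $\C$ the partial type $\Sigma$ in variables $\{y_{i,j}:i<m,\,j<\omega\}$ over $M\cup\{a\}$ asserting $\forall x\,(\varphi(x,a)\to\bigvee_i\psi_i(x,y_{i,0}))$, that each $y_{i,j}$ has the same complete type over $M$ as $y_{i,0}$, and that $\{\psi_i(x,y_{i,j}):j<\omega\}$ is $k_i$-inconsistent for every $i<m$. Any finite subset of $\Sigma$ mentions only finitely many indices $j<n$, a single finite $\Delta\subseteq L$, and a single tuple $\bar c\in M^{<\omega}$; (2) applied to this $\Delta,n,\bar c$ produces $\bar y_i\in N\subseteq\C$ realizing the finite fragment, so $\Sigma$ is finitely satisfiable and hence consistent. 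Any realization $\{b_{i,j}\}$ in $\C$ shows that each $\psi_i(x,b_{i,0})$ divides over $M$ in the precise sense of the paper's definition, while $\varphi(x,a)\vdash\bigvee_i\psi_i(x,b_{i,0})$; therefore $\varphi(x,a)$ forks over $M$.

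The main obstacle is purely bookkeeping: verifying that the infinite conjunction $\bigwedge_{\Delta\,\mathrm{finite}}\bigwedge_{n<\omega}\forall\bar c\in M$ accumulates under compactness to ``$y_{i,j}$ has the same complete type over $M$ as $y_{i,0}$'', and conversely that every $M$-indiscernible sequence coming from forking satisfies each finite $\Delta$-type condition over every $\bar c\in M^{<\omega}$. Both checks are immediate once spelled out, but they are the only places where the precise quantifier alternation $\exists\psi\exists k\bigwedge_\Delta\bigwedge_n\forall\bar c\in M\exists\bar y$ in (2) actually does work.
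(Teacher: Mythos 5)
Your argument is correct and is exactly the compactness argument the paper compresses into its one-line proof (``By compactness''): one direction transfers the finite approximations of a dividing configuration from $\C$ into $N$ by elementarity, and the other assembles the finite approximations supplied by (2) into a full dividing configuration via finite satisfiability of the type $\Sigma$. No gaps.
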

\begin{proof}
By compactness. \end{proof}
\begin{lem}
\label{lem:big g implies formula is constant}If $g_{T}\left(\kappa\right)>\mu>\aleph_{0}$,
then there is a non-forking pattern $\left\{ \varphi_{\alpha},\bar{a}_{\alpha}\left|\,\alpha<\mu\right.\right\} $
such that $\varphi_{\alpha}=\varphi$ for some formula $\varphi$.\end{lem}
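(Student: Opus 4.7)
The plan is a pigeonhole argument on the countable set $L$ of $L$-formulas $\varphi(x,y)$ of $T$. Let $\{\bar{a}_{\alpha},\varphi_{\alpha}:\alpha<\mu\}$ be the non-forking pattern witnessing $g_{T}(\kappa)>\mu$, over some $M$ with $|M|\leq\kappa$, and partition $\mu$ by $S_{\varphi}=\{\alpha<\mu:\varphi_{\alpha}=\varphi\}$. Since $\mu>\aleph_{0}=|L|$, cardinal arithmetic forces $\sup_{\varphi\in L}|S_{\varphi}|=\mu$.

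Case (a): the supremum is attained, so $|S_{\varphi}|=\mu$ for some $\varphi$. Being a subset of the initial ordinal $\mu$ of cardinality $\mu$, $S_{\varphi}$ has order type exactly $\mu$; list it as $\{s_{\beta}:\beta<\mu\}$ in increasing order and set $\bar{a}'_{\beta}=\bar{a}_{s_{\beta}}$. The reindexed pattern $\{\bar{a}'_{\beta},\varphi:\beta<\mu\}$ inherits indiscernibility of $\bar{a}'_{\beta}$ over $M\cup\{\bar{a}'_{\gamma}:\gamma<\beta\}$ from the stronger indiscernibility of $\bar{a}_{s_{\beta}}$ over $M\cup\{\bar{a}_{\delta}:\delta<s_{\beta}\}$; and its associated partial type $\{\varphi(x,a'_{\beta,0})\wedge\neg\varphi(x,a'_{\beta,1}):\beta<\mu\}$, being a subset of the original non-forking partial type over $M$, still does not fork by the finite character of forking (cf.\ Lemma \ref{lem: Countable base}). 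This case always occurs when $\cof(\mu)>\aleph_{0}$, since a countable family of cardinals strictly below such a $\mu$ cannot have supremum $\mu$.

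Case (b): $\cof(\mu)=\aleph_{0}$ and $|S_{\varphi}|<\mu$ for every $\varphi$. Write $\mu=\sup_{n}\mu_{n}$ with $\mu_{n}<\mu$ regular uncountable. Applying Case (a) to the initial segment of length $\mu_{n}$ produces, for each $n$, a single-formula non-forking pattern of depth $\mu_{n}$ over $M$ using some formula $\varphi^{(n)}\in L$. To merge these sub-patterns into one single-formula pattern of depth $\mu$, use Lemma \ref{lem:forking is L_omega1 definable} to encode ``there is a single-formula non-forking pattern of depth $|Q|$ over the substructure $P$'' as a countable disjunction of $L_{\omega_{1},\omega}$-sentences, and invoke Fact \ref{fac:L_omega1 sentence} together with a compactness argument exploiting the finite character of non-forking to produce a model in which the pattern has depth $\mu$ and uses a single formula.

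The main obstacle is Case (b): the pigeonhole $n\mapsto\varphi^{(n)}:\omega\to L$ need not be constant on any infinite subset, so the sub-patterns of depths $\mu_{n}$ cannot simply be set-theoretically unioned into a depth-$\mu$ single-formula sub-pattern; bridging this gap requires the $L_{\omega_{1},\omega}$-machinery rather than bare extraction.
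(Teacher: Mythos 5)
Your Case (a) is exactly the paper's argument --- the paper's entire proof of this lemma is ``by pigeon-hole'' --- and you carry it out correctly and in more detail than the paper does: restricting a non-forking pattern to a subset $S_{\varphi}\subseteq\mu$ of cardinality $\mu$ preserves the indiscernibility condition (the parameter set over which each $\bar{a}'_{\beta}$ must be indiscernible only shrinks) and preserves non-forking by finite character, and the order type of such an $S_{\varphi}$ is exactly $\mu$, so reindexing gives a single-formula pattern of depth $\mu$. You are also right that a fiber of full cardinality is guaranteed precisely when $\cof\left(\mu\right)>\aleph_{0}$, a point the paper's one-line proof glosses over.

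Case (b), however, is a genuine gap: as written it is a plan, not a proof, and the tools you invoke do not apply. Fact \ref{fac:L_omega1 sentence} requires, for \emph{every} countable ordinal $\varepsilon$, a structure $B$ into whose $Q$-part the order $\beth_{\varepsilon}\left(\left|P^{B}\right|\right)$ embeds; from $g_{T}\left(\kappa\right)>\mu$ with $\cof\left(\mu\right)=\aleph_{0}$ you only obtain single-formula patterns of depths $\mu_{n}$ along one fixed $\omega$-sequence cofinal in $\mu$, which falls far short of that hypothesis unless $\mu\geq\beth_{\aleph_{1}}\left(\kappa\right)$. Nor should you expect a soft compactness argument to merge the sub-patterns: passing from ``patterns of every depth below a limit'' to ``a pattern at the limit'' is exactly the step for which Proposition \ref{prop:If g is bigger than beth omega1 then it is infinite} needs the full $\beth_{\aleph_{1}}$-growth hypothesis and the two-cardinal machinery, so it does not come for free at an arbitrary singular $\mu$. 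The clean repair is to note that the singular case is never needed: wherever the lemma is applied one may replace $\beth_{\varepsilon}\left(\kappa\right)$ by $\beth_{\varepsilon+1}\left(\kappa\right)=2^{\beth_{\varepsilon}\left(\kappa\right)}$, whose cofinality is uncountable by K\"onig's theorem, so that your Case (a) already suffices; alternatively, state the lemma only for $\mu$ of uncountable cofinality. If you want the lemma for all $\mu>\aleph_{0}$ as literally stated, you must supply an actual argument for the countable-cofinality case, and the one you sketch does not go through.
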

\begin{proof}
By pigeon-hole.\end{proof}
\begin{prop}
\label{prop:If g is bigger than beth omega1 then it is infinite}If
for all $\varepsilon<\aleph_{1}$, there is some $\kappa$ such that
$g_{T}\left(\kappa\right)>\beth_{\varepsilon}\left(\kappa\right)$
then $g_{T}\left(\aleph_{0}\right)=\infty$. \end{prop}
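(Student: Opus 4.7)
The plan is to reduce to Morley's two-cardinal theorem (Fact~\ref{fac:L_omega1 sentence}) by encoding the existence of a non-forking pattern over a base model as a single sentence $\psi\in L_{\omega_1,\omega}$, with the predicate $P$ standing for the base model and $Q$ indexing the pattern. First, since $\beth_\varepsilon(\kappa)>\aleph_0$ for $\varepsilon\geq 1$, Lemma~\ref{lem:big g implies formula is constant} provides, for each $\varepsilon<\aleph_1$, a non-forking pattern of depth greater than $\beth_\varepsilon(\kappa_\varepsilon)$ over some model of size $\kappa_\varepsilon$ in which every $\varphi_\alpha$ equals one fixed formula of $L$. Since $L$ is countable, pigeonholing over $\aleph_1$-many $\varepsilon$ yields a fixed formula $\varphi(x,y)$ and an unbounded set $E\subseteq\aleph_1$ such that for every $\varepsilon\in E$ there is such a pattern with all formulas equal to $\varphi$.

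Next I would write down an $L_{\omega_1,\omega}$-sentence $\psi$ in $L$ expanded by unary predicates $P,Q$, a binary relation $<$, and unary function symbols $\{F_i:i<\omega\}$ (where $F_i(\alpha)$ is intended to denote the $i$-th entry of $\bar a_\alpha$). The sentence $\psi$ asserts: (a) the $L$-reduct is a model of $T$ and $P$ is an elementary substructure (first-order Tarski-Vaught schema); (b) $<$ linearly orders $Q$; (c) for each $\alpha\in Q$ the sequence $(F_i(\alpha))_{i<\omega}$ is indiscernible over $P\cup\{F_j(\beta):\beta<\alpha,\,j<\omega\}$ (first-order schema, one axiom per $L$-formula); and (d) for every $n<\omega$ and every $\alpha_1<\cdots<\alpha_n$ in $Q$, the conjunction $\chi_n=\bigwedge_{i\leq n}\bigl(\varphi(x,F_0(\alpha_i))\wedge\neg\varphi(x,F_1(\alpha_i))\bigr)$ does not fork over $P$. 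The key point is Lemma~\ref{lem:forking is L_omega1 definable}: ``$\chi_n$ does not fork over $P$'' is the negation of an explicit $L_{\omega_1,\omega}$-formula in the parameters, so (d) is a countable conjunction (over $n$) of first-order universally-quantified $L_{\omega_1,\omega}$-statements, which remains in $L_{\omega_1,\omega}$.

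For each $\varepsilon\in E$, the pattern constructed above (together with its base model $M$) naturally yields a structure $B_\varepsilon\models\psi$ with $|P^{B_\varepsilon}|=\kappa_\varepsilon$ and $Q^{B_\varepsilon}$ equal to the ordinal indexing the pattern, of size $>\beth_\varepsilon(\kappa_\varepsilon)=\beth_\varepsilon(|P^{B_\varepsilon}|)$. Since $Q^{B_\varepsilon}$ is a well-order of this length, $\beth_\varepsilon(|P^{B_\varepsilon}|)$ embeds into it as an initial segment. As $E$ is unbounded in $\aleph_1$, the hypothesis of Fact~\ref{fac:L_omega1 sentence} holds for $\psi$. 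Hence, for every cardinal $\lambda$, there is $B\models\psi$ with $|P^B|=\aleph_0$ and an order-embedding $\iota\colon\lambda\hookrightarrow(Q^B,<^B)$. Restricting the pattern inside $B$ to the image of $\iota$ gives a non-forking pattern of depth $\lambda$ over the countable model $P^B$: indiscernibility transfers because $\iota$ is order-preserving, and non-forking of the restricted partial type follows because every one of its finite sub-conjunctions is a finite sub-conjunction of the original (non-forking) partial type. Thus $g_T(\aleph_0)>\lambda$ for all $\lambda$, giving $g_T(\aleph_0)=\infty$.

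The main obstacle is the careful verification that clause (d) genuinely lies in $L_{\omega_1,\omega}$; this rests on Lemma~\ref{lem:forking is L_omega1 definable} and on the definition of non-forking of a partial type as non-forking of each of its finite sub-conjunctions. A smaller but essential point is making $\varphi$ independent of $\varepsilon$ via Lemma~\ref{lem:big g implies formula is constant} and pigeonhole, so that a single $\psi$ suffices as input to the two-cardinal theorem.
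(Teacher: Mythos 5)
Your proof is correct and follows essentially the same route as the paper: pigeonhole to fix a single formula $\varphi$ via Lemma \ref{lem:big g implies formula is constant}, encode the non-forking pattern as an $L_{\omega_{1},\omega}$-sentence (using Lemma \ref{lem:forking is L_omega1 definable} for the non-forking clause), and apply Fact \ref{fac:L_omega1 sentence} to pull the pattern down to a countable base. The only difference is cosmetic coding --- you use function symbols $F_{i}$ for the entries of the sequences where the paper uses the relations $R\left(-,\alpha\right)$ and $<_{R}$ --- and your closing verification that indiscernibility and non-forking survive restriction to the image of the order-embedding is the right (and in the paper implicit) final step.
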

\begin{proof}
By Lemma \ref{lem:big g implies formula is constant}, for every $\varepsilon<\aleph_{1}$
there is some formula $\varphi_{\varepsilon}$ and a non-forking pattern
$\left\{ \varphi_{\varepsilon},\bar{a}_{\alpha}^{\varepsilon}\left|\,\alpha<\beth_{\varepsilon}\left(\kappa\right)\right.\right\} $
over a model $M_{\varepsilon}$ of size $\kappa$. We may assume that
$\varphi_{\varepsilon}=\varphi$ for all $\varepsilon<\aleph_{1}$.

Let $\psi$ be the following $L_{\omega_{1},\omega}$ sentence in
the language
\[
\left\{ P\left(x\right),S\left(x\right),Q\left(\alpha\right),<\left(\alpha,\beta\right),R\left(x,\alpha\right),<_{R}\left(x,y,\alpha\right)\right\} \cup L\left(T\right)
\]
 saying:
\begin{enumerate}
\item $S\models T$
\item $P$ is an $L$-elementary substructure of $S$.
\item $S\cap Q=\emptyset$
\item The universe is $S\cup Q$.
\item $Q$ is infinite and $<$ is a linear order on $Q$.
\item For each $\alpha\in Q$, $R\left(-,\alpha\right)$ is infinite and
contained in $S$ and $<_{R}\left(-,-,\alpha\right)$ is discrete
linear order with a first element on $R\left(-,\alpha\right)$.
\item For each $\alpha\in Q$, $R\left(-,\alpha\right)$ is an $L$-indiscernible
sequence over $P\cup\bigcup_{\beta<\alpha}R\left(-,\beta\right)$
ordered by $<_{R}\left(-,-,\alpha\right)$.
\item The set $\left\{ \varphi\left(x,y_{\alpha,0}\right)\land\neg\varphi\left(x,y_{\alpha,1}\right)\left|\,\alpha\in Q\right.\right\} $
does not fork over $P$ (in the sense of $L$), where $y_{\alpha,0}$
and $y_{\alpha,1}$ are the first elements in the sequence $R\left(-,\alpha\right)$.
\end{enumerate}
Note that (6) can be expressed in $L_{\omega_{1},\omega}$ by Lemma
\ref{lem:forking is L_omega1 definable}.

As the assumptions of Fact \ref{fac:L_omega1 sentence} are satisfied,
for each $\lambda$ we find a model $B$ of $\psi$ such that:
\begin{itemize}
\item $\left|P^{B}\right|=\aleph_{0}$
\item There is an embedding $h$ of $\left(\lambda,<\right)$ into $\left(Q^{B},<^{B}\right)$.
\end{itemize}
For all $\alpha<\lambda$ let $\bar{a}_{\alpha}$ be an infinite sub-sequence
of $R\left(B,h\left(\alpha\right)\right)$ and let $M=P\left(B\right)$.
By (1) -- (8), it follows that $\left\{ \varphi,\bar{a}_{\alpha}\left|\,\alpha<\lambda\right.\right\} $
is a non-forking pattern of depth $\lambda$ over $M$ --- as wanted. \end{proof}
\begin{cor}

\begin{enumerate}
\item If for all $\varepsilon<\aleph_{1}$, there is some $\kappa$ such
that $g_{T}\left(\kappa\right)>\beth_{\varepsilon}\left(\kappa\right)$
then $f_{T}\left(\lambda,\kappa\right)\geq\ded\left(\lambda\right)$
for all $\lambda\geq\kappa$.
\item If for every $\varepsilon<\aleph_{1}$ there is some $\lambda\geq\beth_{\varepsilon}\left(\kappa\right)$
such that $f_{T}\left(\lambda,\kappa\right)>\lambda^{<\beth_{\varepsilon}\left(\kappa\right)}$
then $f_{T}\left(\lambda,\kappa\right)\geq\ded\left(\lambda\right)$
for all $\lambda\geq\kappa$.
\item If $f_{T}\left(\lambda,\kappa\right)>\lambda^{<\beth_{\aleph_{1}}\left(\kappa\right)}$
for some $\lambda\geq\beth_{\aleph_{1}}\left(\kappa\right)$, then
$f_{T}\left(\lambda,\kappa\right)\geq\ded\left(\lambda\right)$ for
all $\lambda\geq\kappa$.
\end{enumerate}
\end{cor}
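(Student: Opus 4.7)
The three parts form a cascade: the hypothesis of (3) implies the hypothesis of (2), which implies the hypothesis of (1), so the substance is in (1). My plan is therefore to prove (1) by feeding its hypothesis into Proposition~\ref{prop:If g is bigger than beth omega1 then it is infinite} and then Lemma~\ref{lem:If g is bigger than theta then f is bigger than lambda^theta}, and to reduce the remaining two parts to (1) by checking hypotheses.

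For (1), the hypothesis is \emph{verbatim} that of Proposition~\ref{prop:If g is bigger than beth omega1 then it is infinite}, which yields $g_T(\aleph_0)=\infty$; in particular $g_T(\aleph_0)>\theta$ for every cardinal $\theta$. I then apply Lemma~\ref{lem:If g is bigger than theta then f is bigger than lambda^theta} to get $f_T(\aleph_0,\lambda)\geq\lambda^{\langle\theta\rangle_{\tr}}$ whenever $\lambda\geq\kappa+\theta$. I choose $\theta$ large enough that $\lambda^{\langle\theta\rangle_{\tr}}\geq\ded(\lambda)$; concretely, I pick a linear order $L$ of size $\lambda$ realising (close to) $\ded(\lambda)$ many cuts and take $\theta$ large enough that the standard ``cut-approximation'' tree associated to $L$ --- a tree of size $\leq\lambda$ living in $\lambda^{<\theta}$ whose $\theta$-branches code the cuts of $L$ --- witnesses $\lambda^{\langle\theta\rangle_{\tr}}\geq\ded(\lambda)$. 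Finally, monotonicity of $f_T$ in its first argument (Lemma~\ref{lem:basic properties of f_T}(3)) lifts the bound from base $\aleph_0$ to any $\kappa\geq\aleph_0$.

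For (2), the goal is to check the hypothesis of (1): given $\varepsilon_0<\aleph_1$, produce some $\kappa$ with $g_T(\kappa)>\beth_{\varepsilon_0}(\kappa)$. I apply (2)'s hypothesis not at $\varepsilon_0$ but at a slightly larger countable ordinal $\varepsilon$ (say $\varepsilon=\varepsilon_0+3$) so that the witness $\lambda\geq\beth_\varepsilon(\kappa)$ automatically dominates the arithmetic side-condition $\lambda\geq\sum_{\mu<\beth_{\varepsilon_0}(\kappa)}2^{2^{\kappa+\mu}}$ of Lemma~\ref{lem:polynomial f_T implies g_T bigger than degree} (this sum is bounded by $\beth_{\varepsilon_0+2}(\kappa)$). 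Since also $f_T(\lambda,\kappa)>\lambda^{<\beth_\varepsilon(\kappa)}\geq\lambda^{<\beth_{\varepsilon_0}(\kappa)}$, Lemma~\ref{lem:polynomial f_T implies g_T bigger than degree} with $\theta=\beth_{\varepsilon_0}(\kappa)$ delivers $g_T(\kappa)>\beth_{\varepsilon_0}(\kappa)$. For (3), a single $\lambda\geq\beth_{\aleph_1}(\kappa)$ with $f_T(\lambda,\kappa)>\lambda^{<\beth_{\aleph_1}(\kappa)}$ serves as a witness for (2) at every $\varepsilon<\aleph_1$ simultaneously, since $\beth_\varepsilon(\kappa)<\beth_{\aleph_1}(\kappa)$ gives both $\lambda\geq\beth_\varepsilon(\kappa)$ and $\lambda^{<\beth_\varepsilon(\kappa)}\leq\lambda^{<\beth_{\aleph_1}(\kappa)}$.

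The one genuine obstacle I foresee is the calibration inside (1) --- choosing $\theta$ so that the tree-power $\lambda^{\langle\theta\rangle_{\tr}}$ really catches up with $\ded(\lambda)$. Once the appropriate $\theta$ is pinned down (which the definition of the tree-power should permit, given how $\ded(\lambda)$ is positioned in the Main Theorem's list), the remainder is routine: Lemmas~\ref{lem:polynomial f_T implies g_T bigger than degree} and~\ref{lem:If g is bigger than theta then f is bigger than lambda^theta} together with Proposition~\ref{prop:If g is bigger than beth omega1 then it is infinite} do all the work, and the reductions between the three parts are purely bookkeeping.
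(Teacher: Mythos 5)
Your proposal is correct and follows essentially the same route as the paper: (1) via Proposition~\ref{prop:If g is bigger than beth omega1 then it is infinite} plus Lemma~\ref{lem:If g is bigger than theta then f is bigger than lambda^theta}, (2) by verifying the hypothesis of (1) through Lemma~\ref{lem:polynomial f_T implies g_T bigger than degree} (the paper handles the arithmetic side-condition by taking $\varepsilon$ a limit ordinal and computing $\sum_{\mu<\beth_{\varepsilon}(\kappa)}2^{2^{\kappa+\mu}}=\beth_{\varepsilon}(\kappa)$, where you bump the index to $\varepsilon_{0}+3$ --- both work), and (3) as a special case of (2). The one point to tidy up is the ``calibration'' step in (1) that you flag as the genuine obstacle: since $\ded(\lambda)$ is a supremum that need not be attained, you cannot in general choose a single $\theta$ with $\lambda^{\left\langle \theta\right\rangle _{\tr}}\geq\ded(\lambda)$; the paper instead invokes Proposition~\ref{prop:definitions of ded}(\ref{enu:ded is supremum of tree exponent}), namely $\ded(\lambda)=\sup\left\{ \lambda^{\left\langle \theta\right\rangle _{\tr}}\left|\,\theta\leq\lambda\mbox{ regular}\right.\right\} $, and uses that $f_{T}(\kappa,\lambda)$ dominates every term of this supremum simultaneously.
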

\begin{proof}
(1) By Lemma \ref{prop:If g is bigger than beth omega1 then it is infinite},
we know that $g_{T}\left(\aleph_{0}\right)=\infty$. For any $\lambda\geq\kappa$,
by Lemma \ref{lem:If g is bigger than theta then f is bigger than lambda^theta}
we have that $f_{T}\left(\kappa,\lambda\right)\geq\lambda^{\left\langle \theta\right\rangle _{\tr}}$
for all $\theta\leq\lambda$. As $\ded\left(\lambda\right)=\sup\left\{ \lambda^{\left\langle \theta\right\rangle _{\tr}}\left|\,\theta\leq\lambda,\,\mbox{is regular}\right.\right\} $
by Proposition \ref{prop:definitions of ded} (\ref{enu:ded is supremum of tree exponent})
we get $f_{T}\left(\kappa,\lambda\right)\geq\ded\left(\lambda\right)$.

(2) Let $\varepsilon<\aleph_{1}$ be a limit ordinal and $\theta=\beth_{\varepsilon}\left(\kappa\right)$.
Then
\[
\sum_{\mu<\theta}2^{2^{\kappa+\mu}}=\sum_{\alpha<\varepsilon}2^{2^{\beth_{\alpha}\left(\kappa\right)}}=\sum_{\alpha<\varepsilon}\beth_{\alpha+2}\left(\kappa\right)=\beth_{\varepsilon}\left(\kappa\right).
\]
By Lemma \ref{lem:polynomial f_T implies g_T bigger than degree},
$g_{T}\left(\kappa\right)>\beth_{\varepsilon}\left(\kappa\right)$.
So we can apply (1) to conclude.

(3) follows from (2).
\end{proof}

\section{\label{sec:Inside NTP2}Inside $\protect\NTPT$}

$\NTPT$ is a large class of first-order theories containing both
NIP and simple theories introduced by Shelah. For a general treatment,
see \cite{ChernikovNTP2}. In this section we show that for theories
in this class, the non-forking spectra is well behaved, i.e. it cannot
take values between (\ref{enu:ded kappa ^ omega}) and (\ref{enu:2 ^ lambda}).
\begin{fact}
(see e.g. \cite{HP}) Let $p\left(x\right)$ be a global type non-splitting
over a set $A$. For any set $B\supseteq A$, and an ordinal $\alpha$,
let the sequence $\bar{c}=\left\langle c_{i}\left|\,i<\alpha\right.\right\rangle $
be such that $c_{i}\models p|_{Bc_{<i}}$. Then $\bar{c}$ is indiscernible
over $B$ and its type over $B$ does not depend on the choice of
$\bar{c}$. Call this type $p^{\left(\alpha\right)}|_{B}$, and let
$p^{\left(\alpha\right)}=\bigcup_{B\supseteq A}p^{\left(\alpha\right)}|_{B}$.
Then $p^{\left(\alpha\right)}$ also does not split over $A$. \end{fact}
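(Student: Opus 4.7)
The plan is to prove the indiscernibility of $\bar{c}$ and the uniqueness of its type over $B$ simultaneously, by induction on the length of finite sub-tuples; then everything else follows formally.

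First I would isolate the basic manipulation of non-splitting global types that underlies the whole statement: if $p$ does not split over $A$, and $\sigma$ is an automorphism of $\C$ fixing $A$ pointwise, then for any small set $C$ containing $A$ one has $\sigma(p|_C) = p|_{\sigma(C)}$. Indeed, for $c \in C$, $\tp(c/A) = \tp(\sigma(c)/A)$, so by non-splitting $\varphi(x,c) \in p \Leftrightarrow \varphi(x,\sigma(c)) \in p$, which gives the claim. This is the only place the non-splitting hypothesis enters, and once proved it will be invoked repeatedly.

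Next, I would prove the following by induction on $n < \omega$: for any two sequences $\bar{c},\bar{c}'$ satisfying the recipe in the statement, and any $i_0 < \cdots < i_{n-1}$ and $j_0 < \cdots < j_{n-1}$ in $\alpha$,
\[
\tp(c_{i_0},\ldots,c_{i_{n-1}}/B) = \tp(c'_{j_0},\ldots,c'_{j_{n-1}}/B).
\]
The base case is trivial, and in the inductive step, pick a $B$-automorphism $\sigma$ witnessing the equality for length $n$, i.e.\ $\sigma(c_{i_k}) = c'_{j_k}$ for $k<n$. Since $c_{i_n} \models p|_{B c_{i_0}\cdots c_{i_{n-1}}}$, applying $\sigma$ and using the basic manipulation above gives $\sigma(c_{i_n}) \models p|_{B c'_{j_0}\cdots c'_{j_{n-1}}}$, and $c'_{j_n}$ satisfies the same complete type. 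Taking $\bar{c}' = \bar{c}$ with distinct index tuples yields indiscernibility of $\bar{c}$ over $B$, and taking $i_k = j_k$ for arbitrary $\bar{c}, \bar{c}'$ yields uniqueness of the type, so the notation $p^{(\alpha)}|_B$ is legitimate. For infinite $\alpha$, unique determination of all finite sub-tuple types extends to unique determination of the full type $p^{(\alpha)}|_B$.

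Finally, to check that $p^{(\alpha)} = \bigcup_{B \supseteq A} p^{(\alpha)}|_B$ is itself non-splitting over $A$, suppose $\tp(b/A) = \tp(b'/A)$ and $\varphi(\bar{x}, b) \in p^{(\alpha)}|_{Ab}$, witnessed by some sequence $\bar{c}$ built over $Ab$. Pick an $A$-automorphism $\sigma$ with $\sigma(b) = b'$. Using the basic manipulation once more, $\sigma(c_i) \models p|_{Ab'\sigma(c_{<i})}$ for every $i$, so $\sigma(\bar{c})$ is a valid generating sequence over $Ab'$. Hence by uniqueness $\tp(\sigma(\bar{c})/Ab') = p^{(\alpha)}|_{Ab'}$, and applying $\sigma$ to $\models \varphi(\bar{c}, b)$ gives $\varphi(\bar{x}, b') \in p^{(\alpha)}|_{Ab'}$. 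The main thing to be careful about is the clean bookkeeping in step two — making sure the non-splitting transfer lemma is being applied to the right small set at each step — but no deeper obstacle appears to arise.
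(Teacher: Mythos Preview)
The paper does not actually prove this statement: it is recorded as a \emph{Fact} with a citation to \cite{HP} and no proof is given. So there is nothing to compare your argument against in the paper itself.

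That said, your proof is correct and is essentially the standard argument one finds in the cited literature. The only small point worth making explicit is the coherence check that $p^{(\alpha)}$ is well-defined as a global type: for $A \subseteq B \subseteq B'$, a generating sequence over $B'$ is automatically a generating sequence over $B$, so by the uniqueness you established, $p^{(\alpha)}|_{B'}$ restricts to $p^{(\alpha)}|_{B}$. You implicitly rely on this when forming the union $\bigcup_{B \supseteq A} p^{(\alpha)}|_B$, but it follows immediately from what you have already shown.
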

\begin{defn}
(strict invariance) Let $p\left(x\right)$ be a global type. We say
that $p$ is \emph{strictly invariant} over a set $A$ if $p$ does
not split over $A$, and if $B\supseteq A$ and $c\models p|_{B}$
then $\tp\left(B/cA\right)$ does not fork over $A$. \end{defn}
\begin{lem}
\label{lem:making p into an heir}Let $p$ be a global type finitely
satisfiable in $A$. Then there is some model $M\supseteq A$ with
$\left|M\right|\leq\left|A\right|+\aleph_{0}$ such that $p^{\left(\omega\right)}$
is strictly invariant over $M$. \end{lem}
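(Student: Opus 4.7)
The plan is to take any model $M \supseteq A$ of size at most $|A|+\aleph_{0}$, available by L\"owenheim--Skolem, and argue that strict invariance of $p^{(\omega)}$ over such an $M$ follows automatically from $p$ being finitely satisfiable in $M$ (which is immediate because $A \subseteq M$). The two ingredients I would combine are the propagation of finite satisfiability from $p$ to every $p^{(n)}$, and the classical heir--coheir duality.

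I would first verify by induction on $n$ that $p^{(n)}$ is finitely satisfiable in $M$. Given $\phi(y_{0},\ldots,y_{n-1},\bar{d}) \in p^{(n)}$, by definition $\phi(c_{0},\ldots,c_{n-2},y,\bar{d}) \in p$ for $c_{i} \models p|_{\bar{d}c_{<i}}$, so finite satisfiability of $p$ in $M$ yields $m_{n-1} \in M$ with $\models \phi(c_{0},\ldots,c_{n-2},m_{n-1},\bar{d})$; iterating with $m_{n-1},\bar{d}$ as parameters and peeling off each $c_{i}$ in turn, one obtains $m_{0},\ldots,m_{n-1} \in M$ with $\models \phi(m_{0},\ldots,m_{n-1},\bar{d})$. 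By compactness $p^{(\omega)}$ itself is finitely satisfiable in $M$, and in particular does not split over $M$ --- the first clause of strict invariance.

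For the second clause, fix $B \supseteq M$ and $\bar{c} \models p^{(\omega)}|_{B}$. Since $\tp(\bar{c}/BM) = p^{(\omega)}|_{BM}$ is finitely satisfiable in $M$, classical heir--coheir duality forces $\tp(B/\bar{c}M)$ to be an heir of $\tp(B/M)$ over $M$, and heir extensions do not fork. To verify the heir property directly, take $\phi(\bar{x},\bar{c}',\bar{m}) \in \tp(B/\bar{c}M)$ with $\bar{c}' \subseteq \bar{c}$ finite, $\bar{m} \in M$, and witness $\bar{b} \in B$; by $B$-indiscernibility of $\bar{c}$ (from the Fact preceding the definition of strict invariance) I may replace $\bar{c}'$ by the initial segment $(c_{0},\ldots,c_{n-1})$ without changing $\tp(\bar{c}'/B)$. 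Then $\phi(\bar{b},\bar{y},\bar{m}) \in p^{(n)}|_{B}$, and finite satisfiability of $p^{(n)}$ in $M$ applied to this formula yields $\bar{m}'' \in M$ with $\models \phi(\bar{b},\bar{m}'',\bar{m})$, i.e., $\phi(\bar{x},\bar{m}'',\bar{m}) \in \tp(B/M)$, as required.

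The main technical point is the inductive verification of finite satisfiability of $p^{(n)}$ via the peeling-off argument; the reduction of an arbitrary sub-tuple of $\bar{c}$ to an initial segment is a one-line consequence of $B$-indiscernibility, and the heir--coheir duality together with non-forking of heir extensions are standard. No special construction of $M$ is needed beyond having $M$ be a model containing $A$.
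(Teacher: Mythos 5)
Your first paragraph is fine: the peeling-off argument does show that each $p^{(n)}$, and hence $p^{(\omega)}$, is finitely satisfiable in $A$ (so in any $M\supseteq A$), and finite satisfiability gives non-splitting. The gap is in the second clause. The duality is applied correctly as far as it goes: from $\tp(\bar c/B)$ being finitely satisfiable in $M$ you may conclude that $\tp(B/M\bar c)$ is an \emph{heir} of $\tp(B/M)$. But ``heir extensions do not fork'' is not a standard fact, and it is false in general; the true statement is the dual one, that \emph{coheirs} (finitely satisfiable extensions) do not fork. Heirs can even divide. For example, in the dense circular order take a model $M$, a non-algebraic $p\in S_1(M)$ given by a cut with no nearest point of $M$ on either side, realizations $a,c$ of $p$ with $c$ clockwise of $a$ inside the cut, and $b$ in the arc from $a$ to $c$. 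By quantifier elimination, $\tp(b/Mac)$ is an heir of $\tp(b/M)$ (any formula is reflected by replacing $a,c$ with points of $M$ squeezed between the cut and the finitely many $M$-parameters occurring in the formula), yet it contains $C(a,x,c)$, which $2$-divides over $M$ via an $M$-indiscernible sequence of pairwise disjoint arcs inside the cut. So your argument delivers ``heir'' and stops short of ``non-forking''; whether the conclusion happens to hold for an arbitrary $M$ is beside the point, since the justification offered does not establish it.

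This is precisely what the paper's construction, which you declare unnecessary, is for. The paper builds $M=\bigcup_{i<\omega}M_i$ so that every formula $\varphi(x,c)\in p^{(\omega)}$ with $\varphi$ over $M$ admits a reflection $\varphi(x,c')\in p^{(\omega)}$ with $c'\in M$; that is, the \emph{global} type $p^{(\omega)}$ is made into an heir of $p^{(\omega)}|_M$ (hence the name of the lemma). Running the heir--coheir duality in the direction opposite to yours, this gives that for every $B\supseteq M$ and $\bar c\models p^{(\omega)}|_B$ the type $\tp(B/M\bar c)$ is finitely satisfiable in $M$, i.e.\ a coheir of $\tp(B/M)$, and coheirs genuinely do not fork. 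The chain construction is therefore not a dispensable technicality: it is exactly what upgrades the ``heir'' you obtain for free into the ``coheir'' that the forking argument requires.
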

\begin{proof}
Let $M_{0}$ be some model containing $A$ of size $\left|A\right|+\aleph_{0}$.
Construct by induction an increasing sequence of models $M_{i}$ for
$i<\omega$, such that $\left|M_{i}\right|=\left|M_{0}\right|$ and
for every formula $\varphi\left(x,y\right)$ over $M$ if $\varphi\left(x,c\right)\in p^{\left(\omega\right)}$
for some $c$, then there is some $c'\in M_{i+1}$ such that $\varphi\left(x,c'\right)\in p^{\left(\omega\right)}$.
Let $M=\bigcup_{i<\omega}M_{i}$.
\end{proof}
In lieu of giving a definition of $\NTPT$, we only state the properties
which we will be using.
\begin{fact}
\label{fac:Cheka}\cite{cheka} Let $T$ be $\NTPT$ and $M\models T$,
then:
\begin{enumerate}
\item $\varphi\left(x,c\right)$ divides over $M$ if and only if $\varphi\left(x,c\right)$
forks over $M$.
\item Let $p\left(x\right)$ is a global type strictly invariant over $M$
and $\left\langle c_{i}\left|\,i<\omega\right.\right\rangle \models p^{\left(\omega\right)}|_{M}$.
Then for any formula $\varphi\left(x,c_{0}\right)$ dividing over
$M$, $\left\{ \varphi\left(x,c_{i}\right)\left|\,i<\omega\right.\right\} $
is inconsistent.
\end{enumerate}
\end{fact}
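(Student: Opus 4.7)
Both parts will be proved using the strictly invariant Morley sequences furnished by Lemma \ref{lem:making p into an heir}. For (1), the implication dividing implies forking is immediate. For the converse, I argue by contradiction: assume $\varphi(x,c)$ does not divide over $M$ while $\varphi(x,c)\vdash\bigvee_{j<n}\psi_{j}(x,a_{j})$ with each $\psi_{j}(x,a_{j})$ dividing over $M$, and extract a pattern contradicting $\NTPT$.

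First, extend $\tp(c/M)$ to a global type $p$ finitely satisfiable in $M$, and apply Lemma \ref{lem:making p into an heir} to enlarge $M$ so that $p^{(\omega)}$ is strictly invariant over $M$ (the size of $M$ is preserved, which is all we need). Let $\langle c_{i}:i<\omega\rangle\models p^{(\omega)}|_{M}$; this sequence is $M$-indiscernible with each $c_{i}\equiv_{M}c$. Non-dividing of $\varphi(x,c)$ over $M$ yields a common realization $b$ of $\{\varphi(x,c_{i}):i<\omega\}$. For each $i$ choose a tuple $\bar{a}_{i}\equiv_{Mc_{i}}(a_{0},\ldots,a_{n-1})$ such that $\models\psi_{j_{i}}(b,a_{i,j_{i}})$ for some $j_{i}<n$; pigeonhole makes $j_{i}=j$ constant. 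Setting $\psi=\psi_{j}$ and $e_{i}=a_{i,j}$, the core of the argument is to extend each $e_{i}$ to an $M$-indiscernible sequence witnessing that $\psi(x,e_{i})$ divides over $M$, while $b$ remains a common realization of all columns. Strict invariance of $p$ is precisely the hypothesis that lets one splice these dividing witnesses together inductively without breaking column consistency; the resulting array is a pattern of the tree property of the second kind for $\psi$, contradicting $\NTPT$.

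For (2), I argue that if $\varphi(x,c_{0})$ divides over $M$ then $\{\varphi(x,c_{i}):i<\omega\}$ is inconsistent. Start with an $M$-indiscernible witness $\langle d_{k}^{0}:k<\omega\rangle$ of $k$-dividing of $\varphi(x,c_{0})$ with $d_{0}^{0}=c_{0}$. Then inductively construct rows $\langle d_{k}^{i}:k<\omega\rangle$ with $d_{0}^{i}=c_{i}$, at each stage $i$ exploiting that $\tp(Mc_{<i}d^{<i}/Mc_{i})$ does not fork over $M$ (this follows from strict invariance of $p$ applied along the Morley sequence) to ensure each new row has the same $\EM$-type over $M$ as the earlier ones. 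Every row then $k$-divides the formula indexed by its first element. If $\{\varphi(x,c_{i})\}$ were consistent, the completed array would form a pattern of the tree property of the second kind, contradicting $\NTPT$.

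The main obstacle in both halves is to run the row-by-row construction so that strict invariance is maintained throughout; this is exactly what prevents forking-induced obstructions to the common realization $b$ in (1) and to column consistency in (2). This also explains why strictly invariant Morley sequences are needed rather than merely finitely satisfiable ones, and why the $\NTPT$ hypothesis is essential for the argument to terminate in a contradiction.
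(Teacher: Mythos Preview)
The paper does not prove this statement: it is stated as a \emph{Fact} with a citation to \cite{cheka} and is used as a black box in the proof of Theorem~\ref{thm:IP in NTP2}. There is therefore no proof in the present paper to compare your proposal against.

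That said, your sketch of (1) has a real gap. You begin by enlarging $M$ via Lemma~\ref{lem:making p into an heir} so that $p^{(\omega)}$ becomes strictly invariant, remarking that ``the size of $M$ is preserved, which is all we need.'' But the statement is about forking and dividing over a \emph{fixed} model $M$, not about a model of a given size. When you pass to a larger model $M'\supseteq M$, non-dividing of $\varphi(x,c)$ over $M'$ still holds, but there is no reason the dividing witnesses $\psi_{j}(x,a_{j})$ continue to divide over $M'$; dividing over $M$ does not in general imply dividing over a superset. So the contradiction you aim for may simply evaporate after the enlargement. In the source \cite{cheka}, this is handled differently: one first proves (2) (Kim's lemma for strictly invariant sequences) directly, then combines it with the existence of global strict coheirs over the \emph{given} model $M$ and an additional combinatorial argument (the ``broom lemma'') to deduce (1) without changing $M$.

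Your outline for (2) is in the right spirit and close to the standard argument, though the inductive step (``exploiting that $\tp(Mc_{<i}d^{<i}/Mc_{i})$ does not fork over $M$'') needs more care: strict invariance gives you that $\tp(B/Mc_{i})$ does not fork over $M$ for any $B$ realizing types along the Morley sequence, and one uses this together with an automorphism to transport the dividing witness row by row. As written, the step is asserted rather than carried out.
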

Improving on \cite[Theorem 4.3]{cheka} we establish the following:
\begin{thm}
\label{thm:IP in NTP2}Let $T$ be $\NTPT$. Then the following are
equivalent:
\begin{enumerate}
\item $f_{T}\left(\kappa,\lambda\right)>\left(\ded\kappa\right)^{\aleph_{0}}$
for some $\lambda\geq\kappa$.
\item $T$ has IP.
\item $f_{T}\left(\kappa,\lambda\right)=2^{\lambda}$ for every $\lambda\geq\kappa$.
\end{enumerate}
\end{thm}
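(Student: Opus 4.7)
The plan is to prove the three equivalences cyclically via $(3) \Rightarrow (1) \Rightarrow (2) \Rightarrow (3)$. The first two implications are essentially immediate: $(3) \Rightarrow (1)$ is the cardinal-arithmetic observation that $(\ded\kappa)^{\aleph_{0}}\leq 2^{\kappa}$, so choosing $\lambda$ large enough gives $2^{\lambda}>(\ded\kappa)^{\aleph_{0}}$; and $(1)\Rightarrow(2)$ is the contrapositive of Fact \ref{fac:NIP}, which bounds $|S^{\nf}(N,M)|$ by $(\ded|M|)^{\aleph_{0}}$ whenever $T$ is NIP.

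The real content lies in $(2)\Rightarrow(3)$. I would fix a formula $\varphi(x,y)$ with IP and, via standard Ramsey-and-compactness extraction, produce a model $M_{0}$ and an $M_{0}$-indiscernible sequence $\langle b_{i}:i<\omega\rangle$ witnessing the strong form of IP: $\{\varphi(x,b_{i})^{(i\in s)}:i<\omega\}$ is consistent for every $s\subseteq\omega$. After absorbing the $b_{i}$'s into a model of size $\kappa$, I would take a global type $q(y)$ finitely satisfiable in $\{b_{i}\}$ such that $\langle b_{i}\rangle$ is the beginning of its Morley sequence over $M$, and apply Lemma \ref{lem:making p into an heir} to enlarge $M$ further (still of size $\kappa$) so that $q^{(\omega)}$ is strictly invariant over $M$. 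Continuing the Morley sequence up to length $\lambda$ produces $\langle b_{i}:i<\lambda\rangle$ on which the strong IP property persists (same EM-type), and I take $N\succeq M$ of size $\lambda$ containing all $b_{i}$. For each $s\subseteq\lambda$, the partial type $p_{s}(x)=\{\varphi(x,b_{i})^{(i\in s)}:i<\lambda\}$ is then consistent.

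The expected main obstacle is showing that each $p_{s}$ does not fork over $M$. Suppose otherwise; then a finite subconjunction $\psi(x;b_{k_{1}},\ldots,b_{k_{n}})$ with $k_{1}<\cdots<k_{n}$ forks, hence divides over $M$ by Fact \ref{fac:Cheka}(1). By $M$-indiscernibility I can relabel so that $\psi(x;b_{0},\ldots,b_{n-1})$ divides over $M$. Then the $n$-block sequence $\bigl\langle(b_{nk},\ldots,b_{nk+n-1}):k<\omega\bigr\rangle$ is a Morley sequence of the strictly $M$-invariant type $q^{(n)}$ (a reduct of $q^{(\omega)}$), so Fact \ref{fac:Cheka}(2) forces $\{\psi(x;b_{nk},\ldots,b_{nk+n-1}):k<\omega\}$ to be inconsistent --- yet this family is a specific sign pattern of the $\varphi(x,b_{j})$'s on a subsequence of $\langle b_{i}\rangle$, which must be consistent by the strong IP property, a contradiction. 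Each $p_{s}$ therefore extends to an element of $S^{\nf}(N,M)$; distinct $s$ yield distinct extensions since they disagree on $\varphi(x,b_{i})$ for $i$ in the symmetric difference, and this furnishes $2^{\lambda}$ non-forking types, matching the trivial upper bound $f_{T}(\kappa,\lambda)\leq 2^{\lambda}$.
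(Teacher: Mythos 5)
Your proposal is correct and takes essentially the same route as the paper: the easy implications are handled identically, and for $(2)\Rightarrow(3)$ the paper likewise absorbs the IP witness into a small model, uses Lemma \ref{lem:making p into an heir} to get a strictly invariant type, stretches its Morley sequence to length $\lambda$, derives consistency of each $\pm$-pattern from finite satisfiability, and rules out dividing of any finite fragment by applying Fact \ref{fac:Cheka}(2) to the $n$-block Morley sequence of $q^{(n)}$, concluding via Fact \ref{fac:Cheka}(1). The only (cosmetic) blemish is the phrase ``$\langle b_{i}\rangle$ is the beginning of its Morley sequence over $M$'' --- once the $b_{i}$ lie in $M$ the Morley sequence over $M$ is a fresh sequence, and the persistence of the IP patterns should be justified, as you in effect do, by finite satisfiability of $q^{(n)}|_{M}$ in the (distinct, non-algebraic) $b_{i}$'s rather than by a literal initial-segment claim.
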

\begin{proof}
(1) implies (2) follows from Fact \ref{fac:NIP} and (3) implies (1)
is clear.

(2) implies (3): Fix $\lambda\geq\kappa$. Let $\varphi\left(x,y\right)$
have IP, and $\bar{a}=\left\langle a_{i}\left|\,i<\omega\right.\right\rangle $
be an indiscernible sequence such that $\forall U\subseteq\omega\exists b_{U}\,\varphi\left(a_{i},b_{U}\right)\Leftrightarrow i\in U$.
Let $p\left(x\right)$ be a global non-algebraic type finitely satisfiable
in $\bar{a}$. By Lemma \ref{lem:making p into an heir}, there a
model $M\supseteq\bar{a}$ be such that $\left|M\right|\leq\aleph_{0}$
and $p^{\left(\omega\right)}$ is strictly invariant over $M$.

Let $\bar{b}=\left\langle b_{i}\left|\,i<\lambda\right.\right\rangle $
realize $p^{\left(\lambda\right)}|_{M}$. We show that $p_{\eta}\left(x\right)=\left\{ \varphi\left(x,b_{i}\right)^{\mbox{if }\eta\left(i\right)=1}\left|\,i<\lambda\right.\right\} $
does not divide over $M$ for any $\eta\in2^{\lambda}$. 

First note
that $p_{\eta}\left(x\right)$ is consistent for any $\eta$, as $\tp\left(\bar{b}/M\right)$
is finitely satisfiable in $\bar{a}$. But as for any $k<\omega$,
$\left\langle \left(b_{k\cdot i},b_{k\cdot i+1},\ldots,b_{k\cdot\left(i+1\right)-1}\right)\left|\,i<\omega\right.\right\rangle $
realizes $\left(p^{\left(k\right)}\right)^{\left(\omega\right)}$,
Fact \ref{fac:Cheka}(2) implies that $p_{\eta}\left(x\right)|_{b_{0}\ldots b_{k-1}}$
does not divide over $M$ for any $k<\omega$. Thus by indiscernibility
of $\bar{b}$, $p_{\eta}(x)$ does not divide over $M$.

Take $N\supseteq\bar{b}\cup M$ of size $\lambda$. By Fact \ref{fac:Cheka}(1)
every $p_{\eta}$ extends to some $p'_{\eta}\in S^{\nf}\left(N,M\right)$,
thus $f_{T}\left(\kappa,\lambda\right)=2^{\lambda}$.
\end{proof}

\section{Examples}

\subsection{Examples of (\ref{enu:kappa}) -- (\ref{enu:ded kappa ^ omega}).}
\begin{prop}

\begin{enumerate}
\item If $T$ is the theory of equality, then $f_{T}\left(\kappa,\lambda\right)=\kappa$
for all $\lambda\geq\kappa$.
\item Let $T$ be the model companion of the theory of countably many unary
relations then $f_{T}\left(\kappa,\lambda\right)=\kappa+2^{\aleph_{0}}$
for all $\lambda\geq\kappa$.
\item Let $T$ be the model companion of the theory of countably many equivalence
relations then $f_{T}\left(\kappa,\lambda\right)=\kappa^{\aleph_{0}}$
for all $\lambda\geq\kappa$.
\item Let $T=DLO$. Then $f_{T}\left(\kappa,\lambda\right)=\ded\left(\kappa\right)$
for all $\lambda\geq\kappa$.
\item Let $T$ be the model companion of infinitely many linear orders.
Then $f_{T}\left(\kappa,\lambda\right)=\ded\left(\kappa\right)^{\aleph_{0}}$.
\end{enumerate}
\end{prop}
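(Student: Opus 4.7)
The plan is to treat (1)--(3) and (4)--(5) separately, as the stable and unstable cases call for different tools. In each case the lower bound reduces to exhibiting a model realizing the claimed number of types; the substance lies in the upper bound.

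Parts (1), (2) and (3) are all stable, so by Remark~\ref{rem:stable theories} we have $f_T(\kappa,\lambda)=f_T(\kappa)$ and it suffices to compute the stability function. For (1), a non-algebraic $1$-type over $M$ is unique (the generic type ``$x\ne m$ for all $m\in M$''), so $f_T(\kappa)=\kappa$. For (2), by quantifier elimination in the model companion a $1$-type is either $x=m$ for some $m\in M$ or is determined by a choice of truth value for each of the $\aleph_0$ unary predicates, giving $\kappa+2^{\aleph_0}$ types. For (3), a non-algebraic type is determined by specifying, for each of the countably many equivalence relations $E_n$, either an existing $M$-class or a new class, yielding $\kappa^{\aleph_0}$; the model-companion axioms ensure every such specification is realized in a saturated model.

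For (4), $DLO$ admits quantifier elimination with $\acl(M)=M$, so a $1$-type over any set corresponds to a Dedekind cut. The key claim is that each cut $c$ of $M$ admits at most two non-forking extensions to $N$: writing $C\subseteq N\setminus M$ for the realizations of $c$, the only candidates placing $x$ in the gap are ``$x<n$ for every $n\in C$'' and ``$n<x$ for every $n\in C$''. Any ``interior'' type containing $n_1<x<n_2$ with $n_1<n_2\in C$ forks over $M$, because the pair $(n_1,n_2)$ has (by quantifier elimination and density) an $M$-indiscernible sequence of pairs $(n_1^i,n_2^i)_{i<\omega}$ with pairwise disjoint intervals, making $\{n_1^i<x<n_2^i\mid i<\omega\}$ $2$-inconsistent; the algebraic type $x=n$ with $n\in C$ divides by the same indiscernibility argument. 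Hence $|S^{\nf}(N,M)|\le 2\,\ded\kappa+\kappa=\ded\kappa$, while the reverse inequality is immediate from $f_T(\kappa)=\ded\kappa$.

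For (5), the model companion again has quantifier elimination, and each $1$-type over $M$ is determined by independently choosing a cut in each of the $\aleph_0$ linear orders. Running the argument of (4) coordinatewise gives $|S^{\nf}(N,M)|\le(\ded\kappa)^{\aleph_0}$; the lower bound follows by realizing every prescribed sequence of cuts (one per order) in a sufficiently saturated model, which the model-companion axioms guarantee is possible. The main technical point throughout is the dividing analysis for $DLO$ in (4), namely producing the pairwise disjoint indiscernible intervals that witness dividing of ``interior'' formulas; once this is in hand, (5) is essentially a coordinatewise repetition.
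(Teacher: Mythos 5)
Your proof of (1)--(3) is the same as the paper's: reduce to the stability function via Remark \ref{rem:stable theories} and quote the known values. For (4) and (5), however, you take a genuinely different route. The paper stays entirely inside the NIP machinery: for (4) it observes that every type over a model of $DLO$ has only finitely many non-splitting global extensions and that, by Fact \ref{fac:non-splitting}(3), non-forking extensions of NIP theories do not split, so $f_{T}(\kappa,\lambda)=f_{T}(\kappa)=\ded\kappa$; for (5) it simply invokes the general NIP counting bound $\left|S^{\nf}(N,M)\right|\leq\left(\ded\left|M\right|\right)^{\aleph_{0}}$ of Fact \ref{fac:NIP}. You instead carry out a direct dividing analysis: the formula $n_{1}<x<n_{2}$, with $n_{1},n_{2}$ realizing the same cut of $M$, $2$-divides over $M$ via an $M$-indiscernible sequence of pairwise disjoint intervals (which exists by quantifier elimination and density), and $x=n$ for $n\notin M=\acl(M)$ divides likewise; hence each cut of $M$ has at most two non-forking completions over $N$, and the coordinatewise version handles (5). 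Both arguments are correct. Yours is more elementary and self-contained --- it explains concretely \emph{why} the interior types fork, and it does not rely on the NIP splitting facts --- at the cost of being longer; the paper's is essentially a two-line deduction from cited general facts. One small point worth making explicit in your version of (5): you should note that a formula dividing in a single $<_{n}$-reduct still divides in the full theory, i.e.\ that the witnessing indiscernible sequence of pairs can be chosen indiscernible in the whole language (this is immediate from quantifier elimination and the independence of the orders in the model companion, but it is the step that makes ``coordinatewise'' legitimate), and that a non-forking type restricts to a non-forking type in each coordinate so that the product bound $\left(\ded\kappa\right)^{\aleph_{0}}$ applies.
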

\begin{proof}
(1) -- (3): it is well known that these examples have the corresponding
$f_{T}\left(\kappa\right)$'s, and that they are stable. It follows
from Remark \ref{rem:stable theories} that they have the corresponding
$f_{T}\left(\kappa,\lambda\right)$.

(4): It is easy to check that every type has finitely many non-splitting
global extensions, but DLO is NIP so by Fact \ref{fac:non-splitting}
every non-forking extension is non-splitting. Since $f_{T}\left(\kappa\right)=\ded\left(\kappa\right)$
for this theory, we are done.

(5): This theory is NIP so $f_{T}\left(\kappa,\lambda\right)\leq\ded\left(\kappa\right)^{\aleph_{0}}$
by Fact \ref{fac:NIP}, and clearly $f_{T}\left(\kappa\right)=\left(\ded\kappa\right)^{\aleph_{0}}$.
\end{proof}

\subsection{Circularization.\protect \\
}

We shall first describe a general construction for examples of non-forking
spectra functions.

For this section, a ``formula'' means an $\emptyset$-definable
formula unless otherwise specified. Most formulas we work with are
partitioned formulas, $\varphi\left(\bar{x};\bar{y}\right)$, where
the variables are broken into two distinct sets. We write $\varphi$
instead of $\varphi\left(\bar{x};\bar{y}\right)$ when the partition
is clear from the context. We let $\varphi^{1}=\varphi$ and $\varphi^{0}=\neg\varphi$.
We assume that our languages relational in this section (so a subset
is a substructure).

\subsubsection{Circularization: Base step.\protect \\
}

The dense circular order was used as an example of a theory where
forking is not the same as dividing (see e.g. \cite[Example 2.11]{KimThesis}).
The reason is that with circular ordering around, it is hard not to
fork.
\begin{defn}
A \emph{circular order} on a finite set is a ternary relation obtained
by placing the points on a circle and taking all triples in clockwise
order. For an infinite set, a circular order is a ternary relation
such that the restriction to any finite set is a circular order. Equivalently,
a circular order is a ternary relation $C$ such that for every $x$,
$C\left(x,-,-\right)$ is a linear order on $\left\{ y\left|\,y\neq x\right.\right\} $
and $C\left(x,y,z\right)\to C\left(y,z,x\right)$ for all $x,y,z$.
Denote the theory of circular orders by $T_{C}$.
\end{defn}
The following definitions are well-known.
\begin{defn}
Let $K$ be a class of $L$-structures (where $L$ is relational).
\begin{enumerate}
\item We say that $K$ has the \emph{strong amalgamation property} \emph{(SAP)}
if for every $A,B,C\in K$ and embeddings $i_{1}:A\to B$ and $i_{2}:A\to C$
there exist both a structure $D\in K$ and embeddings $j_{1}:B\to D$,
$j_{2}:C\to D$ such that

\begin{enumerate}
\item $j_{1}\circ i_{1}=j_{2}\circ i_{2}$ and
\item $j_{1}\left(B\right)\cap j_{2}\left(C\right)=(j_{1}\circ i_{1})\left(A\right)=(j_{2}\circ i_{2})\left(A\right)$.
\end{enumerate}
\item We say that $K$ has the \emph{disjoint embedding property} \emph{(DEP)}
if for any 2 structures $A,B\in K$, there exists a structure $C\in K$
and embeddings $j_{1}:B\to C$, $j_{2}:A\to C$ such that $j_{1}\left(A\right)\cap j_{2}\left(B\right)=\emptyset$.
\item We say that a first-order theory $T$ has these properties if its
class of (finite) models has them.
\end{enumerate}
\end{defn}
Note that
\begin{rem}
$T_{C}$ is universal and it has DEP and SAP.\end{rem}
\begin{fact}
\label{fac:Hodges}Let $T$ be a universal theory with DEP and SAP
in a finite relational language $L$, then:
\begin{enumerate}
\item \cite[Theorem 7.4.1]{Hod} It has a model completion $T_{0}$ which
is $\omega$-categorical and eliminates quantifiers.
\item \cite[Theorem 7.1.8]{Hod} If $A\subseteq M\models T_{0}$ then $\acl\left(A\right)=A$.
\end{enumerate}
\end{fact}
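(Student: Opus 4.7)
The plan is to derive both items from Fra\"iss\'e's theorem applied to the class $K$ of finite $L$-structures embeddable into models of $T$. Since $T$ is universal, $K$ coincides with the class of finite models of $T$ and is automatically closed under substructures (HP). The DEP yields JEP at once, SAP gives AP, and since $L$ is finite relational there are only finitely many isomorphism types of $n$-element structures in $K$ for each $n$. Fra\"iss\'e's theorem then produces a (unique up to isomorphism) countable ultrahomogeneous structure $U$ whose age is exactly $K$.

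Setting $T_{0}:=\operatorname{Th}(U)$, I would derive (1) by standard arguments. Ultrahomogeneity implies that any two tuples of $U$ with the same quantifier-free type lie in the same $\operatorname{Aut}(U)$-orbit, hence realise the same complete type; this gives quantifier elimination for $T_{0}$. Finiteness of the number of isomorphism types of $n$-element structures in $K$, combined with QE, bounds the number of complete $n$-types of $T_{0}$ by a finite number, so Ryll-Nardzewski gives $\omega$-categoricity. That $T_{0}$ is a model completion of $T$ follows because every model of $T$ has all its finite substructures in $K$, so by compactness it embeds into a model of $T_{0}$, while QE furnishes model completeness.

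For (2), fix finite $A\subseteq M\models T_{0}$ and $b\in M\setminus A$; I would produce infinitely many realisations of $\tp(b/A)$ in $M$, thereby excluding $b$ from $\acl(A)$. Iteratively applying SAP to pairs of embeddings $A\hookrightarrow Ab$ and fresh disjoint copies $A\hookrightarrow Ab'$, one constructs inside members of $K$, hence (by the universality of $U$ among countable $K$-structures) inside $U$, arbitrarily many pairwise distinct elements all realising the same quantifier-free type over $A$ as $b$. By QE they all realise $\tp(b/A)$, whence $b\notin\acl(A)$.

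The main obstacle I anticipate is the precise verification that $T_{0}$ serves as the \emph{model completion} of $T$ in the technical sense, and not merely as a model-complete extension containing $T$. This reduces to checking that any two models of $T_{0}$ sharing a common $T$-submodel are elementarily equivalent over it; this is clear from QE once one unwinds the Fra\"iss\'e correspondence between embeddings of finite $T$-substructures and quantifier-free types, but it is the step most naturally delegated to Hodges' exposition.
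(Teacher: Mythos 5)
Your argument is correct and is precisely the standard Fra\"iss\'e-limit proof underlying the two results of Hodges that the paper cites for this Fact (which it states as a quoted fact, without giving a proof of its own). The only step worth making explicit is the transfer from the Fra\"iss\'e limit $U$ to an arbitrary $M\models T_{0}$ in part (2): since $T_{0}$ is $\omega$-categorical, every model realizes each of the finitely many types over a finite set (equivalently, $M$ is existentially closed in its $T$-extensions), so the arbitrarily many realizations of $\tp_{\qf}(b/A)$ you build via iterated SAP yield infinitely many realizations of $\tp(b/A)$ inside $M$ itself, whence $b\notin\acl(A)$.
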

\begin{cor}
\label{cor:infinite-1}Suppose that $\varphi\left(\bar{x};\bar{y}\right)$
is a formula in $L$, $\bar{a}\in M\models T_{0}$. If $M\models\exists\bar{z}\varphi\left(\bar{z};\bar{a}\right)\land\bar{z}\nsubseteq\bar{a}$
then $\left\{ \bar{t}\in M\left|\,\varphi\left(\bar{t};\bar{a}\right)\right.\right\} $
is infinite.\end{cor}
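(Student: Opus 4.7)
The plan is to reduce the statement to the algebraic closure computation in Fact \ref{fac:Hodges}(2). The set $\{\bar{t}\in M\mid \varphi(\bar{t};\bar{a})\}$ is nonempty by hypothesis, and I want to show it is infinite by showing that one particular coordinate of a witness admits infinitely many realizations.

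First, I would fix a witness $\bar{b}\in M$ with $M\models\varphi(\bar{b};\bar{a})$ and $\bar{b}\nsubseteq\bar{a}$. Let $i$ be an index with $b_i\notin\bar{a}$, and write $\bar{x}=(x_i,\bar{x}')$ after separating the $i$-th variable. Consider the formula
\[
\psi(x_i;\bar{a}) \;:=\; \exists \bar{x}'\,\varphi(x_i,\bar{x}';\bar{a}),
\]
which is a formula with parameters from $\bar{a}$ and which $b_i$ satisfies in $M$.

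The key observation is that $\psi(x_i;\bar{a})$ must have infinitely many realizations in $M$. Indeed, otherwise $b_i$ would be algebraic over $\bar{a}$, i.e.\ $b_i\in\acl(\bar{a})$. But by Fact \ref{fac:Hodges}(2), applied to $A=\bar{a}\subseteq M\models T_0$, we have $\acl(\bar{a})=\bar{a}$, contradicting $b_i\notin\bar{a}$. Hence there are infinitely many elements $c\in M$ with $M\models\psi(c;\bar{a})$.

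For each such $c$, by definition of $\psi$ there exists $\bar{c}'\in M$ with $M\models\varphi(c,\bar{c}';\bar{a})$, giving a tuple $\bar{t}_c=(c,\bar{c}')$ in the set of interest. The tuples $\bar{t}_c$ are pairwise distinct since their $i$-th coordinates are distinct, so the set $\{\bar{t}\in M\mid\varphi(\bar{t};\bar{a})\}$ is infinite. I do not anticipate a real obstacle here: the only subtle point is ensuring that "algebraic" truly means "finitely many realizations in $M$" rather than in a larger model, but this is automatic because in any structure, a formula with finitely many realizations in the monster has the same finitely many realizations in every elementary submodel containing the parameters, and conversely finiteness of the solution set in $M$ transfers upward by elementarity.
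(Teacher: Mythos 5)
Your proof is correct and is essentially the argument the paper intends: the corollary is stated without proof immediately after Fact \ref{fac:Hodges}, and the expected reasoning is exactly your reduction to $\acl\left(\bar{a}\right)=\bar{a}$ via the projection formula $\exists\bar{x}'\,\varphi\left(x_{i},\bar{x}';\bar{a}\right)$ applied to a coordinate of the witness lying outside $\bar{a}$. Your closing remark about finiteness transferring between $M$ and elementary extensions correctly handles the only subtlety.
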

\begin{defn}
For any formula $\varphi\left(\bar{x};\bar{y}\right)$ in $L$ where
$\bar{x}$ is not empty , let $C\left[\varphi\left(\bar{x};\bar{y}\right)\right]$
be a new $\lg\left(\bar{y}\right)+3\cdot\lg\left(\bar{x}\right)$-place
relation symbol. Denote $L\left[\varphi\left(\bar{x};\bar{y}\right)\right]=L\cup\left\{ C\left[\varphi\left(\bar{x};\bar{y}\right)\right]\right\} $.
\end{defn}

\begin{defn}
Suppose $\varphi\left(\bar{x};\bar{y}\right)$ is a quantifier free
formula in $L$ with $\bar{x}$ not empty. Let $T\left[\varphi\left(\bar{x};\bar{y}\right)\right]$
be the theory in $L\left[\varphi\left(\bar{x};\bar{y}\right)\right]$
containing $T$ and the following axioms:
\begin{itemize}
\item For all $\bar{t}$ in the length of $\bar{y}$, the set:
\[
S\left[\varphi\left(\bar{x};\bar{y}\right)\right]\left(\bar{t}\right):=\left\{ \bar{s}\left|\,\bar{s}\cap\bar{t}=\emptyset\land\lg\left(\bar{s}\right)=\lg\left(\bar{x}\right)\land\varphi\left(\bar{s};\bar{t}\right)\right.\right\}
\]
is circularly ordered by the relation:
\[
C\left[\varphi\left(\bar{x};\bar{y}\right)\right]\left(\bar{t}\right):=\left\{ \left(\bar{s}_{1},\bar{s}_{2},\bar{s}_{3}\right)\left|\,C\left[\varphi\left(\bar{x},\bar{y}\right)\right]\left(\bar{t},\bar{s}_{1},\bar{s}_{2},\bar{s}_{3}\right)\right.\right\}
\]
 (i.e. $C\left[\varphi\left(\bar{x};\bar{y}\right)\right]$ with index
$\bar{t}$ orders this set in a circular order). Call $\bar{t}$ the
index variables, and $\bar{s}$ the main variables.
\item If $C\left[\varphi\left(\bar{x};\bar{y}\right)\right]\left(\bar{t}\right)\left(\bar{s}_{1},\bar{s}_{2},\bar{s}_{3}\right)$
then $\bar{s}_{1},\bar{s}_{2},\bar{s}_{3}\in S\left[\varphi\left(\bar{x};\bar{y}\right)\right]\left(\bar{t}\right)$.
\end{itemize}
\end{defn}
\begin{claim}
\label{cla:Thas}If $\varphi$ is as in the definition, then
\begin{enumerate}
\item $T\left[\varphi\right]$ is universal.
\item $T\left[\varphi\right]$ has DEP.
\item $T\left[\varphi\right]$ has SAP.
\end{enumerate}
\end{claim}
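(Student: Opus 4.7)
The plan is to dispatch (1)--(3) in turn, using the corresponding properties of $T$ recorded in Fact~\ref{fac:Hodges} together with the observation (made just above Fact~\ref{fac:Hodges}) that the theory of circular orders $T_{C}$ is itself universal and has DEP and SAP. The overall strategy for (2) and (3) is to first amalgamate (or disjointly join) the $L$-reducts using the corresponding property of $T$, and then separately equip each ``index slice'' $S\left[\varphi\right]\left(\bar{t}\right)$ with a circular order.

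For (1), I would check that every new axiom of $T\left[\varphi\right]$ is universal. The membership condition $\bar{s}\in S\left[\varphi\right]\left(\bar{t}\right)$ is a quantifier-free conjunction, since $\varphi$ is quantifier-free and disjointness $\bar{s}\cap\bar{t}=\emptyset$ is a finite conjunction of negated equalities; and the circular-order axioms (cyclic symmetry, transitivity, and totality on distinct triples of $S\left[\varphi\right]\left(\bar{t}\right)$), together with the second bullet of the definition, are all $\forall$-statements.

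For (2), given $A,B\models T\left[\varphi\right]$, I would apply DEP of $T$ to the $L$-reducts to obtain $D_{0}\models T$ with disjoint embeddings $j_{1}:A|_{L}\hookrightarrow D_{0}$ and $j_{2}:B|_{L}\hookrightarrow D_{0}$. For each tuple $\bar{t}\in D_{0}$ I then define a circular order on $S\left[\varphi\right]\left(\bar{t}\right)^{D_{0}}$ as follows. If $\bar{t}\subseteq j_{1}(A)$, use the order pulled back from $A$ on $S\left[\varphi\right]\left(\bar{t}\right)^{D_{0}}\cap j_{1}(A)$ (which, because $\varphi$ is quantifier-free and $A$ is an $L$-substructure of $D_{0}$, really is the $j_{1}$-image of the corresponding set in $A$) and extend arbitrarily to all of $S\left[\varphi\right]\left(\bar{t}\right)^{D_{0}}$; the case $\bar{t}\subseteq j_{2}(B)$ is symmetric; and if $\bar{t}$ is mixed, pick any circular order. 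Since extending a circular order to a superset is always possible (place new points anywhere on the cycle), this produces a $T\left[\varphi\right]$-model with $j_{1},j_{2}$ disjoint $T\left[\varphi\right]$-embeddings.

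For (3) the same blueprint applies with SAP replacing DEP. Apply SAP of $T$ to the $L$-reducts to obtain $D_{0}\models T$ with $j_{1}\circ i_{1}=j_{2}\circ i_{2}$ and $j_{1}(B)\cap j_{2}(C)=(j_{1}\circ i_{1})(A)$. For each $\bar{t}\in D_{0}$ the set $S\left[\varphi\right]\left(\bar{t}\right)^{D_{0}}$ inherits at most two partial circular orders: one from $B$ on $S\left[\varphi\right]\left(\bar{t}\right)^{D_{0}}\cap j_{1}(B)$ when $\bar{t}\subseteq j_{1}(B)$, and one from $C$ analogously. The main obstacle is merging these consistently. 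This hinges on two points: first, by strong amalgamation any element lying in both $j_{1}(B)$ and $j_{2}(C)$ comes from $(j_{1}\circ i_{1})(A)$, so the two partial orders agree on their common domain (both being the pullback of $A$'s order via the $T\left[\varphi\right]$-embeddings $i_{1},i_{2}$); second, two circular orders that agree on their intersection can be amalgamated into a single circular order on the union, by SAP of $T_{C}$. Amalgamating and then extending arbitrarily to the rest of $S\left[\varphi\right]\left(\bar{t}\right)^{D_{0}}$ yields the desired $T\left[\varphi\right]$-model.
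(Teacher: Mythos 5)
Your proposal is correct and follows essentially the same route as the paper: amalgamate (resp.\ disjointly join) the $L$-reducts using SAP (resp.\ DEP) of $T$, then handle each index slice $S\left[\varphi\right]\left(\bar{t}\right)$ by a case split on where $\bar{t}$ lives, amalgamating the inherited circular orders (which agree on the part coming from the base by strong amalgamation and quantifier-freeness of $\varphi$) and extending arbitrarily elsewhere. The universality check in (1) likewise matches the paper's one-line observation that $T_{C}$ is universal and $\varphi$ is quantifier-free.
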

\begin{proof}
As $T_{C}$ is universal, (1) is clear (note that this uses the fact
that $\varphi$ is quantifier free).

(3): Let $M_{0}'$, $M_{1}'$ and $M_{2}'$ be models of $T\left[\varphi\right]$
such that $M_{0}'=M_{1}'\cap M_{2}'$. Let $M_{i}=M_{i}'\upharpoonright L$
for $i<3$. By assumption, there is a model $M_{3}\models T$ such
that $M_{1}\cup M_{2}\subseteq M_{3}$. We define $M_{3}'$ as an
expansion of $M_{3}$. Let $\bar{t}\in M_{3}$ be a tuple of length
$\lg\left(\bar{y}\right)$. Split into cases:
\begin{casenv}
\item \label{cas:M_0}$\bar{t}\in M_{0}'$. In this case, $\left(S^{M'_{i}}\left[\varphi\right]\left(\bar{t}\right),C^{M'_{i}}\left[\varphi\right]\left(\bar{t}\right)\right)$
are circular orders for $i<3$ and $S^{M'_{1}}\left[\varphi\right]\left(\bar{t}\right)\cap S^{M'_{2}}\left[\varphi\right]\left(\bar{t}\right)=S^{M'_{0}}\left[\varphi\right]\left(\bar{t}\right)$
so we can amalgamate them as circular orders and extend it arbitrarily
to $S^{M_{3}}\left[\varphi\right]\left(\bar{t}\right)$, and that
will be $C^{M_{3}'}\left[\varphi\right]\left(\bar{t}\right)$.

Note that in the special case where $S^{M_{0}}\left[\varphi\right]\left(\bar{t}\right)=\emptyset$,
there are no restrictions on the place of $S^{M_{i}}\left[\varphi\right]\left(\bar{t}\right)$
for $i<3$ in this order.

\item $\bar{t}\in M_{1}\backslash M_{2}$. Then $\left(S^{M_{1}'}\left[\varphi\right]\left(\bar{t}\right),C^{M_{1}'}\left[\varphi\right]\left(\bar{t}\right)\right)$
is a circular order. Extend it so that its domain would be $S^{M_{3}}\left[\varphi\right]\left(\bar{t}\right)$
arbitrarily.
\item $\bar{t}\in M_{2}\backslash M_{1}$ --- the same.
\item $\bar{t}\notin M_{1}$ and $\bar{t}\notin M_{2}$. Then $C^{M_{3}'}\left[\varphi\right]\left(\bar{t}\right)$
is any circular order on $S^{M_{3}}\left[\varphi\right]\left(\bar{t}\right)$.
\end{casenv}
(2): Similar to (3), but easier. \end{proof}
\begin{rem}
\label{rem:AmalInside} It is follows from the proof of amalgamation,
that if $M\models T$ contains models $M_{0}\subseteq M_{i}\subseteq M$
for $i<n$ such that $M_{0}=M_{i}\cap M_{j}$ for $i<j<n$ and for
each $M_{i}$, there is an expansion $M_{i}'$ to a model of $T\left[\varphi\right]$
such that $M_{0}'\subseteq M_{i}'$ then there is an expansion $M'$
of $M$ to a model of $T\left[\varphi\right]$ such that $M_{i}'\subseteq M'$. \end{rem}
\begin{claim}
\label{cla:existence}$ $
\begin{enumerate}
\item If $M\models T$, then we can expand it to a model $M'$ of $T\left[\varphi\right]$.
\item Moreover: if $B\subseteq M$ and there is already an expansion $B'$
of $B$ to a model of $T\left[\varphi\right]$, then we can expand
$M$ in such a way that $B'\subseteq M'$.
\item Moreover: suppose that

\begin{itemize}
\item $A\subseteq M$
\item $\left\langle \bar{c}_{i}\left|\,i<n\right.\right\rangle $ is a finite
sequence of finite tuples from $M$, such that $\bar{c}_{i}\cap\bar{c}_{j}\subseteq A$,
$\tp_{\qf}\left(\bar{c}_{i}/A\right)=\tp_{\qf}\left(\bar{c}_{j}/A\right)$
for all $i<j<n$.
\item $M_{0}'$ is an expansion of $A\bar{c}_{0}$ to a model of $T\left[\varphi\right]$.
\end{itemize}

Then we can find an expansion $M'$ such that the quantifier free
types are still equal in the sense of $L\left[\varphi\right]$ and
$M_{0}'\subseteq M'$.

\end{enumerate}
\end{claim}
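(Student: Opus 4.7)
The plan is to handle (1) by a direct construction, to derive (2) as the special case of (3) with $n = 1$, $A = B$, and $\bar{c}_0$ empty (so $A\bar{c}_0 = B$ and $M_0' = B'$), and to focus the main effort on (3). My strategy for (3) would be to first produce a uniform expansion $M_i'$ of $A\bar{c}_i$ to a model of $T[\varphi]$ for each $i < n$, transported from $M_0'$ along an $L$-iso\-morph\-ism, and then to invoke Remark \ref{rem:AmalInside} (the multi-model form of SAP from Claim \ref{cla:Thas}(3)) to amalgamate these expansions inside $M$.

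For (1), for each tuple $\bar{t} \in M^{\lg(\bar{y})}$, the set $S^M[\varphi](\bar{t})$ is already determined by the $L$-structure; I would choose (via the axiom of choice) an arbitrary circular order on it and interpret $C[\varphi]$ accordingly, making the axioms of $T[\varphi]$ hold by construction.

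For (3), first set $A' := M_0' \upharpoonright A$. Since $L$ is relational and $\varphi$ is quantifier-free, for any $\bar{t} \in A$ we have $S^{A'}[\varphi](\bar{t}) = S^{M_0'}[\varphi](\bar{t}) \cap A^{\lg(\bar{x})}$, and the restriction of the circular order $C^{M_0'}[\varphi](\bar{t})$ to this subset is still a circular order, so $A'$ is itself a model of $T[\varphi]$. Next, for each $i < n$, the equality $\tp_{\qf}(\bar{c}_0/A) = \tp_{\qf}(\bar{c}_i/A)$ combined with $\bar{c}_0 \cap \bar{c}_i \subseteq A$ forces the coordinates of $\bar{c}_0$ and $\bar{c}_i$ lying in $A$ to coincide, giving an $L$-isomorphism $\sigma_i : A\bar{c}_0 \to A\bar{c}_i$ that fixes $A$ pointwise and sends $\bar{c}_0$ to $\bar{c}_i$ (in particular $\sigma_0 = \id$). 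I would then transport the $L[\varphi]$-structure of $M_0'$ along $\sigma_i$ to obtain an expansion $M_i'$ of $A\bar{c}_i$ to a model of $T[\varphi]$ with $M_i' \upharpoonright A = A'$; by construction $\sigma_i : M_0' \to M_i'$ is an $L[\varphi]$-iso\-morph\-ism fixing $A'$ pointwise.

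Finally, the substructures $A\bar{c}_i \subseteq M$ all contain $A$ and pairwise intersect exactly in $A$ (by the disjointness hypothesis), while their expansions $M_i'$ all restrict to the common $A'$. Applying Remark \ref{rem:AmalInside} with base $A$ (with expansion $A'$) and upper models $A\bar{c}_i$ (with expansions $M_i'$) yields an expansion $M'$ of $M$ to a model of $T[\varphi]$ with $M_i' \subseteq M'$ for every $i < n$; since $L[\varphi]$ is relational this forces $M' \upharpoonright A\bar{c}_i = M_i'$, so $\sigma_i$ remains an $L[\varphi]$-iso\-morph\-ism $M_0' \to M' \upharpoonright A\bar{c}_i$ fixing $A$, which gives $\tp_{\qf}^{L[\varphi]}(\bar{c}_i/A) = \tp_{\qf}^{L[\varphi]}(\bar{c}_0/A)$ as required. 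The main conceptual obstacle, proving amalgamation in the enriched language, is already packaged in Claim \ref{cla:Thas}(3) and its iterated form in Remark \ref{rem:AmalInside}; what remains is the careful bookkeeping to guarantee that the transported expansions all agree on $A$ before the amalgamation is invoked.
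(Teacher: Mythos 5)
Your proof is correct and follows essentially the same route as the paper: transport the expansion $M_0'$ along the $L$-isomorphisms $A\bar{c}_0\to A\bar{c}_i$ fixing $A$, then amalgamate the resulting expansions over $A$ inside $M$ via Remark \ref{rem:AmalInside}, the equality of quantifier-free $L[\varphi]$-types following because these maps are $L[\varphi]$-isomorphisms. The only (harmless) structural difference is that you derive (2) as the $n=1$, $\bar{c}_0=\emptyset$ case of (3), whereas the paper proves (2) directly by extending each circular order $C^{B'}[\varphi](\bar{t})$ arbitrarily; your extra bookkeeping that the transported expansions all restrict to $A'=M_0'\upharpoonright A$ is a point the paper leaves implicit.
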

\begin{proof}
(2): For any $\bar{t}$ in the length of $\bar{y}$, if $\bar{t}\in B$
then we choose a circular order $C^{M'}\left[\varphi\right]\left(\bar{t}\right)$
that extends $C^{B'}\left[\varphi\right]\left(\bar{t}\right)$ on
$S^{M}\left[\varphi\right]\left(\bar{t}\right)$. If not, then define
it arbitrarily.

(3): Let $M_{i}=A\bar{c}_{i}$. As $\bar{c}_{0}\equiv_{A}^{\qf}\bar{c_{i}}$
for $i<n$, there are isomorphisms $f_{i}:M_{0}\to M_{i}$ of $L$
that fix $A$ and take $\bar{c}_{0}$ to $\bar{c}_{i}$. So $f_{i}$
induces expansions $M_{i}'$ of $M_{i}$, isomorphic (via $f_{i}$)
to $M_{0}'$. As the intersection of any two models $M_{i}$ is exactly
$A$, by Remark \ref{rem:AmalInside}, there is an expansion $M'$
of $M$ to a model of $T\left[\varphi\right]$ that contains $M_{i}'$.
In this expansion the quantifier free types will remain the same because
$f_{i}$ are $L\left[\varphi\right]$-isomorphisms.\end{proof}
\begin{cor}
\label{cor:exClosPer}Suppose that $M'\models T\left[\varphi\right]$,
$M'\upharpoonright L\subseteq N\models T$. Then there is an expansion
of $N$ to a model $N'$ of $T\left[\varphi\right]$ such that $M'\subseteq N'$.
In particular, if $M'\models T\left[\varphi\right]$ is existentially
closed, then $M'\upharpoonright L$ is an existentially closed model
of $T$. Denote by $T_{0}\left[\varphi\right]$ the model completion
of $T\left[\varphi\right]$. We will call it the \underline{$\varphi$-circularization}
of $T_{0}$. It follows that $T_{0}\left[\varphi\right]\upharpoonright L=T_{0}$
(for more see \cite[Theorem 8.2.4]{Hod}).
\end{cor}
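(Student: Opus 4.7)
The plan is to derive everything as a short consequence of Claim \ref{cla:existence}(2), with the model-completion part following by a further invocation of Fact \ref{fac:Hodges}(1); no new ideas are required.

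For the opening assertion, I would appeal directly to Claim \ref{cla:existence}(2) with $B := M'\upharpoonright L$, $B' := M'$, and the ambient model $M := N$. The hypothesis $M'\upharpoonright L \subseteq N$ together with the fact that $M'$ is already an expansion of $B$ to a model of $T[\varphi]$ supplies precisely the data required by that clause, and it delivers an expansion $N'$ of $N$ to a model of $T[\varphi]$ with $M' \subseteq N'$. For the existential-closure statement, I would then take $M' \models T[\varphi]$ existentially closed and consider a quantifier-free $L$-formula $\theta(\bar{z}, \bar{a})$ with $\bar{a} \in M'$ together with some extension $N \supseteq M'\upharpoonright L$, $N \models T$, satisfying $\exists\bar{z}\,\theta(\bar{z},\bar{a})$. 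Using the opening assertion, expand $N$ to some $N' \models T[\varphi]$ containing $M'$; since $\theta$ is an $L$-formula, the new symbol $C[\varphi]$ does not appear in it, so $N' \models \exists\bar{z}\,\theta(\bar{z},\bar{a})$. Existential closure of $M'$ inside $T[\varphi]$ then forces $M' \models \exists\bar{z}\,\theta(\bar{z},\bar{a})$, whence $M'\upharpoonright L \models \exists\bar{z}\,\theta(\bar{z},\bar{a})$, as wanted.

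For the final remarks on $T_0[\varphi]$, Claim \ref{cla:Thas} says that $T[\varphi]$ is universal with DEP and SAP in the finite relational language $L[\varphi]$, so Fact \ref{fac:Hodges}(1) provides a model completion $T_0[\varphi]$. Any $M \models T_0[\varphi]$ is existentially closed in $T[\varphi]$, hence by the previous step $M\upharpoonright L$ is existentially closed in $T$ and therefore models $T_0$; this yields $T_0[\varphi]\upharpoonright L = T_0$. The only point that deserves attention in this argument is the transfer of the $L$-existential sentence from $N$ to its $L[\varphi]$-expansion $N'$, which is immediate because $C[\varphi]$ does not occur in $L$-formulas; beyond that, the entire corollary is a direct unwinding of Claim \ref{cla:existence}(2) and the definition of existential closure, and I do not anticipate any genuine obstacle.
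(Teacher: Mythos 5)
Your proposal is correct and follows exactly the route the paper intends (the corollary is stated without proof precisely because it is the direct unwinding of Claim \ref{cla:existence}(2) you give, plus Claim \ref{cla:Thas} and Fact \ref{fac:Hodges}(1) for the existence of the model completion). The only point worth noting is that the final identity $T_{0}\left[\varphi\right]\upharpoonright L=T_{0}$ uses that $T_{0}$ is complete ($\omega$-categorical by Fact \ref{fac:Hodges}(1)), so that containment of $L$-consequences upgrades to equality; your argument supplies everything needed for this.
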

We turn to dividing:
\begin{claim}
\label{cla:CircDividing} Assume that $M\models T_{0}\left[\varphi\right]$,
$A\subseteq M$, $\bar{a}\in M$, $S^{M}\left[\varphi\right]\left(\bar{a}\right)\cap A^{\lg\left(\bar{x}\right)}=\emptyset$,
and $\bar{c}\neq\bar{d}\in S^{M}\left[\varphi\right]\left(\bar{a}\right)$.
Then the formula $\psi\left(\bar{z};\bar{a},\bar{c},\bar{d}\right)=C\left[\varphi\right]\left(\bar{a},\bar{c},\bar{z},\bar{d}\right)$
2-divides over $A\bar{a}$. \end{claim}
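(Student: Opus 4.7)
The plan is to produce, inside a sufficiently saturated elementary extension of $M$, a sequence $\langle (\bar c_i,\bar d_i) : i<\omega\rangle$ starting with $(\bar c_0,\bar d_0)=(\bar c,\bar d)$, each pair realizing $\tp(\bar c,\bar d/A\bar a)$, whose induced open arcs $C[\varphi](\bar a,\bar c_i,z,\bar d_i)$ are pairwise disjoint in the circular order on $S[\varphi](\bar a)$. Disjoint open arcs in a circular order share no common point, so the formulas $\psi(\bar z;\bar a,\bar c_i,\bar d_i)$ are pairwise contradictory, which is exactly $2$-inconsistency.

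The key observation is that the position of $(\bar c,\bar d)$ inside the circular order $C[\varphi](\bar a)$ is essentially free over $A\bar a$. Indeed, $T[\varphi]$ is universal with DEP and SAP by Claim~\ref{cla:Thas}, so $T_{0}[\varphi]$ eliminates quantifiers by Fact~\ref{fac:Hodges}(1), and hence $\tp(\bar c,\bar d/A\bar a)$ is determined by its quantifier-free part. The hypothesis $S^{M}[\varphi](\bar a)\cap A^{\lg(\bar x)}=\emptyset$, together with the requirement in the definition of $S[\varphi](\bar t)$ that its elements be disjoint from $\bar t$, implies that no $\lg(\bar x)$-tuple drawn from $A\bar a$ lies in $S^{M}[\varphi](\bar a)$. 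Thus $\bar c$ and $\bar d$ are the only elements of $S[\varphi](\bar a)$ occurring in the quantifier-free type of $(\bar c,\bar d)$ over $A\bar a$, and a circular order on two points carries no nontrivial information.

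For each $n<\omega$ I take $n-1$ fresh copies $(\bar c_i,\bar d_i)_{1\leq i<n}$ of $(\bar c,\bar d)$ over $A\bar a$, pairwise disjoint from one another and from $M$ outside of $A\bar a$, and iteratively apply Claim~\ref{cla:existence}(3) together with Remark~\ref{rem:AmalInside} to amalgamate them with $M$ into a model of $T[\varphi]$ extending the $L[\varphi]$-structure on $M$. When specifying the circular order on the newly introduced elements of $S[\varphi](\bar a)$, I use the freedom from the proof of Claim~\ref{cla:Thas}(3) --- any extension of the existing partial circular data is permitted --- to arrange the pairs in the cyclic pattern $\bar c_0,\bar d_0,\bar c_1,\bar d_1,\dots,\bar c_{n-1},\bar d_{n-1}$, which forces the arcs to be pairwise disjoint. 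Embedding the result into a model of $T_{0}[\varphi]$ and transferring back inside a saturated elementary extension of $M$ via compactness yields the desired $\omega$-sequence. The main technical point will be to verify that the placement of the new points in the circular order is genuinely unconstrained by the $L[\varphi]$-structure on $M$; this is precisely where the hypothesis $S^{M}[\varphi](\bar a)\cap A^{\lg(\bar x)}=\emptyset$ is crucial, ensuring that the freshly added elements of $S[\varphi](\bar a)$ do not interact via any forced $C[\varphi](\bar a,-,-,-)$-instance with the existing ones.
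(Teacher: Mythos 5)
Your proposal is correct and follows essentially the same route as the paper: exploit that $S^{A\bar{a}}\left[\varphi\right]\left(\bar{a}\right)=\emptyset$ so the amalgamation of circular orders over $A\bar{a}$ is unconstrained, place copies of $\left(\bar{c},\bar{d}\right)$ in the cyclic pattern $\bar{c}_{0}\to\bar{d}_{0}\to\bar{c}_{1}\to\bar{d}_{1}\to\cdots$ so the open arcs are pairwise disjoint, and pass to an infinite sequence by compactness. The only cosmetic difference is that you invoke Claim \ref{cla:existence}(3) and Remark \ref{rem:AmalInside} where the paper applies SAP directly, but these rest on the same amalgamation argument from Claim \ref{cla:Thas}.
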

\begin{proof}
Let $M_{0}=A\bar{a}$, $M_{1}=M_{0}\bar{c}\bar{d}$ and $M_{2}=M_{0}\bar{c}'\bar{d}'$
where $M_{1}\cap M_{2}=M_{0}$ and there is an isomorphism $f:M_{1}\to M_{2}$
that fixes $M_{0}$ and takes $\bar{c}\bar{d}$ to $\bar{c}'\bar{d}'$.

By SAP, there is a model $M_{3}\models T\left[\varphi\right]$ that
contains $M_{1}\cup M_{2}$. We wish to choose it carefully: in the
proof of Claim \ref{cla:Thas}, we saw that there are no constraints
on the amalgamation of $C^{M_{1}}\left[\varphi\right]\left(\bar{a}\right)$
and $C^{M_{2}}\left[\varphi\right]\left(\bar{a}\right)$ (because
$S^{M_{0}}\left[\varphi\right]\left(\bar{a}\right)=\emptyset$, see
the definition of $S\left[\varphi\right]$). In particular we can
put $\bar{c}'$ and $\bar{d}'$ so that in the circular order we have
$\bar{c}\to\bar{d}\to\bar{c}'\to\bar{d}'\to\bar{c}$, and in this
case there is no $\bar{z}$ such that $C\left[\varphi\right]\left(\bar{a}\right)\left(\bar{c},\bar{z},\bar{d}\right)$
and $C\left[\varphi\right]\left(\bar{a}\right)\left(\bar{c}',\bar{z},\bar{d}'\right)$.

Applying the same technique $n$ times, there is a model of $T\left[\varphi\right]$
with a sequence $\left\langle \bar{c}_{i},\bar{d}_{i}\left|i<n\right.\right\rangle $
that contains $M_{1}$ and satisfies $\tp_{\qf}\left(\bar{c}_{i}\bar{d}_{i}/A\bar{a}\right)=\tp_{\qf}\left(\bar{c}\bar{d}/A\bar{a}\right)$,
so that in the circular order $C\left[\varphi\right]\left(\bar{a}\right)$
the tuples will be ordered as follows: $\bar{c}\to\bar{d}\to\bar{c}_{1}\to\bar{d}_{1}\to\ldots\to\bar{c}_{n}\to\bar{d}_{n}\to\bar{c}$.
Hence, there is a model of $T_{0}\left[\varphi\right]$ and an infinite
such sequence, and this sequence witnesses the 2-dividing of $\psi\left(\bar{z};a,\bar{c},\bar{d}\right)$.

Note that the tuples $\bar{c}_{i}\bar{d}_{i}$ were chosen so that
the intersection of each pair $\bar{c}_{i}\bar{d}_{i}$, $\bar{c}_{j}\bar{d}_{j}$
is contained in $A$.
\end{proof}
The last sentence justifies the following auxiliary definition which
will make life a bit easier:
\begin{defn}
Say that a formula $\varphi\left(\bar{x},\bar{a}\right)$ $k$-\emph{divides
disjointly} over $A$ if there is an indiscernible sequence $\left\langle \bar{a}_{i}\left|\,i<\omega\right.\right\rangle $
that witnesses $k$-dividing and moreover $\bar{a}_{i}\cap\bar{a}_{j}\subseteq A$.\end{defn}
\begin{rem}
Note that if $\varphi\left(\bar{x},\bar{a}\right)$ divides over $A$,
then it divide disjointly over some $B\supseteq A$ (if $I$ is an
indiscernible sequence witnessing dividing, then $B=A\cup\bigcap I$).
\end{rem}
We shall also need some kind of a converse to the last claim. More
precisely, we need to say when a formula does not divide.
\begin{claim}
\label{cla:ConverseToDividing}Suppose
\begin{enumerate}
\item $A\subseteq M\models T_{0}\left[\varphi\right]$
\item $p\left(\bar{x}\right)=p_{1}\left(\bar{x}\right)\cup p_{2}\left(\bar{x}\right)$
is a complete quantifier-free type over $M$.
\item $p_{1}\left(\bar{x}\right)$ is a complete $L$ type over $M$ and
$p_{2}\left(\bar{x}\right)$ is a complete $\left\{ C\left[\varphi\right]\right\} $
type over $M$.
\item $p_{1}\left(\bar{x}\right)$ does not divide over $A$ (as an $L$-type
so also as an $L\left[\varphi\right]$-type).
\item For all $\bar{t}\in M^{\lg\left(\bar{y}\right)}$, $p_{2}\left(\bar{x}\right)\upharpoonright\left\{ C\left[\varphi\right]\left(\bar{t},-,-,-\right)\right\} $
does not divide over $A\bar{t}$ (this means all formulas in $p_{2}\left(\bar{x}\right)$
of the form $C\left[\varphi\right]\left(\bar{t},\bar{z}_{1},\bar{z}_{2},\bar{z}_{3}\right)$
where $\bar{x}$ substitutes the $\bar{z}$'s in some places and in
the others there are parameters from $M$).
\end{enumerate}
Then $p\left(\bar{x}\right)$ does not divide over $A$.

In particular, if both $p_{1}\left(\bar{x}\right)$, $p_{2}\left(\bar{x}\right)$
do not divide over $A$, then $p\left(\bar{x}\right)$ does not divide
over $A$.\end{claim}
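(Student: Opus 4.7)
Plan. I want to show that the complete quantifier-free type $p(\bar{x})$ does not divide over $A$. By compactness this reduces to showing that a single conjunction $\psi(\bar{x},\bar{m}) \in p$ does not divide over $A$. By completeness of $p$ and the decomposition $p = p_1 \cup p_2$, every such $\psi$ can be written as $\psi_1(\bar{x},\bar{m}) \wedge \psi_2(\bar{x},\bar{m})$ where $\psi_1$ is a quantifier-free $L$-formula in $p_1$ and $\psi_2$ is a finite conjunction of $C[\varphi]$-literals in $p_2$. Fix an $A$-indiscernible sequence $\langle \bar{m}_i : i < \omega \rangle$ with $\bar{m}_0 = \bar{m}$; I must realize $\{\psi(\bar{x},\bar{m}_i) : i < \omega\}$ in some model of $T_0[\varphi]$.

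\emph{Step 1 (realize the $L$-part).} Since $p_1$ is a complete $L$-type over $M$ that does not divide over $A$, a compactness argument produces a tuple $\bar{b}$ in some $L$-structure $N$ containing $A \cup \bar{m}_{<\omega}$ such that $\bar{b}$ realizes $\bigcup_{i < \omega} \sigma_i(p_1)$, where $\sigma_i$ is the $A$-elementary map sending $\bar{m}$ to $\bar{m}_i$. By Corollary \ref{cor:exClosPer} we may take $N \models T_0$. In particular $\bar{b} \models \psi_1(\bar{x},\bar{m}_i)$ for every $i$, and the quantifier-free $L$-type of $(\bar{b},\bar{m}_i)$ over $A$ is constant in $i$.

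\emph{Step 2 (expand to a model of $T[\varphi]$).} I need to expand $N$ to $N' \models T[\varphi]$ in which $\bar{b} \models \psi_2(\bar{x},\bar{m}_i)$ for every $i$. The crucial structural observation is that $T[\varphi]$ imposes no axiom relating $C[\varphi](\bar{t}_1,-)$ and $C[\varphi](\bar{t}_2,-)$ for distinct indices, so the circular orders at distinct indices can be chosen independently. Group the literals of $\psi_2$ by their index template $\bar{u}(\bar{x},\bar{m})$ and let $\bar{u}_i = \bar{u}(\bar{b},\bar{m}_i)$. For each template it suffices to verify that the finitely many circular-order triples demanded at index $\bar{u}_i$ (pooled across all $i$ that produce the same $\bar{u}_i$) are jointly realizable as a partial circular order on $S^{N}[\varphi](\bar{u}_i)$. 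When $\bar{u}$ does not involve $\bar{x}$, we have $\bar{u}_0 \in M$, and hypothesis (5) tells us that $p_2 \upharpoonright \{C[\varphi](\bar{u}_0,-,-,-)\}$ is a consistent type not dividing over $A\bar{u}_0$; combined with Step 1 and $A$-indiscernibility, this transfers simultaneous realizability to each $\bar{u}_i$. When $\bar{u}$ involves $\bar{x}$, the freedom supplied by Claim \ref{cla:existence}(2) at such ``fresh'' indices gives the needed consistency. By Remark \ref{rem:AmalInside} the partial $C[\varphi]$-data assembles into a full expansion $N' \models T[\varphi]$, and embedding $N'$ into a model of $T_0[\varphi]$ via Corollary \ref{cor:exClosPer} completes the construction.

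\emph{Main obstacle and the ``in particular'' clause.} The most delicate case is when $\bar{u}$ uses $\bar{x}$ but not $\bar{m}$, so that $\bar{u}_i = \bar{u}(\bar{b})$ is a single index, lying outside $M$, at which the constraints coming from all $\{\psi_2(\bar{x},\bar{m}_i) : i < \omega\}$ pile up. Hypothesis (5) does not apply directly since $\bar{u}(\bar{b}) \notin M$; the consistency of the pooled circular-order constraint must instead be derived by passing to a realization of the original $p$ in some model of $T_0[\varphi]$ and invoking the constancy of the qf-$L$-type of $(\bar{b},\bar{m}_i)$ over $A$ from Step 1, which transports realizability from the canonical realization to our chosen $\bar{b}$. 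The ``in particular'' clause is immediate: if $p_2$ itself does not divide over $A$, then each subtype $p_2 \upharpoonright \{C[\varphi](\bar{t},-,-,-)\}$ also does not divide over $A$ and a fortiori not over $A\bar{t}$, so hypothesis (5) is automatically satisfied and the general statement applies.
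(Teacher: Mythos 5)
Your strategy is the same as the paper's: realize the $L$-part first using hypothesis (4), then build the $C\left[\varphi\right]$-structure index by index, splitting into cases according to how the index tuple sits relative to the indiscernible sequence and the new realization, using hypothesis (5) exactly when the index is constant along the sequence and lies in $M$, and transferring circular orders from a fixed realization $\bar{d}\models p$ in a model $M\bar{d}\models T\left[\varphi\right]$ otherwise. (The paper phrases this with an indiscernible sequence of copies of the whole model $M$ rather than of a finite parameter tuple, but that is an equivalent formulation.) The outline is correct and the case division is the right one.

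The one place where your write-up falls short of a proof is precisely the case you single out as the main obstacle: a constant index $\bar{t}=\bar{u}\left(\bar{b},\ldots\right)$ that involves the new realization $\bar{b}$, so that constraints from all rows $i$ accumulate at a single circular order. Your argument --- ``transport realizability from the canonical realization to our chosen $\bar{b}$ via the constancy of the qf-$L$-type of $\left(\bar{b},\bar{m}_{i}\right)$ over $A$'' --- only establishes that the constraints contributed by each \emph{single} row $i$ are consistent (each row is a copy, under the type-preserving map $\bar{c}\bar{m}_{i}\mapsto\bar{d}\bar{m}$, of a circular order realized in $M\bar{d}$). It does not by itself show that the \emph{union} over all $i$ of these constraints is a consistent partial circular order, which is the whole difficulty. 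The missing step is the paper's Case 4 argument: the maps $\sigma_{i}$ (fixing the common part of the sequence and sending $\bar{b}$ to $\bar{d}$) induce circular orders on the sets $S^{M_{i}\bar{c}}\left[\varphi\right]\left(\bar{t}\right)$, any two of which intersect exactly in $S^{\bigcap M_{i}\bar{c}}\left[\varphi\right]\left(\bar{t}\right)$ and agree there (because the $\sigma_{i}$ all restrict to the same map on that common part); since circular orders have SAP, these amalgamate into a single circular order on the union, which one then extends arbitrarily. Your appeal to Remark \ref{rem:AmalInside} gestures in this direction but that remark amalgamates whole $L\left[\varphi\right]$-structures over a common submodel; what is actually needed here is the agreement-on-overlap computation for the transported circular orders at the one fixed index, and that computation is absent. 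A similar (smaller) gap occurs in your constant-index-in-$M$ case: hypothesis (5) hands you \emph{some} realization $\bar{c}'$ of the pooled $C\left[\varphi\right]\left(\bar{t},\cdot\right)$-constraints, and you must still explain why its circular-order data can be copied onto your already-chosen $\bar{b}$ (the paper does this by pulling back along the bijection $\bar{c}\mapsto\bar{c}'$, restricting to the common domain and extending). Neither gap requires a new idea, but both are the substantive content of the claim rather than routine bookkeeping.
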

\begin{proof}
Denote $\bar{x}=\left(x_{0},\ldots,x_{m-1}\right)$, $p\left(\bar{x},M\right)=p\left(\bar{x}\right)$.
We may assume that $p\upharpoonright x_{i}$ is non-algebraic for
all $i<m$ (otherwise, by Fact \ref{fac:Hodges}, $\left(x_{i}=c\right)\in p$
for some $c\in M$, so $c\in A$ as $x=c$ divides over $A$, and
we can replace $x_{i}$ by $c$). Suppose $\left\langle M_{i}\left|\,i<\omega\right.\right\rangle $
is an $L\left[\varphi\right]$-indiscernible sequence over $A$ in
some model $N\supseteq M$ such that $M_{0}=M$. We will show that
$\bigcup\left\{ p\left(\bar{x},M_{i}\right)\left|i<\omega\right.\right\} $
is consistent.

Let $\bar{c}\models\bigcup\left\{ p_{1}\left(\bar{x},M_{i}\right)\right\} $
(exists by (4)), and $B=\bigcup\left\{ M_{i}\left|i<\omega\right.\right\} $
and let $B'=B\bar{c}\upharpoonright L$ (i.e. forget $C\left[\varphi\right]$).
Also let $\bar{d}\models p\left(\bar{x}\right)$ be in some other
model $N'=M\bar{d}$ of $T\left[\varphi\right]$.

For $\bar{t}\in\left(B\bar{c}\right)^{\lg\left(\bar{y}\right)}$ we
define a circular order on $S\left[\varphi\right]\left(\bar{t}\right)$
to make $B'$ into a model $U$ of $T\left[\varphi\right]$ extending
$B$ such that $\bar{c}\models\bigcup\left\{ p\left(\bar{x},M_{i}\right)\right\} $.
\begin{casenv}
\item $\bar{t}\nsubseteq M_{i}\bar{c}$ for any $i<\omega$. In this case,
there is no information on $C\left[\varphi\right]\left(\bar{t}\right)$
in $\bigcup\left\{ p_{2}\left(\bar{x},M_{i}\right)\right\} $, so
let $C\left[\varphi\right]^{U}\left(\bar{t}\right)$ be any circular
order on $S\left[\varphi\right]\left(\bar{t}\right)$ that extends
the circular order $C\left[\varphi\right]^{B}\left(\bar{t}\right)$
(in case $\bar{t}\subseteq B$).
\item $\bar{t}\subseteq M_{i}\bar{c}$ for some $i<\omega$, but $\bar{t}\nsubseteq M_{j}\bar{c}$
for some other $j\neq i$. By indiscernibility, it follows that $\bar{t}\not\subseteq M_{j}\bar{c}$
for all $j\neq i$. Let $\sigma:M_{i}\bar{c}\to M\bar{d}$ be an $L$-isomorphism.
There are two sub-cases:

\begin{casenv}
\item $\bar{t}\cap\bar{c}\neq\emptyset$. Let $C\left[\varphi\right]^{U}\left(\bar{t}\right)$
 be any extension of $\sigma^{-1}\left(C\left[\varphi\right]^{N'}\left(\sigma\left(\bar{t}\right)\right)\right)$
to $S^{U}\left[\varphi\right]\left(\bar{t}\right)$.
\item $\bar{t}\cap\bar{c}=\emptyset$. Then $C\left[\varphi\right]^{B}\left(\bar{t}\right)$
is already a circular order on $S^{B}\left[\varphi\right]\left(\bar{t}\right)$.
On the other hand, $\sigma^{-1}\left(C\left[\varphi\right]^{N'}\left(\sigma\left(\bar{t}\right)\right)\right)$
defines some circular order on $S^{M_{i}\bar{c}}\left[\varphi\right]\left(\bar{t}\right)$.
The intersection is $S^{M_{i}}\left[\varphi\right]\left(\bar{t}\right)$
on which they agree, so we can amalgamate the two circular orders.
\end{casenv}
\item $\bar{t}\subseteq\bigcap M_{i}.$ In this case, by (5), $p_{2}\left(\bar{x}\right)\upharpoonright\left\{ C\left[\varphi\right]\left(\bar{t},-,-,-\right)\right\} $
does not divide over $A\bar{t}$, so let $\bar{c}'\models\bigcup\left\{ p_{2}\left(\bar{x},M_{i}\right)\upharpoonright C\left[\varphi\right]\left(\bar{t},-,-,-\right)\left|i<\omega\right.\right\} $.
Let $U'$ be the $L\left[\varphi\right]$ structure $B\bar{c}'.$
Let $f:B\bar{c}\to B\bar{c}'$ fix $B$ and take $\bar{c}$ to $\bar{c}'$.
Now, $C^{U'}\left[\varphi\right]\left(f\left(\bar{t}\right)\right)$
induces a circular order on
\[
S=f^{-1}\left(S^{U'}\left[\varphi\right]\left(f\left(\bar{t}\right)\right)\right)\cap S^{B'}\left[\varphi\right]\left(\bar{t}\right).
\]
Extend it to some circular order on $S^{U}\left[\varphi\right]\left(\bar{t}\right)$
and let it be $C^{U}\left[\varphi\right]\left(\bar{t}\right)$.
\item $\bar{t}\subseteq\bigcap M_{i}\bar{c}$, and $\bar{t}\cap\bar{c}\neq\emptyset$.
Let $\sigma_{i}:M_{i}\bar{c}\to M\bar{d}$ be the $L$-isomorphism
fixing $\bigcap M_{i}$ and taking $\bar{c}$ to $\bar{d}$. $\sigma_{i}$
induces a circular order on $S^{M_{i}\bar{c}}\left[\varphi\right]\left(\bar{t}\right)$,
and the intersection of any two $S^{M_{i}\bar{c}}\left[\varphi\right]\left(\bar{t}\right)$
and $S^{M_{j}\bar{c}}\left[\varphi\right]\left(\bar{t}\right)$ is
$S^{\bigcap M_{i}\bar{c}}\left[\varphi\right]\left(\bar{t}\right)$
on which these circular orders agree. By amalgamation, we have a circular
order on the union $\bigcup_{i}S^{M_{i}\bar{c}}\left[\varphi\right]\left(\bar{t}\right)$
that we can expand to a circular order on $S^{U}\left[\varphi\right]\left(\bar{t}\right)$.
\end{casenv}
\end{proof}
\begin{claim}
\label{cla:dividingPer}Let $A\subseteq M\models T_{0}\left[\varphi\right]$
be $\left|A\right|^{+}$-saturated and $M'=M\upharpoonright L$. Suppose
that $\psi\left(\bar{z},\bar{a}\right)$, a quantifier free $L$-formula,
$k$-divides disjointly over $A$ in $M'$. Then the same is true
in $M$.\end{claim}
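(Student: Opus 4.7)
The strategy is to take the given $L$-indiscernible witness of disjoint $k$-dividing in $M'$ and upgrade it to an $L[\varphi]$-indiscernible sequence that can be realized inside $M$ itself, by combining Claim~\ref{cla:existence}(3), compactness, the quantifier elimination of $T_0[\varphi]$, and the saturation of $M$.

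First, I fix an $L$-indiscernible sequence $\langle\bar{a}_i\mid i<\omega\rangle$ in $M'$ with $\bar{a}_0=\bar{a}$, with pairwise intersections contained in $A$, and with $\{\psi(\bar{z},\bar{a}_i)\mid i<\omega\}$ being $k$-inconsistent. Since $T[\varphi]$ is universal (Claim~\ref{cla:Thas}(1)) and $M\models T_0[\varphi]$, the $L[\varphi]$-substructure $M_0':=M\upharpoonright A\bar{a}$ is itself a model of $T[\varphi]$. For each $n<\omega$, Claim~\ref{cla:existence}(3), applied to $A$, the tuples $\bar{a}_0,\dots,\bar{a}_{n-1}$ and the prescribed expansion $M_0'$ of $A\bar{a}_0$, produces an expansion of $M'$ to a model of $T[\varphi]$ extending $M_0'$ in which $\bar{a}_0,\dots,\bar{a}_{n-1}$ all share a common quantifier-free $L[\varphi]$-type over $A$. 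A compactness argument (taking the $L[\varphi]$-theory consisting of $T[\varphi]$, the atomic $L$-diagram of $M'$, the $L[\varphi]$-diagram of $M_0'$, and for every $i<j<\omega$ the schema that $\bar{a}_i$ and $\bar{a}_j$ satisfy the same quantifier-free $L[\varphi]$-formulas over $A$) then assembles these finitary expansions into a single $L[\varphi]$-structure $\hat{M}\models T[\varphi]$ that contains $M'$ as an $L$-substructure and $M_0'$ as an $L[\varphi]$-substructure, and in which all the $\bar{a}_i$ share a common quantifier-free $L[\varphi]$-type over $A$.

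Next, I embed $\hat{M}$ into some $N\models T_0[\varphi]$. By Fact~\ref{fac:Hodges}, $T_0[\varphi]$ eliminates quantifiers, so the common quantifier-free $L[\varphi]$-type of the $\bar{a}_i$ over $A$ is in fact a complete $L[\varphi]$-type. Standard indiscernible extraction (Ramsey combined with compactness in a sufficiently saturated extension of $N$) then produces an $L[\varphi]$-indiscernible sequence $\langle\bar{b}_i\mid i<\omega\rangle$ over $A$ whose finite $L[\varphi]$-type over $A$ is realized by finite subsequences of $\langle\bar{a}_i\rangle$. In particular, $\bar{b}_0$ and $\bar{a}$ have the same $L[\varphi]$-type over $A$, the pairwise intersections satisfy $\bar{b}_i\cap\bar{b}_j\subseteq A$ for $i\neq j$, and $\{\psi(\bar{z},\bar{b}_i)\mid i<\omega\}$ remains $k$-inconsistent, since all three conditions can be read off from finite $L[\varphi]$-types over $A$. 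Finally, the $|A|^+$-saturation (and hence $|A|^+$-homogeneity) of $M$ allows me to realize the $L[\varphi]$-type of $\langle\bar{b}_i\mid i<\omega\rangle$ over $A$ inside $M$, by some sequence $\langle\bar{c}_i\mid i<\omega\rangle$; this sequence then witnesses that $\psi(\bar{z},\bar{a})$ $k$-divides disjointly over $A$ in $M$.

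The main obstacle is coordinating the circular orders on the many sets $S[\varphi](\bar{t})$ (for tuples $\bar{t}$ coming from different $\bar{a}_i$'s and from $A$) so as to simultaneously respect the prescribed $L[\varphi]$-structure on $A\bar{a}$ and make the $\bar{a}_i$ quantifier-free indistinguishable over $A$ in $L[\varphi]$; the amalgamation-based ``moreover'' clause of Claim~\ref{cla:existence}(3) is precisely what enables this coordination locally, while compactness handles the global assembly and the saturation of $M$ takes care of the transfer back.
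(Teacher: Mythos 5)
Your proposal is correct and follows essentially the same route as the paper: apply Claim \ref{cla:existence}(3) together with compactness to expand $M'$ to a model of $T\left[\varphi\right]$ in which the tuples of the witnessing sequence have a common quantifier-free $L\left[\varphi\right]$-type over $A$ extending the structure already present on $A\bar{a}$, and then transfer back into $M$ using quantifier elimination and $\left|A\right|^{+}$-saturation. The only (cosmetic) difference is at the end, where the paper amalgamates the expanded model with $M$ over $A\bar{a}$ while you extract an $L\left[\varphi\right]$-indiscernible sequence in an ambient model of $T_{0}\left[\varphi\right]$ and realize its type in $M$; both come down to the same saturation argument.
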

\begin{proof}
Suppose that $I=\left\langle \bar{a}_{i}\left|\,i<\omega\right.\right\rangle \subseteq M$
witnesses $k$-dividing disjointly of $\psi\left(\bar{z},\bar{a}\right)$
over $A$ in the sense of $L$. Assume that $\bar{a}_{0}=\bar{a}$.

By Claim \ref{cla:existence} (3) and compactness, we can expand and
extend $M'$ to $M''\models T_{0}\left[\varphi\right]$ that will
keep the equality of types of the tuples in the sequence. In addition,
the interpretation of the new relation $C\left[\varphi\right]$ on
$A\bar{a}$ remains as it was in $M$. In particular, in $M''$, $\psi\left(\bar{z},\bar{a}\right)$
still $k$-divides over $A$. We may amalgamate a copy of $M''$ with
$M$ over $A\bar{a}$ to get a bigger model in which $\psi\left(\bar{z},\bar{a}\right)$
still $k$-divides disjointly and by saturation this is still true
in $M$.
\end{proof}

\subsubsection{\label{sub:General-construction}Circularization: Iterations.\protect \\
}

Assume there are theories $\mathcal{T}=\left\langle T_{i}^{\forall}\left|\,i\leq\omega\right.\right\rangle $
and formulas $\left\langle \varphi_{i}\left(\bar{x}_{i};\bar{y}_{i}\right)\left|\,i<\omega\right.\right\rangle $
in the finite relational languages $\left\langle L_{i}\left|\,i\leq\omega\right.\right\rangle $
where:
\begin{itemize}
\item $T_{0}^{\forall}$ is a universal theory with SAP and DEP in $L_{0}$.
\item $T_{i}^{\forall}$ is a theory in $L_{i}$ for $i\leq\omega$.
\item $\varphi_{i}\left(\bar{x}_{i};\bar{y}_{i}\right)$ is a quantifier
free formula in $L_{i}$.
\item $L_{i}=L_{i}\left[\varphi_{i}\left(\bar{x}_{i};\bar{y}_{i}\right)\right]$
and $T_{i+1}^{\forall}=T_{i}^{\forall}\left[\varphi_{i}\left(\bar{x}_{i};\bar{y}_{i}\right)\right]$.
\item $L_{\omega}=\bigcup\left\{ L_{i}\left|i<\omega\right.\right\} $ and
$T_{\omega}^{\forall}=\bigcup\left\{ T_{i}^{\forall}\left|i<\omega\right.\right\} $. \end{itemize}
\begin{prop}
\label{prop:model completion of phi-circulization}In the situation
above, $T_{i}^{\forall}$ has a model completion $T_{i}$, $T_{i}\subseteq T_{i+1}$
and $T_{i}\subseteq T_{\omega}$ which is the model completion of
$T_{\omega}^{\forall}$ for all $i<\omega$.\end{prop}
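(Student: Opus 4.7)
The plan is to prove the three assertions in order: existence of each $T_i$ as a model completion, the inclusion $T_i \subseteq T_{i+1}$, and then the limit statement for $T_\omega$.

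First, I would establish the finite stages by induction on $i$. The base $i=0$ is immediate from Fact \ref{fac:Hodges}(1): $T_0^\forall$ is universal in a finite relational language with SAP and DEP, so it has a model completion $T_0$ that eliminates quantifiers. For the inductive step, assuming $T_i^\forall$ is universal with SAP and DEP in the finite relational language $L_i$, Claim \ref{cla:Thas} tells us $T_{i+1}^\forall = T_i^\forall[\varphi_i]$ is again universal with SAP and DEP in the finite relational language $L_{i+1}$, so Fact \ref{fac:Hodges}(1) produces the model completion $T_{i+1}$ with quantifier elimination. The inclusion $T_i \subseteq T_{i+1}$ is then a direct consequence of Corollary \ref{cor:exClosPer}: any $M' \models T_{i+1}$ restricts to $M' \upharpoonright L_i \models T_i$, so every $L_i$-sentence of $T_i$ is forced to hold in $M'$.

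For the limit, set $T_\omega := \bigcup_{i<\omega} T_i$; I claim this is the model completion of $T_\omega^\forall$. Model completeness follows from quantifier elimination: any $L_\omega$-formula $\psi$ mentions only finitely many symbols, hence lies in some $L_i$, so it is $T_i$-equivalent (and therefore $T_\omega$-equivalent) to a quantifier-free $L_i$-formula. The remaining and main content is that every $M \models T_\omega^\forall$ embeds into some model of $T_\omega$.

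To produce such an embedding I would build a chain $M = M^{(0)} \subseteq M^{(1)} \subseteq \cdots$ of models of $T_\omega^\forall$ as follows. At stage $n$, assuming $M^{(n)} \models T_\omega^\forall$, the $L_n$-reduct $M^{(n)} \upharpoonright L_n \models T_n^\forall$ embeds into some $\widetilde{M^{(n)}} \models T_n$ by the model-completion property. I then need to re-expand $\widetilde{M^{(n)}}$ to an $L_\omega$-structure satisfying $T_\omega^\forall$ in a way that preserves the $L_\omega$-structure already present on $M^{(n)}$. For this I iterate Claim \ref{cla:existence}(2): at each $k \geq n$, given an expansion of $\widetilde{M^{(n)}}$ to a model of $T_k^\forall$ containing $M^{(n)} \upharpoonright L_k$, I apply Claim \ref{cla:existence}(2) with $\varphi = \varphi_k$ (so $B = M^{(n)} \upharpoonright L_k$, $B' = M^{(n)} \upharpoonright L_{k+1}$) to extend to an expansion to a model of $T_{k+1}^\forall$ still containing $M^{(n)} \upharpoonright L_{k+1}$, and take the union in $\omega$ steps to obtain $M^{(n+1)} \models T_\omega^\forall$ with $M^{(n)} \subseteq M^{(n+1)}$ as $L_\omega$-structures and $M^{(n+1)} \upharpoonright L_n = \widetilde{M^{(n)}} \models T_n$. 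Setting $N := \bigcup_n M^{(n)}$, for each fixed $n$ the reduct $N \upharpoonright L_n$ is the union of a chain $\widetilde{M^{(n)}} \subseteq \widetilde{M^{(n+1)}} \subseteq \cdots$ of models of $T_n$ (using that $T_n \subseteq T_k$ for $k \geq n$); since $T_n$ is model complete this chain is elementary, so $N \upharpoonright L_n \models T_n$. As this holds for every $n$, $N \models T_\omega$, completing the embedding.

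The main obstacle is the bookkeeping in the chain construction: ensuring that when one first enlarges $M^{(n)} \upharpoonright L_n$ to $\widetilde{M^{(n)}}$ in the pure $L_n$-world and then reintroduces the higher relations $C[\varphi_k]$ for $k \geq n$, the already-present $L_\omega$-structure on $M^{(n)}$ is preserved. Claim \ref{cla:existence}(2) is exactly the tool that allows such a preservation at each step, and the amalgamation argument of Remark \ref{rem:AmalInside} ensures the iteration goes through. Everything else—QE, inclusions, and verifying $T_\omega^\forall = \bigcup T_i^\forall$—is bookkeeping around these ingredients.
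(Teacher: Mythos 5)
Your proof is correct and follows essentially the route the paper intends: the paper's own proof is the one-line ``Follows from Claim \ref{cla:Thas} and Corollary \ref{cor:exClosPer}'', and your argument simply makes explicit the induction via Fact \ref{fac:Hodges}(1) at the finite stages and supplies the union-of-chains/re-expansion argument (via Claim \ref{cla:existence}(2) and model completeness of each $T_n$) that the paper leaves implicit for the limit stage, where Fact \ref{fac:Hodges} no longer applies directly since $L_{\omega}$ is infinite.
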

\begin{proof}
Follows from Claim \ref{cla:Thas} and Claim \ref{cor:exClosPer}.
\end{proof}
From now on, we work in $T:=T_{\omega}$. Call $T_{\omega}$ the $\bar{\varphi}$-circularization
of $T_{0}$ where $\bar{\varphi}=\left\langle \varphi_{i}\left|\,i<\omega\right.\right\rangle $.
Let $M\models T$ and $A\subseteq M$.
\begin{claim}
\label{cla:ForkingIffFS}Suppose $\varphi\left(\bar{x};\bar{y}\right)=\varphi_{i}\left(\bar{x}_{i};\bar{y}_{i}\right)$
for some $i<\omega$. Then for all $\bar{a}\in M^{\lg\left(\bar{y}\right)}$,
$\varphi\left(\bar{z},\bar{a}\right)\land\left(\bar{z}\cap\left(\bar{a}\cap A\right)=\emptyset\right)$
forks over $A$ if and only if it is not satisfied in $A$. \end{claim}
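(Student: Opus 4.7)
The plan is as follows. The ``if'' direction (realized in $A$ implies not forking) is immediate: a realization $\bar{c}\in A^{\lg(\bar{z})}$ of $\varphi(\bar{z},\bar{a})\land\bar{z}\cap(\bar{a}\cap A)=\emptyset$ makes the formula part of an algebraic type over $A$, which does not fork. For the ``only if'' direction, assume the formula is not satisfied in $A$; this is equivalent to $S^{M}[\varphi](\bar{a})\cap A^{\lg(\bar{x})}=\emptyset$, since any $\bar{c}\in A^{\lg(\bar{z})}$ realizing the formula automatically satisfies $\bar{c}\cap\bar{a}=\bar{c}\cap(\bar{a}\cap A)=\emptyset$, placing it in $S[\varphi](\bar{a})$.

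I plan to witness forking by exhibiting the formula as a consequence of a finite disjunction of formulas each dividing over $A$. Since $\bar{z}\cap(\bar{a}\cap A)=\emptyset$ together with $\varphi(\bar{z},\bar{a})$ forces $\bar{z}$ either to share a coordinate with $\bar{a}\setminus A$ or to be disjoint from $\bar{a}$,
\[
\varphi(\bar{z},\bar{a})\land\bar{z}\cap(\bar{a}\cap A)=\emptyset\;\Longrightarrow\;\bigvee_{\substack{a_{j}\in\bar{a}\setminus A\\ i<\lg(\bar{z})}}(z_{i}=a_{j})\;\lor\;\bar{z}\in S[\varphi](\bar{a}).
\]
Each equation $z_{i}=a_{j}$ with $a_{j}\in\bar{a}\setminus A$ divides over $A$, since $\acl(A)=A$ by Fact \ref{fac:Hodges}(2), so $a_{j}$ has infinitely many distinct $A$-conjugates. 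For the second disjunct, we may assume $S^{M}[\varphi](\bar{a})\neq\emptyset$ (otherwise the formula is inconsistent and trivially forks), so by Corollary \ref{cor:infinite-1} it is infinite; pick distinct $\bar{c},\bar{d}\in S^{M}[\varphi](\bar{a})$. The circular order then yields
\[
\bar{z}\in S[\varphi](\bar{a})\;\Longrightarrow\;\bar{z}=\bar{c}\lor\bar{z}=\bar{d}\lor C[\varphi](\bar{a},\bar{c},\bar{z},\bar{d})\lor C[\varphi](\bar{a},\bar{d},\bar{z},\bar{c}),
\]
and $\bar{z}=\bar{c},\bar{z}=\bar{d}$ divide over $A$ because $\bar{c},\bar{d}\not\subseteq A$.

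The main obstacle is to show each arc $C[\varphi](\bar{a},\bar{c},\bar{z},\bar{d})$ divides over $A$ itself, rather than merely over $A\bar{a}$ as Claim \ref{cla:CircDividing} directly delivers. I plan to construct an $A$-indiscernible sequence $\langle(\bar{a}_{j},\bar{c}_{j},\bar{d}_{j}):j<\omega\rangle$ of $A$-conjugates of $(\bar{a},\bar{c},\bar{d})$ in $L_{\omega}$, pairwise disjoint modulo $A$, and to arrange the circular orders on each $S[\varphi](\bar{a}_{j})$ via Claim \ref{cla:existence}(3) so that the arc formulas $C[\varphi](\bar{a}_{j},\bar{c}_{j},\bar{z},\bar{d}_{j})$ are pairwise $2$-inconsistent. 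The arrangement adapts the packing argument of Claim \ref{cla:CircDividing} (placing $\bar{c}_{j}\to\bar{d}_{j}\to\bar{c}_{j+1}\to\bar{d}_{j+1}$ on overlapping circles) while now allowing $\bar{a}$ to vary; the delicate point is that distinct circles $S[\varphi](\bar{a}_{j})$ may overlap in $M$, so the arcs must be positioned consistently on the overlaps. The key leverage is that $S^{A}[\varphi](\bar{a}_{j})=\emptyset$ for every $j$, since $A$-automorphisms preserve $A$; this, via Case \ref{cas:M_0} of Claim \ref{cla:Thas}, grants complete freedom in amalgamating the circular orders, and Ramsey together with compactness then extract the required $A$-indiscernible sequence.
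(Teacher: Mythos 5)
Your decomposition is exactly the paper's: reduce to $\psi(\bar{z},\bar{a})=\varphi(\bar{z},\bar{a})\land\bar{z}\cap\bar{a}=\emptyset$ via the disjunction with the equations $z_{i}=a_{j}$ ($a_{j}\in\bar{a}\setminus A$), then cover $S[\varphi](\bar{a})$ by the two arcs $C[\varphi](\bar{a})(\bar{c},\bar{z},\bar{d})$, $C[\varphi](\bar{a})(\bar{d},\bar{z},\bar{c})$ and the two equations $\bar{z}=\bar{c}$, $\bar{z}=\bar{d}$, using Corollary \ref{cor:infinite-1} for non-algebraicity and Claims \ref{cla:CircDividing} and \ref{cla:dividingPer} for the arcs. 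Up to that point the argument is correct and matches the paper.

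The ``main obstacle'' you then set out to overcome does not exist. To witness that $\psi(\bar{z},\bar{a})$ forks over $A$ you need each disjunct to divide over $A$, and dividing over $A\bar{a}$ already implies dividing over $A$: a sequence of tuples having the same type over $A\bar{a}$ as $(\bar{c},\bar{d})$ and witnessing $2$-inconsistency a fortiori consists of tuples with the same type over $A$, so the very same sequence witnesses dividing over the smaller base. (Dividing is a \emph{stronger} condition over a larger base, not a weaker one.) This is precisely how the paper passes from Claim \ref{cla:CircDividing} to the conclusion, and it is why the claim there is stated over $A\bar{a}$ in the first place. Consequently the entire last paragraph of your proposal --- the construction of an $A$-indiscernible sequence of conjugates of the full tuple $(\bar{a},\bar{c},\bar{d})$ with consistently positioned arcs on overlapping circles --- is superfluous. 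It is also the one place where your write-up is not actually a proof: you acknowledge the ``delicate point'' of overlapping circles $S[\varphi](\bar{a}_{j})$ and then assert, rather than verify, that Case \ref{cas:M_0} of Claim \ref{cla:Thas} resolves it. Had that construction been necessary, this would be a genuine gap; since it is not, you should simply delete it and invoke base monotonicity of dividing.

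One smaller point: when $S^{M}[\varphi](\bar{a})=\emptyset$ the formula $\varphi(\bar{z},\bar{a})\land\bar{z}\cap(\bar{a}\cap A)=\emptyset$ need not be inconsistent (it may still be realized by tuples meeting $\bar{a}\setminus A$); what is true is that the disjunct $\psi$ drops out and $\alpha(\bar{z},\bar{a})\vdash\bigvee_{i,j}(z_{i}=a_{j})$, which still witnesses forking since each equation divides over $A$. State it that way rather than via ``inconsistent, hence trivially forks.''
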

\begin{proof}
Denote $\bar{a}'=\bar{a}\cap A,$ and $\alpha\left(\bar{z},\bar{a}\right)=\varphi\left(\bar{z},\bar{a}\right)\wedge\left(\bar{z}\cap\bar{a}'=\emptyset\right)$.
Obviously if $\alpha$ is satisfied in $A$ it does not fork over
$A$.

Suppose $\alpha$ is not satisfied in $A$. Consider the formula $\psi\left(\bar{z},\bar{a}\right)=\varphi\left(\bar{z},\bar{a}\right)\wedge\left(\bar{z}\cap\bar{a}=\emptyset\right)$.
First we prove that $\psi$ forks. It defines $S\left[\varphi\right]^{M}\left(\bar{a}\right)$,
and by assumption $S\left[\varphi\right]^{M}\left(\bar{a}\right)\cap A=\emptyset$.
Note that for all $\bar{c}\neq\bar{d}\in S^{M}\left[\varphi\right]\left(\bar{a}\right)$,
since $C^{M}\left[\varphi\right]\left(\bar{a}\right)$ orders this
set in a circular order,
\[
S\left[\varphi\right]\left(\bar{a}\right)\left(\bar{z}\right)\vdash C\left[\varphi\right]\left(\bar{a}\right)\left(\bar{c},\bar{z},\bar{d}\right)\vee C\left[\varphi\right]\left(\bar{a}\right)\left(\bar{d},\bar{z},\bar{c}\right)\vee\bar{z}=\bar{c}\vee\bar{z}=\bar{d}.
\]
 If $S\left[\varphi\right]^{M}\left(\bar{a}\right)=\emptyset$ we
are done. If not, (by Corollary \ref{cor:infinite-1}) this set is
infinite and there are such $\bar{c},\bar{d}$.

By Claim \ref{cla:CircDividing} and Claim \ref{cla:dividingPer},
it follows that $C\left[\varphi\right]\left(\bar{a}\right)\left(\bar{c},\bar{z},\bar{d}\right)$,
$C\left[\varphi\right]\left(\bar{a}\right)\left(\bar{d},\bar{z},\bar{c}\right)$
divides over $A\bar{a}$. By Corollary \ref{cor:infinite-1}, both
$\bar{z}=\bar{c}$ and $\bar{z}=\bar{d}$ divides over $A\bar{a}$.
This means that $S\left[\varphi\right]\left(\bar{a}\right)\left(\bar{z}\right)=\psi\left(\bar{z},\bar{a}\right)$
forks over $A$.

Now, $\alpha\left(\bar{z},\bar{a}\right)\vdash\psi\left(\bar{z},\bar{a}\right)\vee\bigvee_{i,j}\left(z_{i}=a_{j}\right)$
(where $z_{i},a_{j}$ run over all the variables and parameters from
$\bar{a}\backslash A$ in $\varphi$). But the formula $z_{i}=a_{j}$
divides over $A$ when $a_{j}\notin A$ (By Corollary \ref{cor:infinite-1}),
so we are done.
\end{proof}
On the other hand, we have:
\begin{claim}
\label{cla:NotDiv}Suppose that $p\left(\bar{x}\right)$ is a (quantifier
free) type over $M$ such that:
\begin{itemize}
\item $p_{0}\left(\bar{x}\right)=p\upharpoonright L_{0}$ does not divide
over $A$.
\item $p_{i}\left(\bar{x}\right)=p\upharpoonright L_{i+1}\backslash L_{i}$
does not divide over $A$.
\end{itemize}
Then $p$ does not divide over $A$.\end{claim}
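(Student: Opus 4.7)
The plan is to reduce the claim to an induction on $n < \omega$, where the inductive step is a direct application of Claim \ref{cla:ConverseToDividing}, applied to the circularization that produces $L_{n+1}$ from $L_n$.

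First I would observe that, by compactness, it suffices to show that $p\upharpoonright L_{n}$ does not divide over $A$ in the sense of $T_{n}$ for each $n<\omega$. Indeed, if some finite conjunction $\psi(\bar{x},\bar{a})$ of formulas from $p$ $k$-divides over $A$, then $\psi\in L_{n}$ for some $n$; the sequence $\langle\bar{a}_{i}\rangle$ witnessing dividing in the sense of $T_{\omega}$ then realizes $\tp_{L_{n}}(\bar{a}/A)$ as well, so $\psi$ already $k$-divides over $A$ in the $L_{n}$-reduct. Note also that, by Corollary \ref{cor:exClosPer}, $M\upharpoonright L_{n}\models T_{n}$, so it makes sense to speak of dividing in $T_{n}$.

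Next, I would prove by induction on $n<\omega$ that $p\upharpoonright L_{n}$ does not divide over $A$. The base case $n=0$ is precisely the first bullet of the hypothesis. For the inductive step, write $L_{n+1}=L_{n}[\varphi_{n}]$ and decompose
\[
p\upharpoonright L_{n+1}\;=\;(p\upharpoonright L_{n})\;\cup\;(p\upharpoonright\{C[\varphi_{n}]\}),
\]
where the second piece is exactly $p_{n}=p\upharpoonright(L_{n+1}\setminus L_{n})$. By the induction hypothesis, $p\upharpoonright L_{n}$ does not divide over $A$, and by the second bullet of the hypothesis, $p_{n}$ does not divide over $A$. Applying Claim \ref{cla:ConverseToDividing} inside $T_{n+1}=T_{n}[\varphi_{n}]$ (in its ``in particular'' form) then yields that $p\upharpoonright L_{n+1}$ does not divide over $A$, completing the induction.

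The only delicate point is bookkeeping of theories and reducts, rather than new mathematical input: one needs to check that the hypotheses of Claim \ref{cla:ConverseToDividing} (completeness of the quantifier-free type and the fact that we are working over a model of the appropriate circularization) are satisfied at each step. Completeness of $p\upharpoonright L_{n+1}$ as a quantifier-free type over $M\upharpoonright L_{n+1}$ follows from completeness of $p$ over $M$, and the fact that $M\upharpoonright L_{n+1}\models T_{n+1}$ is Corollary \ref{cor:exClosPer} again. Once this is verified, the induction goes through uniformly and, combined with the compactness reduction of the first paragraph, gives the claim.
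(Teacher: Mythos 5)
Your proof is correct and follows essentially the same route as the paper's: the paper's own argument is exactly an induction on $n$ showing $p\upharpoonright L_{n}$ does not divide over $A$, with the base case given and the inductive step an application of Claim \ref{cla:ConverseToDividing}. Your version merely makes explicit the compactness reduction (a dividing formula lives in some finite level $L_{n}$) and the reduct bookkeeping, which the paper leaves implicit.
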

\begin{proof}
By induction on $i<\omega$ we show that $p'_{i}=p\upharpoonright L_{i}$
does not divide over $A$. For $i=0$ it is given. For $i+1$ use
Claim \ref{cla:ConverseToDividing}.
\end{proof}
The following definition is a bit vague
\begin{prop}
\label{def:Circul iteration} Let $\F$ be a function defined on the
class of all countable relational first-order languages such that
$\mathcal{\F}\left(L\right)$ is a set of quantifier free partitioned
formulas in $L$. Let $T_{0}$ be a universal theory in the language
$L_{0}$ satisfying SAP and DEP. We define:
\begin{itemize}
\item For $n<\omega$, let $L_{n+1}=\bigcup\left\{ L_{n}\left[\varphi\left(\bar{x};\bar{y}\right)\right]\left|\,\varphi\left(\bar{x};\bar{y}\right)\in\mathcal{F}\left(L_{n}\right)\right.\right\} $
and let $L_{\omega}$ be their union $\bigcup\left\{ L_{n}\left|n<\omega\right.\right\} .$
\item For $n<\omega$, let $T_{n}^{\forall}$ be a universal theory in $L_{n}$
defined by induction on $n\leq\omega$:

\begin{itemize}
\item $T_{0}^{\forall}=T_{0}$
\item $T_{n+1}^{\forall}=\bigcup\left\{ T_{n}^{\forall}\left[\varphi\left(\bar{x};\bar{y}\right)\right]\left|\,\varphi\in\mathcal{F}\left(L_{n}\right)\right.\right\} $
\item $T_{\omega}^{\forall}=\bigcup\left\{ T_{n}^{\forall}\left|\,n<\omega\right.\right\} $
\end{itemize}
\end{itemize}
Then $T_{\omega}^{\forall}$ has a model completion which we denote
by $\circlearrowright_{T_{0},L_{0},\F}$. Moreover, it is a $\bar{\varphi}$-circularization
for some choice of $\bar{\varphi}$.\end{prop}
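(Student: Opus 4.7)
The plan is to reduce the statement to the single-formula iteration developed in Section \ref{sub:General-construction}. Concretely, I will exhibit a sequence $\bar\varphi = \langle \varphi_i \mid i < \omega \rangle$ of quantifier free partitioned formulas with $\varphi_i$ in the language produced after the previous $i$ single-formula steps, such that the iterated theory produced by that sequence coincides on the nose with $T_\omega^\forall$. Once this is in hand, Proposition \ref{prop:model completion of phi-circulization} immediately produces the required model completion, which is by definition a $\bar\varphi$-circularization of $T_0$.

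For the construction of $\bar\varphi$, first fix, for each $n < \omega$, an enumeration $\F(L_n) = \{\varphi_{n,k} \mid k < \omega\}$; this is possible because each $L_n$ and hence $\F(L_n)$ is countable (pad a fixed formula if $\F(L_n)$ happens to be finite). Then dovetail the double-indexed family $\{\varphi_{n,k}\}$ into a single sequence $\bar\varphi$ subject to the constraint that the pair $(m,\ell)$ always comes strictly before $(n,k)$ whenever $m < n$. By induction on the stage $i$, the language produced after the first $i$ single-formula steps is a subset of $L_\omega$, and the dependency clause ensures that $\varphi_i = \varphi_{n,k}$ for some $(n,k)$ already lives in this current language: indeed $\varphi_{n,k} \in L_n$, and every new symbol in $L_n \setminus L_0$ arose from some $\varphi_{m,\ell}$ with $m < n$, which by construction has already been processed. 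So $\bar\varphi$ genuinely fits the hypotheses of Section \ref{sub:General-construction}.

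Next I verify that the unions of languages and of universal theories match up in the limit. For the languages, both $\bigcup_i L_0[\varphi_0]\cdots[\varphi_{i-1}]$ and $L_\omega = \bigcup_n L_n$ equal $L_0$ augmented by the relation symbols $\{C[\varphi] \mid \varphi \in \F(L_n),\ n < \omega\}$. For the axioms, at each single-formula step the new axioms mention only the brand new symbol $C[\varphi_i]$ (plus symbols already present); consequently the limit theory is $T_0$ together with the full axiomatization $T[\varphi_i]$ for every entry of $\bar\varphi$, and this is exactly the axiomatization of $T_\omega^\forall$ given in the statement. Crucially, the circularization axioms for distinct $\varphi$ and $\varphi'$ involve disjoint new symbols $C[\varphi]$, $C[\varphi']$ and so do not interact, which is why the parallel construction and the sequential one agree.

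The main delicate point is the bookkeeping in the dovetailing: one must ensure that at the moment $\varphi_i$ is processed, all the symbols appearing in it have already been introduced, so that the expansion $L[\varphi_i]$ and theory $T[\varphi_i]$ of Section \ref{sub:General-construction} are even well defined. This is handled by the ``stage $m$ before stage $n$ for $m<n$'' ordering, within which $\F(L_n)$ can be enumerated freely. After this, the rest is a routine verification and the conclusion is delivered by Proposition \ref{prop:model completion of phi-circulization}.
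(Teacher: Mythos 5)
Your overall strategy is exactly the paper's: the paper's entire proof is the one--line remark that ``by carefully choosing an enumeration of the formulas in $L_{\omega}$'' one reduces to the single-formula iteration and then invokes Proposition \ref{prop:model completion of phi-circulization}. You fill in what that enumeration has to achieve, and your verification that the languages and theories match in the limit (the axioms for distinct $\varphi$ involve disjoint new symbols $C\left[\varphi\right]$, so the parallel and sequential constructions agree) is correct and is the right thing to check.

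There is, however, a concrete slip in the bookkeeping. You ``dovetail'' subject to the constraint that $(m,\ell)$ precedes $(n,k)$ for \emph{all} $\ell,k$ whenever $m<n$, i.e.\ all of stage $m$ strictly before all of stage $n$. In every application in the paper the sets $\F\left(L_{n}\right)$ are countably infinite (e.g.\ ``all quantifier free partitioned formulas of $L$''), so this order has type at least $\omega\cdot\omega$ and cannot be realized by a sequence $\bar{\varphi}=\left\langle \varphi_{i}\left|\, i<\omega\right.\right\rangle $, which is what the iteration of Section \ref{sub:General-construction} requires. The repair is the observation you already half-state: each $\varphi_{n,k}\in\F\left(L_{n}\right)$ is a \emph{finite} formula, so it mentions only finitely many symbols $C\left[\varphi_{m,\ell}\right]$ with $m<n$; hence the dependency relation on $\bigcup_{n}\F\left(L_{n}\right)$ is well-founded with each element having a finite set of ancestors, and any countable set carrying such a relation admits an enumeration of order type $\omega$ in which every element appears after all of its ancestors (enumerate the elements arbitrarily and insert each one's finitely many unprocessed ancestors just before it). With that enumeration in place of the stagewise one, the rest of your argument goes through and the conclusion follows from Proposition \ref{prop:model completion of phi-circulization} exactly as you say.
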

\begin{proof}
By carefully choosing an enumeration of the formulas in $L_{\omega}$,
we can reconstruct $T_{\omega}^{\forall},L_{\omega}$ in such a way
that in each step we deal with one formula and it has a model completion
by Proposition \ref{prop:model completion of phi-circulization}.
\end{proof}

\subsection{Example of (\ref{enu:double exponent kappa}).}
\begin{defn}
Let $L_{0}=\left\{ =\right\} $ and $T_{0}$ be empty. Let $\F\left(L\right)$
be the set of all quantifier free partitioned formulas from $L$.
Let $T=\circlearrowright_{T_{0},L_{0},\F}$. \end{defn}
\begin{rem}
\label{rem:IP}$T$ has IP: Let $\varphi\left(x,y\right)=\left(x\neq y\right)$.
Then $C\left[\varphi\right]\left(y;x_{1},x_{2},x_{3}\right)$ has
IP. \end{rem}
\begin{cor}
For any set $A$, a type $p\left(\bar{x}\right)\in S\left(\C\right)$
does not fork over $A$ if and only if $p$ is finitely satisfiable
in $A$. In particular, by Fact \ref{fac:non-splitting}, $f_{T}\left(\kappa,\lambda\right)\leq2^{2^{\kappa}}$. \end{cor}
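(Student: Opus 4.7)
The plan is to prove each direction separately. The easier $(\Leftarrow)$ goes through standard invariance: if $p \in S(\C)$ is finitely satisfiable in $A$, then $p$ is $A$-invariant --- for if $\bar{c} \equiv_A \bar{c}'$ and $\psi(\bar{x}, \bar{c}) \wedge \neg \psi(\bar{x}, \bar{c}') \in p$, a witness $\bar{d} \in A$ coming from finite satisfiability would contradict $\bar{c} \equiv_A \bar{c}'$. Any $A$-invariant global type does not divide over $A$ (for $\psi(\bar{x}, \bar{a}) \in p$ and any $A$-indiscernible $\langle \bar{a}_i \rangle$ with $\bar{a}_0 = \bar{a}$, invariance forces every $\psi(\bar{x}, \bar{a}_i)$ into $p$, so the set is consistent), and by completeness of $p$ (a disjunction in $p$ has a disjunct in $p$) this upgrades to non-forking.

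For the substantive direction $(\Rightarrow)$ I would argue by contradiction. Assume $p$ does not fork over $A$ but is not finitely satisfiable in $A$, and pick --- using quantifier elimination from Proposition \ref{prop:model completion of phi-circulization} --- a quantifier-free formula $\psi(\bar{x}, \bar{a}) \in p$ with no realization in $A$. Since $\psi \in L_n$ for some $n$ and $\F(L_n)$ contains every QF partitioned formula in $L_n$, $\psi$ appears as one of the circularized $\varphi_i$'s in the $\bar{\varphi}$-circularization, so Claim \ref{cla:ForkingIffFS} is available for $\psi$.

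To invoke the disjointness hypothesis of Claim \ref{cla:ForkingIffFS}, the plan is to peel off the coordinates of $\bar{x}$ that $p$ already pins into $A$: let $I$ be the set of indices $i$ with $(x_i = b_i) \in p$ for some $b_i \in A$; for $i \notin I$, completeness of $p$ forces $(x_i \neq b) \in p$ for every $b \in A$. Substituting $\bar{b}_I$ for $\bar{x}_I$ in $\psi$ yields a QF partitioned formula $\psi'(\bar{x}', \bar{c}) \in p$ with $\bar{c} = \bar{a}\bar{b}_I$, still lying in $\F(L_n)$ and hence still of the form $\varphi_j$ for some $j$. The conjunct $\bar{x}' \cap (\bar{c} \cap A) = \emptyset$ is in $p$ by the choice of $I$, and the conjunction $\psi'(\bar{x}', \bar{c}) \wedge (\bar{x}' \cap (\bar{c} \cap A) = \emptyset)$ is not realized in $A$ (a realization $\bar{d}$ would produce $(\bar{b}_I, \bar{d}) \in A$ realizing $\psi(\bar{x}, \bar{a})$). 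By Claim \ref{cla:ForkingIffFS} this conjunction forks over $A$, so $p$ forks over $A$, the desired contradiction.

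The main obstacle is merely bookkeeping around the substitution: one must confirm that $\psi'$ remains a QF partitioned formula in $L_n$ so that the Claim still applies --- but this is clear since $\F(L_n)$ collects all such formulas and substitution of parameters does not leave the language. The ``in particular'' is then immediate: for $p \in S^{\nf}(N, M)$ the equivalence gives $p$ finitely satisfiable in $M$, hence $p$ does not split over $M$, and Fact \ref{fac:non-splitting}(1) bounds the number of such types by $2^{2^{|M|}} \leq 2^{2^{\kappa}}$.
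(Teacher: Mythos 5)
Your proof is correct and follows essentially the same route as the paper: the substantive direction is exactly the paper's argument (quantifier elimination to get a quantifier-free $\psi(\bar{x},\bar{a})\in p$ unrealized in $A$, substitute out the coordinates pinned to $A$ so that the disjointness conjunct lies in $p$, then apply Claim \ref{cla:ForkingIffFS}), while the converse direction and the ``in particular'' are standard facts the paper leaves implicit. Your bookkeeping of the substitution step is if anything slightly more careful than the paper's one-line version.
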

\begin{proof}
Suppose $p\left(\bar{x}\right)$ is a global type that is not finitely
satisfiable in $A$. By quantifier elimination, there is a quantifier
free formula $\varphi\left(\bar{x};\bar{y}\right)$ and $\bar{a}\in\C$
such that $\varphi\left(\bar{x},\bar{a}\right)\in p$ and this formula
is not satisfiable in $A$. If $\bar{a}\cap A\neq\emptyset$, and
$x_{i}=a\in p$ for some $a\in\bar{a}\cap A$, replace $x_{i}$ by
$a$ in $\varphi$, and change the partition of the variables so that
we get $\varphi\left(\bar{z},\bar{a}\right)\land\bar{z}\cap\left(\bar{a}\cap A\right)=\emptyset\in p$.
By Claim \ref{cla:ForkingIffFS}, this formula forks over $A$ and
we are done. \end{proof}
\begin{prop}
\label{prop:f_T =00003D 2^2^kappa}We have $f_{T}\left(\kappa,\lambda\right)=2^{\min\left\{ 2^{\kappa},\lambda\right\} }$.\end{prop}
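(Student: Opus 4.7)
The plan is to get the proof by matching an upper bound established just before the proposition with a lower bound coming from the general IP machinery already developed in the paper.

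For the upper bound, I would combine two ingredients. First, the trivial estimate $f_T(\kappa,\lambda)\leq 2^\lambda$ from Lemma \ref{lem:basic properties of f_T}. Second, the corollary immediately preceding the proposition shows that over any model $M$ every non-forking global type is finitely satisfiable in $M$, and in particular does not split over $M$, so by Fact \ref{fac:non-splitting}(1) there are at most $2^{2^\kappa}$ such types when $|M|\leq\kappa$. Putting these together gives
\[
f_T(\kappa,\lambda)\;\leq\;\min\{2^\lambda,\,2^{2^\kappa}\}\;=\;2^{\min\{2^\kappa,\,\lambda\}}.
\]

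For the lower bound, the key observation is that, by Remark \ref{rem:IP}, the theory $T$ has IP (witnessed by $C[x\neq y](y;x_1,x_2,x_3)$). This is exactly the hypothesis needed to apply Proposition \ref{prop: many ultrafilters from IP}: from IP we extract an independent family of sets and build $2^\mu$ distinct non-forking types via ultrafilters, for any $\mu\leq 2^\kappa$ with enough parameter room. Applied with $\mu=\min\{2^\kappa,\lambda\}$ this gives
\[
f_T(\kappa,\lambda)\;\geq\;2^{\min\{2^\kappa,\,\lambda\}},
\]
matching the upper bound.

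There is essentially no obstacle here: both inequalities are immediate citations from earlier results, and the only thing worth checking carefully is that the ``finitely satisfiable $\Rightarrow$ non-splitting $\Rightarrow$ few types'' chain yields the correct bound $2^{2^\kappa}$ rather than $2^{2^\lambda}$, which it does because splitting/finite-satisfiability is measured over the small model $M$ of size $\leq\kappa$, not over $N$.
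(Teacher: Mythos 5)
Your proposal is correct and follows essentially the same route as the paper, which simply cites Remark \ref{rem:IP} together with the \emph{proof} of Proposition \ref{prop: many ultrafilters from IP} for the lower bound, the upper bound $2^{\min\{2^{\kappa},\lambda\}}=\min\{2^{2^{\kappa}},2^{\lambda}\}$ being supplied by the corollary immediately preceding the proposition plus the trivial bound. Your one caveat is handled correctly: as you note, one must invoke the argument of Proposition \ref{prop: many ultrafilters from IP} (which only needs IP) rather than its literal hypothesis, and the types produced there are finitely satisfiable in the small model, so they indeed do not fork over it.
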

\begin{proof}
By the proof of Proposition \ref{prop: many ultrafilters from IP}
and Remark \ref{rem:IP}.
\end{proof}

\subsection{Example of (\ref{enu:lambda}).}

In this section we are going to construct an example of a theory $T$
with $f_{T}\left(\kappa,\lambda\right)=\lambda$. The idea is to start
with the random graph and circularize it in order to ensure that any
non-forking type $p\in S^{\nf}\left(N,M\right)$ can be $R$-connected
to at most one point of $N$.
\begin{defn}
\label{def:phi is in circulization}Suppose $L$ is a relational language
which includes a binary relation symbol $R$. For a quantifier free
$L$-formula $\psi\left(\bar{x};\bar{y}\right)$ and atomic formulas
$\theta_{0}\left(\bar{x};\bar{y}_{0}\right)$, $\theta_{1}\left(\bar{x},\bar{y}_{1}\right)$,
where $\lg\left(\bar{x}\right)>0$, and both $\bar{x}$ and $\bar{y}_{i}$
occur in them, define the formula:
\begin{eqnarray*}
\varphi_{\psi}^{\theta_{0},\theta_{1}}\left(\bar{x};\bar{y}'\right) & =\\
\varphi_{\psi}^{\theta_{0},\theta_{1}}\left(\bar{x};\bar{y},\bar{y}_{0},\bar{y}_{1},z_{0},z_{1},z_{2}\right) & = & \theta_{0}\left(\bar{x},\bar{y}_{0}\right)\land\theta_{1}\left(\bar{x},\bar{y}_{1}\right)\land\\
 &  & \psi\left(\bar{x},\bar{y}\right)\land\\
 &  & \bigwedge_{i<j<3}R\left(z_{i},z_{j}\right)\land\bigwedge_{i<3,y\in\bar{y}\bar{y}_{0}\bar{y}_{1}}R\left(z_{i},y\right)\\
 &  & \bar{y}_{0}\neq\bar{y}_{1}.
\end{eqnarray*}

\end{defn}
So $z_{0},z_{1},z_{2}$ form a triangle and are connected to all other
parameters. The reason for this will be made clearer in the proof
of Claim \ref{cla:IsolateType}.
\begin{defn}
For a countable first-order relational language $L$ containing a
binary relation symbol $R$, Let $\F\left(L\right)$ be the set of
all formulas of the form $\varphi_{\psi}^{\theta_{0},\theta_{1}}$
from $L$ as above. Let $L_{0}=\left\{ R\right\} $ where $R$ is
a binary relation symbol. Let $T_{0}$ say that $R$ is a graph (symmetric
and non-reflexive). Let $T=\circlearrowright_{T_{0},L_{0},\F}$. \end{defn}
\begin{claim}
\label{cla:IsolateType}Let $b\in M$. Let $p_{b}\left(z\right)$
be a non-algebraic type over $M$ in one variable saying that $R\left(z,a\right)$
just when $a=b$. Then $p_{b}$ isolates a complete type over $M$. \end{claim}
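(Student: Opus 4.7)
The plan is to invoke the quantifier elimination for $T=T_{\omega}$ (guaranteed by Proposition \ref{prop:model completion of phi-circulization} and Fact \ref{fac:Hodges}) to reduce the claim to showing that $p_{b}$ decides every atomic $L_{\omega}$-formula $\chi(z,\bar{a})$ with $\bar{a}\subseteq M$. Every such atom either lives in $L_{0}=\{R\}$, in which case it is an equality or an $R$-atom and is directly decided by the definition of $p_{b}$, or has the form $C[\varphi](\bar{t},\bar{s}_{1},\bar{s}_{2},\bar{s}_{3})$ for some $\varphi=\varphi_{\psi}^{\theta_{0},\theta_{1}}\in\F(L_{i})$. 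I will prove by induction on $i$ the strengthened statement: for any realization $c$ of $p_{b}$, every atomic $L_{i}$-formula involving $c$ with parameters in $M$ is decided by $p_{b}$, and moreover every circular-order atom that involves $c$ is outright false. The base case $i=0$ is immediate from $p_{b}$.

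For the inductive step, fix an atom $C[\varphi](\bar{t},\bar{s}_{1},\bar{s}_{2},\bar{s}_{3})$ in which $c$ appears, and split by where $c$ sits. \emph{Case 1:} $c$ occupies one of the triangle positions $z_{0},z_{1},z_{2}$ of $\bar{t}$. Then the conjunct $\bigwedge_{i<j<3}R(z_{i},z_{j})$ of $\varphi$ either forces $R(c,c)$ (if $c$ appears in two such positions) or forces $R(c,\cdot)$ for two pairwise-distinct $M$-parameters, compelling both to equal $b$ by $p_{b}$; in either case a contradiction, so $S[\varphi](\bar{t})=\emptyset$ and the atom is false. \emph{Case 2:} $c$ occupies a $y$-, $y_{0}$- or $y_{1}$-position of $\bar{t}$. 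Then $\bigwedge_{i<3,\,y\in\bar{y}\bar{y}_{0}\bar{y}_{1}}R(z_{i},y)$ forces each of the three pairwise-distinct triangle vertices in $M$ to be $R$-adjacent to $c$, so each must equal $b$, again impossible.

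\emph{Case 3}, the main obstacle, is when $c$ lies in some main-variable tuple $\bar{s}_{j}$; here we must show $\bar{s}_{j}\notin S[\varphi](\bar{t})$, i.e.\ that $\varphi(\bar{s}_{j},\bar{t})$ fails. Since $\theta_{0}$ and $\theta_{1}$ are atomic $L_{i}$-formulas in which all of $\bar{x}$ and all of $\bar{y}_{i}$ appear, there are two sub-cases. If both $\theta_{0},\theta_{1}$ are already in $L_{0}$ (thus binary $R$-atoms or equalities), then $\lg(\bar{x})=1$ and $\bar{s}_{j}=(c)$; a short case check on the possibilities $\theta_{i}\in\{R(x_{1},y_{i}),R(y_{i},x_{1}),x_{1}=y_{i}\}$ shows that $\theta_{0}(c,\bar{a}_{y_{0}})\wedge\theta_{1}(c,\bar{a}_{y_{1}})$ either forces $c=a_{y_{i}}$ (violating non-algebraicity of $p_{b}$) or forces $a_{y_{0}}=a_{y_{1}}=b$ (violating the axiom $\bar{y}_{0}\neq\bar{y}_{1}$ built into $\varphi$). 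If instead some $\theta_{i}$ is itself a circular-order atom of level $\le i$, then the substituted atomic formula $\theta_{i}(\bar{s}_{j},\bar{a}_{y_{i}})$ involves $c$, and by the inductive hypothesis it is false; so again $\bar{s}_{j}\notin S[\varphi](\bar{t})$.

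In every case the atom is false, closing the induction and, by quantifier elimination, yielding that $p_{b}$ isolates a complete type. The delicate point is Case 3: its $L_{0}$-atomic sub-case exploits the design that $\theta_{0}$ and $\theta_{1}$ share $\bar{x}$ while $\bar{y}_{0}\neq\bar{y}_{1}$, and its higher-level sub-case only closes because every variable of $\bar{x}$ is required to appear in each $\theta_{i}$ (so that substituting $\bar{s}_{j}$ really does place $c$ inside $\theta_{i}$). Together with the triangle gadget driving Cases 1 and 2, these are exactly the features of Definition \ref{def:phi is in circulization} that guarantee $p_{b}$ isolates.
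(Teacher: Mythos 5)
Your proof is correct and follows essentially the same route as the paper's: an induction on the levels $L_{i}$ of the language showing that every circular-order atom involving $z$ is false, with the index-variable case killed by the $R$-triangle gadget and the main-variable case split according to whether $\theta_{0},\theta_{1}$ lie in $L_{0}$ (direct check using $\bar{y}_{0}\neq\bar{y}_{1}$ and non-algebraicity) or not (inductive hypothesis). Your write-up is somewhat more explicit about the sub-cases of the index-variable position and about the reading of Definition \ref{def:phi is in circulization} that makes the inductive step close, but the argument is the same.
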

\begin{proof}
We will show:
\begin{enumerate}
\item $p_{b}\upharpoonright L_{0}$ is complete.
\item If $L\supseteq L_{0}$ is some subset of $L_{\omega}$ and for all
atomic formulas $\theta\left(z\right)\in L\backslash L_{0}$ over
$M$, $p_{b}\left(z\right)\models\neg\theta\left(z\right)$, then
for all $\varphi\in L$ used in the circularization (as in Definition
\ref{def:phi is in circulization}) and atomic formulas $\theta\left(z,\bar{y}\right)\in L\left[\varphi\right]\backslash L$
and $\bar{c}\in M^{\lg\left(\bar{y}\right)}$, $p_{b}\left(z\right)\models\neg\theta\left(z,\bar{c}\right)$.
\end{enumerate}
From (1) and (2) it follows by induction that $p_{b}$ is complete.

(1) is immediate.

(2): Suppose $\theta\left(z,\bar{y}\right)$ is an atomic formula
in $L\left[\varphi\right]\backslash L$. Then it is of the form $C\left[\varphi\right]\left(\ldots\right)$
where $\varphi=\varphi_{\psi}^{\theta_{0},\theta_{1}}\left(\bar{x};\bar{y}'\right)$
for some $\psi\left(\bar{x};\bar{y}\right)$ and $\theta_{i}\left(\bar{x};\bar{y}_{i}\right)$
from $L$. Suppose $z$ appears in $\theta\left(z,\bar{y}\right)$
among the index variables. Then by the choice of $\varphi$, it follows
that $\theta\left(z,\bar{c}\right)$ implies that $z$ is $R$-connected
to at least two different elements from $M$, and this contradicts
the choice of $p_{b}$ (this is why we added the extra parameters
forming an $R$-triangle in Definition \ref{def:phi is in circulization}).
So assume that $z$ appears only in the main variables.
\begin{casenv}
\item One of $\theta_{0}$, $\theta_{1}$ is not from $L_{0}$, say $\theta_{0}$.

Since $C\left[\varphi\right]\left(\bar{y}',\bar{x}_{1},\bar{x}_{2},\bar{x}_{3}\right)\models\bigwedge\varphi\left(\bar{x}_{i},\bar{y}'\right)$,
and $p_{b}\left(z\right)\models\neg\theta_{0}\left(\ldots z\ldots\right)$
by induction (this notation means: substituting some variables of
$\theta_{0}$ with $z$, and putting parameters from $M$ elsewhere),
$p_{b}\left(z\right)\models\neg\theta\left(z,\bar{c}\right)$.
\item Both $\theta_{0},\theta_{1}\in L_{0}$. 

Suppose $\bar{c}\in M^{\lg\left(\bar{y}'\right)}$
and show that $p_{b}\left(z\right)\models\neg C\left[\varphi\right]\left(\bar{c};\ldots z\ldots\right)$.
There are two possibilities for $\theta_{i}$: $R\left(z,y\right)$
and $z=y$. If $C\left[\varphi\right]\left(\bar{c};\ldots z\ldots\right)$
holds, then we would get that either $R\left(z,c_{0}\right)\land R\left(z,c_{1}\right)$
for some $c_{0}\neq c_{1}\in M$, or some equation $x=s'$ for $s'\in M$
is in $p_{b}$ (here we use the fact that both $x$ and $\bar{y}_{i}$
occur in $\theta_{0},\theta_{1}$) --- contradiction.
\end{casenv}
\end{proof}
\begin{claim}
$f_{T}\left(\kappa,\lambda\right)\geq\lambda$. \end{claim}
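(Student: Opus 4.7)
The plan is to fix any pair $M \preceq N \models T$ with $|M| \leq \kappa$ and $|N| = \lambda$ and to produce $\lambda$ pairwise distinct members of $S^{\nf}(N, M)$. For each $b \in N$, the proof of Claim \ref{cla:IsolateType} applies verbatim with $N$ replacing $M$, yielding a complete type $p_b \in S(N)$ isolated by $\{R(z, b)\} \cup \{\neg R(z, a) : a \in N, a \neq b\}$. These types are pairwise distinct, since $R(z, b) \in p_b$ but $\neg R(z, b) \in p_{b'}$ for $b' \neq b$, giving $|N| = \lambda$ candidates in $S(N)$.

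The task is then to show that $p_b$ does not fork over $M$ for every $b \in N$. My plan is to extend $p_b$ to a global type $q_b \in S(\C)$ defined by the same isolation recipe applied over $\C$: the $L_0$-content is $R(z, c) \Leftrightarrow c = b$ for all $c \in \C$, together with the negations of all circularized atoms that this forces. By the generalization of Claim \ref{cla:IsolateType} to $\C$, $q_b$ is a complete type and $q_b \upharpoonright N = p_b$, so it is enough to show that $q_b$ does not fork over $M$.

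For non-dividing of $q_b$ over $M$, I would apply Claim \ref{cla:NotDiv}: the $L_0$-reduct is a random-graph type whose non-dividing over $M$ follows from the extension axioms (any finite conjunction $R(z,b) \wedge \bigwedge_i \neg R(z,c_i)$ is generically realizable); and each $L_{n+1} \setminus L_n$-reduct consists purely of negated $C[\varphi_n]$-atoms, whose non-dividing over $M$ follows from the amalgamation flexibility of Claim \ref{cla:Thas}, since a realizing element can always be arranged to lie outside the relevant $S[\varphi_n](\bar{t})$-sets. To upgrade non-dividing to non-forking, the key observation is that $q_b$ contains no positive circularized atom --- all its circularized content is negative --- so Claim \ref{cla:ForkingIffFS}, which pins down forking precisely for positive circularized formulas of the form $\varphi_i(\bar{z}, \bar{a}) \wedge \bar{z} \cap (\bar{a} \cap M) = \emptyset$, leaves no disjunction of dividing formulas that could cover a formula of $q_b$.

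The main obstacle I anticipate is precisely this upgrade from non-dividing to non-forking: since $T$ is neither simple nor NIP, the two notions need not coincide a priori, and the argument must carefully combine the level-by-level analysis of Claim \ref{cla:NotDiv} with the explicit forking characterization of Claim \ref{cla:ForkingIffFS} in order to exclude every potential disjunction of dividing formulas implied by $q_b$. Once this is in place, $\{p_b : b \in N\} \subseteq S^{\nf}(N, M)$ has cardinality $\lambda$ and we conclude $f_T(\kappa, \lambda) \geq \lambda$.
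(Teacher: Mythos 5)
Your proposal follows essentially the same route as the paper: take the types $p_b$ for $b\in N$ given by Claim \ref{cla:IsolateType} (applied to $N$ in place of $M$; the paper's own proof writes $b\in M$, which is evidently a typo), extend each to the global type $q_b$ that is $R$-connected only to $b$, and deduce non-dividing over $M$ (indeed over $\emptyset$) from Claim \ref{cla:NotDiv} together with the observation that all higher-level atoms get the same negative truth value. The one place where you diverge is the passage from non-dividing to non-forking, which you flag as the main obstacle and propose to handle via Claim \ref{cla:ForkingIffFS}. That detour is both unnecessary and not actually a valid argument as stated: Claim \ref{cla:ForkingIffFS} only characterizes forking for formulas of one particular shape and says nothing about which disjunctions of dividing formulas a formula of $q_b$ might imply. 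The correct and standard resolution is that for a \emph{complete global} type, non-dividing over $A$ already implies non-forking over $A$: if some conjunction $\phi\in q_b$ implied $\bigvee_{i<n}\psi_i(x,a_i)$ with each $\psi_i(x,a_i)$ dividing over $M$, then by completeness of $q_b$ over $\C$ some $\psi_i(x,a_i)$ would belong to $q_b$, contradicting non-dividing. With that substitution your argument is exactly the paper's.
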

\begin{proof}
Let $M\prec N\models T$, $\left|M\right|=\kappa,\left|N\right|=\lambda$.
For each $b\in M$, let $p_{b}$ be the type defined in the previous
claim. Then $p_{b}$ extends naturally to a global type $q_{b}$ (i.e.
the type over $\C$ that is $R$-connected only to $b$). This type
does not divide over $M$ (in fact it does not divide over $\emptyset$).
This is by Claim \ref{cla:NotDiv} and the proof of Claim \ref{cla:IsolateType}
(all atomic formulas in $L_{n}$ have exactly the same truth value
for $n>0$). \end{proof}
\begin{claim}
\label{cla:f(k,l)=00003Dl}$f_{T}^{n}\left(\kappa,\lambda\right)=\lambda$
for all $n$ and all $\lambda\geq2^{2^{\kappa}}$.\end{claim}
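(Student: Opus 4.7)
The lower bound is immediate from Lemma~\ref{lem:basic properties of f_T}(4) together with the previous Claim: one has $f_T^n(\kappa,\lambda)\geq f_T^1(\kappa,\lambda)\geq\lambda$.

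For the upper bound, I would fix $M\prec N\models T$ with $|M|\leq\kappa$ and $|N|\leq\lambda$, and code each $p(\bar{x})\in S^{\nf}_n(N,M)$ by a small amount of data. For each $i<n$, define a \emph{connection coordinate} $b_i(p)\in N\cup\{*\}$ in analogy with Claim~\ref{cla:IsolateType}: set $b_i(p)=b$ if $(x_i=b)\in p$ for some $b\in N$, or if $b$ is the unique element of $N\setminus M$ with $R(x_i,b)\in p$; otherwise set $b_i(p)=*$. The key technical step is the uniqueness assertion, which I propose as a lemma: if $R(x_i,b_1)\wedge R(x_i,b_2)\in p$ with distinct $b_1,b_2\in N\setminus M$ and $p$ is non-algebraic in $x_i$, then $p$ forks over $M$. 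Granting this, $p$ is determined by the tuple $(b_i(p))_{i<n}\in(N\cup\{*\})^n$ together with the restriction $p\upharpoonright M$, giving at most $\lambda^n\cdot 2^{2^\kappa}=\lambda$ possibilities (using $\lambda\geq 2^{2^\kappa}$, the bound $|S_n(M)|\leq 2^{2^\kappa}$, and $\lambda^n=\lambda$).

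The proof of the uniqueness lemma is the heart of the matter and would proceed by exhibiting an explicit forking witness. Using the random-graph axioms on $M$, choose auxiliary parameters $a,z_0,z_1,z_2\in M$ realizing the triangle-with-star configuration required by Definition~\ref{def:phi is in circulization} and all $R$-connected to both $b_1$ and $b_2$. For a judicious choice of $\psi(x,y)$ (specifically one for which $\psi(x,a)\in p$), the instance $\varphi_{\psi}^{R,R}(x_i;a,b_1,b_2,z_0,z_1,z_2)$ lies in $p$. By Claim~\ref{cla:ForkingIffFS} this instance forks over $M$ unless it is satisfied in $M$, so it suffices to choose $\psi,a$ so that no $x\in M$ simultaneously satisfies $R(x,b_1)$, $R(x,b_2)$ and $\psi(x,a)$. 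The freedom afforded by iterating the circularization over all quantifier-free atomic $\theta_0,\theta_1$ and arbitrary $\psi$ (together with Fact~\ref{fac:Hodges}(2), which prevents $p$ from collapsing to algebraicity via $\acl$) makes such a choice possible.

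The expected obstacle is precisely this simultaneous requirement on $\psi$: the instance must be forced into $p$ yet provably unsatisfiable in $M$. Managing this balance — in particular ensuring that the $R$-pattern of $p$ with respect to $M$-parameters is rich enough to separate $p$ from every candidate $x\in M$ that could witness satisfiability of the circularization instance — is where the random-graph universal axioms on $M$ must be combined delicately with the forking criterion of Claim~\ref{cla:ForkingIffFS}. Once the 1-variable uniqueness is in hand, the $n$-variable bound follows from it coordinate by coordinate without any further induction, since distinct coordinates interact only through $p\upharpoonright M$ and the already-counted connection tuple.
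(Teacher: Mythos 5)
Your overall strategy (turn two ``positive connections'' of a type to $N$ into a forking witness via $\varphi_{\psi}^{\theta_{0},\theta_{1}}$ and Claim~\ref{cla:ForkingIffFS}, then count the remaining types) is the same as the paper's, and the lower bound is fine, but two steps fail as written. First, your key uniqueness lemma is false: a global type finitely satisfiable in $M$ is automatically non-forking over $M$, and such a type can contain $R(x_{i},b)$ for two (indeed arbitrarily many) $b\in N\setminus M$ --- take an ultrafilter limit of points of $M$ and elements $b_{1}\neq b_{2}\in N\setminus M$ that are $R$-connected to every point of $M$. For such a $p$ there is \emph{no} $\psi(\bar{x},\bar{c})\in p$ unrealized in $M$, so the forking instance you need can never be produced; the ``obstacle'' you flag in your last paragraph is not a balance to be managed but a genuine counterexample. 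The paper's fix is to use $\lambda\geq2^{2^{\kappa}}$ at exactly this point: arguing by contradiction with $\lambda^{+}$ many non-forking $n$-types, it first discards the at most $2^{2^{\kappa}}\leq\lambda$ types finitely satisfiable in $M$ (Fact~\ref{fac:non-splitting}(1)); every surviving $p$ then contains some quantifier-free $\psi(\bar{x},\bar{c})$ not realized in $M$, which is what makes $\varphi_{\psi}^{\theta_{0},\theta_{1}}(\bar{x};\bar{c},\bar{a}_{0},\bar{a}_{1},\bar{d})\wedge\bar{x}\cap\bar{a}'=\emptyset$ unsatisfiable in $M$ and hence forking. You instead spend the hypothesis $\lambda\geq2^{2^{\kappa}}$ only on the bound for $p\upharpoonright M$, which does not remove the problematic types.

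Second, even granting the lemma, the map $p\mapsto\left(\left(b_{i}(p)\right)_{i<n},p\upharpoonright M\right)$ is not injective: $L_{\omega}$ contains the higher-arity relations $C[\varphi]$ produced by the circularization, and two non-forking types can agree on all $R$-instances and on their restrictions to $M$ while differing on positive instances $C[\varphi](\bar{t},\dots)$ with $\bar{t}$ meeting $N\setminus M$ (a positive such instance need not force any $R(x_{i},b)$ with $b\in N\setminus M$, since the atomic formulas $\theta_{0},\theta_{1}$ in Definition~\ref{def:phi is in circulization} may only involve the $M$-part of $\bar{t}$). The dichotomy the paper actually counts with concerns \emph{arbitrary} atomic formulas: either every positive atomic instance of $p$ with nonempty parameters sits on one and the same tuple $\bar{a}_{0}\in N^{<\omega}$ --- and by quantifier elimination there are at most $\lambda\cdot2^{\aleph_{0}}=\lambda$ such types --- or $p$ contains $\theta_{0}(\bar{x},\bar{a}_{0})\wedge\theta_{1}(\bar{x},\bar{a}_{1})$ with $\bar{a}_{0}\neq\bar{a}_{1}$ and forks by the argument above. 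For the same reason your closing remark that coordinates ``interact only through $p\upharpoonright M$ and the connection tuple'' is not justified; the single-parameter-tuple dichotomy handles all coordinates and all relations at once, so no coordinate-by-coordinate reduction is needed.
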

\begin{proof}
Suppose $f_{T}^{n}\left(\kappa,\lambda\right)>\lambda$. Let $M\prec N\models T$
where $\left|M\right|=\kappa,\left|N\right|=\lambda$ and $\left|S_{n}^{\nf}\left(N,M\right)\right|>\lambda$.

Let $\left\{ p_{i}\left(\bar{x}\right)\left|\,i<\lambda^{+}\right.\right\} \subseteq S_{n}^{\nf}\left(N,M\right)$
be pairwise distinct. By possibly replacing $\bar{x}$ with a sub-tuple
and throwing away some $i$'s, we may assume that for all $i<\lambda^{+}$,
$p_{i}\models\bar{x}\cap M=\emptyset$. Since $\lambda\geq2^{2^{\kappa}}$,
we may assume that for all $i<\lambda^{+}$, $p_{i}$ is not finitely
satisfiable in $M$.

Then, an easy computation shows that there must be some some $i<\lambda^{+}$
such that $p_{i}$ contains two positive occurrences of atomic formulas
$\theta_{0}\left(\bar{x},\bar{a}_{0}\right)$ and $\theta_{1}\left(\bar{x},\bar{a}_{1}\right)$
for some $\bar{a}_{0}\neq\bar{a}_{1}\in N$. Let $p=p_{i}$. There
is some quantifier free formula $\psi\left(\bar{x},\bar{c}\right)\in p$
such that $\psi$ is not realized in $M$. Let $\bar{a}$ be the tuple
of parameters $\left\langle \bar{c},\bar{a}_{0},\bar{a}_{1}\right\rangle $
and let $d_{0},d_{1},d_{2}\in N$ be an $R$-triangle such that $R\left(d_{i},a\right)$
for all $a\in\bar{a}$. Finally, let $\bar{a}'=\bar{a}d\cap M$ and
$\varphi_{\psi}^{\theta_{0},\theta_{1}}\left(\bar{x};\bar{c},\bar{a}_{0},\bar{a}_{1},d\right)\land\bar{x}\cap\bar{a}'=\emptyset\in p$
forks over $M$ by Claim \ref{cla:ForkingIffFS}.
\end{proof}

\subsection{Example of (\ref{enu:lambda ^omega}).\protect \\
}

In this subsection we prove the following Proposition:
\begin{prop}
\label{prop:lambda^aleph_0}For any theory $T$, there is a theory
$T_{*}$ such that $f_{T_{*}}\left(\kappa,\lambda\right)=f_{T}\left(\kappa,\lambda\right)^{\aleph_{0}}$
for all $\lambda\geq\kappa$.
\end{prop}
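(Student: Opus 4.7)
My plan is to build $T_*$ as the generic theory of $\omega$ independent copies of $T$ bound together by a product-like sort. Work in a many-sorted setting with sorts $P_n$ for $n<\omega$, each carrying a fresh disjoint copy of $L := L(T)$ so that $P_n \models T$ as an $L$-structure, together with an auxiliary sort $Q$ and unary function symbols $\pi_n : Q \to P_n$. The axioms are: (i) each $P_n$ is a model of $T$ in its own copy of $L$; (ii) the joint projection $(\pi_n)_{n<\omega} : Q \to \prod_n P_n$ is injective; (iii) for every finite $I \subset \omega$ and every tuple $(a_n)_{n \in I} \in \prod_{n \in I} P_n$ there exist infinitely many $u \in Q$ with $\pi_n(u) = a_n$ for all $n \in I$. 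Let $T_*$ be the model completion of this universal theory -- existence via Fra\"{\i}ss\'e-style amalgamation, where SAP and DEP come from the fact that the coordinates (and the $Q$-sort above them) can be amalgamated independently.

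Two claims will do the work. The \emph{structural claim}: modulo QE in $T_*$, the $1$-type of $u \in Q$ over $N_* \models T_*$, provided $u$ differs from every element of $Q^{N_*}$, is determined by the sequence $(q_n)_{n<\omega}$ where $q_n := \tp(\pi_n(u)/P_n^{N_*})$ in the corresponding copy of $L$; and any consistent such sequence lifts back to a type. This is because the only interaction across sorts is via $\pi_n$. The \emph{forking claim}: for $M_* \preceq N_*$ and such a $u$, the type $\tp(u/N_*)$ does not fork over $M_*$ in $T_*$ iff each $q_n$ does not fork over $P_n^{M_*}$ in $T$. The forward direction pulls a dividing witness for $q_n$ back along the $\emptyset$-definable $\pi_n$ and extracts an $L_*$-indiscernible sub-sequence via Ramsey and compactness. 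The backward direction takes coordinatewise global non-forking extensions $\tilde q_n$ of $q_n$ and lifts them to $\tilde p := \bigcup_n \tilde q_n(\pi_n(x))$, showing that $\tilde p$ is a global non-forking extension of $\tp(u/N_*)$ by a variant of Claim~\ref{cla:NotDiv}.

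The counting is then immediate. For $|M_*| \leq \kappa$ and $|N_*| \leq \lambda$, write $M_n := P_n^{M_*}$ and $N_n := P_n^{N_*}$, so $|M_n| \leq \kappa$ and $|N_n| \leq \lambda$. Non-forking $1$-types with variable in some $P_n$ contribute at most $\aleph_0 \cdot f_T(\kappa,\lambda) = f_T(\kappa,\lambda)$; realized $Q$-types ``$x = u'$'' for $u' \in Q^{N_*}$ are non-forking only when $u' \in Q^{M_*}$, hence contribute $\leq \kappa$; and non-realized $Q$-types are bounded by $\prod_n |S^{\nf}(N_n, M_n)| \leq f_T(\kappa,\lambda)^{\aleph_0}$. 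Summing: $f_{T_*}(\kappa,\lambda) \leq f_T(\kappa,\lambda)^{\aleph_0}$. For the matching lower bound, fix $M \preceq N \models T$ with $|M| \leq \kappa$, $|N| \leq \lambda$ and $|S^{\nf}(N, M)|$ approximating $f_T(\kappa, \lambda)$; set $P_n^{M_*} := M$ and $P_n^{N_*} := N$ for every $n$, and take $Q^{M_*}, Q^{N_*}$ generic of sizes $\kappa$ and $\lambda$ respectively (possible since $\kappa \cdot \aleph_0 = \kappa$ and $\lambda \cdot \aleph_0 = \lambda$). Each sequence $(p_n) \in \prod_n S^{\nf}(N,M)$ then yields by the structural and forking claims a distinct non-forking $1$-type over $N_*$, giving $f_T(\kappa,\lambda)^{\aleph_0}$ such types.

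The main obstacle will be the backward direction of the forking claim: ruling out ``cross-coordinate'' dividing phenomena in the combined type $\tilde p$. The key lever is axiom (iii), which guarantees that coordinatewise prescriptions can always be jointly realized, so that any would-be dividing witness for $\tilde p$ must already be visible in some single coordinate, contradicting the non-forking of the corresponding $\tilde q_n$. A subsidiary difficulty is verifying existence of the model completion, which reduces to SAP and DEP in the universal theory via coordinatewise independent amalgamation of the $P_n$'s together with the combinatorial freedom on the $Q$-sort granted by (iii).
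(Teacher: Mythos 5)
Your construction is essentially the paper's: it builds the same $\omega$-indexed product of copies of $T$ (predicates $P_{n}$, a product sort $Q$, projections $f_{n}$), proves the same coordinatewise decomposition of $Q$-types and the same ``forks iff some coordinate forks'' lemma, and performs the same three-way count ($P_{n}$-types, realized $Q$-types, non-realized $Q$-types). The only real difference is packaging: the paper takes $T_{*}$ to be the theory of the concrete structure $\bigsqcup_{n}M_{n}\sqcup\prod_{n}M_{n}$ and gets quantifier elimination by back-and-forth after Morleyizing $T$, which sidesteps your model-completion step --- where, as stated, your axiom (ii) is not first-order and the SAP/DEP machinery you invoke is stated for finite relational languages without function symbols.
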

Let $T$ be a theory in the language $L$ and assume that $T$ eliminates
quantifiers. For each $n<\omega$, let $L_{n}$ be a copy of $L$
such that $L_{n}\cap L_{m}=\emptyset$ for $n<m$, and $L_{n}=\left\{ R_{n}\left|\,R\in L\right.\right\} $.
Let $\left\langle M_{n}\left|\,n<\omega\right.\right\rangle $ be
a sequence of models of $T$. We define a structure $M$ in the language
$\left\{ P_{n}\left(x\right),Q\left(x\right),f_{n}:Q\to P_{n}\left|\,n<\omega\right.\right\} \cup\bigcup L_{n}$:
\begin{enumerate}
\item $M=\bigsqcup_{n<\omega}M_{n}\sqcup\left(\prod_{n<\omega}M_{n}\right)$
($\sqcup$ means disjoint union).
\item $P_{n}^{M}=M_{n}$, $Q^{M}=\prod_{n<\omega}M_{n}$
\item If $R\left(\bar{x}\right)\in L\left(T\right)$ then for every $n<\omega$,
$R_{n}^{M}\subseteq\left(P_{n}^{M}\right){}^{\lg\left(\bar{x}\right)}$
and $P_{n}^{M}$ is the structure $M_{n}$.
\item $f_{n}^{M}:\,Q^{M}\to P_{n}^{M}$, $f_{n}^{M}\left(\eta\right)=\eta\left(n\right)$
--- the projection onto the $n$-th coordinate.
\end{enumerate}
Let $T_{*}=\mbox{Th}(M)$.
\begin{rem}
\label{rem: Models of T*} The following properties are easy to check
by back-and-forth:
\begin{enumerate}
\item Doing the same construction with respect to any sequence of models
$\left\langle M_{n}\left|\,n<\omega\right.\right\rangle $ of $T$
gives the same $T_{*}$.
\item Moreover, if we have $M_{n}\preceq N_{n}$ for all $n$ and do the
construction, then $M\preceq N$.
\item $T_{*}$ eliminates quantifiers.
\end{enumerate}
\end{rem}
Now let $M\preceq N\models T$ with $\left|M\right|=\kappa,\,\left|N\right|=\lambda$.
\begin{lem}
\label{lem: non-forking projections } Given $p\left(x\right)\in S_{1}\left(N\right)$
such that $Q\left(x\right)\in p$ , for each $n<\omega$ we let $p_{n}\left(y\right)=\left\{ \varphi\left(y\right)\left|\,\varphi\in L_{n},\,\varphi\left(f_{n}\left(x\right)\right)\in p\right.\right\} $.
\begin{enumerate}
\item $p\left(x\right)$ is equivalent to $\bigcup_{n<\omega}p_{n}\left(f_{n}\left(x\right)\right)$.
\item For each $n<\omega$, let $q_{n}\left(y\right)$ be a complete $L_{n}$-type
over $P_{n}^{N}$. 

Then the type $\left(\bigcup_{n<\omega}q_{n}\left(f_{n}\left(x\right)\right)\right)\cup\left\{ Q\left(x\right)\right\} $
is consistent and complete.
\item $P_{n}$ is stably embedded and the induced structure on $P_{n}$
is just the $L_{n}$-structure. Moreover, for any $n<\omega$ and
$L_{*}$-formula $\varphi\left(\bar{x},\bar{y}_{1},\bar{y}_{2},\bar{z}\right)$
there is some $L_{n}$-formula $\psi\left(\bar{x},\bar{y}_{1},\bar{z}'\right)$
such that for any e $\bar{c}_{1}\in P_{n}$, $\bar{c}_{2}\in\bigcup_{m\neq n}P_{m}$
and $\bar{d}\in Q$, the set $\left\{ \bar{a}\in P_{n}\left|\,\models\varphi\left(\bar{a},\bar{c}_{1},\bar{c}_{2},\bar{d}\right)\right.\right\} =\bigcup\left\{ \bar{a}\in P_{n}\left|\,\models\psi\left(\bar{a},\bar{c}_{1},f_{n}\left(\bar{d}\right)\right)\right.\right\} $.
\item $p(x)$ forks over $M$ if and only if for some $n<\omega$, $p_{n}\left(y\right)\upharpoonright L_{n}$
forks over $P_{n}^{M}$ (in the sense of $T$).
\end{enumerate}
\end{lem}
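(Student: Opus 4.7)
The plan is that all four parts rest on quantifier elimination for $T_{*}$ (Remark~\ref{rem: Models of T*}(3)) together with the structural observation that the sorts $P_{n}, Q$ are pairwise disjoint and the only function symbols crossing sorts are the projections $f_{n}\colon Q\to P_{n}$; in particular, every atomic $L_{*}$-formula, after accounting for sorts, is either a sort predicate, an equality between matching-sort terms, or an $L_{n}$-atomic formula on elements of $P_{n}$ (possibly with arguments of the form $f_{n}(z)$ for $z\in Q$).

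Given this, (1) is immediate: modulo $Q(x)\in p$, every atomic formula occurring positively or negatively in $p$ is an $L_{n}$-atom in $f_{n}(x)$ with parameters in $P_{n}^{N}$, hence lies in $p_{n}(f_{n}(x))$, and by QE $p$ is equivalent to their union. For (2), consistency is shown by realising each $q_{n}$ by some $a_{n}$ in a sufficiently saturated elementary extension of $P_{n}^{N}$ and then finding a $Q$-element projecting to $(a_{n})_{n<\omega}$; such an element exists by applying the product construction from Remark~\ref{rem: Models of T*} to these extensions, which yields an elementary extension of $N$ of the required form. Completeness then follows from (1). For (3), an atomic $L_{*}$-formula $\varphi(\bar{x},\bar{y}_{1},\bar{y}_{2},\bar{z})$ with $\bar{x},\bar{y}_{1}\in P_{n}$, $\bar{y}_{2}\in\bigsqcup_{m\neq n}P_{m}$, $\bar{z}\in Q$ cannot meaningfully involve $\bar{y}_{2}$ (no atoms connect different $P_{m}$'s), so it reduces to an $L_{n}$-atom on $\bar{x},\bar{y}_{1},f_{n}(\bar{z})$; by QE the same reduction extends to arbitrary $L_{*}$-formulas, yielding the desired $L_{n}$-formula $\psi$ and stable embeddedness.

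For (4), the easy direction $\Leftarrow$ uses (3): if $p_{n}\upharpoonright L_{n}$ forks over $P_{n}^{M}$, then a finite conjunction $\psi(y,\bar{b})\in p_{n}$ implies a disjunction of $L_{n}$-formulas each dividing over $P_{n}^{M}$ in $T$, witnessed by $P_{n}^{M}$-indiscernible sequences in $P_{n}$. By (3), such sequences remain $M$-indiscernible in $T_{*}$ (the extra $M$-parameters that lie outside $P_{n}^{M}$ either contribute nothing to the $L_{n}$-reduct on $P_{n}$, or are $Q$-elements whose effect factors through $f_{n}$ and so already sits in $P_{n}^{M}$), and $k$-inconsistency is an $L_{n}$-statement in $P_{n}$ so is preserved; thus $\psi(f_{n}(x),\bar{b})\in p$ forks over $M$. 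The harder direction $\Rightarrow$ proceeds by contrapositive: assuming no $p_{n}\upharpoonright L_{n}$ forks, pick for each $n$ a global $L_{n}$-non-forking extension $q_{n}$ in $T$ and set $q(x):=\{Q(x)\}\cup\bigcup_{n<\omega}q_{n}(f_{n}(x))$, which by (2) is a consistent complete global type extending $p$. We then show $q$ does not fork over $M$.

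The main obstacle is that a dividing formula in $q$ need not be an $L_{n}$-pullback for a single $n$: by QE it is, modulo $Q(x)$, a Boolean combination of formulas $\theta_{n}(f_{n}(x),\bar{c}_{n})$ for finitely many $n$ at once, and a cross-sort $M$-indiscernible sequence of parameter-tuples may witness dividing. The key tool for handling this is again (3) combined with the orthogonality of the $P_{n}$'s: any $M$-indiscernible sequence of parameter-tuples $\langle\bar{c}_{i}\rangle$ decomposes coordinate-wise into $P_{n}^{M}$-indiscernible sequences in $T$ inside each $P_{n}$, and the dividing of the Boolean combination then forces dividing of the individual $\theta_{n}(y,\bar{c}_{n})\in q_{n}$ for some $n$ in $T$, contradicting the choice of $q_{n}$. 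Hence $q$ does not fork over $M$ and neither does $p\subseteq q$.
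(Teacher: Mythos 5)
Your proposal is correct and follows exactly the route the paper intends: the paper's own proof consists of the single line that (1)--(3) follow from quantifier elimination and (4) from (1)--(3), and your write-up is a faithful (and substantially more detailed) elaboration of that, including the two genuinely non-trivial points the paper leaves implicit --- that $L_n$-indiscernible sequences over $P_n^M$ inside $P_n$ remain $L_*$-indiscernible over $M$, and that consistency of the union of translates of a non-forking global extension reduces coordinatewise via (2) and (3). The only place where you should be slightly more careful is the last step of (4): pass from the Boolean combination to a conjunction of literals lying in $q$ (via a disjunctive normal form and completeness of $q$) before invoking the coordinatewise orthogonality, since for a general Boolean combination the inference "the combination divides, hence some constituent divides" is not literally valid.
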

\begin{proof}
(1), (2) and (3) follows by quantifier elimination and (4) follows
from (1)--(3).
\end{proof}

\begin{proof}
(of Proposition \ref{prop:lambda^aleph_0}). We may assume that $T$
eliminates quantifiers (by taking its Morleyzation). Consider $T_{*}$as
above, and let us compute $f_{T_{*}}\left(\kappa,\lambda\right)$.
Let $M\preceq N\models T_{*}$.

Let $S_{n}=\left\{ p\in S^{\nf}\left(N,M\right)\left|\,P_{n}\left(x\right)\in p\right.\right\} $.

From Lemma \ref{lem: non-forking projections }, it follows that $\left|S_{n}\right|=\left|S^{\nf,L_{n}}\left(P_{n}^{N},P_{n}^{M}\right)\right|$.

Let $S_{Q}=\left\{ p\in S^{\nf}\left(N,M\right)\left|\,Q\left(x\right)\in p\right.\right\} $.

From Lemma \ref{lem: non-forking projections }, it follows that $\left|S_{Q}\right|=\prod_{n<\omega}\left|S^{\nf,L_{n}}\left(P_{n}^{N},P_{n}^{M}\right)\right|$.

Let $S_{\neg}=\left\{ p\in S^{\nf}\left(N,M\right)\left|\,\neg Q\left(x\right),\forall n<\omega\left(\neg P_{n}\left(x\right)\right)\right.\right\} $.

Since there is no structure on elements outside of all the $P_{n}$
and $Q$, $\left|S_{\neg}\right|\leq\left|M\right|$.

Note that $S^{\nf}\left(N,M\right)=\bigcup_{n<\omega}S_{n}\cup S_{Q}\cup S_{\neg}$.
From this and Remark \ref{rem: Models of T*}(2), it follows that
$f_{T^{*}}\left(\kappa,\lambda\right)=f_{T}\left(\kappa,\lambda\right)^{\aleph_{0}}$. \end{proof}
\begin{rem}
This analysis easily generalizes to show that $f_{T_{*}}^{n}\left(\kappa,\lambda\right)=f_{T}^{n}\left(\kappa,\lambda\right)^{\aleph_{0}}$.
\end{rem}

\subsection{Examples of (\ref{enu:ded lambda}) and (\ref{enu:ded lambda ^ omega}).\protect \\
}

Here we construct an example of a theory $T$ with $f_{T}\left(\kappa,\lambda\right)=\ded\lambda$.
The idea is that we start with an ordered random graph, and we circularize
in order to ensure that for any $p\in S^{\nf}\left(N,M\right)$ there
is some cut of $N$ such that $R\left(x,a\right)$ is in $p$ if any
only if $a$ is in the cut.
\begin{notation}

\begin{enumerate}
\item Here the language $L$ contains an order relation $<$ which induces
the natural lexicographic order on tuples, so abusing notation, we
may write $\bar{y}<\bar{z}$.
\item In this section, we say that two atomic formulas $\theta_{1}\left(\bar{x};\bar{y}_{1}\right)$
and $\theta_{2}\left(\bar{x};\bar{y}_{2}\right)$ are different when
the relation symbol in different (rather than just the variables are
different).
\item Also, when we say atomic formula in the definition below, we mean
that it does \underline{not} use the order relation $<$.
\end{enumerate}
\end{notation}
\begin{defn}
Suppose $L$ is a relational language which includes a binary relation
symbol $R$, a unary predicate $P$ and an order relation $<$.
\begin{enumerate}
\item For a quantifier free $L$-formula $\psi\left(\bar{x};\bar{y}\right)$
and two \underline{different} atomic formulas $\theta_{0}\left(\bar{x};\bar{y}_{0}\right)$,
$\theta_{1}\left(\bar{x},\bar{y}_{1}\right)$, where $\lg\left(\bar{x}\right)>0$,
and both $\bar{x}$ and $\bar{y}_{i}$ occur in them, define the formula,
define the formula
\begin{eqnarray*}
\varphi_{\psi}^{\theta_{0},\theta_{1}}\left(\bar{x};\bar{y}'\right) & =\\
\varphi_{\psi}^{\theta_{0},\theta_{1}}\left(\bar{x};\bar{y},\bar{y}_{0},\bar{y}_{1},z_{0},z_{1}\right) & = & \theta_{0}\left(\bar{x},\bar{y}_{0}\right)\land\theta_{1}\left(\bar{x},\bar{y}_{1}\right)\land\\
 &  & \psi\left(\bar{x},\bar{y}\right)\land\\
 &  & z_{0}<z_{1}\land P\left(z_{0}\right)\land P\left(z_{1}\right)\land\\
 &  & \bigwedge_{y\in\bar{y}\bar{y}_{0}\bar{y}_{1},i<2}\left(y\neq z_{i}\right)\land R\left(y,z_{1}\right)\land\neg R\left(y,z_{0}\right).
\end{eqnarray*}

\item For an $L$-formula $\psi\left(\bar{x};\bar{y}\right)$ and an atomic
formula $\theta\left(\bar{x};\bar{y}_{0}\right)$ (in which $\bar{y}_{0}$
appears) , define the formula
\begin{eqnarray*}
\varphi_{\psi}^{\theta}\left(\bar{x};\bar{y}'\right) & =\\
\varphi_{\psi}^{\theta}\left(\bar{x};\bar{y},\bar{y}_{0},\bar{y}_{1},z_{0},z_{1}\right) & = & \neg\theta\left(\bar{x},\bar{y}_{0}\right)\land\theta\left(\bar{x},\bar{y}_{1}\right)\land\\
 &  & \psi\left(\bar{x},\bar{y}\right)\land\\
 &  & z_{0}<z_{1}\land P\left(z_{0}\right)\land P\left(z_{1}\right)\land\\
 &  & \bigwedge_{y\in\bar{y}\bar{y}_{0}\bar{y}_{1},i<2}\left(y\neq z_{i}\right)\land R\left(y,z_{1}\right)\land\neg R\left(y,z_{0}\right)\\
 &  & \bar{y}_{0}<\bar{y}_{1}.
\end{eqnarray*}

\end{enumerate}
\end{defn}

\begin{defn}
For a countable first-order relational language $L$ containing a
binary relation symbol $R$, Let $\F\left(L\right)$ be the set of
all formulas from $L$ of the form $\varphi_{\psi}^{\theta_{0},\theta_{1}}$
or $\varphi_{\psi}^{\theta}$ as above. Let $L_{0}=\left\{ R,<\right\} $
where $R$ and $<$ are binary relation symbols. Let $T_{0}$ say
that $R$ is a graph and that $<$ is a linear order. Let $T=\circlearrowright_{T_{0},L_{0},\F}$.
\end{defn}
Suppose $M\models T$.
\begin{claim}
Let $I$ be initial segments in $M$. Let $p_{I}\left(x\right)$ be
a non-algebraic type over $M$ saying that $x>M$, $\neg P\left(x\right)$
and $R\left(x,a\right)$ just when $a\in I$. Then $p_{I}$ isolates
a complete type over $M$. \end{claim}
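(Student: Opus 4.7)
The plan is to adapt the proof of Claim~\ref{cla:IsolateType} to the order-respecting setup, replacing the ``$R$-connects only to $b$'' condition of $p_b$ with ``$R$-connects to exactly the initial segment $I$'' for $p_I$. I will prove by induction on the stage of the circularization tower $L_0\subseteq L\subseteq L_\omega$ that (i) $p_I\upharpoonright L$ is complete, and (ii) for every atomic $L$-formula $\theta(x,\bar y)\in L\setminus L_0$ and every $\bar c\in M^{\lg(\bar y)}$, $p_I\models\neg\theta(x,\bar c)$. These together at $L=L_\omega$ give the claim.

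The base case is immediate: $x>M$ decides all order- and equality-atomic formulas, $\neg P(x)$ decides the predicate $P$, and the prescription $R(x,a)\Leftrightarrow a\in I$ decides $R$ using its symmetry; part (ii) is vacuous.

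For the inductive step, extend $L$ to $L[\varphi]$ by a circularization formula $\varphi$ of the form $\varphi_\psi^{\theta_0,\theta_1}$ or $\varphi_\psi^\theta$, and consider an atomic formula $C[\varphi](\bar c;\bar s_1,\bar s_2,\bar s_3)$ in which $x$ occurs. Split into two cases. In the \emph{index-variable case}, both variants of $\varphi$ enforce $P(z_0)\land P(z_1)\land z_0<z_1$ and $R(y,z_1)\land\neg R(y,z_0)$ for every other index variable $y$: substituting $x$ for $z_0$ or $z_1$ forces $P(x)$, contradicting $\neg P(x)$; substituting $x$ for another index variable forces $R(x,z_1)\land\neg R(x,z_0)$ for specific $z_0<z_1$ in $M$, contradicting $I$ being an initial segment. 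In the \emph{main-variable case}, $C[\varphi]$ implies $\varphi(\bar s_j,\bar c)$ for the $\bar s_j$ containing $x$, hence also the conjunction $\theta_0(\bar s_j,\bar c_0)\land\theta_1(\bar s_j,\bar c_1)$ (or $\neg\theta\land\theta$ in the second form). If an atomic component lies in $L\setminus L_0$, hypothesis (ii) applied inductively negates it; if both lie in $L_0$, then either (since in the first form $\theta_0$ and $\theta_1$ are distinct symbols, so one must be equality, while in the second form we have $\bar y_0<\bar y_1$ built in) one conjunct forces an equation $x=c$ with $c\in M$, contradicting $x>M$, or the $R$-part forces an edge across a cut that once more violates the initial-segment structure of $I$.

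The main obstacle is the index case: the $z_0,z_1$ parameters in the definitions of both $\varphi_\psi^{\theta_0,\theta_1}$ and $\varphi_\psi^\theta$ are engineered to demand an $R$-pattern straddling the cut $\{z_0\}$ versus $\{z_1\}$, which is exactly what an $R$-behavior dictated by an initial segment cannot accommodate---the order-respecting analogue of the $R$-triangle trick used in the previous example. Once this device is in place, the remainder of the induction is a routine adaptation of Claim~\ref{cla:IsolateType}.
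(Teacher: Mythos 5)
Your proof is correct and takes essentially the same route as the paper, which itself only records the two-part outline (completeness of $p_{I}\upharpoonright L_{0}$ together with the negative resolution of every atomic formula outside $L_{0}$, by the same induction as in Claim \ref{cla:IsolateType}); your identification of the $z_{0}<z_{1}$, $P\left(z_{i}\right)$, $R\left(y,z_{1}\right)\land\neg R\left(y,z_{0}\right)$ device as the order-theoretic replacement for the $R$-triangle, and of the built-in $\bar{y}_{0}<\bar{y}_{1}$ in $\varphi_{\psi}^{\theta}$ as what clashes with the initial segment, is exactly the intended argument. You in fact supply more detail than the paper does.
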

\begin{proof}
In fact, $p_{I}\upharpoonright L_{0}$ is complete, and for all atomic
formulas $\theta\left(x\right)\notin L_{0}$ over $M$, $p_{I}\models\neg\theta\left(x\right)$.
The proof is very similar to the proof of Claim \ref{cla:IsolateType}.\end{proof}
\begin{claim}
$f_{T}\left(\kappa,\lambda\right)\geq\ded\left(\lambda\right)$. \end{claim}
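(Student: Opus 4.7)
My plan is to mimic the proof of Claim~\ref{cla:f(k,l)=00003Dl}, parametrizing non-forking types by initial segments of $(P^N,<^N)$ rather than by individual elements of $N$. Since $|P^N|\le\lambda$ and $\ded(\lambda)$ is by definition the supremum of the number of initial segments of linear orders of size $\lambda$, this should yield up to $\ded(\lambda)$ distinct non-forking types.

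First I would take $M\preceq N\models T$ with $|M|\le\kappa$ and $|N|\le\lambda$, arranged so that $(P^N,<^N)$ is a linear order of size $\lambda$ realizing $\ded(\lambda)$ many distinct initial segments; such $M,N$ exist by the DEP and SAP that underlie the construction of $T$, which allow any prescribed linear order of size $\lambda$ to be embedded into $(P^N,<^N)$. For each initial segment $I$ of $(P^N,<^N)$ I would set
\[
p_I(x)\;=\;\{\neg P(x)\}\cup\{R(x,a):a\in I\}\cup\{\neg R(x,a):a\in P^N\setminus I\},
\]
a partial type using only $L_0$-formulas, deliberately omitting any $<$-constraint on $x$. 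Distinct $I\ne I'$ yield partial types that disagree on $R(x,a)$ for any $a\in I\triangle I'$, so any respective completions to types over $N$ are automatically distinct.

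The key step is to show $p_I$ does not fork over $M$. By Claim~\ref{cla:NotDiv}, non-dividing of $p_I$ over $M$ reduces to non-dividing of each $L_n$-layer. The $L_n$-layers for $n\ge 1$ contain no formulas of $p_I$ and are vacuously non-dividing. The $L_0$-layer is finitely satisfiable in $M$: in the model completion of the base theory $T_0$, the relation $R$ behaves like a random graph and $\neg P$-elements are generic, so any finite $R$-pattern on any finite parameter set is realized by infinitely many $\neg P$-elements of $M$. To lift non-dividing to non-forking I would argue directly that no finite conjunction $\psi(x)=\neg P(x)\wedge\bigwedge_i R(x,a_i)^{\epsilon_i}$ of formulas of $p_I$ implies a finite disjunction of dividing formulas over $M$: by Claim~\ref{cla:ForkingIffFS} the relevant dividing formulas are instances of circularized predicates $\varphi_i$ not satisfied in $M$, and the purely graph-theoretic content of $\psi$ does not force such instances because the circularized predicates demand extra atomic configurations (triangles, positional orderings, additional $R$-edges) beyond what $\psi$ provides.

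Extending each $p_I$ to some $q_I\in S^{\nf}(N,M)$ then produces $\ded(\lambda)$ pairwise distinct elements of $S^{\nf}(N,M)$, whence $f_T(\kappa,\lambda)\ge\ded(\lambda)$. The main obstacle is the upgrade from non-dividing to non-forking: one must rule out, for every finite conjunction $\psi$ of $p_I$-formulas, the existence of any finite list of dividing formulas whose disjunction is implied by $\psi$. This amounts to a careful accounting of how the pure $L_0$-content of $p_I$ interacts with the circularized predicates at higher layers of the construction, and is the technical heart of the argument.
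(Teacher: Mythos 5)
Your overall strategy (one type per cut, distinctness via the $R$-pattern, non-dividing via Claim \ref{cla:NotDiv}) matches the paper's, but there is a genuine gap exactly where you flag "the technical heart": you never actually get from non-dividing to non-forking, and the direct route you sketch (showing no finite conjunction of $p_I$-formulas implies a finite disjunction of dividing formulas) is not carried out and is the hard way around. The paper avoids this entirely by a standard trick that your write-up misses: first it proves (the analogue of Claim \ref{cla:IsolateType}) that the partial type $p_I$ --- which crucially \emph{includes} the order constraint $x>M$ that you deliberately drop --- isolates a \emph{complete} type in which every atomic formula from the higher layers $L_n\setminus L_0$ is false. This gives a canonical complete \emph{global} extension $q_I$ (determined by the induced cut of $\C$), to which Claim \ref{cla:NotDiv} applies and yields that $q_I$ does not divide over $\emptyset$. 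Since a complete global type that does not divide over a set cannot fork over it (it would have to contain one of the dividing disjuncts), the restriction of $q_I$ to $N$ does not fork over $M$. No accounting of implied disjunctions is needed.

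Two further points. First, because you omit the $<$-constraint and say nothing about the circularized predicates, your $p_I$ is very far from complete, and "the $L_n$-layers contain no formulas of $p_I$, hence are vacuously non-dividing" does not license an application of Claim \ref{cla:NotDiv}: that claim (via Claim \ref{cla:ConverseToDividing}) is about the layers of a complete quantifier-free type, so you must exhibit a \emph{completion} whose higher layers do not divide --- which is precisely what the isolation claim supplies. Second, your assertion that the $L_0$-part of $p_I$ is finitely satisfiable in $M$ is unjustified: the parameters lie in $N$, and elementarity of $M\prec N$ gives realizations of the relevant existential formulas in $N$, not in $M$. Fortunately finite satisfiability in $M$ is not needed; the paper shows non-dividing over $\emptyset$ directly.
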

\begin{proof}
Let $M\prec N\models T$, $\left|M\right|=\kappa,\left|N\right|=\lambda$.
For each cut $I$ in $N$, let $p_{I}$ be the type defined in the
previous claim. Then $p_{I}$ extends naturally to a global type $q_{I}$
(i.e. the type over $\C$ defined by $p_{I'}$ where $I'=\left\{ c\in\C\left|\,\exists a\in I\left(c<a\right)\right.\right\} $).
This type does not divide over $M$ (in fact it does not divide over
$\emptyset$) by Claim \ref{cla:NotDiv} and by the proof of the previous
claim (all atomic formulas have exactly the same truth value in $L_{n}$
for $n>0$). \end{proof}
\begin{claim}
$f_{T}^{n}\left(\kappa,\lambda\right)=\ded\left(\lambda\right)$ for
all $n$ and all $\lambda\geq2^{2^{\kappa}}$.\end{claim}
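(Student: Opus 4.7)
The plan mirrors Claim \ref{cla:f(k,l)=00003Dl}, but must account for both species of circularizing formulas $\varphi_{\psi}^{\theta_0,\theta_1}$ and $\varphi_{\psi}^{\theta}$. The lower bound $f_T^n(\kappa,\lambda)\geq\ded(\lambda)$ follows from the preceding claim for $n=1$ together with monotonicity in $n$ (Lemma \ref{lem:basic properties of f_T}(4)). For the upper bound, assume for contradiction that $f_T^n(\kappa,\lambda)>\ded(\lambda)$, fix $M\prec N$ realizing it, and a pairwise distinct family $\{p_i\}_{i<\ded(\lambda)^+}\subseteq S_n^{\nf}(N,M)$. Apply the reductions of Claim \ref{cla:f(k,l)=00003Dl}: by passing to a sub-tuple of $\bar{x}$ we may assume each $p_i\models\bar{x}\cap M=\emptyset$; and using $\lambda\geq 2^{2^{\kappa}}$ together with the non-splitting bound Fact \ref{fac:non-splitting}(1), we may assume no $p_i$ is finitely satisfiable in $M$, fixing for each $i$ a quantifier-free $\psi_i(\bar{x},\bar{c}_i)\in p_i$ not realized in $M$.

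For each surviving $p_i$ and each atomic $L_\omega$-formula $\theta(\bar{x};\bar{y})$ not involving $<$, set $I_{p_i,\theta}:=\{\bar{a}\in N^{\lg(\bar{y})}:\theta(\bar{x},\bar{a})\in p_i\}$. The two families of circularizing formulas force structural constraints on the cut data: if two distinct atomic $\theta_0\neq\theta_1$ both have nonempty $I_{p_i,\theta_k}$, choosing $\bar{a}_k\in I_{p_i,\theta_k}$ with $\bar{a}_0\neq\bar{a}_1$ and locating auxiliary $z_0<z_1$ in $P^N$ with $R(y,z_1)\land\neg R(y,z_0)$ for every parameter $y$ appearing in $\bar{c}_i\bar{a}_0\bar{a}_1$ produces the formula $\varphi_{\psi_i}^{\theta_0,\theta_1}(\bar{x};\bar{c}_i,\bar{a}_0,\bar{a}_1,z_0,z_1)\in p_i$, which forks over $M$ by Claim \ref{cla:ForkingIffFS}; and if some $I_{p_i,\theta}$ fails to be downward-closed in the lex order on $N^{\lg(\bar{y})}$ (that is, $\bar{a}_0<\bar{a}_1$ with $\neg\theta(\bar{x},\bar{a}_0),\theta(\bar{x},\bar{a}_1)\in p_i$), the analogous argument via $\varphi_{\psi_i}^{\theta}$ produces forking. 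Thus at most one $\theta$ has $I_{p_i,\theta}\neq\emptyset$, and that $I_{p_i,\theta}$ is an initial segment of $(N^{\lg(\bar{y})},<_{\text{lex}})$.

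It follows (by quantifier elimination) that each $p_i$ is determined by the $n$ cuts of $(N,<)$ recording the position of each coordinate --- contributing $\ded(\lambda)^n=\ded(\lambda)$ options --- plus either nothing or a choice of one atomic $\theta$ (countably many) together with one initial segment of the linear order $(N^{\lg(\bar{y})},<_{\text{lex}})$ of cardinality $\lambda$, contributing a further factor of at most $\aleph_0\cdot\ded(\lambda)$. The total count is $\ded(\lambda)$, contradicting the family size $\ded(\lambda)^+$. The main obstacle is the careful execution of the forking steps: one must verify that the auxiliary witnesses $z_0,z_1\in P^N$ actually exist (first producing them in a suitable elementary extension via the ordered random graph homogeneity of $T_0$, then pulling back using saturation), and that the cut data truly determines the full quantifier-free type --- an $n$-variable ordered analogue of the completeness argument behind Claim \ref{cla:IsolateType}.
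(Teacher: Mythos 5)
Your argument follows the paper's proof essentially step for step: the same reductions (sub-tuple, non-finite-satisfiability via the $2^{2^{\kappa}}$ bound, fixing the order type), the same use of $\varphi_{\psi}^{\theta_{0},\theta_{1}}$ to force at most one positively-occurring atomic relation and of $\varphi_{\psi}^{\theta}$ to force its positive locus to be a cut, and the same final count against $\ded\left(\lambda\right)^{+}$. The points you flag as needing care (existence of the auxiliary witnesses in $N$, which follows from existential closedness of models of the model completion, and the completeness of the cut data as in Claim \ref{cla:IsolateType}) are exactly the points the paper also leaves to the reader.
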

\begin{proof}
Suppose $f_{T}^{n}\left(\kappa,\lambda\right)>\ded\left(\lambda\right)$.
Let $M\prec N\models T$ where $\left|M\right|=\kappa,\left|N\right|=\lambda$.

Let $\left\{ p_{i}\left(\bar{x}\right)\left|\,i<\ded\left(\lambda\right)^{+}\right.\right\} \subseteq S^{\nf}\left(N,M\right)$
is a set of pairwise distinct types. As in the proof of Claim \ref{cla:f(k,l)=00003Dl},
we may assume that $p_{i}\models\bar{x}\cap M=\emptyset$ for all
$i$, and that $p_{i}$ is not finitely satisfiable in $N$. Also
we may assume that $p_{i}\upharpoonright\left\{ <\right\} $ is constant.

Then, by the choice of $\varphi_{\psi}^{\theta_{0},\theta_{1}}$,
for every $i<\ded\left(\lambda\right)^{+}$ there is at most one atomic
formula of the form $\theta\left(\bar{x};\bar{y}\right)$ such that
there is some positive instance $\theta\left(\bar{x},\bar{a}\right)\in p_{i}$
(if not, suppose $\theta_{0}\left(\bar{x},\bar{a}_{0}\right)\land\theta_{1}\left(\bar{x},\bar{a}_{1}\right)\in p$.
There is some quantifier free formula $\psi\left(\bar{x},\bar{c}\right)\in p_{i}$
such that $\psi$ is not realized in $M$. Let $\bar{a}$ be the tuple
of parameters $\left\langle \bar{c},\bar{a}_{0},\bar{a}_{1}\right\rangle $
and let $d_{0},d_{1},d_{2}\in N$ be an $R$-triangle such that $R\left(d,b\right)$
for all $b\in\bar{a}$. Finally, let $\bar{a}'=\bar{a}d\cap M$ and
$\varphi_{\psi}^{\theta_{0},\theta_{1}}\left(\bar{x};\bar{c},\bar{a}_{0},\bar{a}_{1},d\right)\land\bar{x}\cap\bar{a}'=\emptyset\in p$
forks over $M$ by Claim \ref{cla:ForkingIffFS}).

Similarly, by the choice of $\varphi_{\psi}^{\theta}$, this formula
induces a cut $I=\left\{ \bar{a}\left|\,\theta\left(\bar{x},\bar{a}\right)\in p_{i}\right.\right\} $
.

This formula and the cut it induces determine the type. But this is
a contradiction to the definition of $\ded$.\end{proof}
\begin{cor}
There is a theory $T_{*}$ such that $f_{T_{*}}\left(\lambda,\kappa\right)=\ded\left(\lambda\right)^{\aleph_{0}}$.\end{cor}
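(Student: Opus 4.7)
The plan is simply to combine the previous claim with Proposition \ref{prop:lambda^aleph_0}. The preceding claim produced a theory $T$ (the circularization of the ordered random graph by the family $\F$ of formulas $\varphi_{\psi}^{\theta_0,\theta_1}$ and $\varphi_{\psi}^{\theta}$) for which $f_T^n(\kappa,\lambda)=\ded(\lambda)$ for every $n$ and every $\lambda \geq 2^{2^{\kappa}}$. Feeding this $T$ into the construction of Proposition \ref{prop:lambda^aleph_0} yields a theory $T_*$, and using the version of that proposition for $n$-types (the remark following its proof), we obtain
\[
f_{T_*}(\kappa,\lambda)=f_T(\kappa,\lambda)^{\aleph_0}=\ded(\lambda)^{\aleph_0}
\]
in the relevant range of cardinals.

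So the only step to carry out is the application: first, one observes that $T$ (or its Morleyization, which has the same non-forking spectrum) eliminates quantifiers, so the recipe of Proposition \ref{prop:lambda^aleph_0} applies. Next, one notes that by Lemma \ref{lem: non-forking projections }(4), a $1$-type (or $n$-type) over $N \models T_*$ concentrating on $Q$ decomposes coordinatewise into non-forking types over the sorts $P_n$, and types concentrating on a single $P_n$ are just non-forking $T$-types. Counting gives the factor $\ded(\lambda)^{\aleph_0}$ from $Q$-types, which dominates the contribution of the $P_n$-types and the unstructured complement.

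There is no real obstacle here; the matching upper bound $f_{T_*}(\kappa,\lambda)\leq\ded(\lambda)^{\aleph_0}$ follows from the same decomposition (Lemma \ref{lem: non-forking projections }(1)), since each of the $\aleph_0$ coordinates contributes at most $\ded(\lambda)$ non-forking extensions to $P_n^N$ over $P_n^M$, and the lower bound is obtained by choosing $\aleph_0$ independent cuts in the copies of $N$ and gluing them via an element of $Q$. Hence the only thing to state is that Proposition \ref{prop:lambda^aleph_0} applied to the $\ded(\lambda)$-example of the preceding claim produces the required $T_*$.
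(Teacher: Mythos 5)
Your proposal is correct and is exactly the paper's argument: the paper proves this corollary by applying Proposition \ref{prop:lambda^aleph_0} to the $\ded(\lambda)$-example constructed in the preceding claims. The additional details you supply about the coordinatewise decomposition are just a recapitulation of the proof of that proposition and are not needed beyond citing it.
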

\begin{proof}
By Proposition \ref{prop:lambda^aleph_0}.
\end{proof}

\subsection{Example of (\ref{enu:2 ^ lambda}).\protect \\
}

As a pleasant surprise to the reader who managed to get this far,
the example is just the theory of the random graph (it is $\NTPT$
and has IP, see Proposition \ref{thm:IP in NTP2}).

\subsection{\label{sub:f_T1 different than f_T2}Example of $f_{T}^{1}\left(\kappa,\lambda\right)\leq2^{2^{\kappa}}$
but $f_{T}^{2}\left(\kappa,\lambda\right)=2^{\lambda}$.\protect \\
}

Again we use circularizations, but instead of considering all formulas,
we consider only formulas with one variable.
\begin{defn}
Let $L_{0}=\left\{ =\right\} $ and $T_{0}$ be empty. Let $\F\left(L\right)$
be the set of all quantifier free partitioned formulas from $L$ of
the form $\varphi\left(x;\bar{y}\right)$ where $x$ is a singleton.
Let $T=\circlearrowright_{T_{0},L_{0},\F}$.
\end{defn}
Let $A\subseteq M\models T$. By Claim \ref{cla:ForkingIffFS} and
as in the proof of Proposition \ref{prop:f_T =00003D 2^2^kappa},
\begin{cor}
If $p\left(x\right)\in S_{1}\left(M\right)$ then $p$ does not fork
over $A$ if and only if it is finitely satisfiable in $A$. So $f_{T}^{1}\left(\kappa,\lambda\right)\leq2^{2^{\kappa}}$
for all
\end{cor}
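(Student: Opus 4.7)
The plan is to reprise the argument used for Proposition \ref{prop:f_T =00003D 2^2^kappa}, observing that restricting $\F$ to partitioned formulas whose main variable is a single variable still suffices for the forking argument provided the type being analysed is itself a $1$-type. The ``if'' direction is standard: a globally finitely satisfiable extension of $p$ is invariant over $A$, and in particular does not fork over $A$. So the content lies entirely in the ``only if'' direction.

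For the ``only if'' direction, I would extend $p(x) \in S_1(M)$ to a global type $p'(x) \in S_1(\C)$ that does not fork over $A$, and argue by contradiction assuming $p'$ is not finitely satisfiable in $A$. By the quantifier elimination for $T = \circlearrowright_{T_0,L_0,\F}$ given by Proposition \ref{prop:model completion of phi-circulization}, there is a quantifier-free formula $\varphi(x,\bar a)\in p'$ which is not realized in $A$. As in the proof of Proposition \ref{prop:f_T =00003D 2^2^kappa} I would eliminate any equation $x=a$ with $a\in\bar a\cap A$ (such an equation cannot belong to $p'$, for otherwise $a\in A$ would satisfy $\varphi$), and then repartition the variables so that $\varphi(x;\bar a)\land x\notin(\bar a\cap A)\in p'$. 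Since $p$ is a $1$-type, the main variable $x$ is a single variable, so the resulting partitioned formula $\varphi(x;\bar a)$ is exactly of the form admitted by $\F(L_n)$ at some stage of the circularization. Claim \ref{cla:ForkingIffFS} then asserts that this formula forks over $A$, contradicting the non-forking of $p'$.

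For the cardinality bound, once the equivalence is in hand, every $p\in S_1^{\nf}(N,M)$ is finitely satisfiable in $M$, hence is determined by its trace $\{\psi(x,\bar b)\in p : \bar b\subseteq M\}$, which is an ultrafilter on the Boolean algebra of $M$-definable unary subsets of $M$; this gives at most $2^{2^{|M|}}\leq 2^{2^\kappa}$ types.

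The one delicate point to verify carefully — and the main conceptual obstacle — is that the repartitioning step really lands inside some $\F(L_n)$. This is where the restriction to single-main-variable formulas is essential and also where the $1$-type versus $n$-type asymmetry enters: for $n$-types with $n>1$ the same manipulation produces a partitioned formula with $n$ main variables, which is not in our $\F$, so Claim \ref{cla:ForkingIffFS} no longer applies and $f_T^2$ can behave very differently, as the next subsection will show.
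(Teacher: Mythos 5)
Your argument for the equivalence is correct and is essentially the paper's: the proof given there consists of citing Claim \ref{cla:ForkingIffFS} together with the proof of the analogous corollary in the $2^{2^{\kappa}}$ example, which is exactly the reduction you carry out (pass to a quantifier-free $\varphi\left(x,\bar{a}\right)$ in a global non-forking extension that is not realized in $A$, dispose of any equation with an element of $\bar{a}\cap A$, repartition, and invoke Claim \ref{cla:ForkingIffFS}). Your observation that the repartitioned formula lies in $\F\left(L_{n}\right)$ precisely because the main variable is a singleton is the right point to isolate, and it is indeed where $f_{T}^{1}$ and $f_{T}^{2}$ diverge.

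The one step that is wrong as written is the cardinality count. A type over $N$ finitely satisfiable in $M$ is \emph{not} determined by its trace $\left\{ \psi\left(x,\bar{b}\right)\in p\left|\,\bar{b}\subseteq M\right.\right\}$, i.e.\ by $p\restriction M$: if it were, the bound would be $\left|S_{1}\left(M\right)\right|\leq2^{\kappa}$, which contradicts the preceding example, where $f_{T}\left(\kappa,\lambda\right)=2^{\min\left\{ 2^{\kappa},\lambda\right\} }$ can equal $2^{2^{\kappa}}>2^{\kappa}$. What determines $p$ is the ultrafilter $\left\{ \psi\left(M,\bar{b}\right)\left|\,\psi\left(x,\bar{b}\right)\in p,\,\bar{b}\in N\right.\right\}$ on the powerset of $M$ --- the sets cut out on $M$ by formulas with parameters anywhere in $N$, not just in $M$. (If $\psi\left(M,\bar{b}\right)=\chi\left(M,\bar{c}\right)$ and $\psi\left(x,\bar{b}\right)\in p$ but $\chi\left(x,\bar{c}\right)\notin p$, then $\psi\left(x,\bar{b}\right)\land\neg\chi\left(x,\bar{c}\right)\in p$ is not satisfiable in $M$.) This gives the stated bound $2^{2^{\kappa}}$, as does simply noting that finite satisfiability in $M$ implies non-splitting over $M$ and quoting Fact \ref{fac:non-splitting}(1), which is what the paper does.
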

On the other hand, if we consider types in two variables, then there
is no reason for them to fork.
\begin{claim}
$f_{T}^{2}\left(\kappa,\lambda\right)\geq2^{\lambda}$. \end{claim}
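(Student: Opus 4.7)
My strategy is to construct $2^{\lambda}$ distinct complete non-forking 2-types over some $N \succeq M$ with $|M|=\kappa$, $|N|=\lambda$, exploiting the following structural observation: Claim \ref{cla:ForkingIffFS} characterizes forking (as non-finite-satisfiability) only for the circularized formulas in $\F$, which in this theory all have \emph{singleton} main variable. So 2-variable couplings via the new $C[\varphi]$ relations are considerably less constrained, and in particular can code a large amount of independent information.

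Concretely, I will work with the base singleton formula $\varphi(x;y):=(x\neq y)\in\F(L_0)$ and the 2-variable atomic formula
\[
\chi(x_1,x_2;a,b)\;:=\;C[\varphi](a;\,x_1,\,b,\,x_2)
\]
in $L_1\setminus L_0$, in which the variables $x_1,x_2$ sit in the \emph{outer} slots of the circular order and $b$ occupies the middle. Given $M\preceq N$ of the required sizes, I pick $\lambda$-many distinct $a_i\in N$ and a fixed auxiliary $b\in N$ with $b\neq a_i$ for all $i$. For each $\eta\in 2^{\lambda}$ I form the partial 2-type
\[
p_{\eta}(x_1,x_2) \;:=\; \{\chi(x_1,x_2;a_i,b)^{\eta(i)}:i<\lambda\}\;\cup\;\{x_1\neq c,\, x_2\neq c : c\in N\}\;\cup\;\{x_1\neq x_2\}.
\]
Consistency of $p_{\eta}$ follows by iterated application of Claim \ref{cla:existence}: the circles $C[\varphi](a_i)$ for distinct $i$ are essentially independent, so placing a fresh pair $(x_1,x_2)$ on the prescribed side of $b$ in each circle is an unconstrained choice. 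A careful completion yields pairwise distinct qf 2-types $q_{\eta}$ over $N$.

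The crux is to show each $q_{\eta}$ does not fork over $M$. The singleton restrictions $q_{\eta}\restriction x_j$ amount to ``$x_j\neq c$ for each $c$'' (since everything else in $q_{\eta}$ involving only $x_j$ reduces to such assertions), hence are finitely satisfiable in $M$, so by Claim \ref{cla:ForkingIffFS} they do not fork. For the 2-variable $C[\varphi']$-content of $q_{\eta}$ at each stratum $L_{n+1}\setminus L_n$, the variables sit in outer slots of the relevant circular orders, so the dividing mechanism of Claim \ref{cla:CircDividing} (which requires the variable tuple in the \emph{middle} slot) does not apply; a direct amalgamation of circular orders, parallel to Claim \ref{cla:dividingPer}, then shows these 2-variable restrictions do not divide over $M$. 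Claim \ref{cla:NotDiv} combines these to give non-dividing of $q_{\eta}$ over $M$, and a standard passage to a global invariant extension turns non-dividing into non-forking.

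The main obstacle is in the last step: I must arrange the completion $q_{\eta}$ so that every $C[\varphi']$-formula appearing in $q_{\eta}$ (not only the designated $\chi$'s, and across all strata) keeps the variables in outer slots, and so that the joint 2-variable pattern at each stratum is globally realizable by amalgamating circular orders. This requires a stratum-by-stratum construction using Claim \ref{cla:existence}, plus a verification that each stratum's qf content satisfies the non-dividing hypothesis of Claim \ref{cla:NotDiv}; the upper-bound computation for $f_T^1$ is, by contrast, trivial once non-forking equals finite satisfiability for singletons.
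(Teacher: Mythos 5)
Your combinatorial coding is essentially the paper's: pick $\lambda$ many indices $a_i$ in $N$, use the independent circular orders $C[\varphi](a_i)(-,-,-)$ to place the pair $(x_1,x_2)$ so that a reference point lands in a prescribed arc for each $i$, get consistency from Claim \ref{cla:existence} and pairwise distinctness for free. The gap is in the non-forking verification, and it is exactly at the point you flag as ``the main obstacle''; the device you propose there does not work. First, the singleton restrictions. A complete $2$-type over $N$ restricts to \emph{complete} $1$-types over $N$, and these do not ``amount to $x_j\neq c$'': they must decide every instance $C[\varphi'](\bar t;\, e, x_j, f)$, $C[\varphi'](\bar t;\, x_j, e, f)$, etc., with $\bar t, e, f\in N$. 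By the Corollary preceding this claim, such a $1$-type is non-forking over $M$ iff it is finitely satisfiable in $M$, and an arbitrary completion will not be (nothing in $M\preceq N$ prevents all of $M$ from lying in one arc of some circle determined by parameters of $N$). The paper handles this by fixing a global $1$-type $q'$ finitely satisfiable in the small set and forcing both restrictions to equal $q'|_N$.

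Second, the ``outer slots'' criterion is both unattainable and not an argument. Unattainable: for any index $\bar t$ and any two parameters $e\neq f$ on the circle $C[\varphi'](\bar t)$, a complete type must contain one of $C[\varphi'](\bar t; e, x_1, f)$ or $C[\varphi'](\bar t; f, x_1, e)$, which puts a variable in the middle slot, so the completion you describe does not exist. Not an argument: the fact that Claim \ref{cla:CircDividing} does not apply to a given formula does not show that formula (let alone the whole restriction) fails to divide; what must be verified is hypothesis (5) of Claim \ref{cla:ConverseToDividing} for \emph{every} index tuple $\bar t\in N$, i.e.\ that the full $\{C[\varphi'](\bar t,-,-,-)\}$-fragment of the type does not divide over $M\bar t$. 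The paper's missing ingredient here is the Morley pair: take $c\models q'|_N$, $d\models q'|_{Nc}$, and decide every undetermined circular-order formula $\theta(x,y)$ by whether $\theta(c,d)$ holds; this is what makes the amalgamations in Claim \ref{cla:ConverseToDividing} go through coherently across all strata. Your final step (``standard passage to a global invariant extension'') is also slightly off: what one needs is that the construction produces a \emph{global} non-dividing type, whose restriction then cannot fork; invariance is not the mechanism. So the skeleton is right, but the proof of non-forking needs to be rebuilt around coheir restrictions and the Morley-pair completion rather than around slot positions.
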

\begin{proof}
Suppose $\left|M\right|=\lambda$, so $M=\left\{ a_{i}\left|i<\lambda\right.\right\} $,
and $A\subseteq M$ of size $\kappa$. Let $q\left(z\right)\in S_{1}\left(M\right)$
be any 1-type which is finitely satisfiable in $A$ but not algebraic
over $A$. For $S\subseteq\lambda$, let $p_{S}\left(x,y\right)$
be a partial type over $M$ such that
\begin{enumerate}
\item $p_{S}\upharpoonright x=q\left(x\right)$, $p_{S}\upharpoonright y=q\left(y\right)$.
\item $R\left(x,y,a_{i}\right)\in p_{S}$ if and only if $i\in S$.
\end{enumerate}
First, $p_{S}$ is indeed a type. The proof is by induction, i.e.
one proves that $p_{S}\upharpoonright L_{0}$ is a type (which is
clear), and that if $L$ is some subset of $L_{\omega}$ such that
$p_{S}\upharpoonright L$ is a type and $\varphi\left(x;\bar{y}\right)$
is some partitioned $L$-formula with $\lg\left(x\right)=1$, then
also $p_{S}\upharpoonright L\left[\varphi\right]$ is a type, and
this follows from Claim \ref{cla:existence}.

Let $N\supseteq M$ be an $\left|A\right|^{+}$-saturated model and
$q'\supseteq q$ be a global type which is finitely satisfiable in
$A$. Fix $c\models q'|_{N}$ and $d\models q'|_{Nc}$.

We want to construct a completion $r_{S}\left(x,y\right)\in S_{2}\left(N\right)$
containing $p_{S}$ which does not divide over $A$. We start by $r_{S}\upharpoonright x=q'|_{N}\left(x\right)$,
$r_{S}\upharpoonright y=q'_{N}\left(y\right)$ and $r_{S}\upharpoonright L_{0}$
is any completion of $p_{S}\upharpoonright L_{0}$. For each atomic
formulas $\theta\left(x,y,\bar{t}\right)$ over $N$ of the form $C\left[\varphi\right]\left(\bar{t},-,-,-\right)$
(so $\bar{t}\in N$) such that $\varphi\left(x,t\right)\in q'\left(x\right)$
define $\theta\left(x,y\right)\in r_{S}$ if and only if $\theta\left(c,d\right)$
holds. This is a type (by induction again, by Claim \ref{cla:existence}
(3), but follow the proof a bit more carefully, and choose the amalgamation
of the circular orders corresponding to $\bar{t}$ according to the
choice of $c,d$). Let $r_{S}$ by any completion.

Finally, $r_{S}$ does not divide over $A$ by Claim \ref{cla:ConverseToDividing}
(by induction and by the choice of $c,d$).
\end{proof}

\section{\label{sec:on ded kappa < ded kappa ^aleph0}On $\protect\ded\kappa<\left(\protect\ded\kappa\right)^{\aleph_{0}}$}

\subsection{On $\protect\ded\left(\lambda\right)$.}
\begin{defn}
\label{def:ded}Let $\ded\left(\lambda\right)$ be the supremum of
the set
\[
\left\{ \left|I\right|\left|\,I\mbox{ is a linear order with a dense subset of size }\leq\lambda\right.\right\} .
\]
\end{defn}
\begin{fact}
\label{fac:ded inequalities-1}It is well known that $\lambda<\ded\lambda\leq\left(\ded\lambda\right)^{\aleph_{0}}\leq2^{\lambda}$.
If $\ded\lambda=2^{\lambda}$, then $\ded\lambda=\left(\ded\lambda\right)^{\aleph_{0}}=2^{\lambda}$.
This is true for $\lambda=\aleph_{0}$, or more generally for any
$\lambda$ such that $\lambda=\lambda^{<\lambda}$. So in particular
this holds for any $\lambda$ under GCH.

In addition, if $\ded\lambda$ is not attained (i.e. it is a supremum
rather than a maximum), then $\cof\left(\ded\lambda\right)>\lambda$.
See also Corollary \ref{cor:cof ded lambda < lambda}.\end{fact}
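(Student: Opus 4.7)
The plan is to verify the four assertions comprising Fact~\ref{fac:ded inequalities-1} separately, by short standard arguments on linear orders and cardinal arithmetic.

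First I would establish the chain $\lambda<\ded\lambda\leq(\ded\lambda)^{\aleph_0}\leq 2^\lambda$. The middle inequality is the tautology $\kappa\leq\kappa^{\aleph_0}$. For the right inequality, if $D\subseteq I$ is dense with $|D|\leq\lambda$, the map $x\mapsto(\{d\in D:d<x\},\{d\in D:d\geq x\})$ is injective, so $|I|\leq 2^{|D|}\leq 2^\lambda$; this gives $\ded\lambda\leq 2^\lambda$ and then $(\ded\lambda)^{\aleph_0}\leq(2^\lambda)^{\aleph_0}=2^{\lambda\cdot\aleph_0}=2^\lambda$. The strict inequality $\lambda<\ded\lambda$ is witnessed for $\lambda=\aleph_0$ by $\mathbb{Q}\subseteq\mathbb{R}$, giving $\ded\aleph_0\geq 2^{\aleph_0}$; for larger $\lambda$ one takes the Dedekind completion of a suitably chosen dense linear order of size $\lambda$ without endpoints (e.g.\ one built from $\mathbb{Q}\cdot\lambda$), in which new cuts proliferate past $\lambda$.

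Next I would handle the second clause. Once $\ded\lambda=2^\lambda$ is assumed, cardinal arithmetic immediately gives $(\ded\lambda)^{\aleph_0}=(2^\lambda)^{\aleph_0}=2^\lambda$, collapsing the chain. The case $\lambda=\aleph_0$ follows from $\ded\aleph_0\geq|\mathbb{R}|=2^{\aleph_0}$ combined with the upper bound already established. For the general case $\lambda=\lambda^{<\lambda}$, I would appeal to the existence of a $\lambda$-saturated DLO of cardinality $\lambda$ (a standard consequence of $\lambda^{<\lambda}=\lambda$): every branching pattern of a tree of height $\leq\lambda$ is realized, producing $2^\lambda$ distinct Dedekind cuts and hence an extension of size $2^\lambda$ admitting the saturated DLO itself as a dense subset.

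Finally, for the cofinality statement, suppose $\ded\lambda$ is unattained and, toward a contradiction, that $\mu:=\cof(\ded\lambda)\leq\lambda$. Pick a strictly increasing sequence $(\kappa_\alpha)_{\alpha<\mu}$ of attained values with supremum $\ded\lambda$, and for each $\alpha<\mu$ fix an order $I_\alpha$ of cardinality $\kappa_\alpha$ with dense subset $D_\alpha\subseteq I_\alpha$, $|D_\alpha|\leq\lambda$. The ordered sum $\sum_{\alpha<\mu}I_\alpha$ then has a dense subset $\bigcup_{\alpha<\mu}D_\alpha$ of size $\leq\mu\cdot\lambda=\lambda$ and total cardinality $\sum_{\alpha<\mu}\kappa_\alpha=\ded\lambda$ (since $\mu\leq\lambda<\ded\lambda$), contradicting non-attainment. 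The only step I expect any subtlety in is the production of $2^\lambda$ cuts in a saturated DLO under $\lambda=\lambda^{<\lambda}$; everything else is either routine set theory or cited from textbooks on linear orders, justifying the authors' ``it is well known'' designation.
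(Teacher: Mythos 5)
The paper states this as a known fact without proof, so the only question is whether your argument is sound. Most of it is, but there is one genuine gap: your proof of the strict inequality $\lambda<\ded\lambda$ for general $\lambda$. The witness you propose --- the Dedekind completion of an order built from $\mathbb{Q}\cdot\lambda$ --- has cardinality at most $\lambda\cdot 2^{\aleph_{0}}$, which equals $\lambda$ as soon as $\lambda\geq 2^{\aleph_{0}}$; so for such $\lambda$ no ``new cuts proliferate past $\lambda$'' and the construction proves nothing. The classical argument instead takes $\mu$ least with $2^{\mu}>\lambda$ (so $\mu\leq\lambda$ and $2^{<\mu}\leq\lambda$ by minimality), and uses the tree $2^{<\mu}$, which has at most $\lambda$ nodes and $2^{\mu}>\lambda$ branches; linearly ordering nodes and branches lexicographically gives an order of size $>\lambda$ with a dense subset of size $\leq\lambda$ (equivalently, apply Proposition \ref{prop:definitions of ded}). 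A second, much smaller, slip: the map $x\mapsto\left(\left\{ d\in D:d<x\right\} ,\left\{ d\in D:d\geq x\right\} \right)$ need not be injective (two points with no element of $D$ strictly between them in $[x,y)$ collide), but its fibers are convex sets whose interiors lie in $D$, so $|I|\leq(|D|+2)\cdot 2^{|D|}=2^{\lambda}$ and the bound survives.

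The remaining parts are fine. The collapse under $\ded\lambda=2^{\lambda}$ is immediate cardinal arithmetic; the case $\lambda=\lambda^{<\lambda}$ works either via your saturated DLO or, more directly, via the tree $2^{<\lambda}$ with $\lambda$ nodes and $2^{\lambda}$ branches (note $\lambda^{<\lambda}=\lambda$ forces $\lambda$ regular, so Proposition \ref{prop:definitions of ded} applies). Your cofinality argument by ordered sums is correct: with $\mu=\cof(\ded\lambda)\leq\lambda$ and attained values $\kappa_{\alpha}$ cofinal in $\ded\lambda$, the sum $\sum_{\alpha<\mu}I_{\alpha}$ has size $\sup_{\alpha}\kappa_{\alpha}=\ded\lambda$ and a dense subset of size $\leq\mu\cdot\lambda=\lambda$ (after adjoining endpoints of each summand to the dense sets, to keep density across the seams), contradicting non-attainment.
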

\begin{defn}
\label{def:trees}
\begin{enumerate}
\item Given a linear order $I$ and two regular cardinals $\theta,\mu$,
we say that $S$ is a\emph{ $\left(\theta,\mu\right)$-cut} when it
has cofinality $\theta$ from the left and cofinality $\mu$ from
the right.
\item By a \emph{tree} we mean a partial order $\left(T,<\right)$ such
that for every $a\in T$, $T_{<a}=\left\{ x\in T\left|\,x<a\right.\right\} $
is well ordered. By a \emph{branch} in $T$ we mean a maximally linearly
ordered subset of $T$. Its \emph{length} is its order type.
\item For two cardinals $\lambda$ and $\mu$, let $\lambda^{\left\langle \mu\right\rangle _{\tr}}$
be
\[
\sup\left\{ \kappa\left|\,\mbox{\mbox{there is some tree} }T\mbox{ with }\lambda\mbox{ many nodes and }\kappa\mbox{ branches of length }\mu\right.\right\} .
\]

\end{enumerate}
\end{defn}
\begin{rem}
\label{rem:treePower=00003Dpower}Note that $\lambda^{\left\langle \mu\right\rangle _{\tr}}\leq\lambda^{\mu}$
and if $\lambda=\lambda^{<\mu}$ then $\lambda^{\left\langle \mu\right\rangle _{\tr}}=\lambda^{\mu}$
(consider the tree $\lambda^{<\mu}$ ordered lexicographically).\end{rem}
\begin{prop}
\label{prop:definitions of ded}The following cardinalities are the
same:
\begin{enumerate}
\item $\ded\left(\lambda\right)$
\item $\sup\left\{ \kappa\left|\,\mbox{there is a linear order }I\mbox{ of size }\lambda\mbox{ with }\kappa\mbox{ many cuts}\right.\right\} $
\item $\sup\left\{ \kappa\left|\,\exists\mbox{ a regular }\mu\mbox{ and a linear order }I\mbox{ of size }\leq\lambda\mbox{ with }\kappa\mbox{ many }\left(\mu,\mu\right)\mbox{-cuts}\right.\right\} $
\item $\sup\left\{ \kappa\left|\,\exists\mbox{\mbox{ a regular }\ensuremath{\mu}\ and a tree }T\mbox{ with }\kappa\mbox{ branches of length }\mu\mbox{ and }\left|T\right|\leq\lambda\right.\right\} $
\item $\sup\left\{ \kappa\left|\,\exists\mbox{\mbox{ a limit ordinal }\ensuremath{\delta}\ and a tree }T\mbox{ with }\kappa\mbox{ branches of length }\delta\mbox{ and }\left|T\right|\leq\lambda\right.\right\} $
\item \label{enu:ded is supremum of tree exponent}$\sup\left\{ \lambda^{\left\langle \mu\right\rangle _{\tr}}\left|\,\mu\leq\lambda\mbox{ is regular}\right.\right\} $
\end{enumerate}
\end{prop}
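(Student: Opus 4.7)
The plan is to close a cycle of inequalities among the six quantities, with (4) = (6) and (5) $\leq$ (4) and (3) $\leq$ (2) being essentially tautological, and the substantial work being to relate linear orders with many cuts to trees with many branches.

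First I would dispose of the easy identifications. The equality (1) = (2) follows from the Dedekind completion: if $J$ is a linear order with a dense subset $I$ of size $\leq\lambda$, then each element of $J\setminus I$ is determined by its cut in $I$, so $|J|\leq |I|+|\mathrm{cuts}(I)|$, and the Dedekind completion of $I$ realizes this bound. Since $\ded(\lambda)>\lambda$, the supremum in (1) is controlled by the cut count, giving (2). The identification (4) = (6) is immediate from the definition of $\lambda^{\langle\mu\rangle_{\tr}}$ once one notes that a branch of length $\mu>\lambda$ cannot exist in a tree of size $\leq\lambda$ (so the sup in (6) is automatically over $\mu\leq\lambda$). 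Trivially (5) $\leq$ (4) and (3) $\leq$ (2).

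Next I would close the main loop by showing (2) $\leq$ (4) $\leq$ (1). For (2) $\leq$ (4), fix $I$ of size $\lambda$ and enumerate $I=(a_\alpha)_{\alpha<\lambda}$; the tree $T$ whose level-$\alpha$ nodes are the convex components of $I\setminus\{a_\beta:\beta<\alpha\}$ has at most $|\alpha|+1$ nodes at level $\alpha$, hence $|T|\leq\lambda$, and every cut of $I$ traces a unique branch of length $\lambda$ (the nested sequence of intervals containing it). A pigeonhole on the cofinality pair $(\theta,\nu)$ of each cut (there are only $\leq\lambda$ such pairs) extracts a regular branch length from the chosen type. For (4) $\leq$ (1), fix an order on the children at every node of $T$ and take the lexicographic order on $T\cup\partial T$; the nodes $T$ form a dense subset of size $\leq\lambda$, and the total size is at least the number of branches, so $\ded(\lambda)$ is bounded below by (4). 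The passage (4) $\leq$ (5) from trees to binary trees is routine: replace each node with $\alpha$ many children by a binary ``comb'' of size $\alpha$, ensuring branch lengths remain regular (possibly replacing $\mu$ by a nearby regular cardinal without changing the supremum).

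Finally, the most delicate step is (2) $\leq$ (3), where one must arrange that the cuts witnessing (2) are of symmetric type $(\mu,\mu)$. Given $I$ of size $\lambda$ with $\kappa$ cuts and $\kappa>\lambda$, pigeonhole on the pair of cofinalities $(\theta,\nu)$ (at most $\lambda$ such pairs, each regular $\leq\lambda$) to extract $\kappa$ cuts of common type $(\theta_0,\nu_0)$. If $\theta_0=\nu_0$, we are done with $\mu=\theta_0$. If $\theta_0<\nu_0$, the plan is to pad $I$ by inserting, above each element of $I$ (or, equivalently, into each interval used to approximate a cut from the left), a short increasing $\nu_0$-sequence whose supremum is that element; the resulting linear order $J$ still has size $\lambda\cdot\nu_0=\lambda$, and each of the chosen $(\theta_0,\nu_0)$-cuts now admits a cofinal approach of length $\nu_0$ from the left (by stringing together the new padding sequences along a $\theta_0$-cofinal skeleton), turning it into a $(\nu_0,\nu_0)$-cut. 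Symmetrizing the cofinalities coherently across all $\kappa$ cuts at once is the main technical obstacle; the key point is that a single universal padding works because the padding is attached to the elements of $I$ rather than to the individual cuts.
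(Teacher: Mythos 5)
Your overall cycle of inequalities matches the paper's decomposition, and most of the easy arrows ((1)=(2), (4)=(6), (5)$\leq$(4), (3)$\leq$(2), the lexicographic-order argument for (4)$\leq$(1), and the tree-of-convex-components argument for (2)$\leq$(4), which is exactly the content of the Baumgartner result the paper cites) are fine modulo routine care. The genuine gap is your argument for (2)$\leq$(3), i.e.\ for reducing to symmetric $\left(\mu,\mu\right)$-cuts. Your padding idea cannot work: the cofinality of the left side of a cut $C$ is an order invariant of $C$ as a linear order (the least order type of a cofinal subset), so inserting new points \emph{below} elements of $I$ --- which is the only kind of insertion compatible with keeping $\left|J\right|=\lambda$, as you note --- leaves every cut's cofinality pair $\left(\theta_{0},\nu_{0}\right)$ unchanged: the original $\theta_{0}$-sequence $\left(a_{\xi}\right)_{\xi<\theta_{0}}$ is still cofinal in the left side, and your ``strung together'' set of order type $\nu_{0}\cdot\theta_{0}$ still has cofinality $\theta_{0}$. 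The only way to turn a $\left(\theta_{0},\nu_{0}\right)$-cut into a $\left(\nu_{0},\nu_{0}\right)$-cut is to insert a $\nu_{0}$-sequence \emph{at the cut itself}, and doing this for $\kappa>\lambda$ cuts blows the order up to size $\kappa$. There is no elementary fix here: the paper handles this step by invoking a nontrivial theorem of Shelah (cited as \cite[Theorem 3.9]{Sh818}) that for regular $\theta\neq\mu$ the number of $\left(\theta,\mu\right)$-cuts of $I$ is at most $\left|I\right|$, so that after your (correct) pigeonhole on cofinality pairs the asymmetric classes simply do not contribute to the supremum, since $\ded\left(\lambda\right)>\lambda$. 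Your proposal is missing this input entirely and replaces it with a construction that fails.

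A secondary problem is (4)$\leq$(5). Replacing a node with $\alpha\leq\lambda$ children by a binary comb of length $\alpha$ makes the length of the image of a branch depend on which child it chooses at each level: a branch of length $\mu$ becomes a maximal chain of order type $\sum_{\beta<\mu}\left(1+\gamma_{\beta}\right)$ with the $\gamma_{\beta}<\lambda$ varying from branch to branch, so the resulting binary tree does not have $\kappa$ branches of a single fixed regular length, and ``replacing $\mu$ by a nearby regular cardinal'' does not repair this since different branches acquire different lengths. The paper avoids the issue by embedding $\lambda^{<\mu}$ into $2^{<\left(\mu\times\lambda\right)}$ via characteristic functions ($f\left(\eta\right)\left(\beta,i\right)=1$ iff $\eta\left(\beta\right)=i$), so that every node of level $\alpha$ lands at the same binary level and the image of each length-$\mu$ branch is again a chain of order type exactly $\mu$ inside the image tree. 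Finally, a small correction to (2)$\leq$(4): it is not true that every cut traces a branch of length $\lambda$; branch lengths vary and need not be regular, so one must pigeonhole on the branch length $\delta$ and then restrict the tree to a set of levels cofinal in $\delta$ of order type $\cof\left(\delta\right)$ (distinct branches still separate on such a set), rather than on the cofinality type of the cut.
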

\begin{proof}
(1)$=$(2), (4)$=$(6): obvious.

(2)$=$(3): By \cite[Theorem 3.9]{Sh818}, given a linear order $I$
and two regular cardinals $\theta\neq\mu$ the number of $\left(\theta,\mu\right)$-cuts
in $I$ is at most $\left|I\right|$. Given $I$ and a regular cardinal
$\mu$, let $D_{\mu}\left(I\right)$ be the set of $\left(\mu,\mu\right)$-cuts,
and let $D\left(I\right)$ be the set of all cuts. Suppose $\left|I\right|=\lambda$,
then $\left|D\left(I\right)\right|=\sup\left\{ \left|D_{\mu}\left(I\right)\right|\left|\,\mu=\cof\left(\mu\right)\leq\lambda\right.\right\} $
holds whenever $\left|D\left(I\right)\right|>\lambda$. By Fact \ref{fac:ded inequalities-1},
$\ded\left(\lambda\right)=\sup\{D_{\mu}\left(I\right)|\,\mu=\cof\left(\mu\right)\leq\lambda,\,\left|I\right|\leq\lambda\}$.

(2)$=$(4): Follows from \cite[Theorem 2.1(a)]{Baumgartner}.

(4)$=$(5): Obviously (5) $\geq$ (4). Suppose $T$ is a tree as in
(5). Let $\mu=\cof\left(\delta\right)$ and let $U=\left\{ \delta_{i}\left|\,i<\mu\right.\right\} $
be increasing such that $\delta=\bigcup_{i<\mu}\delta_{i}$. Let $S$
be $\left\{ a\in T\left|\,\lev\left(a\right)\in U\right.\right\} $.
Then $S$ is a subset of $T$, so a tree with the induced order. For
a branch $B\subseteq T$ of length $\delta$, let $B^{S}=B\cap S$,
then $B^{S}$ is a branch of $S$ of length $\mu$. If $B_{1}\neq B_{2}$
are branches of length $\delta$ in $T$, then let $a\in B_{1}\backslash B_{2}$,
and let $a'>a$ in $B_{1}$ be such that $\lev\left(a'\right)\in U$.
Then $a'\in B_{1}^{S}\backslash B_{2}^{S}$.
\end{proof}

\subsection{Consistency of $\protect\ded\kappa<\left(\protect\ded\kappa\right)^{\aleph_{0}}$.\protect \\
}

In \cite{KeislerSixClasses}, the following fact is mentioned (without
proof), attributed to Kunen:
\begin{rem}
{[}Kunen{]} If $\kappa^{\aleph_{0}}=\kappa$ then $\left(\ded\kappa\right)^{\aleph_{0}}=\ded\kappa$.\end{rem}
\begin{proof}
Suppose $I$ is a linear order, and $J\subseteq I$ is dense, $\left|J\right|=\kappa$.
Let $\Uu$ be a non-principal ultrafilter on $\omega$. Then the linear
order $I^{\omega}/\Uu$ has $J^{\omega}/\Uu$ as a dense subset. Now\footnote{If $A$ is infinite then $A^{\omega}/\Uu$ has size $\left|A\right|^{\aleph_{0}}$:
let $g_{n}:A^{n}\to A$ be bijections. Then take $f\in A^{\omega}$
to $\bar{f}=\left\langle g_{n}\left(f\left(0\right),\ldots,f\left(n-1\right)\right)\left|\,n<\omega\right.\right\rangle $,
so that if $f\neq g$ then $\bar{f}\neq\bar{g}$ from some point onwards,
and in particular, modulo $\Uu$.}, $\left|J^{\omega}/\Uu\right|=\kappa^{\aleph_{0}}=\kappa$ and $\left|I^{\omega}/\Uu\right|=\left|I\right|^{\aleph_{0}}$.
The remark follows from Fact \ref{fac:ded inequalities-1}.
\end{proof}
Answering a question of Keisler \cite[Problem 2]{KeislerSixClasses},
we show:
\begin{thm}
It is consistent with ZFC that $\ded\kappa<\left(\ded\kappa\right)^{\aleph_{0}}$.
\end{thm}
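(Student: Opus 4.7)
The key reduction is König's theorem: $(\ded\kappa)^{\aleph_0} > \ded\kappa$ if and only if $\cof(\ded\kappa) = \aleph_0$. Moreover, Fact \ref{fac:ded inequalities-1} tells us that whenever $\cof(\ded\kappa) \leq \kappa$, in particular when $\cof(\ded\kappa) = \aleph_0$, the supremum in the definition of $\ded\kappa$ is automatically attained. So the task reduces to producing a forcing extension in which some cardinal $\kappa$ satisfies $\ded\kappa = \mu$ for a cardinal $\mu$ of countable cofinality.

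The plan is to take $\kappa = \aleph_1$ and aim for $\ded\aleph_1 = \aleph_\omega$. Starting from a ground model $V$ satisfying GCH (so $\ded\kappa = 2^\kappa = \kappa^+$ for every infinite $\kappa$), I would design a forcing $\mathbb{P}$ whose generic simultaneously (a) adds a binary tree $T \subseteq 2^{<\omega_1}$ of size $\aleph_1$ possessing $\aleph_\omega$ cofinal branches, and (b) arranges $2^{\aleph_1} = \aleph_\omega$. The natural candidate is a Mitchell-style construction --- an iteration or product of $\aleph_1$-closed forcings combined with a forcing component having an $\aleph_2$-cc property --- so as to preserve $\aleph_1$ and $\aleph_2$ and, crucially, to keep $\aleph_\omega^V$ a cardinal of cofinality $\omega$ in the extension.

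Once such $\mathbb{P}$ is in place, the bounds are immediate. The tree characterization of Proposition \ref{prop:definitions of ded}(5) yields $\ded\aleph_1 \geq \aleph_\omega$ in $V[G]$, while the general inequality $\ded\kappa \leq 2^\kappa$ combined with $2^{\aleph_1} = \aleph_\omega$ gives $\ded\aleph_1 \leq \aleph_\omega$. Hence $\ded\aleph_1 = \aleph_\omega$; this cardinal has cofinality $\omega$ in $V[G]$, so by König,
\[
(\ded\aleph_1)^{\aleph_0} = \aleph_\omega^{\aleph_0} > \aleph_\omega = \ded\aleph_1,
\]
as desired.

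The hard part is the forcing construction together with its preservation analysis. One must simultaneously avoid collapsing $\aleph_1$ (or else $\ded$ is computed at the wrong cardinal), preserve $\aleph_\omega^V$ as a cardinal (so the target $\aleph_\omega^{V[G]}$ has the right cofinality), and yet genuinely add $\aleph_\omega$-many cofinal branches to the tree while not blowing $2^{\aleph_1}$ past $\aleph_\omega$. Balancing the chain-condition side (which bounds $2^{\aleph_1}$ from above and preserves large cardinals and their cofinalities) against the closure side (which preserves small cardinals and $\aleph_\omega$) is the delicate combinatorial heart of the argument; this is where I expect the real work to lie.
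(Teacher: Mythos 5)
Your reduction to ``make $\cof(\ded\kappa)$ countable'' is the right first step (only the forward direction of your K\"onig equivalence is needed, and that direction is correct), but the concrete plan collapses exactly where you propose to arrange $2^{\aleph_1}=\aleph_\omega$: K\"onig's theorem gives $\cof(2^{\aleph_1})>\aleph_1$, while $\cof(\aleph_\omega)=\aleph_0$, so no forcing whatsoever can achieve this equality. Since your entire upper bound $\ded\aleph_1\le 2^{\aleph_1}=\aleph_\omega$ rests on it, the part of your argument you call ``immediate'' is the part that fails. The lower bound is not free either: a tree in $2^{<\omega_1}$ of size $\aleph_1$ with $\aleph_\omega$ cofinal branches already forces $2^{\aleph_1}\ge\aleph_{\omega+1}$, so you would need a genuinely new argument --- not the crude bound $\ded\kappa\le 2^\kappa$ --- showing that no linear order with a dense subset of size $\aleph_1$ has more than $\aleph_\omega$ cuts. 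You defer all of this to ``the delicate combinatorial heart,'' but that heart is the whole theorem, and whether $\ded\aleph_1$ can have countable cofinality at all is a substantially harder question than the statement requires.

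The paper sidesteps every one of these obstacles by computing $\ded$ at a \emph{singular} cardinal of countable cofinality rather than at $\aleph_1$. Over a model of GCH it applies Easton forcing with $\theta_i=\aleph_{i+1}$ and $2^{\theta_i}=\mu_i=\aleph_{\omega+i}$, and shows $\ded\aleph_\omega=\aleph_{\omega+\omega}$. The lower bound comes from the ground-model trees $\left(2^{<\theta_i}\right)^{M}$, which still have size $\theta_i$ in the extension but acquire $\mu_i$ branches. For the upper bound one does \emph{not} compare with $2^{\theta}$; instead, because $\theta=\aleph_\omega$ is singular, every cut of a linear order of size $\theta$ has both of its cofinalities $\le\theta_i$ for some $i<\omega$, and the number of such cuts is at most $\theta^{\theta_i}\le\mu_0^{\theta_i}\le 2^{\theta_0+\theta_i}=\mu_i<\aleph_{\omega+\omega}$; summing over the countably many levels gives the bound. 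The singularity of the cardinal at which $\ded$ is computed is precisely what makes the level-by-level cut count work, and it is the idea missing from your proposal.
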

Our proof uses Easton forcing, so let us recall:
\begin{thm}
\label{thm:(Easton)}{[}Easton{]} Let $M$ be a transitive model of
ZFC and assume that the Generalized Continuum Hypothesis holds in
$M$. Let $F$ be a function (in $M$) whose arguments are regular
cardinals and whose values are cardinals, such that for all regular
$\kappa$ and $\lambda$:
\begin{enumerate}
\item $F\left(\kappa\right)>\kappa$
\item $F\left(\kappa\right)\leq F\left(\lambda\right)$ whenever $\kappa\leq\lambda$.
\item $\cof\left(F\left(\kappa\right)\right)>\kappa$
\end{enumerate}
Then there is a generic extension $M\left[G\right]$ of $M$ such
that $M$ and $M\left[G\right]$ have the same cardinals and cofinalities,
and for every regular $\kappa$, $M\left[G\right]\models2^{\kappa}=F\left(\kappa\right)$.
\end{thm}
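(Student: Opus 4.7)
The plan is to realize $M[G]$ as a generic extension by an \emph{Easton support product} of Cohen-style forcings, one at each regular cardinal, and then verify preservation of cardinals and cofinalities together with the prescribed values of $2^\kappa$. For each regular cardinal $\kappa$ in $M$, let $Q_\kappa=\operatorname{Fn}(F(\kappa)\times\kappa,\,2,\,\kappa)$ be the standard forcing adding $F(\kappa)$ many Cohen subsets of $\kappa$ by partial functions of cardinality strictly less than $\kappa$. Define $P$ to be the class of functions $p$ with $p(\kappa)\in Q_\kappa$ whose support is Easton, meaning that for every regular $\alpha$, $|\{\kappa\in\operatorname{supp}(p):\kappa<\alpha\}|<\alpha$. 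Order $P$ coordinatewise.

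The central tool is a factorization lemma: for each regular $\kappa$, $P$ is densely equivalent to $P_{\leq\kappa}\times P_{>\kappa}$, where the two factors collect the coordinates at regular $\lambda\leq\kappa$ and at regular $\lambda>\kappa$ respectively. Using GCH in $M$ together with the hypotheses on $F$, I would then establish: first, $|P_{\leq\kappa}|=F(\kappa)$ and $P_{\leq\kappa}$ has the $\kappa^+$-chain condition, via a $\Delta$-system argument where the supports of any $\kappa^+$ conditions are refined so that the values on a common root are counted using GCH to produce a compatible pair; second, $P_{>\kappa}$ is $\kappa^+$-closed, which follows from the $\lambda$-closure of each $Q_\lambda$ for $\lambda>\kappa$ combined with the Easton support condition. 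The assumption $\cof(F(\kappa))>\kappa$ is what makes the nice-name count below work cleanly.

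Given these two ingredients, cardinals and cofinalities $\leq\kappa$ are preserved by $P_{\leq\kappa}$ (it has size $F(\kappa)$ and is $\kappa^+$-c.c.), while $P_{>\kappa}$ adds no new subsets of $\kappa$ by $\kappa^+$-closure; iterating this observation across all regular $\kappa$ preserves the whole cofinality function, hence every cardinal. The lower bound $2^\kappa\geq F(\kappa)$ in $M[G]$ comes for free from the generic for $Q_\kappa$, while the upper bound $2^\kappa\leq F(\kappa)$ follows by counting nice names in $P_{\leq\kappa}$ (using $|P_{\leq\kappa}|=F(\kappa)$, its $\kappa^+$-c.c., $\cof(F(\kappa))>\kappa$, and the fact that $P_{>\kappa}$ contributes no new subsets of $\kappa$).

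The main obstacle, and the part specific to \emph{class} forcing, is verifying that the axioms of ZFC (especially Replacement and Power Set) survive in $M[G]$, since $P$ is a proper class in $M$. The remedy is again the factorization: any set of ordinals in $M[G]$ already lies in $M[G_{\leq\kappa}]$ for some regular $\kappa$, because the $\kappa^+$-closure of $P_{>\kappa}$ forbids the tail from contributing new bounded sequences, and the Easton support condition forces every $P$-name of bounded rank to be decided by a bounded piece of the head. Thus ZFC in $M[G]$ reduces to ZFC in each set-sized intermediate model $M[G_{\leq\kappa}]$, which is standard. Formalizing this reduction, together with the definability of the forcing relation for the proper class $P$ within $M$, is the technically most delicate step of the proof.
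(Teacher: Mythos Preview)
Your sketch is essentially the standard proof of Easton's theorem as presented in Jech, and it is correct in outline: the Easton-support product, the factorization $P\cong P_{\leq\kappa}\times P_{>\kappa}$, the $\kappa^+$-c.c.\ of the lower part via a $\Delta$-system argument under GCH, the $\kappa^+$-closure of the upper part, the nice-name count using $\cof(F(\kappa))>\kappa$, and the reduction of the class-forcing ZFC verification to the set-sized intermediate extensions are all the right ingredients.

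However, the paper does not prove this theorem at all. It is stated as a classical result of Easton and immediately followed by a reference to \cite[Theorem 15.18]{JechSetTheory}; no argument is given in the paper itself. So there is nothing to compare your approach to beyond noting that what you wrote is precisely the argument one finds in the cited source.
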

See \cite[Theorem 15.18]{JechSetTheory}.

Easton forcing is a class forcing but we can just work with a set
forcing, i.e. when $F$ is a set. The following is the main claim:
\begin{claim}
\label{cla:Main}Suppose $M$ is a transitive model of ZFC that satisfies
GCH, and furthermore:
\begin{itemize}
\item $\kappa$ is a regular cardinal.
\item $\left\langle \theta_{i}\left|\,i<\kappa\right.\right\rangle $, $\left\langle \mu_{i}\left|\,i<\kappa\right.\right\rangle $
are strictly increasing sequences of cardinals and $\theta=\sup_{i<\kappa}\theta_{i}$,
$\mu=\sup_{i<\kappa}\mu_{i}$.
\item $\kappa<\theta_{0}$, $\theta_{i}<\mu_{0}$ for all $i<\kappa$.
\item $\theta_{i}$ are regular for all $i<\kappa$.
\end{itemize}
\underline{Then}, letting $P$ be Easton forcing with $F:\left\{ \theta_{i}\left|\,i<\kappa\right.\right\} \to\mathbf{card}$,
$F\left(\theta_{i}\right)=\mu_{i}$ and $G$ a generic for $P$, in
$M\left[G\right]$, $\ded\theta=\mu$ and the supremum is attained.\end{claim}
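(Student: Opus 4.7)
The plan is to take $P$ to be the standard Easton product $\prod_{i<\kappa}^{\mathrm{Easton}}\mathrm{Fn}(\mu_i,2,\theta_i)$, verify the continuum function in $M[G]$, and then separately establish the upper bound $\ded\theta\leq\mu$ and the attained lower bound $\ded\theta\geq\mu$. The hypotheses on $\kappa,\theta_i,\mu_i$ are exactly those required for Theorem \ref{thm:(Easton)} to apply, so in $M[G]$ cardinals and cofinalities are preserved and $2^{\theta_i}=\mu_i$ for each $i<\kappa$. A standard Easton analysis then yields $2^\lambda=\lambda^+$ for regular $\lambda<\theta_0$ and $2^\lambda=\mu_j$ for regular $\lambda\in[\theta_j,\theta_{j+1})$; in particular $2^{<\theta_i}=\mu_{i-1}$ for $i\geq 1$, and $\theta$ is singular of cofinality $\kappa$ with $\theta\leq\mu_0$.

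For the upper bound I would apply Proposition \ref{prop:definitions of ded}(3) to reduce to bounding the number of $(\nu,\nu)$-cuts in linear orders of size $\leq\theta$ for each regular $\nu\leq\theta$. This is at most $\theta^\nu$, so it suffices to show $\theta^\nu\leq\mu$ in $M[G]$. For $\nu<\kappa=\cof\theta$, every $\nu$-sequence in $\theta$ is bounded, hence $\theta^\nu=\sup_i\theta_i^\nu\leq\sup_i 2^{<\theta_i}=\mu$; for $\kappa\leq\nu<\theta$, a similar counting argument combined with the continuum function above and the gap $\theta<\mu_0$ again yields $\theta^\nu\leq\mu$.

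The attained lower bound rests on the following key observation. For each $i<\kappa$, let $f_\alpha^i:\theta_i\to 2$ (for $\alpha<\mu_i$) be the generic functions added by the Cohen-style factor at index $i$. I claim that in $M[G]$ the subset $\{f_\gamma^i:\gamma<\theta_i\}$ is lex-dense in $\{f_\alpha^i:\alpha<\mu_i\}$: given $f_\alpha^i<f_\beta^i$ in the lex order with first disagreement at some $\xi<\theta_i$, a constraint on finitely many coordinates beyond $\xi$ suffices to place an $f_\gamma^i$ strictly between them, and by the homogeneity of the forcing under permutations of the $\mu_i$ index coordinate, the set of $\gamma<\mu_i$ with $f_\alpha^i<f_\gamma^i<f_\beta^i$ has size $\mu_i$, hence meets $\theta_i$. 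This gives $\ded\theta_i\geq\mu_i$ attained.

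Assembling the pieces, I would take the ordered sum $L=\sum_{i<\kappa}L_i$ with $L_i=\{f_\alpha^i:\alpha<\mu_i\}$ in its lex order. Then $|L|=\sum_i\mu_i=\mu$ and the union $\sum_i\{f_\gamma^i:\gamma<\theta_i\}$ is dense in $L$ of size $\sum_i\theta_i=\theta$, realizing $\ded\theta\geq\mu$ attained. Together with the upper bound this gives $\ded\theta=\mu$ in $M[G]$. The main delicacy lies in the upper-bound calculation, where one must carefully track the continuum function across the singular cardinal $\theta$; the lower bound, by contrast, reduces cleanly to a homogeneity-based density argument on the generics.
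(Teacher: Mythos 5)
Your overall decomposition (upper bound $\ded\theta\leq\mu$ plus an attained lower bound) matches the paper, and your upper bound is essentially the paper's argument: the paper counts $(\kappa_1,\kappa_2)$-cuts for all cofinality pairs and bounds each class by $\theta^{\theta_i}\leq\mu_0^{\theta_i}\leq 2^{\theta_i}=\mu_i<\mu$, using $\theta\leq\mu_0$; your reduction to $(\nu,\nu)$-cuts via Proposition \ref{prop:definitions of ded}(3) and the bound $\theta^{\nu}\leq\mu$ is the same computation. Your lower bound, however, is genuinely different. The paper takes the \emph{ground-model} tree $T_i=(2^{<\theta_i})^M$, which has size $\theta_i$ by GCH in $M$, and observes that the $\mu_i$ generic columns of $G_i$ are distinct branches of $T_i$ (their initial segments lie in $M$ by a trivial density argument); Proposition \ref{prop:definitions of ded} then gives $\ded\theta_i\geq\mu_i$ with no further forcing argument. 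You instead linearly order the generic columns lexicographically and prove that the first $\theta_i$ columns are lex-dense among all $\mu_i$ of them. Your route is more self-contained (it produces a linear order with a dense subset directly, matching Definition \ref{def:ded}) but requires an extra forcing argument; the paper's is shorter because the dense object is already in the ground model.

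Two points in your justification of the lex-density claim need repair. First, to squeeze $f_\gamma^i$ between $f_\alpha^i$ and $f_\beta^i$ you must copy $f_\alpha^i$ on the entire initial segment $[0,\xi]$ up to the first disagreement, which is $|\xi|+1$ coordinates, not finitely many; this is harmless only because conditions in $\Add(\theta_i,\mu_i)$ have support of size $<\theta_i$ and $\theta_i$ is regular. Second, the inference ``the set of suitable $\gamma<\mu_i$ has size $\mu_i$, hence meets $\theta_i$'' is a non sequitur (a subset of $\mu_i$ of size $\mu_i$ can be disjoint from $\theta_i$). The correct argument forces a witness below $\theta_i$ directly: given a condition $p$ and $\alpha\neq\beta$, extend $p$ to decide initial segments of columns $\alpha,\beta$ on which they differ, pick a column $\gamma<\theta_i$ not in the support of the extension (possible as the support has size $<\theta_i$), copy column $\alpha$'s decided segment into column $\gamma$, and diverge upward at a fresh row; this is dense for each pair $(\alpha,\beta)$, so genericity yields lex-density of $\{f_\gamma^i:\gamma<\theta_i\}$. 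With that repair your lower bound, and hence the whole proof, goes through.
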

\begin{rem}
\label{rem:NoteAfterClaim}Note that in $M\left[G\right]$, we also
get by Easton's Theorem \ref{thm:(Easton)} that $2^{\theta_{i}}=\mu_{i}$;
$\cof\left(\theta\right)=\cof\left(\mu\right)=\kappa<\theta$ and
$\mu^{\kappa}>\mu$.\end{rem}
\begin{proof}
First let us show that $\ded\theta\geq\mu$. Recall,
\begin{itemize}
\item $\Add\left(\kappa,\lambda\right)$ is the forcing notion that adjoins
$\lambda$ subsets to $\kappa$, i.e. it is the set of partial functions
$p:\kappa\times\lambda\to2$ such that $\left|\dom\left(p\right)\right|<\kappa$.
\item The Easton forcing notion $P$ is the set of all elements in $\prod_{i<\kappa}\Add\left(\theta_{i},\mu_{i}\right)$
such that the for every regular cardinal $\gamma\leq\kappa$, and
for each $p\in P$, the support $s\left(p\right)$ satisfies $\left|s\left(p\right)\cap\gamma\right|<\gamma$.
\end{itemize}
If $G$ is a generic of $P$, then the projection of $G$ to $i$,
$G_{i}$, is generic in $\Add\left(\theta_{i},\mu_{i}\right)$.

For $i<\kappa$, consider the tree $T_{i}=\left(2^{<\theta_{i}}\right)^{M}$.
Since $M$ satisfies GCH, $M\left[G\right]\models\left|T_{i}\right|=\theta_{i}$.
For all $\beta<\mu_{i}$, we can define a function $\eta_{\beta}:\theta_{i}\to2$
by $\eta_{\beta}\left(\alpha\right)=p\left(\alpha,\beta\right)$ for
some $p\in G_{i}$ such that $\left(\alpha,\beta\right)\in\dom\left(p\right)$.
If $\alpha<\theta_{i}$, then $\eta_{\beta}\upharpoonright\alpha\in M$
(consider the dense set $D=\left\{ p\in\Add\left(\theta_{i},\mu_{i}\right)\left|\,\alpha\times\left\{ \beta\right\} \subseteq\dom\left(p\right)\right.\right\} $),
so for $\beta<\mu_{i}$, $\eta_{\beta}$ defines a branch of $T_{i}$,
and if $\beta_{1}\neq\beta_{2}$ then $\eta_{\beta_{1}}\neq\eta_{\beta_{2}}$.
By Proposition \ref{prop:definitions of ded} we have $\ded\theta_{i}=\mu_{i}=2^{\theta_{i}}$
in $M\left[G\right]$. Since $\ded\theta\geq\ded\theta_{i}$ for all
$i<\kappa$, we are done.

Now let us show that $\ded\left(\theta\right)\leq\mu$. Let $I$ be
some linear order such that $\left|I\right|=\theta$. For any choice
of cofinalities $\left(\kappa_{1},\kappa_{2}\right)$, we look at
the set of all $\left(\kappa_{1},\kappa_{2}\right)$-cuts of $I$,
$C_{\kappa_{1},\kappa_{2}}.$ Obviously for it to be nonempty, $\kappa_{1},\kappa_{2}\leq\theta$,
so let us assume that $\kappa_{1},\kappa_{2}\leq\theta_{i}$ for some
$i$ (note that $\theta$ is singular, so $\kappa_{1},\kappa_{2}\neq\theta$).
We map each such cut to a pair of cofinal sequences (from the left
and from the right). Hence we obtain $\left|C_{\kappa_{1},\kappa_{2}}\right|\leq\theta^{\kappa_{1}+\kappa_{2}}\leq\theta^{\theta_{i}}$.
Since $\theta\leq\mu_{0}$, $\theta^{\theta_{i}}\leq\mu_{0}^{\theta_{i}}\leq2^{\theta_{0}+\theta_{i}}=\mu_{i}<\mu$.
The number of regular cardinals below $\theta$ is $\leq\theta$,
so we are done.\end{proof}
\begin{cor}
Suppose GCH holds in $M$. Choose $\kappa=\aleph_{0}$, $\theta_{i}=\aleph_{i+1}$
and $\mu_{i}=\aleph_{\omega+i}$. Then in the generic extension, $\aleph_{\omega+\omega}=\ded\aleph_{\omega}<\left(\ded\aleph_{\omega}\right)^{\aleph_{0}}$.
In fact, since the Singular Cardinal Hypothesis holds under Easton
forcing (see \cite[Exercise 15.12]{JechSetTheory}), $\left(\ded\aleph_{\omega}\right)^{\aleph_{0}}=\aleph_{\omega+\omega+1}$.
\end{cor}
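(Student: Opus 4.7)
The plan is to simply check that the specific choices $\kappa=\aleph_{0}$, $\theta_{i}=\aleph_{i+1}$, $\mu_{i}=\aleph_{\omega+i}$ fit into the framework of Claim \ref{cla:Main}, and then do the cardinal arithmetic on top. First I would verify the four bulleted hypotheses of that claim: $\kappa=\aleph_{0}$ is regular; the sequences $\langle\aleph_{i+1}\rangle$ and $\langle\aleph_{\omega+i}\rangle$ are strictly increasing with suprema $\theta=\aleph_{\omega}$ and $\mu=\aleph_{\omega+\omega}$ respectively; $\kappa=\aleph_{0}<\aleph_{1}=\theta_{0}$ and $\theta_{i}=\aleph_{i+1}<\aleph_{\omega}=\mu_{0}$ for all $i<\omega$; and the $\theta_{i}$, being successor cardinals, are regular.

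Having checked the hypotheses, applying Claim \ref{cla:Main} to the Easton forcing $P$ with $F(\aleph_{i+1})=\aleph_{\omega+i}$ yields a generic extension $M[G]$ in which $\ded\aleph_{\omega}=\aleph_{\omega+\omega}$ (and the supremum is attained). This immediately gives the first equality of the corollary.

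For the second part I would compute $(\ded\aleph_{\omega})^{\aleph_{0}}=\aleph_{\omega+\omega}^{\aleph_{0}}$ in $M[G]$. Note that $\aleph_{\omega+\omega}$ is a singular cardinal of cofinality $\omega$. Easton forcing with the given $F$ does not add reals (each factor $\Add(\aleph_{i+1},\aleph_{\omega+i})$ is $\aleph_{1}$-closed, and the support condition preserves this at the smallest coordinate), so $2^{\aleph_{0}}=\aleph_{1}$ is still far below $\aleph_{\omega+\omega}$. Thus $2^{\operatorname{cf}(\aleph_{\omega+\omega})}<\aleph_{\omega+\omega}$, and invoking the Singular Cardinal Hypothesis in $M[G]$ (which holds under Easton forcing over a GCH model, as quoted from \cite[Exercise 15.12]{JechSetTheory}) gives $\aleph_{\omega+\omega}^{\aleph_{0}}=\aleph_{\omega+\omega}^{+}=\aleph_{\omega+\omega+1}$.

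There is no real obstacle here beyond making sure the verification of Claim \ref{cla:Main}'s hypotheses is clean and that SCH is indeed applicable; the latter requires only the mild check that $2^{\aleph_{0}}$ has not been blown up by the forcing, which follows from closure of the relevant factors. Combining the two computations gives $\ded\aleph_{\omega}=\aleph_{\omega+\omega}<\aleph_{\omega+\omega+1}=(\ded\aleph_{\omega})^{\aleph_{0}}$, as required.
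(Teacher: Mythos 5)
Your proposal is correct and follows exactly the route the paper intends: the corollary is an immediate instantiation of Claim \ref{cla:Main} (whose hypotheses you verify correctly for $\kappa=\aleph_{0}$, $\theta_{i}=\aleph_{i+1}$, $\mu_{i}=\aleph_{\omega+i}$) combined with SCH in the extension, and your use of the attained supremum plus $2^{\aleph_{0}}<\aleph_{\omega+\omega}$ to get $\aleph_{\omega+\omega}^{\aleph_{0}}=\aleph_{\omega+\omega+1}$ is exactly what the paper's citation of \cite[Exercise 15.12]{JechSetTheory} is meant to supply. (A minor note: for SCH you only need $2^{\aleph_{0}}<\aleph_{\omega+\omega}$, which already follows from $2^{\aleph_{0}}\leq2^{\aleph_{1}}=\mu_{0}=\aleph_{\omega}$ by Easton's theorem, so the closure argument about not adding reals, while standard, is not strictly necessary.)
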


\begin{cor}
\label{cor:cof ded lambda < lambda}It is consistent with ZFC that
$\cof\left(\ded\lambda\right)<\lambda$.\end{cor}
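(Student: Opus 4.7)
The plan is to observe that the same forcing construction used in the previous corollary already witnesses this statement, essentially for free. Concretely, I would invoke Claim \ref{cla:Main} with $\kappa = \aleph_0$, $\theta_i = \aleph_{i+1}$ and $\mu_i = \aleph_{\omega+i}$, so that in the generic extension $M[G]$ we have $\lambda := \theta = \aleph_\omega$ and $\ded\lambda = \mu = \aleph_{\omega+\omega}$.

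The key point is then a trivial cofinality computation: $\mu = \sup_{i<\kappa}\mu_i$ is exhibited as the supremum of a strictly increasing $\kappa$-sequence, so $\cof(\mu) \leq \kappa$, and since the $\mu_i$ are strictly increasing we in fact have $\cof(\mu) = \kappa$. In our choice $\cof(\ded\lambda) = \cof(\aleph_{\omega+\omega}) = \aleph_0$, while $\lambda = \aleph_\omega > \aleph_0$, giving $\cof(\ded\lambda) < \lambda$. (This cofinality observation is already recorded in Remark \ref{rem:NoteAfterClaim}.)

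There is essentially no obstacle here: all the work is done by Claim \ref{cla:Main}, and the statement of the corollary is just an immediate byproduct of the structure of the witness $\mu$ produced there. If one wants to be slightly more formal, I would write: let $M \models \mathrm{ZFC} + \mathrm{GCH}$, let $P$ be the Easton forcing from Claim \ref{cla:Main} with the parameters above, and let $G$ be $P$-generic over $M$; then $M[G] \models \ded\aleph_\omega = \aleph_{\omega+\omega}$, hence $M[G] \models \cof(\ded\aleph_\omega) = \aleph_0 < \aleph_\omega$, proving the consistency.
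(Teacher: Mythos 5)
Your proposal is correct and is exactly the argument the paper intends: the corollary is an immediate consequence of Claim \ref{cla:Main} (with the parameters of the preceding corollary), since there $\ded\aleph_\omega=\aleph_{\omega+\omega}$ is attained and has cofinality $\aleph_0<\aleph_\omega$, as already noted in Remark \ref{rem:NoteAfterClaim}. Nothing further is needed.
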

\begin{problem}
Is it consistent with ZFC that $\ded\kappa<\left(\ded\kappa\right)^{\aleph_{0}}<2^{\kappa}$?

We remark that our construction is not sufficient for that: in the
context of Claim \ref{cla:Main}, $\left(\ded\theta\right)^{\kappa}\leq2^{\theta}$,
but $2^{\theta}=\prod_{i<\kappa}2^{\theta_{i}}\leq\prod_{i<\kappa}\mu_{i}\leq\mu^{\kappa}=\left(\ded\theta\right)^{\kappa}$.
\end{problem}

Some further properties relating the $\ded \kappa$ function and cardinal arithmetic are established in \cite{CheShe}.

\bigskip
\footnotesize
\noindent\textit{Acknowledgments.}

The first author was supported by the Marie Curie Initial Training Network in Mathematical Logic - MALOA - From MAthematical LOgic to Applications, PITN-GA-2009-238381.

The second author was partially supported by the Independent Research Start-up Grant in the Zukunftskolleg (University of Konstanz), the SFB grant 878 (University of Muenster) and the Israel Science Foundation in (grant No. 1533/14).

The third author has received funding from the
European Research Council, ERC Grant Agreement n. 338821. He would like to thank the Israel Science Foundation
for partial support of this research (Grants 710/07 and 1053/11). 
No. 1007
on the third author's list of publications. 
\bibliographystyle{alpha}
\bibliography{common}

\end{document}